\documentclass[12pt,centertags,oneside]{amsart}
\usepackage{amsmath,amstext,amsthm,amscd,typearea,hyperref}
\usepackage{amssymb}
\usepackage{a4wide}
\usepackage[mathscr]{eucal}
\usepackage{mathrsfs}
\usepackage{typearea}
\usepackage{charter}
\usepackage{pdfsync}
\usepackage{stmaryrd}

\usepackage{tikz-cd}
\usepackage{pgfplots}
\tikzset{>=latex}
\pgfplotsset{compat=newest}
\usepackage{caption}
\usepackage{adjustbox}

\usepackage[a4paper,width=16.5cm,top=3cm,bottom=3cm]{geometry}

\numberwithin{equation}{section}

\usepackage[french,english]{babel}

\allowdisplaybreaks
\tolerance=1
\emergencystretch=\maxdimen
\hyphenpenalty=10000
\hbadness=10000

\usepackage{multicol}

\usepackage{xcolor}



\newtheorem{theorem}{Theorem}[section]
\newtheorem{definition}[theorem]{Definition}
\newtheorem{proposition}[theorem]{Proposition}
\newtheorem{corollary}[theorem]{Corollary}
\newtheorem{lemma}[theorem]{Lemma}
\newtheorem{remark}[theorem]{Remark}

\newtheorem*{definition*}{Definition}

\newcommand{\cali}[1]{\mathscr{#1}}

\newcommand{\supp}{{\rm supp}}

\renewcommand{\Im}{\mathop{\mathrm{Im}}}

\newcommand{\dist}{\mathop{\mathrm{dist}}\nolimits}

\newcommand{\ddc}{{\rm dd^c}}
\newcommand{\dc}{{\rm d^c}}

\newcommand{\dd}{{\rm d}}

\newcommand{\dbar}{\overline\partial}

\newcommand{\SH}{{\rm SH}}

\newcommand{\id}{{\rm id}}

\newcommand{\ep}{\epsilon}
\newcommand{\vep}{\varepsilon}

\newcommand{\Leb}{{\rm Leb}}
\newcommand{\Lip}{\mathop{\mathrm{Lip}}\nolimits}

\newcommand{\Vol}{{\rm Vol}}

\newcommand{\Area}{{\rm Area}}

\newcommand{\sing}{{\rm sing}}

\newcommand{\sign}{{\rm sign\ \!}}

\newcommand{\Ac}{\cali{A}}

\newcommand{\Cc}{\cali{C}}

\newcommand{\Oc}{\cali{O}}

\newcommand{\W}{\cali{W}}

\newcommand{\cM}{\mathcal{M}}
\newcommand{\cI}{\mathcal{I}}

\newcommand{\FS}{{\rm FS}}
\newcommand{\B}{\mathbb{B}}

\newcommand{\D}{\mathbb{D}}
\newcommand{\C}{\mathbb{C}}

\newcommand{\N}{\mathbb{N}}
\newcommand{\Z}{\mathbb{Z}}
\newcommand{\R}{\mathbb{R}}

\renewcommand\P{\mathbb{P}}

\newcommand{\E}{\mathbf{E}}
\newcommand{\s}{\mathbf{s}}

\newcommand{\lp}{\langle}
\newcommand{\rp}{\rangle}

\newcommand{\norm}[1]{\lVert#1\rVert}

\newcommand{\oA}{\mathcal{A}}
\newcommand{\oB}{\mathcal{B}}
\newcommand{\oF}{\mathcal{F}}

\newcommand{\oE}{\mathcal{E}}

\newcommand{\oQ}{\mathcal{Q}}
\newcommand{\oS}{\mathcal{S}}
\newcommand{\oL}{\mathcal{L}}
\newcommand{\diam}{{\rm diam}}
\newcommand{\oI}{\mathcal{I}}

\newcommand{\oD}{\mathcal{D}}

\newcommand{\oT}{\mathcal{T}}

\newcommand{\fh}{\mathfrak{h}}

\newcommand{\bH}{\mathbf{H}}

\newcommand{\bQ}{\mathbf{Q}}

\newcommand{\bP}{\mathbf{P}}
\newcommand{\bU}{\mathbf{U}}


\makeatletter

\makeatother




\title[Hole event for random holomorphic sections on compact Riemann surfaces]{Hole event for random holomorphic sections\\ on compact Riemann surfaces}

\author{Tien-Cuong Dinh}
\address{Department of Mathematics,  National University of Singapore - 10, Lower Kent Ridge Road - Singapore 119076}
\email{matdtc@nus.edu.sg}

\author{Subhroshekhar Ghosh}
\address{Department of Mathematics,  National University of Singapore - 10, Lower Kent Ridge Road - Singapore 119076}
\email{matghos@nus.edu.sg}

\author{Hao Wu}
\address{Department of Mathematics,  National University of Singapore - 10, Lower Kent Ridge Road - Singapore 119076}
\email{e0011551@u.nus.edu}

\thanks{}

\begin{document}

\begin{abstract}
Let $X$ be a compact Riemann surface and $\oL$ be a positive line bundle on it.
We study the conditional zero expectation of all the holomorphic  sections of $\oL^n$ which do not vanish on $D$ for some fixed open subset $D$ of $X$. We prove that as $n$ tends to infinity, the zeros of these sections are equidistributed outside $D$ with respect to a probability measure $\nu$. This gives rise to a surprising forbidden set.
\end{abstract}

\clearpage\maketitle
\thispagestyle{empty}

\noindent\textbf{Mathematics Subject Classification 2020:} 31A15, 32C30, 32L10, 60B10, 60D05, 60F10.

\medskip

\noindent\textbf{Keywords:} line bundle,  holomorphic section, Abel-Jacobi map,  quasi-potential, Green function, Perron upper envelop, large deviation.

\setcounter{tocdepth}{1}
\tableofcontents


\section{Introduction and main results}\label{s:intro}

The study of random polynomial initiated from about one century ago by  Littlewood, Erd\"os, Offord, Kac, Bloch, P\'olya, Hammersley \cite{blo-plya-plms,erdos-turn-annals,erdos-offord-plms,ham-pro,kac-bams,kac-plms,lit-efford-jlms,lit-offord-rec}.  For polynomials of degree $n$ equipping with  certain random coefficients, they investigated the stochastic properties of the   zeros of the random polynomials as $n$ tends to infinity, such as the number of real zeros and the distribution of the zeros.  Around 1990's, this topic was fast developed, see e.g.\ \cite{ble-di-jsp,bog-boh-leb-prl,bog-boh-leb-jsp,ede-kos-bams,ibr-zei-tams,mas-aka-2,mas-aka,she-van-tams} etc.

\smallskip

Over the last twenty years, the random series called \textit{Gaussian entire function} attracted a lot of attention, which is defined as
$$ \psi(z):= \sum_{k=0}^ \infty \zeta_k {z^k\over \sqrt{k!}}, $$
where the $\zeta_k$'s are i.i.d.\ standard complex normal distributions. Such a random series first appeared in  \cite{bog-boh-leb-prl,bog-boh-leb-jsp}. The zeros of $\psi$ are  equidistributed with respect to the Lebesgure measure on $\C$.  It is worth mentioning that  Sodin-Tsirelson \cite{sod-tsi-ijm-1,sod-tsi-isrj,sod-tsi-ijm-2} proved upper and lower bounds for the hole probability, which states that the probability  that $\psi$ has no zeros inside $\D(0,r)$ is bounded from above and below by $e^{-c_1 r^4}$ and $e^{-c_2 r^4}$ respectively. Several subsequently
generalized works are studied, see e.g.\  \cite{buc-nis-ron-sod-ptrf,gho-nis-con,cho-zei-imrn,kri-jsp,nis-imrn,nis-jdm,per-vir-acta,shi-zel-imrn}.
  In particular, \cite{nlo-lev-poten,blo-tho,blo-shi-mrl,zhu-anapde,zre-michigan}  gave a vision of random polynomials in several complex variables. The reader may refer to the survey \cite{gaussian-book} for more background.
 
 However, the distribution of the zeros of the hole event remains unknown for quite a long time, until the breakthrough work written by the second author and Nishry \cite{gho-nis-cpam} recently. More precisely, the conditional zero distribution of $\psi$ having no zeros in $\D(0,r)$ is equal to 
$$ e r^2 \,\Leb_{|z|=r}+ \mathbf 1_{|z|\geq \sqrt e r} ( 2\pi)^{-1} \, i \dd z\wedge \dd \overline z+O(r\log ^2 r) \quad \text{as}\quad r\to \infty,$$
where $\Leb_{|z|=r}$ is the Lebesgue probability measure on $b\D(0,r)$.
It is pretty  surprising that the conditional zero distribution has very little mass on $\{r< |z|< \sqrt e r\}$. This is so called the \textit{forbidden region}.  Nishry-Wennman \cite{Nishry-Wennman} generalized this result  to any simply connected domain $D$ with piecewise $\Cc^2$ boundary instead of $\D(0,r)$.

\medskip

In 1999, Shiffman-Zelditch \cite{shi-zel-cmp} extended the notion of random polynomials to compact complex manifold of dimension $k\geq 1$,  known as random sections. We will introduce it below. Since in this article, only Riemann surfaces are considered. So we just let $k=1$ for simplicity to reduce confusion.

\smallskip

 Let  $X$ be a compact Riemann surface of genus $g$ and fix a K\"ahler form $\omega_0$ on $X$. Consider a holomorphic line bundle $\oL\to X$ of positive degree $\deg(\oL)$ which is endowed with a Hermitian metric $\fh$ whose curvature form is strictly positive. Denote by $\omega$ this curvature form divided by $\deg(\oL)$. This is another K\"ahler form on $X$ which defines a probability measure on $X$ as its integral on $X$ is equal to 1. We will see later that for the proof of the main results we can, for simplicity, reduce our study to the case where $\deg(\oL)=1$.

Denote  the space of global holomorphic sections of $\oL$ by $H^0(X,\oL)$.
We fix a non-vanishing local holomorphic section  $e_{\oL}$ over some open subset $B$ of $X$. This also forms a local frame of $\oL$. Then on $B$, $\norm{e_{\oL}}_\fh=e^{-\varphi}$ for some smooth function $\varphi$ and 
$$\omega= -{1\over  \, \deg (\oL)} \ddc \log \norm{e_{\oL}}_\fh ={1\over \deg (\oL)} \ddc \varphi.$$
Here,  $\dc:=\frac{i}{2\pi}(\dbar-\partial)$ and $\ddc=\frac{i}{\pi}\partial\dbar$.

\smallskip

Let $\oL^n$ denote the $n$-th power of the line bundle $\oL$ for $n\geq 1$. It is also a positive line bundle over $X$ of degree $n\deg (\oL)$. The metric $\fh$ on $\oL$ induces a metric $\fh_n$ on $\oL^n$ whose curvature form is equal to $n\deg(\oL)\omega$. It is defined by $\norm{s^{n}(x)}_{\fh_n}:=\norm{s(x)}_\fh^n$ for any section $s$ of $\oL$. We use the notation $(\cdot,\cdot)_n$ to denote the hermitian inner product corresponding to $\fh_n$.  The metric $\fh_n$, together with $\omega_0$, induce an inner product $\lp\cdot,\cdot\rp_n$ on the complex vector space $H^0(X,\oL^n)$ of holomorphic sections of $\oL^n$:
\begin{equation*}
\lp s_1,s_2\rp_n:=\int (s_1,s_2)_n \,\omega_0     \quad  \text{for} \quad  s_1,s_2\in H^0(X,\oL^n).
\end{equation*}

 Denote by $m+1$ the dimension of $H^0(X,\oL^n)$ which is equal to $n\deg(\oL)+1-g$ (so that $m=n\deg(\oL)-g$) for $n\geq 2g-1$, according to  Riemann-Roch theorem. The unitary group $\bU(m+1)$ acts on $H^0(X,\oL^n)$ and its projectivization $\P H^0(X,\oL^n)$. There is an unique invariant probability measure on $\P H^0(X,\oL^n)$ which is given by the volume form of the Fubini-Study metric on $\P H^0(X,\oL^n)$. We denote by $V_n^\FS$ for both this volume form and the probability measure it defines.

\smallskip

The zero set of a section in $H^0(X,\oL^n)\setminus\{0\}$ doesn't change if we multiply it by a constant in $\C^*$. Therefore, we can denote by $Z_s$ the zero set of a section $s$ in $H^0(X,\oL^n)\setminus\{0\}$ or of an element $s$ in $\P H^0(X,\oL^n)$, where the points are counted with multiplicity. So $Z_s$ defines an effective divisor of degree $n\deg(\oL)$ that we still denote by $Z_s$. Denote by $[Z_s]$ the sum of Dirac masses of the points in $Z_s$ and $$\llbracket Z_s \rrbracket:= n^{-1}\deg(\oL)^{-1} [Z_s]$$ the \textit{empirical probability measure} on $Z_s$. 
 One can also  write $s=f e_L^n$ for some holomorphic function $f$ locally. Poincar\'e-Lelong formula gives that
$$[Z_s ]=\ddc\log |f|=\ddc\log \norm{s}_{\fh_n}+n \deg (\oL) \omega.$$

The study of the zeros of random sections in $H^0(X,\oL^n)\setminus\{0\}$ with respect to the complex Gaussian on $H^0(X,\oL^n)$ is equivalent to the study of the zeros of random elements of 
$\P H^0(X,\oL^n)$ with respect to the probability measure $V_n^\FS$.  From now on, we denote by $\bP_n$  this probability distribution and   $\E_n$ its expectation.

\smallskip

 Shiffman-Zelditch \cite{shi-zel-cmp} proved that   under the distribution $V^{\FS}_n$,
\begin{equation}\label{shi-zel-theorem}
\mathbf{E}_n  (\llbracket Z_s \rrbracket) \longrightarrow   \omega  \quad \text{weakly  as}\quad  n\to \infty.  
\end{equation} 
More precisely, for any test function $\phi$ on $X$, one has
$$ \lim_{n\to \infty} \int_{\P H^0(X,\oL^n)}  \lp \llbracket Z_s \rrbracket, \phi \rp  \, \dd V^{\FS}_n(s)  =\int_X \phi \,\omega.$$
After that, they made several great progress   \cite{ble-shi-zel-invent,shiiffman-jga,shi-zel-crelle,shi-zel-gafa,shi-zel-pamq,shi-zel-zh-jus} on this theory. 
The hole probability was obtained in \cite{shi-zel-zre-ind}. However, the conditional expectation of the random zeros of the hole event is still an  open problem, even for the simplest case $(X,\omega)=(\P^1, \omega_\FS)$. To make the literature complete, in addition to the above works of Shiffman-Zelditch, there are many articles related to random sections,  see e.g.\ \cite{bay-ind,bay-com-mar-TAMS,coman-marin-Nguyen,din-ma-mar-jfa,din-ma-ngu-ens,din-mar-sch-jsp,din-sib-cmh,dre-liu-mar} and the survey \cite{survey-random-hol}.

\medskip

Our aim of this article is to find the zero distribution of the hole event under  the setting of random sections for compact Riemann surfaces. To state our result, we introduce some terminologies first.

\smallskip

Let $\alpha \geq 0$. Recall that an open set $D$ in $X$ is of $\Cc^\alpha$-\textit{boundary} if for every $x\in b D$, there is a neighborhood $B$ of $x$ and a $\Cc^\alpha$-diffeomorphism $\psi: B\to \D$, where $\D$ is the unit disc in $\C$, such that 
$$\psi(D\cap B)=\D\cap \{\Im (z) >0\}, \quad    \text{and}\quad    \psi(b D \cap B)=\D\cap \R.$$
The open set $D$ is of \textit{piecewise  $\Cc^\alpha$-boundary}  if the above conditions hold for all  expect finitely many points in $b D$.

We  fix an open subset $D$ of $X$, not necessarily connected, with piecewise $\Cc^{1+\ep}$ boundary and $\overline D\not=X$. Denote by $\P H^0(X,\oL^n)_D$ the set of $s$ in $\P H^0(X,\oL^n)$ such that $Z_s\cap D=\varnothing$.
According to the Abel-Jacobi theorem, there is an integer $n_0$ such that $\P H^0(X,\oL^n)_D$ has non-empty interior, when $n\geq n_0$. We fix such an integer $n_0$ with $n_0\geq 2g-1$. Denote by $V_{n,D}^\FS$ the probability measure obtained by dividing the restriction of $V_n^\FS$ to $\P H^0(X,\oL^n)_D$ by the volume of the later set. By random section non-vanishing on $D$, we mean a random element of  $\P H^0(X,\oL^n)_D$ with respect to the probability measure $V_{n,D}^\FS$. To study the sequences of sections $\s=(s_{n_0},s_{n_0+1},\ldots)$ which do not vanish on $D$, we consider the space $\prod_{n\geq n_0} \P H^0(X,\oL^n)_D$ endowed with the probability measure $\prod_{n\geq n_0} V^\FS_{n,D}$.

\medskip

Recall that a function $\phi$ on $X$ with values in $\R\cup \{-\infty\}$ is \textit{quasi-subharmonic} if locally it is the difference of a subharmonic function and a smooth one.  This notation was introduced by Yau \cite{yau-cpam} and it is very useful  since all subharmonic functions on a compact Riemann surface are constants. It is not hard to see that if $\phi$ is quasi-subharmonic, then there exists a constant $c\geq 0$ such that $\ddc \phi\geq -c\, \omega$. When $c=1$, we call $\phi$ an \textit{$\omega$-subharmonic function}. Denote  by $\SH(X,\omega)$  the space of $\omega$-subharmonic functions.  By Stoke's formula, if $\phi\in \SH(X,\omega)$, then $\int_X \ddc \phi+\omega=\int_X \omega=1$ and thus, $\ddc \phi +\omega$ defines a probability measure on $X$.

\smallskip

For any probability measure $\mu$ on $X$, we can write $\mu=\omega+\ddc U_\mu$, where $U_\mu$ is the unique quasi-subharmonic function such that $\max U_\mu=0$. We call $U_\mu$ the \textit{$\omega$-potential of type M} of $\mu$ (M stands for Maximum). Denote by $\cM(X\setminus D)$ the set of probability measures on $X\setminus D$ and define the functional $\cI_{\omega,D}$ on $\mu\in \cM(X\setminus D)$ with value in $\R\cup\{+\infty\}$ by 
$$\cI_{\omega,D}(\mu):=-\int_X U_\mu \, \omega -\int_X U_\mu \, \dd\mu.$$
This functional was discovered by Zeitouni-Zelditch \cite{zei-zel-imrn}. Notice that the domain is slightly different from there.

\begin{theorem} \label{t:main-1}
There is a unique probability measure $\nu\in  \cM(X\setminus D)$ which minimizes the functional $\cI_{\omega,D}$ on $\cM(X\setminus D)$. Moreover, the following properties hold.
\begin{enumerate}
\item The $\omega$-potential $U_\nu$ of type M of $\nu$ is continuous  on $X\setminus bD$. It satisfies the equation
$$U_\nu=\sup\big\{ \phi  \in\SH(X,\omega):\, \phi\leq 0 \text{ on $X$ and } \phi\leq U_\nu \text{ on } D\big\},$$
and the following inequality for every $\mu\in \cM(X\setminus D)$,
$$\int_X U_\mu \, \dd\nu \leq \int_X U_\nu \, \dd\nu, \quad \text{where } U_\mu \text{ is the $\omega$-potential of type M of $\mu$}.$$ 
\item If $\nu_{bD}$ and $\nu_S$ denote the restrictions of $\nu$ to $bD$ and to $S:=\{U_\nu=0\}$, then $\nu=\nu_{bD}+\nu_S$, $\nu_{bD}\not=0$ and $\nu_S=\omega|_{S}$. 
\item Suppose moreover that $bD$ is the union of finitely many Jordan curves. If $D_0$ is a connected component of $D$ such that $\nu(bD_0)\not=0$, then $bD_0\subset \supp(\nu)$, $\overline D_0\cap S\not=\varnothing$ and $bD_0\cap S\subset \sing(D)$. In particular, $\nu$ vanishes on $D$ and on the non-empty open set $X\setminus (\overline D\cup S)$. If $\sing(D)=\varnothing$, then $\supp(\nu_S)\cap \supp(\nu_{bD})=\varnothing$. 
\end{enumerate}
\end{theorem}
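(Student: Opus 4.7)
My first step is to recast $\cI_{\omega,D}$ via the symmetric Green function $G(x,y)$ of $(X,\omega)$, normalised by $\ddc G(\cdot,y)=\delta_y-\omega$ and $\int G(\cdot,y)\,\omega=0$. Setting $\tilde U_\mu(x):=\int G(x,y)\,d\mu(y)$ one has $U_\mu=\tilde U_\mu-\max\tilde U_\mu$, and a Stokes computation converts the functional to
\[
\cI_{\omega,D}(\mu)=2\max\tilde U_\mu+\int d\tilde U_\mu\wedge\dc\tilde U_\mu.
\]
Both summands are convex, and the Dirichlet energy piece is strictly convex because the map $\mu\mapsto\tilde U_\mu$ is linear and injective modulo constants. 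Combined with standard weak-$*$ lower semicontinuity arguments and compactness of the convex set $\cM(X\setminus D)$, this yields existence and uniqueness of the minimiser $\nu$. For the domination inequality, I would test minimality against $\nu_t=(1-t)\nu+t\mu$: invoking the Green symmetry $\int\tilde U_\mu\,d\nu=\int\tilde U_\nu\,d\mu$ and the subdifferential bound $\tfrac{d}{dt}\big|_{t=0^+}\max\tilde U_{\nu_t}\leq\max\tilde U_\mu-\max\tilde U_\nu$, the non-negativity of the one-sided derivative of $\cI_{\omega,D}(\nu_t)$ at $t=0^+$ rearranges exactly into $\int U_\mu\,d\nu\leq\int U_\nu\,d\nu$.

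\textbf{Envelope equation, regularity, and decomposition.}
Let $\phi^{\star}$ denote the supremum in the envelope formula; since $U_\nu$ is itself a competitor, $U_\nu\leq\phi^{\star}$. Set $\mu^{\star}:=\omega+\ddc\phi^{\star}$. On $D$ one has $\ddc U_\nu=-\omega$ (as $\nu|_D=0$), so on the coincidence set $\{\phi^{\star}=U_\nu\}\cap D$ one has $\ddc\phi^{\star}=-\omega$; on the non-coincidence set $\{\phi^{\star}<U_\nu\}\cap D$, Perron/obstacle theory gives $\ddc\phi^{\star}=-\omega$ as well. Thus $\mu^{\star}$ vanishes on $D$, so $\mu^{\star}\in\cM(X\setminus D)$. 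Applying the domination inequality to $\mu^{\star}$ yields $\int\phi^{\star}\,d\nu\leq\int U_\nu\,d\nu$, which combined with pointwise $\phi^{\star}\geq U_\nu$ forces $\phi^{\star}=U_\nu$ on $\supp(\nu)$. Off $\supp(\nu)$, the difference $\phi^{\star}-U_\nu\geq 0$ is subharmonic and vanishes on the boundary of its domain, so the maximum principle yields $\phi^{\star}=U_\nu$ globally. Continuity of $U_\nu$ on $X\setminus bD$ then follows from smoothness on $D$ (via $\ddc U_\nu=-\omega$) and Perron-theoretic continuity on $X\setminus\overline D$ under the $\Cc^{1+\ep}$ boundary assumption. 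For the decomposition, $\ddc U_\nu=0$ on the open set $\{U_\nu=0\}^{\circ}$ forces $\nu=\omega$ there, extending to $\nu_S=\omega|_S$ by continuity; on $X\setminus(\overline D\cup S)$, strict negativity of $U_\nu$ and the envelope equation force $\nu=0$. Hence $\supp(\nu)\subset bD\cup S$, and $\nu_{bD}\neq 0$ because $\nu(X)=1$ while $\omega(S)\leq\omega(X\setminus D)<1$.

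\textbf{Jordan curve analysis and main obstacle.}
For the structural claims in (3), fix a component $D_0$ with $\nu(bD_0)>0$. If $bD_0\not\subset\supp(\nu)$, transporting a small amount of the mass of $\nu|_{bD_0}$ along an arc of $bD_0$ strictly decreases the Dirichlet-energy piece of $\cI_{\omega,D}$, violating minimality; hence $bD_0\subset\supp(\nu)$. If $\overline D_0\cap S=\varnothing$, then $U_\nu<0$ on $\overline D_0$, and testing the equation $\ddc U_\nu=\nu-\omega$ against $U_\nu$ on $D_0$ produces a sign contradiction with $\nu(bD_0)>0$; hence $\overline D_0\cap S\neq\varnothing$. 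For a smooth point $x\in bD_0\cap S$, the Hopf boundary lemma applied to the non-negative, $\omega$-superharmonic function $-U_\nu$ on $D_0$ (vanishing at $x\in bD_0$) yields a strictly positive inward normal derivative of $-U_\nu$ at $x$, incompatible with $U_\nu\equiv 0$ along $S$ near $x$; so $x\in\sing(D)$. \emph{The main obstacle} is this last Hopf-style step: establishing $bD_0\cap S\subset\sing(D)$ requires careful construction of local barriers at smooth boundary points and genuinely uses the Jordan curve hypothesis. The variational inequality in (1) is also delicate because the $\max$-term in $\cI_{\omega,D}$ is non-smooth in $\mu$, so its subdifferential must be handled carefully when differentiating $\cI_{\omega,D}(\nu_t)$ at $t=0^+$.
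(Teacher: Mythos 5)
Your sketch for existence/uniqueness, the domination inequality, and the envelope characterization follows the same Green-function route as the paper (cf.\ their Lemmas 2.8--2.10), and that part is essentially sound. Two technical remarks there: on $D$ the coincidence/non-coincidence split is unnecessary (and the ``coincidence'' case is not an argument) -- the paper simply observes that $U_\nu$ is itself admissible, so the envelope $\phi^\star$ equals $U_\nu$ identically on $\overline D$, whence $\ddc\phi^\star=-\omega$ on $D$; and your extension of $\phi^\star=U_\nu$ from $\nu$-a.e.\ to everywhere needs the continuity of $\phi^\star$ (or an equivalent domination-principle step) \emph{before} the maximum-principle argument, not after. Also, ``$\nu_S=\omega|_S$ by continuity'' glosses over the real issue: $\omega(bS)$ is not obviously zero, and the paper handles this with a Lebesgue density point argument together with the Bedford--Taylor comparison inequality (their Lemmas 2.6--2.8); your phrase does not replace that.

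The serious gap is in part (3), and there you also misidentify the obstacle. The paper's engine is Proposition~\ref{l:perturbation}: for any $h$ continuous on $\overline D$, harmonic on $D$, with $h+U_\nu\leq 0$ on $\overline D$, one has $\int_{bD}h\,\dd\nu\leq 0$; this is proved by a delicate first-variation argument with the envelopes $U^{\vep h}$, and the three structural conclusions of (3) then all follow by constructing suitable such $h$ using the Poisson kernel estimates of Lemma~\ref{lem-poisson}/Corollary~\ref{cor-poisson}. Nothing resembling this appears in your proposal, so your ``transporting mass'' and ``testing'' arguments for $bD_0\subset\supp(\nu)$ and $\overline D_0\cap S\neq\varnothing$ have no rigorous backing. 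More concretely, your Hopf argument for $bD_0\cap S\subset\sing(D)$ fails on a sign: on $D_0$ one has $\ddc U_\nu=-\omega$, so $-U_\nu$ is (strictly) \emph{subharmonic}, not superharmonic, and a smooth point $x\in bD_0\cap S$ is a boundary \emph{minimum} of $-U_\nu$; the Hopf lemma gives no information at a boundary minimum of a subharmonic function. What actually makes this step work in the paper is the quadratic growth estimate $\nu(\B(x,r))\lesssim r^2$ at local maxima of $U_\nu$ (Lemma~\ref{lem-regularity}), combined with the $r^{-3/2}f_r$ perturbation in Corollary~\ref{cor-sing-bD}; this quantitative regularity, not a Hopf-type barrier, is the missing ingredient.
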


We call $\nu$ the \textit{equilibrium measure} of $D$ and  $X\setminus (\overline D\cup S)$ the \textit{forbidden set}, which is always non-empty. Our second theorem says that the random zeros of $\P H^0(X,\oL^n)_D$ are equidistributed with respect to the minimizer of $\oI_{\omega, D}$.

\begin{theorem} \label{t:main-2}
For random section $s\in \P H^0(X,\oL^n)_D$ with respect to the measure $V^\FS_{n,D}$, we have 
$$\lim_{n\to\infty} \E_n(\llbracket Z_s \rrbracket) =\nu.$$
Moreover, for almost every sequence $\s=(s_{n_0},s_{n_0+1},\ldots)$ in $\prod_{n\geq n_0} \P H^0(X,\oL^n)_D$ with respect to the probability measure $\prod_{n\geq n_0} V^\FS_{n,D}$, we have 
$$\lim_{n\to\infty} \llbracket Z_{s_n} \rrbracket = \nu.$$
\end{theorem}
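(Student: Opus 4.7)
The plan is to establish a large deviation upper bound at speed $n^2$ for the conditional law $V^{\FS}_{n,D}$, whose associated rate function vanishes only at $\nu$. The uniqueness statement in Theorem \ref{t:main-1} then forces both the expectation convergence and the almost sure convergence.

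I would first encode a normalized section $s\in H^0(X,\oL^n)$, with $\lp s,s\rp_n=1$, through its divisor $Z_s$ via the Abel-Jacobi map, and push the Fubini-Study measure $V_n^\FS$ down to the space of effective divisors of degree $n\deg(\oL)$ lying in a fixed class of $\Jac(X)$. The Poincar\'e-Lelong formula
$$[Z_s]=n\deg(\oL)\,\omega+\ddc\log \norm{s}_{\fh_n}$$
identifies $n^{-1}\log \norm{s}_{\fh_n}$ with the $\omega$-potential of type M of $\mu:=\llbracket Z_s\rrbracket$, up to an additive constant. Using the Bergman kernel asymptotics for the positive bundle $\oL^n$ together with a Vandermonde-type computation of the Jacobian of the divisor map, one expects a leading-order formula
$$-\tfrac{1}{n^2}\log V_n^\FS\big(\{s:\llbracket Z_s\rrbracket\in \Uc\}\big)\longrightarrow \inf_{\mu\in\overline{\Uc}}\,\cI_{\omega,D}(\mu)+C\quad\text{as } n\to\infty,$$
for nice weak-$*$ open sets $\Uc\subset\cM(X)$, with a $\mu$-independent constant $C$. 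Intersecting with the hole event $\{Z_s\cap D=\varnothing\}$ restricts $\mu$ to $\cM(X\setminus D)$.

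Combining this with Theorem \ref{t:main-1}, which identifies $\nu$ as the unique minimizer of $\cI_{\omega,D}$ on $\cM(X\setminus D)$, one derives: for every weak-$*$ neighborhood $\Vc$ of $\nu$ there exists $c(\Vc)>0$ with
$$V^\FS_{n,D}\big(\{s:\llbracket Z_s\rrbracket\notin \Vc\}\big)\leq e^{-c(\Vc)\,n^2}\quad\text{for $n$ large.}$$
Testing against a countable dense family of continuous functions $\phi$ on $X$ and using the uniform boundedness of $\lp\llbracket Z_s\rrbracket,\phi\rp$ yields, by dominated convergence, the weak convergence $\E_n(\llbracket Z_s\rrbracket)\to\nu$. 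Since $\sum_n e^{-c\,n^2}<\infty$, the Borel-Cantelli lemma applied under the product measure $\prod_{n\geq n_0} V^\FS_{n,D}$ gives the almost sure convergence $\llbracket Z_{s_n}\rrbracket\to\nu$.

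The main obstacle is the sharp LDP upper bound, which requires two ingredients of different nature: an upper bound on the numerator $V_n^\FS\big(\{\llbracket Z_s\rrbracket\in\Uc\}\cap \P H^0(X,\oL^n)_D\big)$, and a matching lower bound on the denominator $V_n^\FS(\P H^0(X,\oL^n)_D)$. The latter is produced by exhibiting sufficiently many sections whose divisors approximate $\nu$: here the Perron upper envelope characterization of $U_\nu$ in Theorem \ref{t:main-1}(1) is essential, as it allows one to approximate $U_\nu$ by smooth $\omega$-subharmonic functions bounded above by $0$ and by $U_\nu$ on $D$, and a Bergman-type argument converts these into trial sections in $\P H^0(X,\oL^n)_D$. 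A secondary technical point is the Abel-Jacobi constraint forcing trial divisors to lie in a fixed class of $\Jac(X)$, which must be respected by the approximation. The piecewise-$\Cc^{1+\ep}$ regularity of $bD$ and the continuity of $U_\nu$ on $X\setminus bD$ from Theorem \ref{t:main-1} are what make the approximation feasible near the boundary, where $\nu$ may have the singular component $\nu_{bD}$.
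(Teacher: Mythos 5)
Your high-level plan coincides with the paper's: prove a speed-$n^2$ large deviation principle for the conditional law $V^{\FS}_{n,D}$ with rate $\cI_{\omega,D}-\min\cI_{\omega,D}$, deduce from uniqueness of the minimizer (Theorem \ref{t:main-1}) that $V^{\FS}_{n,D}\big(\{s:\llbracket Z_s\rrbracket\notin\Vc\}\big)\leq e^{-c(\Vc)n^2}$, and then conclude by dominated convergence for the expectation and Borel--Cantelli for the almost sure statement. This is exactly the structure of Section \ref{s:proof-main-2} (Theorem \ref{thm-large-devia}, estimate \eqref{P-n-K-leq-exp}, and the final proof).

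The genuine gap is in the hardest ingredient, which you correctly identify as the lower bound on the hole probability $V_n^{\FS}\big(\P H^0(X,\oL^n)_D\big)$ (and, more generally, on $V_n^{\FS}$ of Wasserstein balls intersected with the hole event). Your sketch---smooth the envelope $U_\nu$, feed it into a Bergman-type construction to get trial sections in $\P H^0(X,\oL^n)_D$---does not go through as stated, for two reasons. First, Bergman/Fekete machinery at best produces \emph{one} section $s_n$ with $\tfrac{1}{n}\log\|s_n\|_{\fh_n}$ close to a prescribed $\omega$-subharmonic function; for an LDP lower bound at speed $n^2$ you need a \emph{family} of such sections of $V_n^{\FS}$-volume at least $e^{-n^2(\cI_{\omega,D}(\nu)-C)+o(n^2)}$, and nothing in the envelope characterization gives a volume estimate. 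Second, controlling the potential of $s_n$ does not prevent $Z_{s_n}$ from meeting $D$: nearby zeros inside $D$ cancel to first order in the potential, so one must build the divisor directly in $(X\setminus D)^{(n)}$, not infer its location from $\log\|s_n\|_{\fh_n}$. The paper resolves both points at once by working on the divisor side: Zelditch's explicit formula (Proposition \ref{prop-formula}) turns $V_n^{\FS}$-volume into an integral over $X^{(m)}$ of $e^{m^2(\oE_m-\frac{2(m+1)}{m}\oF_m)}\kappa_n$, and Proposition \ref{bound-Sm} constructs, for every smooth $\sigma\in\cM(X\setminus D)$, a set of well-separated divisors $\mathbf p\in(X\setminus D)^{(m)}$ of Lebesgue volume $\geq m^{-5m}$ for which the completing degree-$g$ divisor $\mathbf q=\oB_m(\mathbf p)$ also avoids $\overline D$. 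That last constraint is the Abel--Jacobi difficulty you flag, but flagging it is not the same as solving it: Lemmas \ref{lem-group}--\ref{lemma-exist-one-p} are a nontrivial construction (sparse points, control of the critical set $\W$, Jacobi inversion) that has no counterpart in your envelope/Bergman sketch. Finally, note that the lower bound is needed for arbitrary smooth $\sigma\in\cM(X\setminus D)$, not just for $\sigma=\nu$; this is what makes the normalization constant (Lemma \ref{lem-cn-dn}) come out to $\min\cI_{\omega,D}$.

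Two smaller remarks. The regularization step (Proposition \ref{l:reg-I}) that reduces the LDP lower bound to smooth $\sigma$ is also needed and should be stated---it is where the $\Cc^{1+\ep}$ regularity of $bD$ enters on the analytic side, rather than in the Bergman construction as your sketch suggests. And your last step (DCT plus Borel--Cantelli) is fine, but the exponential decay at rate $c(\Vc)n^2$ for every $\Vc\ni\nu$ requires upgrading the ball-LDP to a weak LDP upper bound for closed sets; the paper does this through \cite[Theorem 4.1.11]{dem-zei-book} and the coercivity estimate of Lemma \ref{lema-t-I-K}.
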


From the above theorems, we see that the weak limit is independent of $\omega_0$.  For the special case that $D$ is empty, observe that $\omega$ is the minimizer of $\oI_{\omega, \varnothing}$. Thus, the weak limit of $\E_n (\llbracket Z_s \rrbracket)$ will be $\omega$.  This  is exactly \eqref{shi-zel-theorem} above.

The appearance of the forbidden set is  surprising here as well as the Gaussian entire function case \cite{gho-nis-cpam,Nishry-Wennman}. In particular, when $D$ is an open disc in $X$. The forbidden set separates $\supp(\nu_S)$ and $bD$.   Besides,  the measure $\nu_S=\omega|_S$ is also unexpected. Because the hole probability of the conditional set $\P H^0(X,\oL^n)_D$ is exponentially decreasing on $n$ (c.f.\ \cite[Theorem 1.4]{shi-zel-zre-ind}). There is almost no hope that the zeros of $\P H^0(X,\oL^n)_D$ away from $D$ could  share the original distribution of the zeros of $\P H^0(X,\oL^n)$. At the end of this article, we also give some examples that $\nu_S=0$.

\smallskip

For the proof of Theorem \ref{t:main-1}, we borrow some idea from \cite{Nishry-Wennman}. However, instead of considering an equivalent obstacle problem, we work on the minimizer problem of $\oI_{\omega, D}$ directly using potential theory and also some technical from pluripotential theory. Of course,  new  tools need to be built since the metric is not flat in our case.

The proof of Theorem \ref{t:main-2} is based on a large deviation principle related to $\big(\P H^0(X,\oL^n), V_{n,D}^{\FS}\big)$ and the functional $\oI_{\omega,D}$. Such a type of theorem was firstly proposed by  Zelditch in his remarkable works \cite{zei-zel-imrn,zel-imrn}, which  provides us a new way to study random holomorphic sections on Riemann surfaces. This subject lies in the overlapping of complex geometry and probability theory. Before that,  the approaches using to study random sections are more  concentrated on complex geometry.      In this paper, instead of $\P H^0(X,\oL^n)_D$, we look at the  space $\cM (X\setminus D)$, which can be viewed as an ``extension" of $\P H^0(X,\oL^n)_D$, by using the empirical measure $\llbracket Z_s \rrbracket$ for every $s\in \P H^0(X,\oL^n)_D$. In  $\cM (X\setminus D)$, there is a nature Wasserstein metric, which allows us to measure the distance between  holomorphic sections in  $\P H^0(X,\oL^n)_D$.

\smallskip

In this article, we will mainly consider the case that when the genus $g$ of $X$ is positive. For $X=\P^1$, the proof will be much simpler, and all arguments are contained in the proof of the positive genus case. One difficulty of the case $g\geq 1$ is that the number of zeros of any holomorphic section of $\oL^n$ is strictly greater than the dimension of $\P H^0(X,\oL^n)$. It is very hard to describe the conditional set $ \P H^0(X,\oL^n)_D$. Thus, the weak convergence of $\E_n(\llbracket Z_s \rrbracket)$ with respect to  $V^\FS_{n,D}$ requires some delicate estimates.

\medskip

\noindent\textbf{Organization.}  The paper is organized as follows. In Section \ref{s:qpot}, we introduce the quasi-potentials of a probability measure and prove some simple facts on it, which are the main tools used in the proof of Theorem \ref{t:main-1}, given in Section \ref{sec:proof-theorem -1}.

Section \ref{s:bundle} concerns the study of Abel-Jacobi theory. This is the main difference between $g=0$ and $g\geq 1$. We give a parametrization $\Ac_m$ of the zeros of the sections in $\P H^0(X, \oL^{m+g})$. The density of zeros of  $\P H^0(X, \oL^{m+g})$ can be explicitly written down by using $\Ac_m$, which was contributed by  Zelditch. 

Section \ref{s:proof-main-2} is devoted to the proof of Theorem \ref{t:main-2}, where  all the technical pre-request  lemmas are packed  into Sections \ref{s:Wasserstein} and \ref{sec:regularization}.    At the beginning of Section \ref{s:proof-main-2}, we prove a large deviation principle and also discuss the difficulties for finding the equilibrium measure of hole event.  A remark on hole probability is also given there.

Some examples of $\P^1$ and a torus are  included in Section \ref{s:example}.  We state a interesting result that $\nu_S$ may vanish at the end of the section.

\medskip

\noindent\textbf{Notations:}
The symbols $\lesssim$ and $\gtrsim$ stand for inequalities up to a positive multiplicative constant. If both hold, then we write $\simeq$. The dependence of these constants on certain parameters, or lack thereof, will be clear from the context. We denote by $\D$ the unit disc in $\C$ and $\D(a,r)$ the open disc with center $a$ and radius $r$.  For open balls in other Riemann surfaces or complex manifolds, we use $\B(a,r)$ instead.   We will use $\Vol$ to denote the integral of a measurable set with respect to the Lebesgue volume form on a complex manifold.


\medskip

\section{Quasi-potentials} \label{s:qpot}

We first start with some properties for general probability measures on $X$ and then focus on the equilibrium measure of $D$.

\smallskip 

 Consider the product complex manifold $X \times X$ and let $\pi_1,\pi_2$ be the two canonical projections. Denote by $\Delta$  the diagonal of $X \times X$. Notice that $\pi_1^*\omega +\pi_2^*\omega$ is a canonical K\"ahler form on $X\times X$.

\begin{lemma} \label{l:Green}
There is a symmetric  function $G(x,y)$ on $X\times X$, smooth outside $\Delta$, such that for every $x\in X$ we have
$$\int_X G(x,\cdot) \, \omega = 0 \qquad \text{and} \qquad \ddc G(x,y) = [\Delta]-\omega(x) - \omega(y) -\alpha(x,y),$$
where $\alpha$ is an $(1,1)$-form which is a finite sum of products of a smooth  $1$-form in $x$ and a smooth $1$-form in $y$. Moreover, the function $\varrho(x,y):=G(x,y)-\log \dist(x,y)$ is Lipschitz.  
\end{lemma}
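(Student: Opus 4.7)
The plan is to obtain $G$ as the potential of $[\Delta]$ relative to a smooth Künneth representative of its cohomology class on $X\times X$, chosen so that the product structure of $\alpha$ is manifest, and then to arrange the symmetry and normalization essentially for free.

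To start, I would decompose $[\Delta]$ via the Künneth formula. Let $\theta_1,\dots,\theta_{2g}$ be a basis of real harmonic $1$-forms on $(X,\omega_0)$ with Poincar\'e-dual basis $\theta_1^*,\dots,\theta_{2g}^*$, normalized by $\int_X\theta_i\wedge\theta_j^*=\delta_{ij}$. The class of the diagonal in $H^{1,1}(X\times X,\R)$ is $\pi_1^*[\omega]+\pi_2^*[\omega]+\gamma$, and the mixed summand $\gamma$ has the explicit harmonic representative
\[
\alpha\;:=\;-\sum_{i=1}^{2g}\pi_1^*\theta_i\wedge\pi_2^*\theta_i^{\,*},
\]
which is a finite sum of products of smooth $1$-forms on $X$, exactly as required. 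Moreover $\sigma^*\gamma=\gamma$ for the swap $\sigma(x,y):=(y,x)$ (the diagonal being $\sigma$-invariant), and uniqueness of harmonic representatives then makes $\alpha$ itself $\sigma$-invariant.

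Next I would invoke the $\ddc$-lemma on the compact K\"ahler manifold $X\times X$: the closed $(1,1)$-current $[\Delta]-\pi_1^*\omega-\pi_2^*\omega-\alpha$ is $\ddc$-cohomologous to zero and hence equals $\ddc G$ for some $L^1$ quasi-subharmonic function $G$, unique up to a pluriharmonic summand and thus, on the compact K\"ahler manifold, up to a constant. Away from $\Delta$ the right-hand side is smooth, so $G$ is smooth there by elliptic regularity; near a point of $\Delta$, one uses local coordinates $(z_1,z_2)$ in which $z_1-z_2$ defines $\Delta$ holomorphically, so $\ddc\log|z_1-z_2|=[\Delta]$ locally and $G-\log|z_1-z_2|$ has smooth $\ddc$, hence is smooth. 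To symmetrize, replace $G$ by $\frac{1}{2}(G+\sigma^*G)$; this is consistent because $\sigma^*$ preserves the right-hand side, forcing $G-\sigma^*G$ to be pluriharmonic and thus constant.

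For the normalization $\int_X G(x,\cdot)\,\omega=0$, I would first show that $h(x):=\int_X G(x,y)\,\omega(y)$ is already constant on $X$; subtracting the constant then does not affect $\ddc G$, symmetry, or regularity. Commuting $\ddc$ with fibre integration gives $\ddc h=\pi_{1*}(\ddc G\wedge\pi_2^*\omega)$, and expanding via the equation for $\ddc G$ produces four terms: $[\Delta]\wedge\pi_2^*\omega$ and $\pi_1^*\omega\wedge\pi_2^*\omega$ both push forward to $\omega$ on $X$ (the first because $\pi_2^*\omega|_\Delta=\omega$ via the diagonal parametrization, the second by Fubini using $\int_X\omega=1$), and cancel; while $\pi_2^*\omega\wedge\pi_2^*\omega=0$ and $\alpha\wedge\pi_2^*\omega=0$ for degree reasons on the one-complex-dimensional $X$ (both $\omega\wedge\omega$ and $\theta_i^{\,*}\wedge\omega$ vanish). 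So $\ddc h\equiv 0$ and $h$ is constant.

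Finally, for the Lipschitz statement, in local coordinates near $\Delta$ the Riemannian expansion gives $\dist(x,y)=|z_1-z_2|\bigl(c(x,y)+O(|z_1-z_2|)\bigr)$ with $c$ smooth and positive, so $\log\dist-\log|z_1-z_2|$ is smooth near $\Delta$; combined with smoothness of $G-\log|z_1-z_2|$ from above, $\varrho$ is smooth (a fortiori Lipschitz) in a tubular neighbourhood of $\Delta$. Off the diagonal, $G$ is smooth and $\log\dist$ is Lipschitz (smooth away from the cut locus and $1$-Lipschitz across it), so $\varrho$ is Lipschitz there as well, and the global claim follows by compactness. The main bookkeeping challenge is ensuring that the product form of $\alpha$, the swap-symmetry, and the cost-free vanishing of $\ddc h$ are mutually compatible; once these three threads are aligned, the rest reduces to standard elliptic regularity and Riemannian distance estimates.
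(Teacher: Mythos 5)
Your proposal is correct and follows essentially the same route as the paper: decompose $[\Delta]$ via K\"unneth, pick a smooth product-type representative $\alpha$ of the mixed part, solve $\ddc G = [\Delta]-\pi_1^*\omega-\pi_2^*\omega-\alpha$ on the compact K\"ahler manifold $X\times X$, symmetrize, normalize by showing the fibre integral $\pi_{1*}(G\cdot\pi_2^*\omega)$ has vanishing $\ddc$, and compare with $\log\dist$ locally near $\Delta$. The only cosmetic differences are that you realize $\alpha$ as the harmonic representative and get its swap-invariance from uniqueness (the paper simply averages $\alpha(x,y)$ and $\alpha(y,x)$), and that your Lipschitz discussion is a bit more explicit than the paper's.
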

\proof
By K\"unneth formula, we have
$$H^{1,1}(X\times X,\C)\simeq \sum_{q,l} H^{q,l}(X,\C) \otimes H^{1-q,1-l}(X,\C).        $$
There exists a real smooth $(1,1)$-form
$\alpha(x,y) =\sum_{j=1}^k \beta_j(x)\wedge \gamma_j(y)$
with $\beta_j,\gamma_j$ closed smooth $1$-forms, and a function $G'(x,y)$ such that 
$$\ddc G'(x,y)= [\Delta] -\omega(x)-\omega(y) -\alpha(x,y).$$
Replacing $\alpha(x,y)$ by $1/2 \, \alpha(x,y)+1/2 \, \alpha(y,x)$, we can assume $\alpha(x,y)$ is symmetric and hence $G'(x,y)$ is also symmetric.

Since $\pi_2^*\omega$ is closed, we have for any smooth test function $\phi$ on $X$, 
\begin{align*}
\int \phi (x)  \, (\pi_1)_*\big(\ddc (G'  \cdot \pi_2^*\omega)\big)=\int \phi (x)  \,  (\pi_1)_*\big( \ddc G' \cdot \pi_2^* \omega      \big)  
=\int \phi (x)\, \omega -\int \phi (x)\, \omega =0.
\end{align*}
This implies that $\ddc(\pi_1)_* (G'  \cdot \pi_2^*\omega)=0$ and  $(\pi_1)_* (G'  \cdot \pi_2^*\omega)$ is some constant function $c$. We set $G(x,y):=G'(x,y)-c$. Easy to check that $\int_X G(x,\cdot) \, \omega = 0$.

\smallskip

Clearly, the function $\varrho$ is Lipschitz outside $\Delta$. Since the degree of $\alpha$ on $y$ is not $(1,1)$, we have
for every fixed $x$,  $\ddc G(x,\cdot) =\delta_x -\omega$. Together with that 
 $\ddc \log\dist (x,\cdot)= \delta_x + \eta$ for some smooth $(1,1)$-form $\eta$ near $x$, we conclude that $G(x,\cdot)-\log\dist (x,\cdot)$ is Lipschitz.
\endproof

The function $G$ above is called the \textit{Green function} of $(X,\omega)$.
For $\mu$  a probability measure on $X$,  define $U'_\mu(x):=\int_X G(x,\cdot) \, \dd\mu$.

\begin{corollary} \label{c:Green}
We have  $\int_X U'_\mu \,\omega=0$ and $U_\mu=U'_\mu-\max U'_\mu$.
\end{corollary}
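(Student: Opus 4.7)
The plan is to use Fubini's theorem for the first identity and the Poincaré--Lelong-type computation from the proof of Lemma \ref{l:Green} for the second. The key ingredients are the symmetry of $G$, the vanishing $\int_X G(x,\cdot)\,\omega=0$, and the relation $dd^c_x G(x,y)=\delta_y(x)-\omega(x)$ valid for fixed $y$ (which follows from Lemma \ref{l:Green} together with the observation that $\alpha$, being a sum of products of a $1$-form in $x$ and a $1$-form in $y$, contributes nothing when we differentiate in $x$ only).

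First I would establish $\int_X U'_\mu\,\omega=0$. Since $G$ is bounded above on $X\times X$ and the only singularity is the logarithmic one along the diagonal, $U'_\mu$ is a well-defined upper semicontinuous function on $X$ with values in $\R\cup\{-\infty\}$, and the Fubini theorem applies. Using the symmetry $G(x,y)=G(y,x)$ and Lemma \ref{l:Green},
\[
\int_X U'_\mu(x)\,\omega(x)=\int_X\!\!\int_X G(x,y)\,d\mu(y)\,\omega(x)=\int_X\Big(\int_X G(y,x)\,\omega(x)\Big)d\mu(y)=0.
\]

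Next I would compute $dd^c U'_\mu$ as a current on $X$. Differentiating under the integral with respect to $x$ and using $dd^c_x G(x,y)=\delta_y(x)-\omega(x)$ gives
\[
dd^c U'_\mu=\int_X \bigl(\delta_y-\omega\bigr)\,d\mu(y)=\mu-\omega.
\]
Hence $U'_\mu$ is $\omega$-subharmonic and $\omega+dd^c U'_\mu=\mu=\omega+dd^c U_\mu$. Therefore $dd^c(U'_\mu-U_\mu)=0$, so $U'_\mu-U_\mu$ is a harmonic function on the compact Riemann surface $X$, hence a constant $c$. Taking the maximum on both sides and using $\max U_\mu=0$ yields $c=\max U'_\mu$, so $U_\mu=U'_\mu-\max U'_\mu$.

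This is really a bookkeeping argument rather than a substantive one; the only place a bit of care is needed is in justifying $dd^c_x G(x,y)=\delta_y(x)-\omega(x)$ for fixed $y$, which is the mildly technical point. It follows from the identity $dd^c G=[\Delta]-\pi_1^*\omega-\pi_2^*\omega-\alpha$ of Lemma \ref{l:Green}: slicing by $\{y=\mathrm{const}\}$, the term $\pi_2^*\omega$ disappears because it has no $(1,0)$ or $(0,1)$ component in $x$, and $\alpha$ likewise disappears because each summand $\beta_j(x)\wedge\gamma_j(y)$ has bidegree $(1,0)+(0,1)$ or $(0,1)+(1,0)$ in $(x,y)$, so its pullback to the slice is a $1$-form on which $dd^c_x$ acts trivially as a current on $X$. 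Granted this, the remainder of the argument is immediate.
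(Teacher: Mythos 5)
Your proof is correct and follows essentially the same route as the paper: the first identity from $\int_X G(x,\cdot)\,\omega=0$ via Fubini, and the second by computing $\ddc U'_\mu=\mu-\omega$ (discarding the $\alpha$ term on bidegree grounds) and then invoking the normalization $\max U_\mu=0$; you simply spell out a few steps — the Fubini justification and the harmonic-difference argument — that the paper leaves implicit.
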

\proof
The first equality follows from $\int_X G(x,\cdot) \, \omega = 0$.
From the definition of $\omega$-potential of type M, we only need to show $\ddc U'_\mu =\mu -\omega$. By Lemma \ref{l:Green},
\begin{align*}\ddc U'_\mu = \ddc\int G(x,\cdot) \, \dd\mu =\ddc\int G'(x,\cdot) \, \dd\mu =\int \ddc_x  G'(x,y) \, \dd\mu (y) \\
=\int \delta_y - \omega (x) \,\dd \mu(y) =\mu-\omega.
\end{align*}
Here, the term involving $\alpha$ disappear because its degree on $x$ is not $(1,1)$.
\endproof

More generally, for any positive measure $\mu$ (may not be probabilistic) on $X$, we can define 
\begin{equation}\label{defn-potentil-type-I}
U'_\mu(x):=\int_X G(x,\cdot) \, \dd\mu  \quad\text{and}\quad   U_\mu:=U'_\mu-\max U'_\mu.
\end{equation}

Immediately, the functional $\oI_{\omega,D}$ can be rewrote as
\begin{equation}\label{second-formula-I}
\cI_{\omega,D}(\mu)= -\int_X  U'_\mu \,\dd \mu +2 \max U'_\mu.
\end{equation}
One advantage of this formula is the following commutative property.

\begin{lemma}\label{commut-potential}
	For two positive measures $\mu_1,\mu_2$ on $X$, we have 
	$$ \int_X  U'_{\mu_1}\,\dd \mu_2=\int_X U'_{\mu_2}\,\dd \mu_1.$$
\end{lemma}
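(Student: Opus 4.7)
The plan is to unwind the definition of $U'_{\mu}$ and invoke Fubini together with the symmetry of the Green function.

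First I would write, by the definition in \eqref{defn-potentil-type-I},
$$\int_X U'_{\mu_1}\,\dd\mu_2 \;=\; \int_X\!\!\int_X G(x,y)\,\dd\mu_1(y)\,\dd\mu_2(x),$$
and the analogous expression for $\int_X U'_{\mu_2}\,\dd\mu_1$, noting that the symmetry $G(x,y)=G(y,x)$ provided by Lemma \ref{l:Green} lets us identify the two iterated integrals, provided Fubini applies.

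Next I would justify the interchange of integration. Since $X\times X$ is compact and $G$ is smooth away from $\Delta$ with $G(x,y)=\log\dist(x,y)+\varrho(x,y)$ for the Lipschitz (hence bounded) function $\varrho$, the function $G$ is uniformly bounded above by some constant $C$. Therefore $C-G\geq 0$ is nonnegative, symmetric and Borel, and Tonelli's theorem gives
$$\int_X\!\!\int_X (C-G(x,y))\,\dd\mu_1(y)\,\dd\mu_2(x)=\int_X\!\!\int_X (C-G(x,y))\,\dd\mu_2(x)\,\dd\mu_1(y),$$
both sides being simultaneously finite or $+\infty$. Subtracting the finite constant $C\,\mu_1(X)\mu_2(X)$ (the measures are finite since $X$ is compact) transfers the identity to $G$ itself; in the case both sides equal $+\infty$, the same conclusion also holds by symmetry with both integrals of $G$ being $-\infty$. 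Using the symmetry of $G$, the right-hand side equals $\int_X U'_{\mu_2}\,\dd\mu_1$, which is exactly what is claimed.

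The only point requiring any care is the logarithmic singularity of $G$ on the diagonal, which could a priori produce integrals equal to $-\infty$; since it comes from a nonpositive term ($\log\dist\leq 0$ near $\Delta$ up to the Lipschitz error), the one-sided Tonelli step outlined above handles this uniformly and the equality of the two sides is preserved in either case, so this is not a genuine obstacle.
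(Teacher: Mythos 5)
Your proof is correct, but it takes a genuinely different route from the paper. You expand $U'_{\mu_i}$ back into the double integral $\iint G\,\dd\mu_1\,\dd\mu_2$ and argue directly by symmetry of the Green function together with a one-sided Tonelli argument (justified by the fact that $G=\log\dist+\varrho$ is bounded above on the compact $X\times X$). The paper instead argues by Stokes' formula, writing $\dd\mu_i = \ddc U'_{\mu_i}+c_i\omega$ and integrating by parts to swap $\int U'_{\mu_1}\,\ddc U'_{\mu_2}=\int U'_{\mu_2}\,\ddc U'_{\mu_1}$, using the normalization $\int_X U'_{\mu_i}\,\omega=0$ to absorb the $\omega$ terms. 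Your Fubini/Tonelli route is more elementary, requiring only measure theory and the explicit symmetry of $G$ from Lemma~\ref{l:Green}, and it transparently handles the degenerate case where both integrals are $-\infty$. The paper's integration-by-parts route is more idiomatic for the potential-theoretic framework used later (energies, Dirichlet pairings), though it quietly relies on the standard symmetric integration-by-parts identity for quasi-subharmonic potentials, which itself is usually proved by a Fubini-type computation of the sort you wrote out explicitly. Both proofs are fine; yours is self-contained and closer to first principles.
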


\begin{proof}
	Let $c_1,c_2$ be the masses of $\mu_1,\mu_2$ respectively. Then by definition,
	$$\ddc U'_{\mu_1}=\mu_1-c_1 \omega \quad \text{and}\quad \ddc U'_{\mu_2}=\mu_2-c_2 \omega .$$
	Using Stoke's formula, we have 
	\begin{align*} \int_X  U'_{\mu_1}\,\dd \mu_2= \int_X  U'_{\mu_1}\,& (\ddc U'_{\mu_2}+c_2\omega)=\int_X  U'_{\mu_1}\, \ddc U'_{\mu_2}\\
	&=\int_X  U'_{\mu_2}\, \ddc U'_{\mu_1}=\int_X  U'_{\mu_2}\, (\ddc U'_{\mu_1}+c_1\omega)=\int_X U'_{\mu_2}\,\dd \mu_1.
	\end{align*}
	Here we use that $\int_X U'_{\mu_1}\,\omega=\int_X U'_{\mu_2}\,\omega=0$.
\end{proof}

We have the following regularity for a probability measure near the maximum of its quasi-potential.

\begin{lemma} \label{lem-regularity}
Let $\mu$ be any probability measure on $X$. Let $x\in X$ such that $U_\mu(x)=0$. 
Then  $\mu\big(\B(x,r)\big)\leq C r^2$ for some constant $C>0$ independent of $r$ and $x$.
\end{lemma}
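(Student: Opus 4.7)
My plan is to use that $U_\mu(x) = 0$ means $x$ is a global maximum of $U'_\mu$, then extract quadratic decay of $\mu(\B(x,r))$ from a spherical mean value computation near $x$ after peeling off the logarithmic singularity of the Green kernel.

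First, I would work in a coordinate chart $B \ni x$ with local holomorphic coordinate, identifying $x$ with a point of $\C$. For $y$ in a slightly smaller neighborhood $B' \Subset B$, split
\[
U'_\mu(y) = \tilde L(y) + \Phi(y), \qquad \tilde L(y) := \int_B \log|y-z|\,d\mu(z),
\]
so that $\Phi(y) = \int_B (G(y,z)-\log|y-z|)\,d\mu(z) + \int_{X\setminus B} G(y,z)\,d\mu(z)$. The second integrand is smooth since $G$ is smooth off the diagonal. For the first, the kernel $G(y,z)-\log|y-z|$ extends $C^\infty$-smoothly to $B\times B$: on $B \times B$ it satisfies
\[
\ddc\bigl(G(y,z)-\log|y-z|\bigr) = -\omega(y)-\omega(z)-\alpha(y,z),
\]
with smooth right-hand side, and elliptic regularity for $\partial\bar\partial$ upgrades the Lipschitz statement of Lemma \ref{l:Green}. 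Hence $\Phi$ is $C^2$ on $B'$ with $C^2$-norm bounded uniformly in $x$ and in the probability measure $\mu$.

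Second, for $r$ small enough that $\D(x,r) \subset B'$, the classical identity $\tfrac{1}{2\pi}\int_0^{2\pi}\log|x+re^{i\theta}-z|\,d\theta = \log\max(r,|x-z|)$ gives
\[
\frac{1}{2\pi}\int_0^{2\pi} U'_\mu(x + re^{i\theta})\,d\theta - U'_\mu(x) \;=\; F(r) + E(r),
\]
where $F(r) := \int_{\D(x,r)} \log\frac{r}{|x-z|}\,d\mu(z) \geq 0$ and $E(r) = O(r^2)$ by Taylor expansion of the $C^2$ function $\Phi$. Maximality of $U'_\mu$ at $x$ forces the left-hand side to be $\leq 0$, yielding $F(r) \leq C_0 r^2$ for $r \leq r_0$, with $C_0, r_0$ uniform in $x$. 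The crude estimate $F(r) \geq \log 2 \cdot \mu(\D(x,r/2))$ gives $\mu(\D(x,r/2)) \leq Cr^2$. Comparability of intrinsic balls $\B(x,s)$ with Euclidean discs $\D(x,s)$ in a finite atlas then converts this to $\mu(\B(x,r)) \leq C'r^2$ for $r \leq r_0'$; for $r \geq r_0'$ the bound follows from $\mu(X) = 1$.

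The main obstacle is obtaining quadratic (rather than linear) decay. The bare Lipschitz control of $\varrho = G - \log\dist$ in Lemma \ref{l:Green}, together with a first-order averaging argument, would only yield $\mu(\B(x,r)) \leq Cr$. The quadratic bound hinges on the upgraded $C^2$ regularity of the smooth part of $G$ along the diagonal, and on checking that this propagates to $C^2$ bounds on $\Phi$ that are uniform in $x$ and in the probability measure $\mu$. This regularity input is the crucial technical ingredient; once in place, the rest is a routine spherical mean value calculation.
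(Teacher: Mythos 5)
Your argument is correct, and it takes a genuinely different route from the paper. The paper's proof rescales: it pushes $U_\mu$ forward under the dilation $\lambda_r:\overline\D(0,r)\to\overline\D$, observes that $r^{-2}(\lambda_r)_*U_\mu$ lies (after dividing by a fixed constant) in a precompact family of quasi-subharmonic functions on $\overline\D$ that are $\leq 0$ and vanish at $0$, and extracts the uniform mass bound $\|\ddc(c^{-1}r^{-2}\Phi_r)\|\leq C'$ from compactness; unwinding the pushforward gives the $r^2$ decay directly. You instead split $U'_\mu$ into the local logarithmic potential $\tilde L$ plus a remainder $\Phi$, invoke the identity $\tfrac{1}{2\pi}\int_0^{2\pi}\log|re^{i\theta}-w|\,d\theta=\log\max(r,|w|)$ to produce the nonnegative quantity $F(r)$, and squeeze it against a Taylor estimate for $\Phi$ using maximality at $x$. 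The crucial ingredient you supply is the upgrade of the paper's Lipschitz control on $\varrho=G-\log\dist$ to a $C^2$ bound on $G(y,z)-\log|y-z|$ near the diagonal; this is justified, since the trace of the relation $\ddc(G(y,z)-\log|y-z|)=-\omega(y)-\omega(z)-\alpha(y,z)$ is a Poisson equation with smooth right-hand side, and elliptic regularity for the Laplacian (not for $\partial\bar\partial$ directly, which is the more careful way to phrase what you call ``elliptic regularity for $\partial\bar\partial$'') then gives smoothness of the remainder, with bounds uniform over a finite atlas and over $\mu$ by linearity. In exchange for this extra regularity input, your argument is more elementary (no appeal to compactness of families of quasi-subharmonic functions) and makes the source of the quadratic exponent completely transparent, and it also applies verbatim at local maxima, matching the paper's remark after the lemma.
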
 
\begin{proof}
	Since this problem is local, we may assume that $U_\mu$ is an $\omega$-subharmonic function on $\D$ and $0$ is a maximum of $U_\mu$ in $\D$ with $U_\mu(0)=0$. 
	For every $r<1/2$, we consider the dilation map $\lambda_r: \overline\D(0,r)\to \overline\D, z\mapsto z/r$ and let $\Phi_r$  be the function on $\overline\D$ defined by 
	$$\Phi_r(z):=(\lambda_r)_*(U_\mu|_{\D(0,r)})(z)=U_\mu(r z).$$
	By assumption, $\Phi_r \leq 0$ and $\max \Phi_r=0$ on $\overline\D$.
	 Moreover, 
	  \begin{align*}
	\ddc (r^{-2} \Phi_r)=r^{-2}\ddc (\lambda_r)_*(U_\mu|_{\D(0,r)})\geq r^{-2}  (\lambda_r)_* (-\omega)  \geq -c\omega
	 \end{align*} 
	 for some $c>0$.
	 Hence the functions $c^{-1}r^{-2}\Phi_r,0<r<1$ are in a compact family of quasi-subharmonic functions on $\overline \D$ and thus, we have 
	$\big\|\ddc (c^{-1}r^{-2} \Phi_r ) \big\|\leq C'$
	for some constant $C'>0$ independent of $r$ and $x$.
	Finally, using that 
	$\ddc (\lambda_r)_*(U_\mu|_{\D(0,r)})=(\lambda_r)_*(\ddc U_\mu|_{\D(0,r)})$ and $\omega\big(\D(0,r)\big)\lesssim r^2$,
	we conclude that $ (\ddc U_\mu+\omega)\big(\D(0,r)\big)\leq Cr^2$ for some $C>0$. 
\end{proof}

Note that the lemma above still holds for any $x$ which is a local maximum of $U_\mu$.

\medskip

Now let $\mu$ be a probability measure $\mu \in \cM(X\setminus D)$. Consider the following envelop 
$$\widehat U_\mu:=\sup \big \{ \phi \in\SH(X,\omega):\, \phi\leq 0 \text{ on } X, \, \phi \leq  U_\mu   \text{ on } \overline D       \big\}^*$$
and the probability measure 
$$\widehat\mu:=\ddc \widehat U_\mu+\omega.$$
Here, the notation $*$ means the upper semicontinuous regularization.

\begin{proposition} \label{p:envelop}
The function $\widehat U_\mu$ is continuous on $X\setminus bD$ and the open set $\{\widehat U_\mu<0\}\setminus \overline D$ is non-empty. Moreover, we have
$\widehat\mu(bD)\not =0$, $\widehat\mu=0$ on $\{\widehat U_\mu<0\}\setminus bD$ and the restriction of $\widehat\mu$ to the compact set $\{\widehat U_\mu=0\}$ is equal to the restriction of $\omega$ to this set. In particular, the support of $\widehat\mu$ is contained in the union of $bD$ with $\{\widehat U_\mu=0\}\setminus\overline D$. 
\end{proposition}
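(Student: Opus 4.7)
The plan is to first verify that $\widehat U_\mu$ is a well-defined $\omega$-subharmonic function satisfying $U_\mu\leq \widehat U_\mu\leq 0$, with equality $\widehat U_\mu=U_\mu$ on $\overline D$. The lower bound holds because $U_\mu$ itself is a competitor in the defining family (it is $\omega$-sh with $U_\mu\leq 0$); the upper bound on $\overline D$ is part of the definition. Since $\mu\in\cM(X\setminus D)$ forces $\mu|_D=0$, we obtain $\ddc U_\mu=-\omega$ on $D$, so $U_\mu$ is smooth on $D$; consequently $\widehat U_\mu$ is smooth on $D$ and $\widehat\mu|_D=0$.

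Next I would establish continuity on $X\setminus bD$ and compute $\widehat\mu$ on the non-contact set. Smoothness on $D$ handles the interior. On $X\setminus\overline D$ I would use a Perron--balayage argument, exploiting the piecewise $\Cc^{1+\ep}$ regularity of $bD$ to construct local barriers at regular boundary points. Inside the open set $\{\widehat U_\mu<0\}\cap(X\setminus\overline D)$, local maximality of the envelope against constant upward bumps (no obstacle is active there and we stay below $0$) forces $\ddc\widehat U_\mu+\omega=0$, so $\widehat\mu=0$ there. Combining with $\widehat\mu=0$ on $D$, we get $\widehat\mu=0$ on $\{\widehat U_\mu<0\}\setminus bD$.

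The identity $\widehat\mu|_{\{\widehat U_\mu=0\}}=\omega|_{\{\widehat U_\mu=0\}}$ follows from a coincidence principle for $\omega$-subharmonic functions: both $\widehat U_\mu$ and the constant function $0$ are $\omega$-sh and agree on that set, so their Monge--Ampère-type measures $\widehat\mu=\ddc\widehat U_\mu+\omega$ and $\omega=\ddc 0+\omega$ coincide on it. Putting the pieces together,
$$\widehat\mu=\widehat\mu|_{bD}+\omega|_{\{\widehat U_\mu=0\}\setminus bD},$$
and since $\omega(bD)=0$, the total mass identity $\widehat\mu(X)=1$ reduces to $1=\widehat\mu(bD)+\omega(\{\widehat U_\mu=0\})$. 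If $\widehat\mu(bD)=0$, then $\omega(\{\widehat U_\mu=0\})=\omega(X)$, so the open set $\{\widehat U_\mu<0\}$ has zero $\omega$-measure, hence is empty; but then $U_\mu\equiv 0$ on $D$, contradicting $\ddc U_\mu=-\omega\not\equiv 0$ on $D$. Thus $\widehat\mu(bD)\neq 0$, and the support statement is immediate from the decomposition.

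It remains to show $\{\widehat U_\mu<0\}\setminus\overline D$ is non-empty. Suppose the contrary, so $\widehat U_\mu\equiv 0$ on $X\setminus\overline D$. Since $U_\mu$ is strictly $\omega$-superharmonic on $D$, we have $U_\mu<0$ somewhere on $\overline D$. If this occurs at some $x_0\in bD$, then upper semicontinuity of $\widehat U_\mu$ at $x_0$ fails when approached from outside $\overline D$ where $\widehat U_\mu=0>U_\mu(x_0)=\widehat U_\mu(x_0)$. Otherwise $U_\mu=0$ on all of $bD$ and $U_\mu<0$ on $D$, in which case at any $x_0\in bD$ the sub-mean-value property of the $\omega$-sh function $\widehat U_\mu$ fails (a small ball about $x_0$ collects strictly negative contributions from $D$ and zero contributions from $X\setminus\overline D$, yet $\widehat U_\mu(x_0)=0$). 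Both alternatives give contradictions. The main obstacle in this plan will be the second step: establishing continuity of the envelope on $X\setminus\overline D$ and the coincidence principle on $\{\widehat U_\mu=0\}$, which requires adapting classical potential-theoretic tools (Perron balayage, barriers, and Bedford--Taylor-type machinery) to the $\omega$-subharmonic setting on a compact Riemann surface under the piecewise $\Cc^{1+\ep}$ hypothesis on $bD$; everything else is bookkeeping once these adaptations are in place.
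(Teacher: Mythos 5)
Your plan gets the scaffolding right — $U_\mu\le\widehat U_\mu\le 0$ with equality on $\overline D$, $\widehat\mu=0$ on $D$ and on $\{\widehat U_\mu<0\}\setminus\overline D$, the total-mass argument for $\widehat\mu(bD)\ne 0$ — but the step you dismiss as "bookkeeping" is in fact the substance, and the way you phrase it is not correct.

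The claimed ``coincidence principle'' — that because $\widehat U_\mu$ and $0$ are both $\omega$-subharmonic and agree on $\{\widehat U_\mu=0\}$, the measures $\ddc\widehat U_\mu+\omega$ and $\omega$ agree on that set — is not a theorem. For two ($\omega$-)subharmonic functions $u\le v$ agreeing on a closed set $K$ the standard comparison inequality (Demailly, which the paper invokes as Lemma~\ref{comparison}) only yields the one-sided bound $\ddc u\le\ddc v$ on $K$; the reverse inequality is false at the level of generality you invoke, and proving it here is exactly the content of Lemma~\ref{prop-nu-S} in the paper. The paper first uses the comparison inequality to conclude $\widehat\mu\le\omega$ on $\{\widehat U_\mu=0\}\setminus\overline D$ (and hence that the absolutely continuous density $\chi$ with $\ddc(\widehat U_\mu+\varphi)=\chi\omega$ exists with $0\le\chi\le 1$), and then runs a Lebesgue density-point argument — rescaling by dilations $\lambda_\delta$ and using $\Vol(\D(x,\delta)\setminus\{\widehat U_\mu=0\})=o(\delta^2)$ at a.e.\ $x$ — to force $\chi=1$ a.e.\ on the level set. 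That is a genuine argument about density points of the contact set, not a formal consequence of agreement of the functions on that set; as stated, your version would also "prove" that for any $\omega$-subharmonic $u\le 0$ one has $\ddc u=0$ on $\{u=0\}$, which requires a nontrivial justification along exactly these lines.

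Two smaller points. First, your route to continuity of $\widehat U_\mu$ on $X\setminus\overline D$ via Perron barriers and boundary regularity of $bD$ is misdirected: away from $\overline D$ the relevant issue is not a boundary-value problem at all. The paper instead observes that on a ball $B\subset X\setminus\overline D$ the function $\widehat U_\mu+\varphi$ is a local potential of $\widehat\mu$ which is continuous on $B\cap\{\widehat U_\mu=0\}\supset\supp(\widehat\mu)\cap B$, and then applies the classical continuity principle (Tsuji, Theorem~III.2). Second, your case (b) for $\{\widehat U_\mu<0\}\setminus\overline D\ne\varnothing$ — "the sub-mean-value property fails because a small ball collects negative contributions from $D$" — is not automatic: the $\omega$-subharmonic mean-value inequality for $\widehat U_\mu$ carries a positive error term coming from the local potential of $\omega$, and there is no a priori reason the negative contributions dominate. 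The paper's argument is cleaner: under the contradiction hypothesis $\widehat U_\mu=0$ on $bD$ (by upper semicontinuity and $\widehat U_\mu\le 0$), then $-\widehat U_\mu$ is subharmonic on $D$ and one applies the maximum principle to conclude $\widehat U_\mu\equiv 0$ on $D$, contradicting $\ddc\widehat U_\mu=-\omega$ there.
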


We separate the proof of the proposition above into several lemmas below.
 The support of $\widehat\mu$ is given by the next lemma. It should be classical, but we present the proof here for the convenience of the readers. 

\begin{lemma}\label{prop-ddc-eql-omega}
	We have $\ddc \widehat U_\mu =-\omega$ on $D$ and $\{\widehat U_\mu<0\}\setminus \overline D$.
\end{lemma}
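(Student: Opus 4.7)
The plan is to treat the two open sets separately, both as instances of ``the Perron envelope is maximal where no obstacle is active''.

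\textbf{Behavior on $D$.} I would first show the stronger fact that $\widehat U_\mu = U_\mu$ on $\overline D$. The inequality $\widehat U_\mu \geq U_\mu$ is immediate, because $U_\mu$ itself is a competitor in the supremum: it lies in $\SH(X,\omega)$, satisfies $U_\mu \leq 0$ since $\max U_\mu = 0$, and trivially $U_\mu \leq U_\mu$ on $\overline D$. Conversely, every competitor $\phi$ is bounded above by $U_\mu$ on $\overline D$, and since $U_\mu$ is upper semicontinuous this bound survives both the pointwise supremum and the subsequent upper regularization, giving $\widehat U_\mu \leq U_\mu$ on $\overline D$. Because $\mu$ has no mass in $D$, $\ddc U_\mu = -\omega$ on $D$, so we also get $\ddc \widehat U_\mu = -\omega$ on $D$.

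\textbf{Balayage on $\Omega := \{\widehat U_\mu < 0\} \setminus \overline D$.} Assume for contradiction that $x_0 \in \Omega$ belongs to $\supp \widehat\mu$, where $\widehat\mu := \ddc \widehat U_\mu + \omega$. Using that $\widehat U_\mu$ is USC with $\widehat U_\mu(x_0) < 0$, I would pick a constant $c>0$ and a coordinate ball $B = \B(x_0, r)$ disjoint from $\overline D$ with $\widehat U_\mu \leq -c$ on $\overline B$. Writing $\omega = \ddc \varphi$ on $B$ with $\varphi$ smooth, let $\tilde U$ be the Perron harmonic extension to $B$ of the USC boundary data $(\widehat U_\mu + \varphi)|_{bB}$, and set $\hat\phi := \tilde U - \varphi$. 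Then $\ddc \hat\phi = -\omega$ on $B$, and since $\widehat U_\mu + \varphi$ is itself subharmonic and a valid competitor in the Perron construction, one gets $\hat\phi \geq \widehat U_\mu$ on $B$. Because $f := \tilde U - (\widehat U_\mu + \varphi) \geq 0$ is a lower semicontinuous superharmonic function with $\ddc f = -\widehat\mu \leq 0$ and $\widehat\mu$ charging every neighborhood of $x_0$, the strong minimum principle forces $f(x_0) > 0$, i.e.\ $\hat\phi(x_0) > \widehat U_\mu(x_0)$. The glued function $\psi := \hat\phi$ on $B$, $\psi := \widehat U_\mu$ outside $B$, then lies in $\SH(X,\omega)$.

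\textbf{Closing the contradiction.} It remains to check that $\psi$ is admissible in the supremum defining $\widehat U_\mu$. The constraint $\psi \leq U_\mu$ on $\overline D$ is automatic because $\overline D \cap B = \varnothing$. The constraint $\psi \leq 0$ reduces to $\hat\phi \leq 0$ on $B$; the maximum principle for the harmonic function $\tilde U$ gives $\tilde U \leq -c + \max_{bB} \varphi$ on $B$, hence $\hat\phi(x) \leq -c + (\max_{bB} \varphi - \varphi(x))$, and shrinking $r$ so that the oscillation of $\varphi$ on $\overline B$ is smaller than $c$ yields $\hat\phi \leq -c/2 < 0$. Thus $\psi$ is a valid competitor with $\psi(x_0) > \widehat U_\mu(x_0)$, contradicting the definition of $\widehat U_\mu$. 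The only real subtlety is this last check: because $\omega$ is not flat, $\psi \leq 0$ is a genuine extra constraint, and this is precisely why one must choose $B$ small compared to the oscillation of the local potential of $\omega$.
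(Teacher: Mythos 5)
Your proof is correct and rests on the same core mechanism as the paper's: a local Poisson modification of $\widehat U_\mu + \varphi$ on a small coordinate ball away from $\overline D$, glued back to $\widehat U_\mu$, together with a normalization step (you shrink the ball so that the oscillation of $\varphi$ is below $c$; the paper instead normalizes $\varphi$ to lie in $[-\vep,0]$ and works on $\{\widehat U_\mu<-\vep\}$) to keep the competitor $\leq 0$. The only structural difference is that you run it as a proof by contradiction via the strong minimum principle applied to $f=\tilde U-(\widehat U_\mu+\varphi)$, whereas the paper observes directly that the glued function is simultaneously $\geq \widehat U_\mu$ (by construction) and $\leq \widehat U_\mu$ (being an admissible competitor), hence equal to it, which already gives $\ddc\widehat U_\mu=-\omega$ on the ball and makes the minimum-principle detour unnecessary.
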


\begin{proof}
	Since $U_\mu$ is upper semicontinuous, then condition $\phi\leq U_\mu$ on $\overline D$ gives $\widehat U_\mu\leq  U_\mu$ on $\overline D$. 
	On the other hand, by definition, $U_\mu$ itself is $\omega$-subharmonic and non-positive on $X$, which implies $U_\mu\leq \widehat U_\mu$ on $X$. Therefore, $\widehat U_\mu= U_\mu$ on $\overline D$ and thus, $\ddc \widehat U_\mu =-\omega$ on $D$.
	
	\smallskip

	Since $\widehat U_\mu$ is upper semicontinuous,  $\{\widehat U_\mu<-\vep\}\setminus \overline D$ is open for any $\vep>0$. Fix a small $\vep>0$ and   take two  open balls $B_1,B_2$ with  $\overline B_1\subset B_2\subset \{\widehat U_\mu<-\vep\}\setminus \overline D$. We will prove that $\ddc \widehat U_\mu =-\omega$ on $B_1$.
	
	After shrinking $B_2$, we may assume that $\omega$ admits a smooth local potential $\varphi$, i.e., $\ddc\varphi=\omega$ on $B_2$. Moreover, after shrinking $B_2$ again and adding a constant to $\varphi$, we can also assume $-\vep\leq \varphi\leq 0$ on $B_2$. Then $\widehat U_\mu +\varphi$ is subharmonic on $B_2$. By \cite[Proposition 9.1]{bedford-1982}, we can take a function $\Psi$ which is subharmonic on $B_2$, harmonic on $B_1$ and it satisfies $\Psi=\widehat U_\mu+\varphi$ on $B_2\setminus B_1$ and $\Psi\geq \widehat U_\mu+\varphi$ on $B_2$ ($\Psi$ is also the solution of Dirichlet problem on $B_1$).

	Now consider the function $\Phi$ defined by 
	$$\Phi=\widehat U_\mu   \quad\text{on}\quad   X\setminus B_1       \quad\text{and}\quad        \Phi=\Psi-\varphi   \quad\text{on}\quad   B_2.    $$
	It is well-defined because $\widehat U_\mu=\Psi-\varphi$ on $B_2\setminus B_1$. Using that $\Psi$ is subharmonic on $B_2$, we have $\ddc \Phi=\ddc\Psi-\ddc \varphi \geq -\omega$ on $B_2$. Therefore, $\Phi$ is $\omega$-subharmonic on $X$. 
	
	\medskip
	\noindent {\bf  Claim.} $\Phi\leq 0$ on $X$.
	\proof[Proof of Claim] We only need to show $\Phi\leq 0$ on $B_1$.  On the boundary of $B_1$, we have $\Psi=\widehat U_\mu+\varphi\leq -\vep$. So maximal modulus principle gives that $\Psi\leq -\vep$ on $B_1$. Therefore, on $B_1$, we have $\Phi=\Psi-\varphi\leq -\vep +\vep=0$. 
	\endproof
	
	By the above claim and  the definition of $\widehat U_\mu$, we have $\Phi\leq \widehat U_\mu$ on $X$. On the other hand,  $\Phi\geq \widehat U_\mu$ on $X$ by the definition of $\Psi$.  Hence $\Phi=\widehat U_\mu$ on $X$. In particular, on $B_1$, we have $\ddc \widehat U_\mu =\ddc\Phi =\ddc \Psi-\ddc \varphi =-\omega$. This ends the proof of the lemma.
\end{proof}

From the above lemma, we deduce that the support of $\widehat \mu$ is contained in the union of $bD$ and $\{\widehat U_\mu=0\}\setminus \overline D$. 

\begin{lemma}\label{forbidden-nonempty}
	The set  $\{\widehat U_\mu<0\}\setminus\overline D$ is non-empty. Hence it contains a non-empty open subset.
\end{lemma}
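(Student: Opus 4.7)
The plan is to argue by contradiction. Suppose that $\{\widehat U_\mu<0\}\setminus\overline D=\varnothing$, i.e.\ $\widehat U_\mu=0$ on $X\setminus\overline D$. Upper semicontinuity of $\widehat U_\mu$ combined with $\widehat U_\mu\le 0$ immediately forces $\widehat U_\mu(x)=0$ for every $x\in bD$ as well, since $\limsup_{y\to x,\,y\notin\overline D}\widehat U_\mu(y)=0$. Hence $\widehat U_\mu\equiv 0$ on the whole of $X\setminus D$. From Lemma~\ref{prop-ddc-eql-omega} we also have $\widehat U_\mu=U_\mu$ on the interior of $D$ and $\widehat\mu:=\ddc\widehat U_\mu+\omega=0$ on $D$.

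The key step is to evaluate $\int_X\widehat U_\mu\,\ddc\widehat U_\mu$ in two different ways. On the one hand, for a bounded $\omega$-subharmonic function on the closed surface $X$, the standard integration by parts gives
$$\int_X\widehat U_\mu\,\ddc\widehat U_\mu=-\int_X d\widehat U_\mu\wedge\dc\widehat U_\mu\le 0.$$
On the other hand, expanding $\ddc\widehat U_\mu=\widehat\mu-\omega$ and decomposing $X=D\cup bD\cup(X\setminus\overline D)$, one easily gets
$$\int_X\widehat U_\mu\,d\widehat\mu=0\quad\text{and}\quad \int_X\widehat U_\mu\,\omega=\int_D U_\mu\,\omega,$$
the first because $\widehat\mu$ vanishes on $D$ while $\widehat U_\mu$ vanishes on $X\setminus D$, and the second because $\widehat U_\mu=U_\mu$ on $D$, $\widehat U_\mu=0$ on $X\setminus\overline D$, and $\omega(bD)=0$ (smoothness of $\omega$).

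Combining the two computations yields $-\int_D U_\mu\,\omega\le 0$, so $\int_D U_\mu\,\omega\ge 0$; since $U_\mu\le 0$ on $X$, this forces $\int_D U_\mu\,\omega=0$. Elliptic regularity applied to the equation $\ddc U_\mu=-\omega$ on $D$, which holds because $\mu(D)=0$, shows that $U_\mu$ is smooth on $D$, and then $\int_D U_\mu\,\omega=0$ with $U_\mu\le 0$ and $\omega>0$ forces $U_\mu\equiv 0$ on $D$. But this would give $\ddc U_\mu=0$ on $D$, contradicting $\ddc U_\mu=-\omega$ with $\omega>0$. This contradiction shows $\{\widehat U_\mu<0\}\setminus\overline D\ne\varnothing$; the set is automatically open, being the intersection of the open set $\{\widehat U_\mu<0\}$ (from upper semicontinuity of $\widehat U_\mu$) with the open set $X\setminus\overline D$.

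The main technical point I anticipate is justifying the integration-by-parts identity when $\widehat U_\mu$ fails to be bounded below (as can happen when $\mu$ has atoms in $\overline D$). This will be handled by truncating $\widehat U_\mu$ at level $-M$, applying the identity to the bounded $\omega$-subharmonic function $\max(\widehat U_\mu,-M)$, and then letting $M\to\infty$, using that $\widehat U_\mu\in L^1(X,\omega)$ and that $\widehat\mu$ has finite total mass.
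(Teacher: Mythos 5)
Your argument is correct but takes a genuinely different route from the paper. Both proofs start identically: assume $\widehat U_\mu\equiv 0$ on $X\setminus\overline D$, use upper semicontinuity to push this to $bD$, and aim to contradict $\ddc\widehat U_\mu=-\omega$ on $D$. From there the paper argues \emph{locally}: $-\widehat U_\mu$ is subharmonic on $D$ with zero boundary values, so by the maximum modulus principle $-\widehat U_\mu\le 0$ on $D$, forcing $\widehat U_\mu\equiv 0$ on $D$. You instead argue \emph{globally} via the Dirichlet-energy identity $\int_X\widehat U_\mu\,\ddc\widehat U_\mu=-\int_X d\widehat U_\mu\wedge \dc\widehat U_\mu\le 0$, together with the direct computation $\int_X\widehat U_\mu\,\ddc\widehat U_\mu=\int\widehat U_\mu\,d\widehat\mu-\int\widehat U_\mu\,\omega=-\int_D U_\mu\,\omega\ge 0$, concluding $U_\mu\equiv 0$ on $D$. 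The paper's route is shorter and more elementary, while yours avoids any appeal to the boundary behaviour of $\widehat U_\mu$ along $bD$ (the interior continuity is enough) at the cost of needing the integration-by-parts identity for a possibly unbounded $\omega$-subharmonic function. You correctly flag that this is the technical crux; the truncation you propose is the right idea, but note that when passing to the limit in $\int u_M\,d\nu_M$ with $u_M=\max(\widehat U_\mu,-M)$, the measures $\nu_M=\ddc u_M+\omega$ acquire mass \emph{inside} $D$ (on $\{U_\mu<-M\}$), so one must also verify $M\,\nu_M(\{U_\mu\le -M\}\cap D)\to 0$; alternatively one can invoke the standard energy-pairing theory for $\omega$-subharmonic functions on compact Riemann surfaces directly. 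Two small slips of exposition: $\widehat U_\mu=U_\mu$ holds on all of $\overline D$, not just its interior, and $\omega(bD)=0$ follows from $bD$ having Hausdorff dimension one (so Lebesgue measure zero), not merely from the smoothness of $\omega$.
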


\begin{proof}
	Suppose for contradiction, for all $x\in X\setminus \overline D$,  $\widehat U_\mu(x)=0$. The upper semi-continuity of $\widehat U_\mu$ yields that $\widehat U_\mu(x)=0$ for $x\in b(X\setminus \overline D)$. Recall that $D$ has piecewise $\Cc^{1+\ep}$ boundary, which implies $b(X\setminus \overline D)=bD$. Therefore, $\widehat U_\mu(x)=0$ for $x\in bD$. But notice that $-\widehat U_\mu$ is subharmonic on $D$ because $-\ddc \widehat U_\mu=\omega$ on $D$. By maximal modulus principle, $\widehat U_\mu=0$ on $D$. Contradiction.
\end{proof}

In the following, we will discuss the behavior of $\widehat U_\mu$ on the boundary of $\{\widehat U_\mu=0\}$ outside $\overline D$ and prove that $\widehat\mu$ has positive mass on $bD$.  We need the following comparison inequality \cite[Proposition 6.11]{demailly:ptbook} for subharmonic functions.

\begin{lemma}   \label{comparison}
	Let $B$ be an open subset of $X$, and let $\varphi$ and $\psi$ be bounded subharmonic functions on $B$, then as positive measures,
	$$ \mathbf 1_{\varphi\leq \psi}\ddc \psi +\mathbf 1_{\psi<\varphi}\ddc\varphi\leq \ddc \max(\varphi,\psi).$$
\end{lemma}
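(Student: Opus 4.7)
The plan is the standard pluripotential-theoretic argument: reduce to the case of smooth subharmonic $\varphi,\psi$ by local mollification, and handle the smooth case via a regularized maximum function.

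\smallskip

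\emph{Smooth case.} Choose a family of smooth convex functions $M_\epsilon\colon\R^2\to\R$, non-decreasing in each variable, with $M_\epsilon\searrow\max$ as $\epsilon\to 0$, agreeing with $\max(a,b)$ whenever $|a-b|\geq\epsilon$, and normalized on the diagonal so that $\partial_1 M_\epsilon(a,a)=0$ and $\partial_2 M_\epsilon(a,a)=1$. For smooth subharmonic $\varphi,\psi$, the chain rule gives
\[
\ddc M_\epsilon(\varphi,\psi)=\partial_1 M_\epsilon\cdot\ddc\varphi+\partial_2 M_\epsilon\cdot\ddc\psi+H_\epsilon,
\]
where $H_\epsilon$ is the combination of the positive real $(1,1)$-forms $d\varphi\wedge\dc\varphi$, $d\psi\wedge\dc\psi$, $d\varphi\wedge\dc\psi+d\psi\wedge\dc\varphi$ weighted by the Hessian entries of $M_\epsilon$. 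Convexity of $M_\epsilon$ forces $H_\epsilon\geq 0$ as a $(1,1)$-form, so discarding it and letting $\epsilon\to 0$ — using the pointwise convergences $\partial_1 M_\epsilon(\varphi,\psi)\to\mathbf{1}_{\psi<\varphi}$, $\partial_2 M_\epsilon(\varphi,\psi)\to\mathbf{1}_{\varphi\leq\psi}$ together with the weak convergence $\ddc M_\epsilon(\varphi,\psi)\to\ddc\max(\varphi,\psi)$ — yields the inequality in the smooth case.

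\smallskip

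\emph{Reduction.} For general bounded subharmonic $\varphi,\psi$ on $B$, work in a local coordinate chart and convolve with a smooth mollifier at scale $1/k$ to obtain smooth subharmonic $\varphi_k\searrow\varphi$ and $\psi_k\searrow\psi$ with $\ddc\varphi_k\to\ddc\varphi$, $\ddc\psi_k\to\ddc\psi$ weakly as positive measures; since $\max(\varphi_k,\psi_k)\searrow\max(\varphi,\psi)$, we also have $\ddc\max(\varphi_k,\psi_k)\to\ddc\max(\varphi,\psi)$ weakly. Apply the smooth case to each pair.

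\smallskip

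\emph{Main obstacle.} The delicate step is passing the right-hand side $\mathbf{1}_{\psi_k<\varphi_k}\,\ddc\varphi_k+\mathbf{1}_{\varphi_k\leq\psi_k}\,\ddc\psi_k$ to the limit $k\to\infty$, since the indicators need not converge pointwise. Here the monotonicity is essential: for any nonnegative continuous $\chi$ with compact support inside the open set $\{\psi<\varphi\}$, upper semicontinuity of $\psi_k-\varphi_k$ combined with $\psi_k\searrow\psi$, $\varphi_k\searrow\varphi$ forces $\mathrm{supp}\,\chi\subset\{\psi_k<\varphi_k\}$ for $k$ large, so the weak convergence of $\ddc\varphi_k$ gives the correct contribution of the first term tested against $\chi$. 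A symmetric argument handles $\chi$ supported in the open set $\{\varphi<\psi\}$. Exhausting $\{\psi<\varphi\}$ and $\{\varphi<\psi\}$ from inside by compacts — and using that the right-hand side is a positive measure — gives the inequality on their union, which then extends across $\{\varphi=\psi\}$.
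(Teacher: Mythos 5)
The paper does not prove this lemma at all; it is quoted verbatim as \cite[Proposition 6.11]{demailly:ptbook}, so there is no in-paper argument to compare against. Judged on its own terms, your proposal has a genuine gap in the limit passage, which is unfortunately the whole content of the lemma.

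The smooth case is fine: the chain-rule identity, positivity of the Hessian contribution $H_\epsilon$ by convexity of $M_\epsilon$, the asymmetric normalization $\partial_1 M_\epsilon(a,a)=0$, $\partial_2 M_\epsilon(a,a)=1$ to match $\mathbf 1_{\psi<\varphi}$ versus $\mathbf 1_{\varphi\leq\psi}$, and the dominated-convergence passage $\epsilon\to 0$ all work. The problem is the reduction. First, $\{\psi<\varphi\}$ is \emph{not} open in general: $\psi$ and $\varphi$ are only upper semicontinuous, so $\psi-\varphi$ has no semicontinuity and the set $\{\psi-\varphi<0\}$ can fail to be open (a bounded subharmonic $\varphi$ with downward spikes accumulating at a point $x_0$, against a constant $\psi$ sitting between $\liminf_{z\to x_0}\varphi$ and $\varphi(x_0)$, gives $x_0\in\{\psi<\varphi\}$ but not in its interior). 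Second, even granting openness, the claimed containment $\supp\chi\subset\{\psi_k<\varphi_k\}$ for $k$ large does not follow from a Dini-type argument: along a sequence $x_j\to x^*$ with $\psi_{k_j}(x_j)\geq\varphi(x_j)$ one only obtains $\limsup_j\varphi(x_j)\leq\psi(x^*)$, and upper semicontinuity of $\varphi$ gives an upper bound $\limsup_j\varphi(x_j)\leq\varphi(x^*)$ in the \emph{wrong} direction, so there is no contradiction with $\psi(x^*)<\varphi(x^*)$. Third, and most seriously, the final phrase ``extends across $\{\varphi=\psi\}$'' is not an argument: the desired inequality includes the term $\mathbf 1_{\{\varphi=\psi\}}\ddc\psi\leq\mathbf 1_{\{\varphi=\psi\}}\ddc\max(\varphi,\psi)$, and the contact set $\{\varphi=\psi\}$ is precisely where $\ddc\varphi$, $\ddc\psi$, $\ddc\max(\varphi,\psi)$ can all charge mass and where the statement is nontrivial; your argument says nothing about it. This is exactly the difficulty that forces the known proofs (Bedford--Taylor, Demailly) to use a different mechanism, e.g.\ a careful analysis of $\max(\varphi,\psi+c)$ over a.e.\ levels $c$, rather than naive mollification of both functions.
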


Using the comparison inequality, we  deduce the structure of $\widehat\mu$ on $bD$.

\begin{lemma}\label{prop-mass-nu}
	$\widehat U_\mu$ is continuous on $X\setminus bD$ and  the mass of $\widehat\mu$ on $bD$ is non-zero.	
\end{lemma}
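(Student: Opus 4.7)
The two claims are essentially independent, and I treat them separately.

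For continuity on $X\setminus bD$, I split into cases according to the location of a point $x_0$. On $D$, Lemma~\ref{prop-ddc-eql-omega} identifies $\widehat U_\mu$ with $U_\mu$; since $\mu$ puts no mass on $D$, the identity $\ddc U_\mu=-\omega$ there is an elliptic equation with smooth data, so $U_\mu$ is smooth on $D$. On the open set $\{\widehat U_\mu<0\}\cap(X\setminus\overline D)$, the same lemma gives $\ddc\widehat U_\mu=-\omega$, again ensuring smoothness. The delicate case is $x_0\in X\setminus\overline D$ with $\widehat U_\mu(x_0)=0$: take a small ball $B\ni x_0$ disjoint from $\overline D$ and a smooth local potential $\varphi$ of $\omega$ on $B$. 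Then $V:=\widehat U_\mu+\varphi$ is a bounded subharmonic function on $B$ with $V\leq \varphi$ and $V(x_0)=\varphi(x_0)$, which places us in the classical obstacle problem with the smooth obstacle $\varphi$; the Perron envelope in this setting is continuous, hence $V$, and thus $\widehat U_\mu$, is continuous at $x_0$.

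For the mass of $\widehat\mu$ on $bD$, the key step is to show $\widehat\mu\leq\omega$ on $\{\widehat U_\mu=0\}$. Applying Lemma~\ref{comparison} on a local chart $B$ with $\ddc\varphi=\omega$ to the bounded subharmonic pair $\widehat U_\mu+\varphi$ and $-\epsilon+\varphi$ (for $\epsilon>0$) yields
\begin{equation*}
\mathbf 1_{\widehat U_\mu\leq-\epsilon}\,\omega+\mathbf 1_{\widehat U_\mu>-\epsilon}\,\widehat\mu\;\leq\;\ddc\max(\widehat U_\mu,-\epsilon)+\omega.
\end{equation*}
Since $\widehat U_\mu\leq 0$, the function $\max(\widehat U_\mu,-\epsilon)$ decreases uniformly to $0$ as $\epsilon\to 0^+$, so $\ddc\max(\widehat U_\mu,-\epsilon)\to 0$ in the sense of distributions; the left-hand side converges monotonically to $\mathbf 1_{\widehat U_\mu<0}\,\omega+\mathbf 1_{\widehat U_\mu=0}\,\widehat\mu$, giving the claimed inequality. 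Combined with $\widehat\mu=0$ on $D$ and on $\{\widehat U_\mu<0\}\cap(X\setminus\overline D)$ from Lemma~\ref{prop-ddc-eql-omega}, and using $\widehat\mu(X)=1=\omega(X)$ together with $\omega(bD)=0$, this yields
\begin{equation*}
\widehat\mu(bD)=1-\widehat\mu\bigl(\{\widehat U_\mu=0\}\cap(X\setminus\overline D)\bigr)\;\geq\;\omega(D)+\omega\bigl(\{\widehat U_\mu<0\}\cap(X\setminus\overline D)\bigr)\;>\;0,
\end{equation*}
where the strict positivity uses that $D$ is non-empty open and, by Lemma~\ref{forbidden-nonempty}, so is $\{\widehat U_\mu<0\}\cap(X\setminus\overline D)$.

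The main obstacle is the continuity step at a contact point $x_0\in X\setminus\overline D$ where $\widehat U_\mu(x_0)=0$: upper semicontinuity is automatic from the envelope definition, but lower semicontinuity requires obstacle-problem regularity (or an equivalent competitor construction) and is not just a formal consequence of the defining properties. Lemma~\ref{comparison}, which plays the role of the $\omega$-subharmonic Bedford--Taylor comparison, is also the crux of the mass argument via the measure inequality on $\{\widehat U_\mu=0\}$.
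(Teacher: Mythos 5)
Your mass argument is essentially the one in the paper: both hinge on Lemma~\ref{comparison} applied in a local chart to get $\widehat\mu\leq\omega$ on $\{\widehat U_\mu=0\}\setminus\overline D$, followed by a mass count against $\omega(D)>0$. The paper reaches that measure inequality in one step by taking $\psi=\widehat U_\mu+\varphi$ and $\varphi$ as the two subharmonic functions in Lemma~\ref{comparison}, for which $\max(\widehat U_\mu+\varphi,\varphi)=\varphi$ trivially since $\widehat U_\mu\leq 0$; your choice of the pair $\widehat U_\mu+\varphi$ and $-\epsilon+\varphi$ forces the $\epsilon$-regularization and the weak-limit passage, which are correct but an unnecessary detour.

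The continuity argument is where you diverge and where there is a genuine gap. You correctly isolate the hard case, a contact point $x_0\in X\setminus\overline D$ with $\widehat U_\mu(x_0)=0$, and you flag yourself that lower semicontinuity there is not formal. But you then close by asserting that this ``places us in the classical obstacle problem with the smooth obstacle $\varphi$'' and that the Perron envelope is therefore continuous. That assertion is the conclusion, not a proof of it: you would still need (i) a local replacement argument (in the spirit of Lemma~\ref{prop-ddc-eql-omega}) showing that $\widehat U_\mu$ near $x_0$ coincides with the solution of a local obstacle problem on a small ball with its own boundary data, and (ii) a regularity theorem for such solutions, neither of which you carry out or cite. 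The paper avoids obstacle-problem regularity entirely: by Lemma~\ref{prop-ddc-eql-omega}, $\widehat\mu|_B$ is carried by $\{\widehat U_\mu=0\}\cap B$, and on that set the local potential $\widehat U_\mu+\varphi$ of $\widehat\mu$ equals the smooth function $\varphi$, hence is continuous when restricted to $\supp(\widehat\mu|_B)$; the classical continuity principle for potentials (\cite[Theorem III.2]{tsuji-book}) then upgrades continuity on the support of the measure to continuity on all of $B$. This is cleaner and stays within classical potential theory, and I would recommend replacing the obstacle-problem appeal with it.
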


\begin{proof}
	Clearly, $\widehat U_\mu$ is continuous on $D$.
	We fix a small open ball $B\subset X\setminus \overline D$. Again, let $\varphi$ be a local potential of $\omega$ on $B$. Then $\widehat U_\mu +\varphi$ is subharmonic on $B$ and we have $\widehat U_\mu+\varphi \leq \varphi$.
	As a function on $B\cap \{\widehat U_\mu=0\}$, $\widehat U_\mu +\varphi$ is continuous and it is a local potential of the measure $\widehat \mu$ on $B$. Using the classical result \cite[Theorem III.2]{tsuji-book}, we get that $\widehat U_\mu +\varphi$ is continuous on $B$. This gives the first assertion.

	Applying Lemma \ref{comparison} to $\widehat U_\mu+\varphi$ instead of $\psi$, we obtain
	$$\mathbf 1_{\widehat U_\mu = 0} \ddc (\widehat U_\mu+\varphi)+1_{\widehat U_\mu < 0} \ddc \varphi\leq \ddc \varphi.$$
	Consequently,  $\ddc(\widehat U_\mu +\varphi)\leq \ddc \varphi=\omega$ on $B \cap \{\widehat U_\mu=0\}$ as positive measures. Since $B$ is arbitrary chosen outside $\overline D$, we conclude that the probability measure $\widehat\mu$ is bounded by the K\"ahler form $\omega$ on $\{\widehat U_\mu=0\} \setminus \overline D$.

	Using that $\widehat\mu$ has no mass on $D$ and $\widehat\mu,\omega$ both are probability measures, we conclude that $\widehat\mu$ should have positive mass on $bD$. 
\end{proof}

If $\omega$ has a real analytic local potential, using a theorem of Sakai \cite{sak-ASNS}, one can deduce that locally, $b \{\widehat U_\mu=0\}$ is a piecewise analytic curve or are countably many points, which implies that the $\omega$-measure of $b \{\widehat U_\mu=0\}$ is $0$, see also \cite[Section 2.9]{Nishry-Wennman}. But it is not the case for general $\omega$.

\begin{lemma}\label{prop-nu-S}
	We have $\ddc \widehat U_\mu=0$ on $\{\widehat U_\mu=0\}\setminus bD$ as a measure. In other words, the restriction of $\widehat\mu$ to $X\setminus \overline D$ is equal to $ \omega|_{\{\widehat U_\mu=0\}}$. 
\end{lemma}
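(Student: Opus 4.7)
The plan is to verify $\ddc \widehat U_\mu = 0$ on each of the two disjoint pieces $E_1 := \{\widehat U_\mu = 0\} \cap D$ and $E_2 := \{\widehat U_\mu = 0\} \cap (X \setminus \overline D)$ composing $\{\widehat U_\mu = 0\} \setminus bD$. Lemma \ref{prop-mass-nu} already supplies the inequality $\widehat\mu \leq \omega$ on $E_2$, so the new content is the matching reverse inequality there together with an easy verification on $E_1$.

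The part $E_1$ follows from a regularity remark. Since $\mu$ puts no mass on $D$, we have $\ddc U_\mu = -\omega$ on $D$, so $\widehat U_\mu = U_\mu$ is smooth on $D$ and $-\widehat U_\mu \geq 0$ is strictly subharmonic there (its Laplacian is a positive multiple of the smooth, strictly positive density of $\omega$). A smooth nonnegative function with strictly positive Laplacian cannot vanish on a set of positive Lebesgue measure: at any Lebesgue density point of its zero set the gradient, Hessian, and hence the Laplacian are forced to vanish, contradicting strict subharmonicity. Thus $E_1$ is $\omega$-null and $\ddc \widehat U_\mu|_{E_1} = -\omega|_{E_1} = 0$ as a measure.

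The main case $E_2$ reduces to a classical obstacle problem. Fix a small ball $B \subset X \setminus \overline D$ disjoint from $bD$ and a smooth local potential $\varphi$ of $\omega$ on $B$, so that $u := \widehat U_\mu + \varphi$ is subharmonic on $B$ with $u \leq \varphi$. By extending any local subharmonic competitor $w \leq \varphi$ on $B$ by $\widehat U_\mu$ outside $B$ and invoking the global maximality of $\widehat U_\mu$, one checks that $u$ is the Perron envelope of subharmonic functions bounded above by $\varphi$ on $B$ with boundary data $u|_{\partial B}$. The desired identity $\ddc u = \mathbf{1}_{\{u = \varphi\}}\,\omega$ for such an envelope — equivalently $\ddc \widehat U_\mu = 0$ on $E_2 \cap B$ — is classical; the $\leq$ half on the contact set is already Lemma \ref{prop-mass-nu}.

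The matching reverse inequality is the main technical obstacle, since the comparison lemma yields only one direction. I plan to establish it via a harmonic-replacement argument on the open set $\{u < \varphi\}$: on any small sub-ball $B' \Subset \{u < \varphi\}$, continuity of $u$ and $\varphi$ gives the strict inequality $u \leq \varphi - \eta$ on $\overline{B'}$, so the harmonic extension of $u|_{\partial B'}$ still lies below $\varphi$ on $B'$ and produces a strictly larger admissible competitor unless $u$ is already harmonic on $B'$; hence $\ddc u = 0$ on $\{u < \varphi\}$ and all mass of $\widehat\mu|_B$ concentrates on the contact set. Combining this concentration with the upper bound $\widehat\mu \leq \omega$ on the contact set, the density bound $\widehat\mu(\B(x,r)) \lesssim r^2$ from Lemma \ref{lem-regularity} applied at every $x \in \{\widehat U_\mu = 0\}$ (which is automatically a maximum point of the type-M potential $\widehat U_\mu$ of $\widehat\mu$), and the Lebesgue-nullity of the free boundary $\partial\{u = \varphi\} \cap B$, a density argument matches the now absolutely continuous measures $\widehat\mu$ and $\omega$ pointwise on $E_2$, closing the proof.
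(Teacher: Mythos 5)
Your plan for $E_2$ breaks down at the last step. You explicitly invoke ``Lebesgue-nullity of the free boundary $\partial\{u=\varphi\}\cap B$'' to pass from the easy facts ($\chi:=\widehat\mu/\omega$ equals $1$ on the interior of the contact set and $0$ on $\{u<\varphi\}$) to the conclusion that $\chi=1$ a.e.\ on the full contact set. But that nullity is exactly the point the paper warns about in the paragraph preceding Lemma~\ref{prop-nu-S}: when $\omega$ has a real analytic potential one can use Sakai's theorem to deduce the free boundary is negligible, but ``it is not the case for general $\omega$.'' Nothing earlier in your proposal supplies it: the bound $\widehat\mu\leq\omega$ from Lemma~\ref{prop-mass-nu} and the density estimate of Lemma~\ref{lem-regularity} are both upper bounds of the same strength, and your harmonic-replacement step only reproves Lemma~\ref{prop-ddc-eql-omega} (that $\chi=0$ off the contact set), so none of these give a lower bound on the contact set near a free-boundary point. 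As stated, the proof therefore has a genuine gap.

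The paper's argument is designed precisely to avoid any assertion about the free boundary. Writing $\ddc(\widehat U_\mu+\varphi)=\chi\omega$ with $0\leq\chi\leq1$, it works at an arbitrary Lebesgue density point $x$ of the entire contact set $S=\{\widehat U_\mu=0\}$ and performs a blow-up: the rescaled potentials $\delta^{-2}(\lambda_\delta)_*\bigl(\widehat U_\mu|_{\D(x,\delta)}\bigr)$ lie in a compact family of quasi-subharmonic functions that are $\leq 0$ and vanish at the origin, and the density-one condition forces every $L^1$-subsequential limit to vanish a.e., hence identically. So the rescaled family tends to $0$ in $L^1$, the rescaled $\ddc$ tends weakly to $0$, and one reads off $\lim_{\delta\to0}\Vol(\D(x,\delta))^{-1}\int_{\D(x,\delta)}\chi\,\omega=1$, whence $\chi=1$ a.e.\ on $S$ by Lebesgue differentiation — free boundary or not. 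On the smaller points: your $E_1$ step (a smooth nonnegative function with strictly positive Laplacian cannot vanish on a set of positive measure) is correct and is a reasonable supplement to the paper's terse ``it is enough to prove the equality on $\{\widehat U_\mu=0\}\setminus\overline D$''; and the appeal to a ``classical'' identity $\ddc u=\mathbf 1_{\{u=\varphi\}}\omega$ for the Perron envelope is not something you can take for granted at the required generality — it is precisely what the lemma is proving.
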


\begin{proof}
	It is enough to prove the equality on $\{\widehat U_\mu=0\}\setminus \overline D$ since we know $\widehat\mu$ has no mass on $D$. It is also clear for all the points in $\{\widehat U_\mu=0\}^o$. We only need to consider the points in $b \{\widehat U_\mu=0\}$. 
	Take any small open ball $B$ in $X\setminus \overline D$. Let $0\leq \chi\leq 1$ be a measurable function on $B$ such that 
	\begin{equation}\label{defn-chi}
	\ddc (\widehat U_\mu+\varphi) = \chi\omega.
	\end{equation}
	We know that $\chi=0$ on $B \setminus \{\widehat U_\mu=0\}$ and $\chi=1$ on $B\cap \{\widehat U_\mu=0\}^o$.  Since the problem is local, we can use local coordinates to assume that $B=\D$ and $\omega=f(z) \, i\dd z\wedge  \dd \overline z$ for $z\in \D$, where $f$ is a smooth positive function on a neighborhood of  $\D$.
	Lebesgue density theorem gives that 
	\begin{equation}\label{lebesgue point}
	\lim_{\delta\to 0} {1\over \Vol(\D(x,\delta))} \int_{\D(x,\delta)\cap \{\widehat U_\mu=0\}} \omega =1 
	\quad\text{for a.e. }\, x\in \D\cap \{\widehat U_\mu=0\}.
	\end{equation}
	Take any $x\in \D\cap\{\widehat U_\mu=0\}$ satisfying \eqref{lebesgue point} and let $0<\delta<1/2$.
	Consider the dilation map $\lambda_\delta: \D(x,\delta)\to \D(0,1)$ sending $x$ to $0$.  Equality \eqref{lebesgue point} gives that $$\Vol \big(\D(x,\delta)\big)-\Vol \big(\D(x,\delta)\cap \{\widehat U_\mu=0\}\big) = o(\delta^2) \quad\text{as}\quad \delta \to 0.$$
Thus,  $\delta^{-2}(\lambda_\delta)_*(\widehat U_\mu|_{\D(x,\delta)})$ converges to $0$ in $L^1$ as $\delta\to 0$. It follows that 
	$$\delta^{-2}\ddc\big[(\lambda_\delta)_*(\widehat U_\mu|_{\D(x,\delta)})\big] \to 0  \quad\text{weakly as}\quad \delta\to 0.$$
	Pulling back last current by $\lambda_\delta$, yields
	$$\lim_{\delta\to 0}  {1\over\delta^2   }\int_{\D(x,\delta)} \ddc \widehat U_\mu=0.$$
	
	On the other hand, since $\omega=f(z) \, i\dd z\wedge  \dd\overline z$ for $z\in \D$ and $f$ is positive smooth, we have 
	$\delta^2\simeq\delta^2\int_{\D} \omega\simeq \int_{\D(x,\delta)}\omega  \quad\text{for}\quad  0<\delta<1/2$.
	Therefore, we conclude that 
	$$\lim_{\delta\to 0} {1\over \Vol(\D(x,\delta))} \int_{\D(x,\delta)} \ddc \widehat U_\mu=0.$$ 
	Combining with \eqref{defn-chi}, we obtain
	$$\lim_{\delta\to 0} {1\over \Vol(\D(x,\delta))} \int_{\D(x,\delta)}\chi \omega =1.$$
	Last equality holds for a.e.\ $x\in \D\cap \{\widehat U_\mu=0\}$.
	By Lebesgue differential theorem, we have $\chi=1$ a.e.\ on $\D\cap \{\widehat U_\mu=0\}$. This finishes the proof of the lemma.
\end{proof}

We have completed the proof of Proposition \ref{p:envelop}. Next, we come back to the discussion of equilibrium measure $\nu$. One will see that $\nu$ has the same properties as $\widehat \mu$ above.
The measure $\nu$ is defined using the following lemma. 

\smallskip

We fix a topology for the space $\cM(X)$.
Define the \textit{Wasserstein distance} between two probability measures $\mu_1, \mu_2$ on $X$ by
$$\dist_W(\mu_1,\mu_2):=\sup_\phi \Big|\int_X \phi \, \dd\mu_1 - \int_X \phi \, \dd\mu_2\Big|,$$
where $\phi$ runs in the set of all Lipschitz functions on $X$ with Lipschitz constant $1$ with respect to the metric induced by $\omega_0$.

\begin{lemma}\label{l:convex}
		As functionals on $\mathcal M (X\setminus D)$, $\mu\mapsto \max U'_\mu$ is continuous,
	$\mu\mapsto -\int_X  U'_\mu \,\dd\mu$ is lower semicontinuous,
 $\oI_{\omega,D}$ is lower semicontinuous and strictly convex. In particular, $\oI_{\omega,D}$ admits a unique minimizer $\nu$ that we call the equilibrium measure of $D$.
\end{lemma}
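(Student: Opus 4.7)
The plan is to handle the three asserted properties of $\oI_{\omega,D}$ via the decomposition $\oI_{\omega,D}(\mu) = -\int_X U'_\mu\,d\mu + 2\max U'_\mu$ of formula \eqref{second-formula-I}, treating the maximum term and the energy term by different techniques. For continuity of $\mu\mapsto\max U'_\mu$, I will use the identity $\max U'_\mu = -\int_X U_\mu\,\omega$, which follows from $U_\mu = U'_\mu - \max U'_\mu$ together with $\int_X U'_\mu\,\omega = 0$ (Corollary \ref{c:Green}). Given $\mu_n\to\mu$ weakly, the $\omega$-subharmonic functions $U_{\mu_n}$, normalized by $\max = 0$, form an $L^1(X)$-precompact family; any subsequential $L^1$-limit $\phi$ is $\omega$-subharmonic with $\ddc\phi+\omega = \mu$, and a Hartogs-type argument applied along a convergent subsequence of maximizers $x_n$ of $U_{\mu_n}$ forces $\max\phi = 0$. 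Uniqueness of the M-type potential identifies $\phi = U_\mu$, so $U_{\mu_n}\to U_\mu$ in $L^1(X)$ and $\int_X U_{\mu_n}\,\omega\to\int_X U_\mu\,\omega$.

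For the lower semicontinuity of $\mu\mapsto -\int_X U'_\mu\,d\mu = \int_{X\times X}(-G)\,d(\mu\otimes\mu)$, I use that $-G$ is lower semicontinuous and bounded below on $X\times X$, since $G$ has only a logarithmic singularity on the diagonal by Lemma \ref{l:Green}. Writing $-G = \sup_k(-G_k)$ with $-G_k := \min(-G,k)$ continuous and bounded, weak convergence $\mu_n\otimes\mu_n\to\mu\otimes\mu$ yields $\int(-G_k)\,d(\mu_n\otimes\mu_n)\to\int(-G_k)\,d(\mu\otimes\mu)$ for each $k$, and monotone convergence as $k\to\infty$ delivers the l.s.c.\ claim. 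Combined with the continuity of $\max U'_\mu$, this shows $\oI_{\omega,D}$ is l.s.c.

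For strict convexity, $\max U'_\mu$ is convex as a pointwise supremum of affine functionals in $\mu$, and the energy $\oE(\mu) := -\int_X U'_\mu\,d\mu = B(\mu,\mu)$, where $B(\mu,\rho) := -\int_X U'_\mu\,d\rho$ is symmetric by Lemma \ref{commut-potential}, is strictly convex thanks to a standard positive-definiteness: for any signed measure $\sigma$ of mass zero, $\ddc U'_\sigma = \sigma$, so Stokes gives $B(\sigma,\sigma) = -\int_X U'_\sigma\,\ddc U'_\sigma = \int_X dU'_\sigma\wedge\dc U'_\sigma \ge 0$, with equality iff $U'_\sigma$ is constant iff $\sigma = 0$. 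The bilinear identity
$$\oE(t\mu_0+(1-t)\mu_1) - t\oE(\mu_0) - (1-t)\oE(\mu_1) = -t(1-t)\,B(\mu_0-\mu_1,\mu_0-\mu_1)$$
is then strictly negative for $\mu_0\ne\mu_1$ and $t\in(0,1)$. Existence of a minimizer follows from compactness of $\cM(X\setminus D)$ in the Wasserstein topology together with l.s.c.\ (the energy is finite on any probability measure on $X\setminus D$ with bounded density, since the logarithmic singularity of $G$ is then integrable); strict convexity forces it to be unique.

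The main obstacle is the continuity of $\max U'_\mu$. Upper semicontinuity can be obtained directly via the Portmanteau theorem applied to $\delta_{x_n}\otimes\mu_n \to \delta_{x^*}\otimes\mu$, using that $G$ is u.s.c.\ on $X\times X$; but lower semicontinuity cannot be extracted by fixing a maximizer $x^*$ of $U'_\mu$ and looking at $U'_{\mu_n}(x^*) = \int G(x^*,\cdot)\,d\mu_n$, since $G(x^*,\cdot)$ is itself u.s.c.\ rather than l.s.c. The rescue is the $L^1$-compactness of the family of $\omega$-subharmonic functions normalized by $\max = 0$, coupled with the Hartogs-type preservation of this normalization in the limit.
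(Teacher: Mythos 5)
Your proof is correct and self-contained, whereas the paper's proof of this lemma is essentially a citation: it observes that formula \eqref{second-formula-I} lets one transport \cite[Proposition~24]{zei-zel-imrn} and \cite[Proposition~9]{zel-imrn} (stated for weighted energies on a compact set $K$) to the present setting by taking $K=X$, and then restricts from $\cM(X)$ to $\cM(X\setminus D)$. You instead reprove the facts from scratch, which is a genuinely different expository route even if the underlying potential-theoretic ideas are the same ones the cited propositions rely on. Your treatment of the continuity of $\mu\mapsto\max U'_\mu$ via the identity $\max U'_\mu=-\int_X U_\mu\,\omega$, $L^1$-compactness of the normalized family $\{U_{\mu_n}\}$, and Hartogs' lemma to preserve $\max=0$ in the limit, is the correct fix for the obstruction you identify (one cannot just evaluate $U'_{\mu_n}$ at a maximizer of $U'_\mu$, since $G(x^*,\cdot)$ is only u.s.c.). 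The monotone truncation $-G=\sup_k\min(-G,k)$ together with $\mu_n\otimes\mu_n\to\mu\otimes\mu$ is the standard way to get l.s.c.\ of the quadratic energy. For strict convexity, your reduction to positive-definiteness of $B(\sigma,\sigma)=\int dU'_\sigma\wedge\dc U'_\sigma\ge0$ for mass-zero $\sigma$ is correct; one should perhaps add the remark that the bilinear identity is only needed when both $\oE(\mu_0)$ and $\oE(\mu_1)$ are finite (otherwise convexity is trivial), in which case the cross term $B(\mu_0,\mu_1)$ is finite by Cauchy--Schwarz for the truncated kernels, and the Stokes computation is justified because $U'_\sigma$ lies in the finite-energy class. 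With weak compactness of $\cM(X\setminus D)$, existence and uniqueness follow as you state. What the paper's citation buys is brevity; what your argument buys is a proof that is readable without leaving the paper and that makes explicit which facts about the Green kernel are really used.
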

\proof
Using the formula \eqref{second-formula-I},
one can replace $K$ by $X$ in 
\cite[Proposition 24]{zei-zel-imrn} and \cite[Proposition 9]{zel-imrn},  getting the desired identities on $\cM (X)$. In particular, the same properties hold on $\cM(X\setminus D)$.
\endproof

Observe immediately that $\nu$ cannot charge on polar sets, otherwise $\oI_{\omega,D}(\nu)=\infty$.
 We have the following characterization of $\nu$.

\begin{lemma} \label{l:min}
For any probability measure $\mu\in \cM(X\setminus D)$, we have
$$\int_X U_\mu \, \dd\nu \leq \int_X U_\nu \, \dd\nu.$$
\end{lemma}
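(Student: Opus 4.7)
\medskip

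\noindent\textbf{Proof plan for Lemma \ref{l:min}.} The plan is to exploit the fact that $\nu$ minimizes $\cI_{\omega,D}$ on the convex set $\cM(X\setminus D)$ by testing against one-parameter convex combinations. Given $\mu\in\cM(X\setminus D)$, I would set $\nu_t:=(1-t)\nu+t\mu$ for $t\in[0,1]$, which again lies in $\cM(X\setminus D)$, so that $\cI_{\omega,D}(\nu_t)\geq\cI_{\omega,D}(\nu)$ for all such $t$. The useful representation is the one from \eqref{second-formula-I}, namely $\cI_{\omega,D}(\mu)=-\int_X U'_\mu\,\dd\mu+2\max U'_\mu$, since the first term is a genuine quadratic form in $\mu$ while the second is manifestly convex.

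Next I would expand each piece along $\nu_t$. Because $U'$ depends linearly on its measure argument, $U'_{\nu_t}=(1-t)U'_\nu+tU'_\mu$, and Lemma~\ref{commut-potential} gives the symmetric identity $\int U'_\nu\,\dd\mu=\int U'_\mu\,\dd\nu$, so that
\begin{equation*}
\int_X U'_{\nu_t}\,\dd\nu_t=(1-t)^2\int_X U'_\nu\,\dd\nu+2t(1-t)\int_X U'_\mu\,\dd\nu+t^2\int_X U'_\mu\,\dd\mu.
\end{equation*}
For the max term, the pointwise bound $U'_{\nu_t}\leq (1-t)U'_\nu+tU'_\mu$ yields the convexity estimate $\max U'_{\nu_t}\leq(1-t)\max U'_\nu+t\max U'_\mu$. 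Substituting both into $\cI_{\omega,D}(\nu_t)\geq\cI_{\omega,D}(\nu)$ produces
\begin{equation*}
0\leq 2t\bigl(\max U'_\mu-\max U'_\nu\bigr)+\bigl(2t-t^2\bigr)\!\int_X U'_\nu\,\dd\nu-2t(1-t)\!\int_X U'_\mu\,\dd\nu-t^2\!\int_X U'_\mu\,\dd\mu.
\end{equation*}

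Finally, I would divide by $t>0$ and send $t\to0^+$; the finiteness of every integral (guaranteed since $\cI_{\omega,D}(\nu)<\infty$, forcing in particular that $\mu$ and $\nu$ put no mass on polar sets) justifies this. The limit simplifies to
\begin{equation*}
\int_X U'_\mu\,\dd\nu-\max U'_\mu\leq \int_X U'_\nu\,\dd\nu-\max U'_\nu,
\end{equation*}
and since $U_\rho=U'_\rho-\max U'_\rho$ by definition, this is exactly the claimed inequality $\int_X U_\mu\,\dd\nu\leq\int_X U_\nu\,\dd\nu$.

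The main potential obstacle is the $\max U'_{\nu_t}$ term, which is not differentiable in $t$; but one does not need differentiability, only the elementary convexity bound above, which provides precisely the right one-sided inequality. A secondary point to check is that all quantities are finite and the derivative computation is rigorous at $t=0^+$, which follows from $\cI_{\omega,D}(\nu)<\infty$ (guaranteed by Lemma~\ref{l:convex}) and the polynomial dependence in $t$ of the quadratic part.
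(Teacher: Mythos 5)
Your proof takes essentially the same route as the paper: you pass to the convex combination $\nu_t=(1-t)\nu+t\mu$, expand $\cI_{\omega,D}(\nu_t)$ via formula \eqref{second-formula-I}, the linearity of $\mu\mapsto U'_\mu$ and Lemma~\ref{commut-potential}, use the convexity bound $\max U'_{\nu_t}\leq(1-t)\max U'_\nu+t\max U'_\mu$, and divide by $t$ before letting $t\to0^+$; this is exactly the computation in the paper.

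One small discrepancy in the bookkeeping of degenerate cases: the paper first reduces to the case $\int_X U_\mu\,\dd\nu>-\infty$ (otherwise the claimed inequality is trivially true), which is precisely the finiteness needed to make the linear-in-$t$ coefficient meaningful. Your stated justification --- that $\cI_{\omega,D}(\nu)<\infty$ forces all the integrals appearing in the expansion to be finite --- is not correct as written: finiteness of $\cI_{\omega,D}(\nu)$ says nothing about $\mu$, and in particular $\int_X U'_\mu\,\dd\mu$ (the coefficient of $t^2$) can be $-\infty$ for a general $\mu\in\cM(X\setminus D)$. This does not damage the one-sided limit argument (the $t^2$ term, if $+\infty$, only strengthens the upper bound after one invokes the reduction to $\int_X U_\mu\,\dd\nu>-\infty$), and the paper itself passes over the same point with an ``$O(t^2)$'' notation; but the reduction step you omit is the one that makes the trivial case harmless, so you should state it explicitly rather than appealing to $\cI_{\omega,D}(\nu)<\infty$.
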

\proof
 We may  assume $\int_X U_\mu \,\dd \nu>-\infty$, otherwise the  inequality holds trivially. 
	Consider the probability measures $\mu_t:=(1-t)\nu+t\mu\in \mathcal  M(X\setminus D)$ for $0<t<1$. Observe that  $U'_{\mu_t} = (1-t)U'_\nu +t U'_\mu$.  By Lemma \ref{commut-potential} and \eqref{second-formula-I}, we have
	 \begin{align*}
	&\oI_{\omega,D}(\mu_t)=-\int_X \big((1-t)U'_\nu+tU'_\mu\big) \,\dd\big((1-t)\nu+t\mu \big)+2\max \big((1-t)U'_\nu+tU'_\mu\big)\\
	&\leq  -(1-t)^2\int_X U'_\nu \,\dd\nu-2(1-t)t\int_X U'_\mu\,\dd\nu -t^2\int_X U'_\mu\,\dd\mu  +2(1-t)\max U'_\nu+2t\max U'_\mu \\
	&=\oI_{\omega,D}(\nu)+2t\Big(\int_X U'_\nu \,\dd\nu -\int_X U'_\mu\,\dd\nu-\max U'_\nu  +\max U'_\mu \Big)+O(t^2).
	\end{align*}
	
	Since $\nu$ is the   minimizer of $\oI_{\omega,D}$, letting $t\to 0$, we deduce that 
  \begin{equation}\label{Unu-Umu-max}
   \int_X U'_\nu \,\dd\nu -\int_X U'_\mu\,\dd\nu-\max U'_\nu  +\max U'_\mu \geq 0. 
  \end{equation}
	This ends the proof.
\endproof

\begin{corollary}\label{cor-U-mu-widehat}
	We have $\widehat U_\nu =U_\nu$ and $\widehat \nu =\nu$.
\end{corollary}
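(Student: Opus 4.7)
The plan is to reduce the corollary to the uniqueness of the minimizer of $\cI_{\omega,D}$ supplied by Lemma~\ref{l:convex}. I will first show $\widehat\nu=\nu$, from which $\widehat U_\nu=U_\nu$ will then follow by a short constancy argument. Note that $U_\nu$ itself is a competitor in the envelope defining $\widehat U_\nu$: it lies in $\SH(X,\omega)$, is $\le 0$ on $X$, and is tautologically $\le U_\nu$ on $\overline D$. So $\widehat U_\nu\ge U_\nu$ on $X$. Choosing $x_0\in X$ with $U_\nu(x_0)=\max U_\nu=0$, the pinch $0=U_\nu(x_0)\le \widehat U_\nu(x_0)\le 0$ forces $\widehat U_\nu(x_0)=0$, so $\max\widehat U_\nu=0$. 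Hence $\widehat U_\nu$ coincides with the $\omega$-potential of type M of the probability measure $\widehat\nu:=\ddc\widehat U_\nu+\omega$, and by Proposition~\ref{p:envelop} $\widehat\nu\in \cM(X\setminus D)$.

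The core of the argument is the energy identity
\begin{equation*}
\cI_{\omega,D}(\widehat\nu)-\cI_{\omega,D}(\nu)=-\int_X f\,\dd\nu-\int_X f\,\dd\widehat\nu,\qquad f:=\widehat U_\nu-U_\nu\ge 0.
\end{equation*}
To derive it I rewrite $\cI_{\omega,D}(\mu)=-2\int_X U_\mu\,\omega-\int_X U_\mu\,\ddc U_\mu$ using $\dd\mu=\ddc U_\mu+\omega$, then expand $\int_X\widehat U_\nu\,\ddc\widehat U_\nu$ via the splitting $\widehat U_\nu=U_\nu+f$ and integrate by parts on the closed surface $X$. The substitutions $\ddc U_\nu=\nu-\omega$ and $\ddc f=\widehat\nu-\nu$ then collapse all the cross terms as well as all the $\omega$-contributions, and the identity falls out. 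Since $f\ge 0$, the right-hand side is non-positive, hence $\cI_{\omega,D}(\widehat\nu)\le \cI_{\omega,D}(\nu)$; by the uniqueness of the minimizer in Lemma~\ref{l:convex}, $\widehat\nu=\nu$. Then $\ddc f=\widehat\nu-\nu=0$ on the connected compact surface $X$, so $f$ is constant; since $f(x_0)=0$, this constant is $0$, giving $\widehat U_\nu=U_\nu$.

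The step I expect to be delicate is the integration by parts above, because $U_\nu$ is only $\omega$-subharmonic and may take the value $-\infty$ on a polar set possibly charged by $\nu$. My plan is to circumvent this by the standard truncation $U_\nu^N:=\max(U_\nu,-N)$, performing the computation for the bounded $\omega$-subharmonic functions $U_\nu^N$ and their companions $\nu^N:=\ddc U_\nu^N+\omega$ (for which Stokes' formula is classical), and then passing to the limit $N\to\infty$. This limit is legitimate because $\int_X U_\nu\,\dd\nu$ is finite (as $\cI_{\omega,D}(\nu)<\infty$), $\int_X U_\nu\,\dd\widehat\nu$ is finite (by Proposition~\ref{p:envelop}, $\widehat\nu$ agrees with $\omega$ off $bD$, while $U_\nu=\widehat U_\nu$ is bounded on $bD\subset\overline D$), and $\widehat U_\nu$ is globally bounded.
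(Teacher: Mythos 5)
Your proof is correct in outline, but it takes a noticeably longer route than the paper's. The paper proceeds as follows: Proposition~\ref{p:envelop} shows $\widehat\nu\in\cM(X\setminus D)$, so Lemma~\ref{l:min} applies with $\mu=\widehat\nu$ and gives $\int_X U_{\widehat\nu}\,\dd\nu\le\int_X U_\nu\,\dd\nu$; since $U_\nu$ is itself a competitor in the envelope one has $U_{\widehat\nu}=\widehat U_\nu\ge U_\nu$, which forces equality of the integrals and hence $\widehat U_\nu=U_\nu$ (and then $\widehat\nu=\nu$ by taking $\ddc$). In effect, the paper uses the first-order optimality inequality already packaged in Lemma~\ref{l:min} and never needs to compute $\cI_{\omega,D}(\widehat\nu)$. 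You instead expand the energy $\cI_{\omega,D}(\widehat\nu)-\cI_{\omega,D}(\nu)$ from scratch and invoke the strict convexity/uniqueness from Lemma~\ref{l:convex}. That is a valid alternative (your identity $\cI_{\omega,D}(\widehat\nu)-\cI_{\omega,D}(\nu)=-\int_X f\,\dd\nu-\int_X f\,\dd\widehat\nu$ is correct, and with $f\ge 0$ both terms are non-positive), but it forces you to justify a Stokes-type integration by parts with potentially unbounded quasi-subharmonic functions, which Lemma~\ref{l:min} lets you avoid entirely. If you notice that $\widehat\nu\in\cM(X\setminus D)$ together with $\widehat U_\nu\ge U_\nu$ is precisely what Lemma~\ref{l:min} was built to exploit, the argument collapses to a few lines.

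One small inaccuracy in your finiteness discussion: you assert that $U_\nu=\widehat U_\nu$ is \emph{bounded} on $bD$. Lemma~\ref{prop-mass-nu} only gives continuity of $\widehat U_\nu$ on $X\setminus bD$; on $bD$ itself $\widehat U_\nu$ is merely upper semicontinuous and could a priori take the value $-\infty$ on a polar subset. This does not sink the truncation scheme (one can truncate $\widehat U_\nu$ as well, and use that $\nu$ and $\widehat\nu$ do not charge polar sets), but the claim as stated is not justified by what has been established up to this point.
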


\begin{proof}
	From Proposition \ref{p:envelop}, we see that $\widehat \nu \in \cM(X\setminus D)$.
	 So Lemma \ref{l:min} gives  that  $$\int_X  U_{\widehat \nu} \,\dd \nu \leq \int_X U_\nu \,\dd \nu.$$ But by definition, $U_{\widehat\nu} =\widehat U_{\nu} \geq U_\nu$. We conclude that $\widehat U_\nu =U_\nu$ and the lemma follows.
\end{proof}

Therefore, in view of  Proposition \ref{p:envelop}, we have $\nu=\nu_{bD}+\nu_S$ with $\nu_{bD}\not=0$ and $\nu_S=\omega|_{S}$, where $S=\{ U_\nu =0\}$. Actually $\nu$ has no mass on $S\cap bD$.

\begin{lemma}\label{lema-S-no-mass}
		We have $\nu(S \cap bD)=0$.
\end{lemma}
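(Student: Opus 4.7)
The plan is to combine the density estimate from Lemma \ref{lem-regularity} with the one-dimensional nature of $bD$ via a covering argument. The point is that Lemma \ref{lem-regularity} says $\nu\bigl(\B(x,r)\bigr)\leq C r^{2}$ at every point $x$ where $U_\nu$ attains a maximum, and since $\max U_\nu=0$, this applies at every $x\in S$. Meanwhile, $bD$ is piecewise $\Cc^{1+\ep}$, hence has finite one-dimensional Hausdorff measure, so it can be covered by $O(1/\delta)$ balls of radius $\delta$. Quadratic mass against linear covering complexity will give an $O(\delta)$ bound on $\nu(S\cap bD)$.

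Concretely, I would first fix $\delta>0$ small. Using that $bD$ is a finite union of compact $\Cc^{1+\ep}$ arcs (plus finitely many singular points, which contribute no mass by the atom-free consequence of Lemma \ref{lem-regularity}), there is a constant $M>0$ independent of $\delta$ and a cover $\{\B(x_i,\delta)\}_{i=1}^{N_\delta}$ of $bD$ with $N_\delta\leq M/\delta$. Discard those balls that do not meet $S$; for each surviving index $i$, pick $y_i\in S\cap \B(x_i,\delta)$. Then $\B(x_i,\delta)\subset \B(y_i,2\delta)$, so
$$S\cap bD \ \subset\ \bigcup_i \B(y_i,2\delta),\qquad y_i\in S.$$

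Now apply Lemma \ref{lem-regularity} at each $y_i$, using the fact that $U_\nu(y_i)=0=\max U_\nu$: this gives $\nu\bigl(\B(y_i,2\delta)\bigr)\leq 4C\delta^{2}$ with $C$ independent of $i$ and $\delta$. Summing,
$$\nu(S\cap bD)\ \leq\ \sum_{i} \nu\bigl(\B(y_i,2\delta)\bigr)\ \leq\ N_\delta\cdot 4C\delta^{2}\ \leq\ 4CM\,\delta.$$
Letting $\delta\to 0$ yields $\nu(S\cap bD)=0$.

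The only mildly delicate step is ensuring that Lemma \ref{lem-regularity} is applicable at every point of $S$, i.e.\ that the constant $C$ is genuinely uniform on $S$; this is read off directly from the statement since $\max U_\nu = 0$ by definition of the $\omega$-potential of type M, so every $x\in S$ is a global maximum of $U_\nu$. The rest is a bookkeeping covering argument that trades the quadratic density bound against the one-dimensionality of $bD$; the main conceptual obstacle — producing the quadratic density — has already been solved in Lemma \ref{lem-regularity}.
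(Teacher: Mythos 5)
Your proof is correct and follows essentially the same approach as the paper: cover $bD$ by small balls, trade the quadratic density estimate $\nu\bigl(\B(x,r)\bigr)\lesssim r^2$ from Lemma \ref{lem-regularity} (valid at every $x\in S$ since $\max U_\nu=0$) against the one-dimensionality of the piecewise $\Cc^{1+\ep}$ boundary, and let the covering parameter shrink. Your version merely makes explicit the re-centering step --- discarding balls that miss $S$ and replacing each surviving $\B(x_i,\delta)$ by $\B(y_i,2\delta)$ with $y_i\in S$ so that Lemma \ref{lem-regularity} applies at the center --- which the paper leaves implicit, and uses balls of a single radius $\delta$ with $N_\delta=O(1/\delta)$ rather than the paper's covering by balls of varying radii with $\sum_j\omega(B_j)<\delta$; the two bookkeeping schemes are equivalent.
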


\begin{proof}
 Since $D$ is piecewise $\Cc^{1+\ep}$, $bD$ is of Hausdorff dimension $1$. For any $\delta>0$,	we can cover $bD$ by finitely many  open discs $\{B_j\}_{ 1\leq j\leq k}$  such that $\sum_{1\leq j\leq k}\omega(B_j)<\delta$. Applying Lemma \ref{lem-regularity}, we deduce that
 $$\nu (S\cap bD)\lesssim \sum_{1\leq j\leq k}  \omega(B_j)<\delta. $$
By taking $\delta \to 0$ we finish the proof.
\end{proof}

\medskip

\section{Proof of Theorem \ref{t:main-1}} \label{sec:proof-theorem -1}

In this section, we assume that $bD$ is the union of finitely many Jordan curves. By  \cite[Theorem I.12]{tsuji-book}, every continuous function on $bD$ can be extended to a harmonic function on $D$ which is continuous up to $bD$.
We start with 
the following crucial proposition, which will be applied several times. It provide a efficient way to determine whether a probability measure is the equilibrium measure $\nu$.  The original idea comes from Nishry-Wennman \cite{Nishry-Wennman}. However, we need to overcome three extra difficulties:
\begin{enumerate}
	\item  the K\"ahler form $\omega$ may not have real analytic local potential;
	\item   $D$ is not assumed to be simply connected, or even connected;
	\item  $b D$ may have cusps.
\end{enumerate} 

\begin{proposition} \label{l:perturbation}
	Let $h$ be a continuous function on $\overline D$ and harmonic on $D$. Assume that $h+U_\nu\leq 0$ on $\overline D$. Then 
	$$\int_{bD} h \, \dd\nu \leq 0.$$ 
\end{proposition}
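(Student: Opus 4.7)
The plan is to apply the variational inequality of Lemma \ref{l:min} to a family of perturbed competitors $\mu_t \in \cM(X\setminus D)$ built from the hypothesis, and then extract the first-order inequality by sending the perturbation parameter $t \to 0^+$.

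For $t \in [0,1]$, I would introduce the Perron upper envelope
$$V_t := \sup \big\{ \phi \in \SH(X,\omega) : \phi \leq 0 \text{ on } X,\ \phi \leq U_\nu + t h \text{ on } \overline D \big\}^*.$$
The hypothesis $h + U_\nu \leq 0$ on $\overline D$, combined with $U_\nu \leq 0$, guarantees that $U_\nu + t h \leq 0$ on $\overline D$ for $t \in [0,1]$, so the defining family is nonempty: it contains $U_\nu - C t$ with $C := \max_{\overline D}|h|$, since $-Ct \leq th$ on $\overline D$. This gives the lower bound $V_t \geq U_\nu - C t$ on $X$, and Theorem \ref{t:main-1}(1) yields $V_0 = U_\nu$. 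Set $\mu_t := \ddc V_t + \omega$. Since $h$ is harmonic on $D$ and $\ddc U_\nu = -\omega$ there, the function $U_\nu + t h$ has $\ddc(U_\nu + th) = -\omega$ on $D$; adapting the balayage/Dirichlet construction from the proof of Lemma \ref{prop-ddc-eql-omega} (using $U_\nu + t h$ as the local obstacle in place of $U_\mu$) shows $\ddc V_t = -\omega$ on $D$, whence $\mu_t(D) = 0$ and $\mu_t \in \cM(X \setminus D)$.

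Applying Lemma \ref{l:min} to $\mu_t$ and using $U_{\mu_t} = V_t - \max V_t$ yields
$$\int_X (V_t - U_\nu)\, \dd\nu \leq \max V_t.$$
Decomposing $\nu = \nu_{bD} + \nu_S$ via Theorem \ref{t:main-1}(2), with $\nu_S = \omega|_S$, and using $\nu(S \cap bD) = 0$ from Lemma \ref{lema-S-no-mass}, it then suffices to perform a first-order asymptotic analysis: establish (i) $V_t = U_\nu + t h$ on $\overline D$ by exhibiting the Dirichlet extension of $U_\nu + t h$ across $bD$ (modeled on the function $\Phi$ in the proof of Lemma \ref{prop-ddc-eql-omega}) as the attained maximizer, and (ii) $V_t - U_\nu = o(t)$ on $S \cap (X\setminus \overline D)$ together with $\max V_t = o(t)$, relying on the continuity of $U_\nu$ on $X\setminus bD$ from Theorem \ref{t:main-1}(1). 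These reduce the inequality above to $t \int_{bD} h\, \dd\nu + o(t) \leq o(t)$, which on dividing by $t > 0$ and letting $t \to 0^+$ yields the claim.

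The main obstacle is (ii): the a priori bound $V_t \geq U_\nu - Ct$ only gives $O(t)$ rather than $o(t)$ errors on $S$, and since $h$ is not sign-constrained $V_t$ may lie strictly below $U_\nu$ on $S$. Overcoming this requires a finer analysis of $V_t - U_\nu$ using the equation $\ddc(V_t - U_\nu) = \mu_t - \nu$, the boundary behavior on $bD$ from (i), and the envelope structure $\mu_t|_{\{V_t = 0\} \setminus \overline D} = \omega|_{\{V_t = 0\} \setminus \overline D}$ inherited from Proposition \ref{p:envelop} applied to $\mu_t$. Alternatively, one may first replace $h$ by $h - \varepsilon$ for $\varepsilon > 0$ small — for which the strict inequality $h - \varepsilon + U_\nu < 0$ on $\overline D$ simplifies the asymptotic control — establish $\int_{bD}(h-\varepsilon)\, \dd\nu \leq 0$, and finally let $\varepsilon \to 0$.
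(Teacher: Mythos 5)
Your overall strategy is exactly the paper's: the envelope $V_t$ is the paper's $U^{\vep h}$ (with $\vep = t$), and the plan of feeding $\mu_t = \ddc V_t + \omega$ into Lemma \ref{l:min} and extracting the first-order term in $t$ is precisely how the paper proceeds. Two minor corrections first. You do not need $\max V_t = o(t)$; since $V_t \leq 0$ by construction, Lemma \ref{l:min} applied to $\mu_t$ directly gives $\int_X (V_t - U_\nu)\,\dd\nu \leq \max V_t \leq 0$, which is all that is used. And your step (i), the claim $V_t = U_\nu + th$ on all of $\overline D$, is stronger than what actually holds: in general one only has $V_t \leq U_\nu + th$ on $\overline D$, with equality guaranteed only at points of $\supp(\mu_t)\cap bD$ (Proposition \ref{prop-struct-nu-vep}). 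That weaker statement, which gives $\int_{bD}(V_t - U_\nu - th)\,\dd\mu_t = 0$, is what the bookkeeping decomposition of $\int_X(V_t - U_\nu)\,\dd\nu$ actually needs (Corollary \ref{cor-energy-o-vep}).

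The genuine gap is step (ii), which you yourself flag: showing $\int_{X\setminus\overline D} |V_t - U_\nu|\,\dd\nu = o(t)$. The crude bound $V_t \geq U_\nu - Ct$ only gives $O(t)$, and your proposed fix of replacing $h$ by $h-\varepsilon$ does not address the difficulty, which has nothing to do with strictness of $h + U_\nu < 0$ on $\overline D$: it is caused by $V_t$ possibly dropping $O(t)$ below $U_\nu$ on the contact set $S$, and shifting the obstacle downward only makes that drop larger. The paper resolves it with a one-sided auxiliary envelope (Lemma \ref{lemma-U-nu-U-veph}): define $U^{-\vep}$ as the envelope with obstacle $U_\nu - \vep$ on $\overline D$, so that $U^{-\vep} \leq V_t \leq U_\nu = 0$ on $S$ (using $|h|\leq 1$). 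Then $U_\nu - U^{-\vep}$ is bounded by $\vep$ and, crucially, vanishes on $S_{-\vep} := \{U^{-\vep}=0\}$, so
$$\int_{X\setminus \overline D}|V_t - U_\nu|\,\dd\nu \leq \int_{S\setminus S_{-\vep}}(U_\nu - U^{-\vep})\,\omega \leq \vep\,\omega(S\setminus S_{-\vep}),$$
and $\omega(S\setminus S_{-\vep}) \to 0$ by an argument of the type in Lemma \ref{lem-nu-veph-set}. This is the missing idea; everything else in your outline is sound and lines up with the paper.
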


The proof of Proposition \ref{l:perturbation} is rather long. After dividing by  some positive constant, we may assume $|h|\leq 1$.  For every $0<\vep<1$, we   define the perturbation 
$$U^{\vep h}:= \sup \big \{ \phi \in\SH(X,\omega):\, \phi\leq 0 \text{ on } X, \, \phi\leq   U_\nu +\vep h  \text{ on } \overline D  \big\}^*.$$ 
Denote by $\nu_{\vep h}:=\ddc U^{\vep h}+\omega$ the probability measure associated to $U^{\vep h}$.  Observe that $U_\nu +\vep h \leq 0$ on $\overline D$. We have the following structure of $\nu_{\vep h}$ similar as $\widehat \mu$ in  Proposition \ref{p:envelop}. However, one cannot apply that proposition directly since $h$ is not globally defined.

\begin{proposition}\label{prop-struct-nu-vep}
	The probability measure $\nu_{\vep h}$ is supported in $S_{\vep h}\cup bD$, where $S_{\vep h}:=\{U^{\vep h}=0\}$, and $\nu_{\vep h}|_{X\setminus \overline D}=\omega |_{S_{\vep h}}$. 	For $x\in bD$, if $U^{\vep h}(x)<  U_\nu(x)+\vep h(x)$, then $x\notin \supp(\nu_{\vep h})$.
\end{proposition}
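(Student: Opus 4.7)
The plan is to mirror the proof of Proposition \ref{p:envelop} (Lemmas \ref{prop-ddc-eql-omega}--\ref{prop-nu-S}), replacing the obstacle $U_\mu$ by $U_\nu+\vep h$ on $\overline{D}$, and then to add a new local argument at $bD$ for the last claim. The structural fact that makes the adaptation run smoothly is that, since $h$ is harmonic on $D$ and $\nu$ carries no mass on $D$, the obstacle satisfies $\ddc(U_\nu+\vep h)=\nu-\omega=-\omega$ on $D$, matching the equation $U^{\vep h}$ should satisfy there.

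For the support statement $\supp(\nu_{\vep h})\subset S_{\vep h}\cup bD$, I will prove $\ddc U^{\vep h}=-\omega$ on both $D$ and $\{U^{\vep h}<0\}\setminus\overline{D}$. The argument on $\{U^{\vep h}<0\}\setminus\overline{D}$ is literally the one in Lemma \ref{prop-ddc-eql-omega}, since the obstacle is vacuous there. On $D$, I run the same Perron sweeping: for balls $B_1\Subset B_2\subset D$ with $\varphi$ a local potential of $\omega$ on $B_2$, the harmonic replacement $\Psi-\varphi$ of $U^{\vep h}$ on $B_1$ is admissible because $\Psi-\varphi-(U_\nu+\vep h)$ is harmonic on $B_1$ with non-positive boundary values on $bB_1$ (from the constraint $U^{\vep h}\leq U_\nu+\vep h$), so by the maximum principle $\Psi-\varphi\leq U_\nu+\vep h\leq 0$ on $B_1$. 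The identity $\nu_{\vep h}|_{X\setminus\overline{D}}=\omega|_{S_{\vep h}}$ and continuity of $U^{\vep h}$ on $X\setminus bD$ then follow by the proofs of Lemmas \ref{prop-mass-nu} and \ref{prop-nu-S} applied verbatim (Tsuji's theorem for continuity, Lemma \ref{comparison} for $\nu_{\vep h}\leq\omega$ on $S_{\vep h}\setminus\overline{D}$, Lebesgue density for the matching lower bound), since those arguments use only $U^{\vep h}\leq 0$ on $X\setminus\overline{D}$.

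The main obstacle is the last assertion about $x\in bD$. Setting $\eta:=U_\nu(x)+\vep h(x)-U^{\vep h}(x)>0$, upper semicontinuity of $U^{\vep h}$ gives a ball $B_2\ni x$ on which $U^{\vep h}<U^{\vep h}(x)+\eta/4<0$, and continuity of $h$ gives a similar bound for $\vep h$. I plan to run the sweep on a ball $B_1\Subset B_2$ and again show $\nu_{\vep h}(B_1)=0$ by verifying admissibility of the competitor $\Psi-\varphi$. The bound $\Psi-\varphi\leq 0$ follows from the maximum principle for the harmonic $\Psi$ together with small oscillation of $\varphi$ on $B_2$. The truly delicate bound is $\Psi-\varphi\leq U_\nu+\vep h$ on $\overline{D}\cap B_1$, because $U_\nu$ need not be continuous at $bD$ from the outside. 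My plan is to use that $U^{\vep h}-U_\nu-\vep h$ is harmonic on $D$ and non-positive on $\overline{D}$ (by the first part of the proof), together with the $\Cc^{1+\vep}$ regularity of $bD$ and boundary regularity of bounded harmonic functions on $D$, to propagate the strict gap at $x$ into a uniform gap on a $D$-neighborhood; combined with the fact that $\Psi-\varphi$ agrees with $U^{\vep h}$ on $bB_1\cap\overline{D}$, this should yield the required estimate for $B_1$ sufficiently small, forcing $x\notin\supp(\nu_{\vep h})$.
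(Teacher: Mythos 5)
Your argument for the first two claims (support in $S_{\vep h}\cup bD$, and $\nu_{\vep h}|_{X\setminus\overline D}=\omega|_{S_{\vep h}}$) is correct and in the same spirit as the paper's, with one genuine, minor improvement. The paper handles the ball $D$ by a dichotomy: either $U^{\vep h}$ touches $U_\nu+\vep h$ somewhere in $D$, in which case the maximum principle applied to the subharmonic difference forces $U^{\vep h}\equiv U_\nu+\vep h$ on $D$, or $U^{\vep h}<U_\nu+\vep h$ everywhere on $D$, in which case a local Perron sweep (``as in Lemma \ref{prop-ddc-eql-omega}'') applies. You instead sweep uniformly on $D$ and verify admissibility in one stroke via the observation that $\ddc(U_\nu+\vep h)=-\omega$ on $D$ (since $\nu(D)=0$ and $h$ is harmonic), which makes $\Psi-\varphi-(U_\nu+\vep h)$ harmonic on $B_1\Subset D$ with $\leq 0$ boundary data. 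This is a cleaner route to the same conclusion; the key continuity input it uses — continuity of $U_\nu+\vep h$ on $D$ — is available there.

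For the last assertion ($x\in bD$), you also mirror the paper's full-ball sweep and correctly identify that the delicate step is the admissibility bound $\Psi-\varphi\leq U_\nu+\vep h$ on $\overline D\cap B_1$. But your proposed resolution does not close the gap. First, $g:=U^{\vep h}-U_\nu-\vep h$, while harmonic on $D$ and $\leq 0$ on $\overline D$, is not a priori bounded (both potentials can blow down), so appeals to boundary regularity of \emph{bounded} harmonic functions are not available as stated. Second, even if you could propagate $g\leq -\eta/2$ onto $D\cap B_1$, that only controls the constraint on $D\cap B_1$; the maximum-principle argument for the harmonic difference has part of its boundary on $bD\cap B_1$, and the admissibility constraint must also be checked on $bD\cap B_1$ directly — precisely the set where harmonicity of $g$ tells you nothing. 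Third, and most fundamentally, what is really needed is a lower bound for $U_\nu+\vep h$ on $\overline D\cap B_1$ in terms of its value at $x$, i.e.\ a form of lower semicontinuity of $U_\nu|_{\overline D}$ at $x$; but $U_\nu$ is only $\omega$-subharmonic, hence merely usc, and Lemma \ref{prop-mass-nu} gives continuity only on $X\setminus bD$. None of the tools you cite rule out $U_\nu$ dropping sharply along $bD$ near $x$. For what it is worth, the paper's own proof of this last assertion just says ``we use the same argument'' and elides exactly this admissibility check; so the subtlety is shared, but as written your argument does not deliver the required estimate.
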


\begin{proof}
	Since $  U_\nu+\vep h$ is upper semicontinuous on $D$, then condition $\phi\leq   U_\nu +\vep h$ on $\overline D$ gives that $U^{\vep h} \leq   U_\nu+\vep h$ on $\overline D$. 
	
	To prove $\supp(\nu_{\vep h})\subset S_{\vep h}\cup bD$, we consider two cases. Either $U^{\vep h}(z) =   U_\nu (z)+\vep h (z)$ for some  $z\in D$, or $U^{\vep h}(z) <   U_\nu (z)+\vep h (z)$ for all $z\in D$.
	
	If $U^{\vep h}(z) =   U_\nu (z)+\vep h (z)$ for some point $z\in D$.
	Notice that $U^{\vep h}- (  U_\nu+\vep h)$ is subharmonic in $D$.  By maximal modulus principle,  $U^{\vep h} =   U_\nu +\vep h$ on $D$. Hence $\nu_{\vep h}$ is supported outside $D$.
	
	If  $U^{\vep h}(z) <   U_\nu (z)+\vep h (z)$ for every $z\in D$.  Then as in  the proof of Lemma \ref{prop-ddc-eql-omega}, for every $z\in D$, we can find a function $ \Phi$ in the admissible family of upper envelop such that $\ddc \Phi=-\omega$ on some small open ball $B \ni z$ and $  \Phi=U^{\vep h}$ on $B$. This also gives that $\nu_{\vep h}$ is supported outside $D$.
	
	The equality $\nu_{\vep h}|_{X\setminus \overline D}= \omega|_{S_{\vep h}}$  follows from the same lines as in  Lemma \ref{prop-nu-S} after replacing $  U_\nu, S$ by $  U_\nu+\vep h, S_{\vep h}$ respectively.
	
	\smallskip
	For the second assertion, we use the same argument as well.
	If $x\in bD$ and  $U^{\vep h}(x)<   U_\nu(x)+\vep h(x)$, we can find a function $ \Phi'$ in the admissible family of upper envelop for $U^{\vep h}$ such that $\ddc \Phi'=-\omega$ on some small open ball $B \ni x$ and $  \Phi'=U^{\vep h}$ on $B$. The implies that the mass of $\nu_{\vep h}$ near $x$ is $0$.
\end{proof}

Here, one really needs the condition that  $  U_\nu +\vep h\leq 0$ on $\overline D$. Otherwise, $\nu_{\vep h}$ may have mass on $D$.  The above proposition also implies that 
\begin{equation}\label{psi-vep-h-nu-vep}
 \int_{bD}\big(U^{\vep h}-  U_\nu-\vep h\big) \,\dd \nu_{\vep h}= 0    \quad\text{and}\quad  \int_{bD}\big(U^{\vep h}-  U_\nu-\vep h\big) \,\dd \nu\leq 0.
\end{equation}

\begin{lemma}\label{lem-difference-less-vep}
	For every $\vep>0$,  
	$|U^{\vep h}-  U_\nu|\leq \vep$ on $X$. 
\end{lemma}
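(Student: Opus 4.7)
The plan is to verify the two inequalities $U^{\vep h}\geq U_\nu-\vep$ and $U^{\vep h}\leq U_\nu+\vep$ separately, in each case by producing a competitor in the appropriate upper envelope. The input we will use is that $|h|\leq 1$ on $\overline D$ (the normalization made just before the proposition) together with Corollary \ref{cor-U-mu-widehat}, which tells us that $U_\nu=\widehat U_\nu$ is itself the envelope
$$U_\nu=\sup\big\{\phi\in\SH(X,\omega):\phi\leq 0\text{ on }X,\ \phi\leq U_\nu\text{ on }\overline D\big\}^*.$$

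For the lower bound, I would test the admissible family defining $U^{\vep h}$ against the candidate $\phi_0:=U_\nu-\vep$. Clearly $\phi_0\in\SH(X,\omega)$ and $\phi_0\leq -\vep\leq 0$ on $X$. On $\overline D$, since $h\geq -1$ we have $\vep h\geq-\vep$, hence $\phi_0=U_\nu-\vep\leq U_\nu+\vep h$. Thus $\phi_0$ is admissible, so $U_\nu-\vep\leq U^{\vep h}$ pointwise, and this inequality passes to the upper semicontinuous regularization.

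For the upper bound, I would start from an arbitrary admissible $\phi$ in the definition of $U^{\vep h}$ and consider $\phi-\vep$. It is still $\omega$-subharmonic and satisfies $\phi-\vep\leq -\vep\leq 0$ on $X$; on $\overline D$, using $h\leq 1$,
$$\phi-\vep\leq U_\nu+\vep h-\vep\leq U_\nu.$$
Therefore $\phi-\vep$ lies in the admissible family for the envelope that produces $\widehat U_\nu$, and Corollary \ref{cor-U-mu-widehat} yields $\phi-\vep\leq\widehat U_\nu=U_\nu$. Taking the supremum over all admissible $\phi$ and then the usc regularization gives $U^{\vep h}\leq U_\nu+\vep$.

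There is no serious obstacle here; the only thing to be a little careful about is that the envelopes are defined by upper semicontinuous regularization, but both inequalities we obtain are between usc functions (or pass through to them), so regularization is harmless. Combining the two bounds yields $|U^{\vep h}-U_\nu|\leq\vep$ on $X$, as required.
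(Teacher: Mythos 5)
Your proof is correct and follows essentially the same route as the paper: the lower bound comes from the fact that $U_\nu-\vep$ is admissible in the envelope defining $U^{\vep h}$ (using $h\geq -1$), and the upper bound comes from observing that every admissible competitor $\phi$ for $U^{\vep h}$ yields, via $\phi-\vep$ (using $h\leq 1$), an admissible competitor for the envelope $\widehat U_\nu$, which equals $U_\nu$ by Corollary \ref{cor-U-mu-widehat}. The paper records this in a single displayed chain of inequalities, but the underlying reasoning is the same.
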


\begin{proof}
	Since $|h|\leq 1$, we have 
		$$U_\nu -\vep \leq  U^{\vep h}\leq \sup \big \{ \phi \in\SH(X,\omega):\, \phi\leq \vep \text{ on } X, \, \phi\leq   U_\nu +\vep   \text{ on } \overline D  \big\}^*=U_\nu+\vep.$$
	This clearly implies the desired inequality.
\end{proof}

Immediately, we get that $\ddc U^{\vep h}\to \ddc U_\nu$ and $\nu_{\vep h}\to \nu$ weakly as $\vep$ tends to $0$. We also have the measure convergence for  $bD$ and $X\setminus D$.

\begin{lemma}\label{lem-nu-veph-set}
We have	$\lim_{\vep\to 0}\nu_{\vep h} (bD)=\nu (bD)$ and $\lim_{\vep\to 0}\nu_{\vep h} (X\setminus \overline D)=\nu (X\setminus \overline D)$.
\end{lemma}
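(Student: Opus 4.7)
The plan is to reduce both limits to a single one by a mass argument, then sandwich that limit using the uniform control from Lemma 2.10. First observe that neither measure charges $D$: by Proposition 3.2 applied to $U^{\vep h}$, the measure $\nu_{\vep h}$ is supported in $S_{\vep h}\cup bD$, and $\nu\in\cM(X\setminus D)$ by construction (more concretely, $\nu=\widehat\nu$ by Corollary 2.12, so Proposition 2.5 forces $\supp(\nu)\subset bD\cup(S\setminus\overline D)$). Since both $\nu_{\vep h}$ and $\nu$ are probability measures,
\[
\nu_{\vep h}(bD)+\nu_{\vep h}(X\setminus\overline D)=1=\nu(bD)+\nu(X\setminus\overline D),
\]
so it suffices to prove $\lim_{\vep\to 0}\nu_{\vep h}(X\setminus\overline D)=\nu(X\setminus\overline D)$; the $bD$-equality follows by complementation.

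For the \emph{upper bound}, Proposition 3.2 gives $\nu_{\vep h}|_{X\setminus\overline D}=\omega|_{S_{\vep h}\setminus\overline D}$, and Lemma 2.10 yields that whenever $U^{\vep h}(x)=0$ one has $U_\nu(x)\geq -\vep$, i.e.\ $S_{\vep h}\subset\{U_\nu\geq -\vep\}$. Hence
\[
\nu_{\vep h}(X\setminus\overline D)\leq \omega\big(\{U_\nu\geq -\vep\}\setminus\overline D\big).
\]
Because $U_\nu$ is upper semicontinuous and non-positive, the closed sets $\{U_\nu\geq -\vep\}$ decrease to $S=\{U_\nu=0\}$ as $\vep\to 0$, and continuity from above of the finite measure $\omega$ gives $\omega(\{U_\nu\geq -\vep\}\setminus\overline D)\to\omega(S\setminus\overline D)$. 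Combined with Theorem \ref{t:main-1}(2), which identifies $\nu|_{X\setminus\overline D}=\omega|_{S\setminus\overline D}$, this yields $\limsup_{\vep\to 0}\nu_{\vep h}(X\setminus\overline D)\leq\nu(X\setminus\overline D)$.

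For the \emph{lower bound}, the uniform estimate $|U^{\vep h}-U_\nu|\leq\vep$ in Lemma 2.10 forces $U^{\vep h}\to U_\nu$ uniformly, hence $\ddc U^{\vep h}\to\ddc U_\nu$ weakly as currents and thus $\nu_{\vep h}\to\nu$ weakly as probability measures (as already noted after Lemma 2.10). Since $X\setminus\overline D$ is open, the portmanteau theorem gives $\liminf_{\vep\to 0}\nu_{\vep h}(X\setminus\overline D)\geq\nu(X\setminus\overline D)$. Combining the two one-sided bounds proves the second equality, and the first follows from the mass identity above.

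The only delicate point is that weak convergence alone cannot control the mass on $bD$, which is an $\omega$-null set but carries positive $\nu$-mass by Theorem \ref{t:main-1}(2). The remedy is to transfer the problem to the open complement $X\setminus\overline D$, where both $\nu_{\vep h}$ and $\nu$ admit the \textbf{explicit} description as $\omega$-restrictions to the coincidence sets $S_{\vep h}$ and $S$, and then exploit the pointwise inclusion $S_{\vep h}\subset\{U_\nu\geq -\vep\}$ coming from the $L^\infty$-bound on $U^{\vep h}-U_\nu$; this replaces a weak-convergence argument on a set with bad boundary by a monotone-convergence argument on closed sublevel sets of $U_\nu$.
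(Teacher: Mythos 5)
Your proof is correct, and it takes a genuinely different route from the paper's argument, although the two proofs share the same essential ingredients (the Portmanteau theorem, the fact that $\nu_{\vep h}$ and $\nu$ are dominated by $\omega$ on $X\setminus\overline D$, and the uniform bound $|U^{\vep h}-U_\nu|\le\vep$ which drives the weak convergence). The paper argues by contradiction: assuming $|\nu_{\vep_j h}(bD)-\nu(bD)|>\delta$ along a sequence, it constructs an auxiliary open set $E\supset\overline D$ with piecewise smooth, $\omega$-null boundary and $\omega(E\setminus\overline D)<\delta/4$; the Portmanteau continuity-set criterion applied to $E$, combined with the bound $\nu,\nu_{\vep h}\le\omega$ on the thin collar $E\setminus\overline D$, then forces $|\nu_{\vep_j h}(bD)-\nu(bD)|<\delta$ for $j$ large, a contradiction. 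You instead reduce the $bD$-limit to the $(X\setminus\overline D)$-limit by conservation of mass, and sandwich the latter: the lower bound is the Portmanteau open-set inequality, and the upper bound comes from the inclusion $S_{\vep h}\subset\{U_\nu\ge-\vep\}$ (using $|U^{\vep h}-U_\nu|\le\vep$, which is Lemma \ref{lem-difference-less-vep}, not the lemma number you cite) together with continuity from above of $\omega$ on the shrinking closed sets $\{U_\nu\ge-\vep\}$. Your route avoids the construction of the auxiliary set $E$ and the passage to a subsequence, and it gives an explicit rate in principle (the upper bound is an honest monotone estimate), at the cost of invoking the full strength of Proposition \ref{prop-struct-nu-vep} ($\nu_{\vep h}|_{X\setminus\overline D}=\omega|_{S_{\vep h}}$ as an equality) rather than merely the one-sided bound $\nu_{\vep h}\le\omega$ there. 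A small bookkeeping point: your internal numbering of the cited results is off by one in a couple of places (e.g.\ the $L^\infty$ bound on $U^{\vep h}-U_\nu$ and the corollary identifying $\nu=\widehat\nu$), but every fact you use is established in the paper before this lemma, so the argument stands.
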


\begin{proof}
Suppose for contradiction, $\lim_{\vep\to 0}\nu_{\vep h} (bD)\neq\nu (bD)$. Then we can find a sequence $\vep_j \to 0$ such that  $|\nu_{\vep_j h} (bD)-\nu (bD)| >\delta$ for some $\delta>0$

	Since $bD$ is assumed to be piecewise $\Cc^{1+\ep}$, we can find an   open set $E$ with piecewise smooth boundary containing $\overline D$ such that  $\omega(E\setminus \overline D)<\delta/4$. Indeed, one can consider the covering of $bD$ as in Lemma \ref{lema-S-no-mass}. Clearly $\omega (b E)=0$.  On $X\setminus \overline D$, both $\nu$ and $\nu_{\vep h}$ are  bounded by $\omega$, which means that $\nu(E\setminus \overline D)$ and $\nu_{\vep h}(E\setminus \overline D)$ are both bounded by $\delta/4$ and $\nu(bE)=\nu_{\vep h}(bE)=0$.
	By Portmanteau theorem, $\lim_{\vep\to 0} \nu_{\vep h} ( E)=\nu (E)$.  It follows that, for $\vep$ small enough,
	$$|\nu_{\vep h} (bD)-\nu (bD)| \leq |\nu_{\vep h} (E)-\nu (E)| +\nu_{\vep h}(E\setminus \overline D)+\nu(E\setminus \overline D) <\delta.$$
	Contradiction.
\end{proof}

\begin{lemma}\label{lemma-U-nu-U-veph}
As $\vep \to 0$, we have 
$$ \int_{X \setminus \overline D}  |U_\nu-U^{\vep h}|\,\dd \nu=o(\vep)  \quad\text{and}\quad \int_{X \setminus \overline D}  |U^{\vep h}-U_\nu|\,\dd \nu_{\vep h}=o(\vep).$$
\end{lemma}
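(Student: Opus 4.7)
The plan is to combine a Dirichlet-energy identity for the signed measure $\nu_{\vep h}-\nu$ with the structural support descriptions from Propositions \ref{p:envelop} and \ref{prop-struct-nu-vep}. First I rewrite the integrals: by Corollary \ref{cor-U-mu-widehat} and Lemma \ref{lema-S-no-mass}, $\nu|_{X\setminus\overline D} = \omega|_{S\setminus\overline D}$ with $S = \{U_\nu = 0\}$, and since $U_\nu = 0$ on $S$ and $U^{\vep h} \leq 0$, the first integral equals $I_1 := -\int_{S\setminus\overline D} U^{\vep h}\,d\omega \geq 0$. By Proposition \ref{prop-struct-nu-vep} and the same reasoning, the second equals $I_2 := -\int_{S_{\vep h}\setminus\overline D} U_\nu\,d\omega \geq 0$, where $S_{\vep h} := \{U^{\vep h}=0\}$. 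It suffices to prove $I_1 + I_2 = o(\vep)$.

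Set $f := U^{\vep h}-U_\nu$ and consider $A := \int_X f\,d(\nu_{\vep h}-\nu) = \int_X f\,\ddc f$. By integration by parts on the compact Riemann surface $X$ (justified by approximation for the bounded quasi-subharmonic $f$), $A = -\int_X df\wedge d^c f \leq 0$. I then split $A$ over $X\setminus\overline D$ and $\overline D$. Neither measure charges the open set $D$---for $\nu_{\vep h}$ this follows from $h$ being harmonic in $D$, as in the proof of Proposition \ref{prop-struct-nu-vep}---so the $\overline D$ contribution is supported on $bD$. A direct calculation using the reformulation above shows that the $X\setminus\overline D$ part of $A$ equals exactly $I_1+I_2$. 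On the $bD$ part, Proposition \ref{prop-struct-nu-vep} gives the equality $U^{\vep h}-U_\nu = \vep h$ on $\supp(\nu_{\vep h})\cap bD$, while the envelope condition gives the inequality $U^{\vep h}-U_\nu \leq \vep h$ on $\overline D$. Integrating against $\nu_{\vep h}$ and $\nu$ respectively and combining with $A\leq 0$ yields
\begin{equation*}
I_1 + I_2 \;\leq\; \vep \int_{bD} h\,d(\nu - \nu_{\vep h}).
\end{equation*}

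Finally, I show that $\int_{bD} h\,d(\nu-\nu_{\vep h}) \to 0$. The uniform bound from Lemma \ref{lem-difference-less-vep} implies weak convergence $\nu_{\vep h}\to\nu$ on $X$. Extending $h$ continuously to all of $X$ and subtracting the $X$-integrals from the $bD$-integrals, the claim reduces to $\omega(S\triangle S_{\vep h}) \to 0$. The uniform bound forces $S_{\vep h}\subset \{U_\nu \geq -\vep\}$, and the closed sets $\{U_\nu \geq -\vep\}$ decrease to $S$ as $\vep\to 0$ by upper semicontinuity of $U_\nu$, hence $\omega(S_{\vep h}\setminus S)\to 0$; combined with $\omega(S_{\vep h}\setminus\overline D) \to \omega(S\setminus\overline D)$ from Lemma \ref{lem-nu-veph-set}, also $\omega(S\setminus S_{\vep h})\to 0$. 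This gives $I_1+I_2 = o(\vep)$, hence each of $I_1,I_2$ is $o(\vep)$ by non-negativity. The main obstacle is the final passage from global weak convergence on $X$ to localized convergence on $bD$ tested against the continuous function $h$, which is handled via the symmetric-difference estimate $\omega(S\triangle S_{\vep h})\to 0$ that exploits the upper semicontinuity of the limit potential.
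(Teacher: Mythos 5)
Your proof is correct, but it takes a genuinely different route from the paper, so a comparison is warranted.

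The paper's argument introduces two auxiliary lower envelopes, $U^{-\vep}$ (obstacle $U_\nu-\vep$ on $\overline D$) and $U^{\vep h-\vep}$ (obstacle $U^{\vep h}-\vep$), and exploits the sandwiching $U^{-\vep}\leq U^{\vep h}\leq U_\nu=0$ on $S$ and $U^{\vep h-\vep}\leq U_\nu\leq U^{\vep h}=0$ on $S_{\vep h}$ to dominate each of the target integrals by $\int_{X\setminus\overline D}|U_\nu-U^{-\vep}|\,\dd\nu$ and $\int_{X\setminus\overline D}|U^{\vep h}-U^{\vep h-\vep}|\,\dd\nu_{\vep h}$ respectively. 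Each of these is supported on $S\setminus S_{-\vep}$ (resp.\ the analogous set), is pointwise bounded by $\vep$, and the $\omega$-measure of that set goes to zero by the analogue of Lemma \ref{lem-nu-veph-set} for $U^{-\vep}$. Your argument instead treats the two target integrals \emph{simultaneously}: you identify their sum $I_1+I_2$ as the $X\setminus\overline D$ contribution to $A:=\int_X f\,\ddc f$ with $f=U^{\vep h}-U_\nu$, invoke $A=-\int_X \dd f\wedge\dc f\leq 0$, and bound the complementary $bD$ contribution from below via the identity $f=\vep h$ on $\supp(\nu_{\vep h})\cap bD$ (from Proposition \ref{prop-struct-nu-vep}) and the inequality $f\leq\vep h$ $\nu$-a.e.\ on $bD$. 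This converts the problem into the estimate $\int_{bD} h\,\dd(\nu-\nu_{\vep h})=o(1)$, which you prove inline by a symmetric-difference argument $\omega\big((S\triangle S_{\vep h})\setminus\overline D\big)\to 0$, using the uniform bound $|U^{\vep h}-U_\nu|\leq\vep$ and Lemma \ref{lem-nu-veph-set}. The paper's approach is local and lemma-by-lemma (the $bD$ estimate appears separately as the first assertion of Lemma \ref{lem-Uveh-Unu-veh-bD}), and it avoids the integration by parts for $f\,\ddc f$; your approach is more compact, treats the two integrals symmetrically, and the single sign constraint $\int_X \dd f\wedge\dc f\geq 0$ does the heavy lifting. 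One point to state explicitly if you write this up: your use of $\int_{bD} h\,\dd(\nu-\nu_{\vep h})\to 0$ coincides with the first part of the paper's Lemma \ref{lem-Uveh-Unu-veh-bD} (proved there after the present lemma), so you should make clear — as your inline symmetric-difference argument does — that this fact is established independently and there is no circularity.
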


\begin{proof}
	For every $\vep>0$, we consider the following perturbations of $U_\nu$ and $U^{\vep h}$:
	$$U^{-\vep}:=\sup \big \{ \phi \in\SH(X,\omega):\, u\leq 0 \text{ on } X, \, \phi\leq U_\nu -\vep  \text{ on } \overline D  \big\}^*,$$
		$$U^{\vep h -\vep}:=   \sup \big \{ \phi \in\SH(X,\omega):\, \phi\leq 0 \text{ on } X, \, \phi\leq  U^{\vep h}-\vep  \text{ on } \overline D  \big\}^*. $$
Since $|h|\leq 1$, it is not hard to see  that $U_\nu -\vep \leq U^{-\vep}\leq U_\nu$,  $U^{-\vep}\leq U^{\vep h}$ and   $U^{\vep h-\vep} \leq   U_\nu$.
	
		Let $\nu_{-\vep}:=\ddc U^{-\vep} +\omega$ and $S_{-\vep}:=\{U^{-\vep}=0\}$. Replacing $\widehat U_\mu$ by $U^{-\vep}$ in Proposition \ref{p:envelop}, yields that $\nu_{-\vep}=\nu_{-\vep}|_{bD}+ \omega|_{S_{-\vep}}$.
	By the same proof as Lemma \ref{lem-nu-veph-set}, we can show $\lim_{\vep \to 0} \nu_{-\vep}(X\setminus \overline D) =\nu(X \setminus \overline D)$, which gives $\lim_{\vep \to 0}\omega (S\setminus S_{-\vep})=0$.
	 On $S_{-\vep}$, $U^{-\vep}=U_\nu=0$. Thus, as $\vep\to 0$, we have
	$$ \int_{X\setminus \overline D}  | U_\nu-U^{-\vep}|\,\dd \nu =\int_{S\setminus S_{-\vep}}  ( U_\nu-U^{-\vep})\,\omega  \leq \vep \cdot \omega(S\setminus S_{-\vep})=o(\vep).$$
Similarly, 
 $$\int_{X \setminus \overline D}  |U^{\vep h}-U^{\vep h -\vep}|\,\dd \nu_{\vep h}=o(\vep)$$
	
	The first equation follows by the fact that $U^{-\vep} \leq  U^{\vep h}\leq U_\nu=0$ on $S$. The second one  follows by the fact that  $U^{\vep h -\vep}\leq  U_\nu \leq U^{\vep h}=0$  on $S_{\vep h}$.
\end{proof}

\begin{lemma}\label{lem-Uveh-Unu-veh-bD}
	As $\vep \to 0$, we have 
	$$\Big| \int_{bD} \vep h  \,\dd(\nu-\nu_{\vep h}) \Big|=o(\vep) \quad\text{and}\quad \Big|\int_{bD} \big( U^{\vep h}-  U_\nu\big)  \,\dd(\nu-\nu_{\vep h})\Big| =o(\vep).$$
\end{lemma}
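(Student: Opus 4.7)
The plan is to prove the first equality and then bootstrap it, via the identities \eqref{psi-vep-h-nu-vep} together with an integration by parts on $X$, to obtain the second.

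\smallskip

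\emph{First equality.} Factoring out $\vep$, it suffices to show $\int_{bD} h \, \dd\nu_{\vep h} \to \int_{bD} h \, \dd\nu$. I would extend $h$ from $\overline D$ to a continuous $\tilde h$ on $X$ with $\|\tilde h\|_\infty \leq 1$ (Tietze), and take a continuous cutoff $\chi_\delta \colon X \to [0,1]$ equal to $1$ on $bD$ and supported in a $\delta$-neighborhood of $bD$. Since $U^{\vep h}\to U_\nu$ uniformly on $X$ by Lemma \ref{lem-difference-less-vep}, $\nu_{\vep h}\to\nu$ weakly, so $\int \tilde h\chi_\delta\,\dd\nu_{\vep h}\to\int \tilde h\chi_\delta\,\dd\nu$ for each fixed $\delta$. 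Both $\nu|_D$ and $\nu_{\vep h}|_D$ vanish (Propositions \ref{p:envelop} and \ref{prop-struct-nu-vep}), and both measures are dominated by $\omega$ on $X\setminus\overline D$; the piecewise $\Cc^{1+\ep}$ regularity of $bD$ guarantees that the $\delta$-tube has $\omega$-measure $O(\delta)$. Hence for $\mu\in\{\nu,\nu_{\vep h}\}$, $\big|\int_X \tilde h\chi_\delta\,\dd\mu-\int_{bD} h\,\dd\mu\big|=O(\delta)$ uniformly in $\vep$. Letting $\vep\to 0$ first and then $\delta\to 0$ concludes.

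\smallskip

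\emph{Second equality.} Put $v_\vep:=U^{\vep h}-U_\nu$, so $|v_\vep|\leq\vep$ and $\ddc v_\vep=\nu_{\vep h}-\nu$. From \eqref{psi-vep-h-nu-vep} I extract $\int_{bD} v_\vep\,\dd\nu_{\vep h}=\vep\int_{bD} h\,\dd\nu_{\vep h}$ and $\int_{bD} v_\vep\,\dd\nu\leq\vep\int_{bD} h\,\dd\nu$. Subtracting and invoking the first equality gives the upper bound
\[
\int_{bD} v_\vep\,\dd(\nu-\nu_{\vep h}) \leq \vep\int_{bD} h\,\dd(\nu-\nu_{\vep h}) = o(\vep).
\]
For the lower bound, Stokes' theorem on the closed manifold $X$ yields $\int_X v_\vep\,\ddc v_\vep = -\int_X \dd v_\vep\wedge \dc v_\vep \leq 0$, i.e.\ $\int_X v_\vep\,\dd(\nu-\nu_{\vep h})\geq 0$. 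Since $\nu|_D=\nu_{\vep h}|_D=0$,
\[
\int_{bD} v_\vep\,\dd(\nu-\nu_{\vep h}) \geq -\int_{X\setminus\overline D} v_\vep\,\dd(\nu-\nu_{\vep h}).
\]
On $S$ one has $U_\nu=0$ and on $S_{\vep h}$ one has $U^{\vep h}=0$, so in both cases $|v_\vep|=|U^{\vep h}-U_\nu|$ on the relevant support. The structural identities $\nu|_{X\setminus\overline D}=\omega|_S$ and $\nu_{\vep h}|_{X\setminus\overline D}=\omega|_{S_{\vep h}}$ together with Lemma \ref{lemma-U-nu-U-veph} then bound both $\big|\int_{X\setminus\overline D} v_\vep\,\dd\nu\big|$ and $\big|\int_{X\setminus\overline D} v_\vep\,\dd\nu_{\vep h}\big|$ by $o(\vep)$. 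This yields the lower bound $-o(\vep)$ and hence the claim.

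\smallskip

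The main technical subtlety is the Stokes identity, since $v_\vep$ is only a bounded Borel function (the difference of two $\omega$-subharmonic potentials that are merely continuous on $X\setminus bD$) rather than smooth. A standard approximation of the individual $\omega$-subharmonic functions by smooth $\omega$-subharmonic ones, which is particularly tame in complex dimension one, makes the identity $\int_X v_\vep\,\ddc v_\vep = -\int_X \dd v_\vep\wedge \dc v_\vep$ rigorous while preserving the sign of the right-hand side in the limit.
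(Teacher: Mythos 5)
Your proof is correct and follows essentially the same route as the paper's: for the first equality, extend $h$ to a small neighborhood of $bD$ and use the weak convergence $\nu_{\vep h}\to\nu$ together with the domination of both measures by $\omega$ away from $\overline D$; for the second, combine the upper bound from \eqref{psi-vep-h-nu-vep} and the first equality with the lower bound coming from $\int_X v_\vep\,\dd(\nu-\nu_{\vep h})\geq 0$ (Stokes) minus the $X\setminus\overline D$ contribution controlled by Lemma \ref{lemma-U-nu-U-veph}. Your closing remark about making the Stokes identity rigorous for the merely bounded difference $v_\vep$ is a legitimate point of care that the paper's terse proof glosses over, but the identity is standard in one-variable finite-energy potential theory and the approximation scheme you sketch handles it.
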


\begin{proof}
	For every $\delta>0$, let $E_\delta$ be an open set with piecewise smooth boundary containing $\overline D$ such that $\omega(E_\delta \setminus \overline D)<\delta$. We extend $h$ to a continuous function $h_\delta$ on $X$ satisfying $|h|\leq 1$,  $h_\delta =h$ on $\overline D$ and $\supp(h_\delta)\subset E_\delta$.  Then 
	$$\int_{bD}h \,\dd(\nu-\nu_{\vep h}) =\int_X h_\delta  \,\dd(\nu-\nu_{\vep h}) - \int_{X\setminus \overline D} h_\delta \,\dd \nu + \int_{X\setminus \overline D} h_\delta \,\dd \nu_{\vep h}.$$
	The second and third integrals are $O(\delta)$ since $\nu$ and $\nu_{\vep h}$ are bounded by $\omega$ on $X\setminus \overline D$. The first integral is $o(1)$ as $\vep\to 0$. So we take $\delta\to 0$ and finish the proof of the first equation. 
	
	\smallskip
	
	For the second one,   by \eqref{psi-vep-h-nu-vep}, we have 
	$$\int_{bD} \big( U^{\vep h}-  U_\nu\big)  \,\dd(\nu-\nu_{\vep h}) \leq  \int_{bD} \vep h  \,\dd(\nu-\nu_{\vep h}). $$
  It remains to prove the lower bound.	Using Stoke's formula, 
		$$ \int_X \big(U^{\vep h}-  U_\nu\big) \,\dd (\nu-\nu_{\vep h})=\int_X \dd \big(U^{\vep h}-  U_\nu\big)\wedge \dc\big(U^{\vep h}-  U_\nu\big)\geq 0.  $$
	The second equation follows by Lemma \ref{lemma-U-nu-U-veph}.
\end{proof}

We conclude the following corollary from the above  lemmas.

\begin{corollary}\label{cor-energy-o-vep}
	As $\vep\to 0$, we have 
	$$\int_{X}\big( U^{\vep h} -  U_\nu \big) \,\dd\nu = \vep \int_{bD} h \,\dd \nu +o(\vep).$$
\end{corollary}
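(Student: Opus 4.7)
The strategy is to assemble the four preceding lemmas into a single identity. First I would split the integration domain and write
\[
\int_{X}\big(U^{\vep h}-U_\nu\big)\,\dd\nu = \int_{bD}\big(U^{\vep h}-U_\nu\big)\,\dd\nu + \int_{X\setminus\overline D}\big(U^{\vep h}-U_\nu\big)\,\dd\nu.
\]
The second summand is $o(\vep)$ immediately by Lemma \ref{lemma-U-nu-U-veph}, so all the work concentrates on the boundary term.

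Next, I would rewrite the boundary integral by switching the reference measure:
\[
\int_{bD}\big(U^{\vep h}-U_\nu\big)\,\dd\nu = \int_{bD}\big(U^{\vep h}-U_\nu\big)\,\dd\nu_{\vep h} + \int_{bD}\big(U^{\vep h}-U_\nu\big)\,\dd(\nu-\nu_{\vep h}).
\]
The second piece is $o(\vep)$ directly by the second estimate in Lemma \ref{lem-Uveh-Unu-veh-bD}. For the first piece I would invoke the equality in \eqref{psi-vep-h-nu-vep}, namely $\int_{bD}(U^{\vep h}-U_\nu-\vep h)\,\dd\nu_{\vep h}=0$, to replace it by $\vep\int_{bD} h\,\dd\nu_{\vep h}$. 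Finally, applying the first estimate in Lemma \ref{lem-Uveh-Unu-veh-bD} converts this into $\vep\int_{bD} h\,\dd\nu+o(\vep)$. Collecting the three contributions gives the desired asymptotic.

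There is no real obstacle at this stage, since the content of the corollary has essentially been packaged into the four preceding lemmas; the only care required is to track that all $o(\vep)$ terms are genuinely additive and that the use of \eqref{psi-vep-h-nu-vep} is legitimate (which it is, because $\nu_{\vep h}$ is supported in $S_{\vep h}\cup bD$ by Proposition \ref{prop-struct-nu-vep} and both sides of the envelope inequality $U^{\vep h}\le U_\nu+\vep h$ are continuous on $bD$). The corollary is therefore essentially a bookkeeping statement that packages the earlier technical work into the linear response formula $\vep\int_{bD}h\,\dd\nu$ which will drive the proof of Proposition \ref{l:perturbation}.
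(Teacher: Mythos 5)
Your proof is correct and uses exactly the same ingredients as the paper's: the $X\setminus\overline D$ versus $bD$ split, Lemma \ref{lemma-U-nu-U-veph}, both estimates of Lemma \ref{lem-Uveh-Unu-veh-bD}, and the identity from \eqref{psi-vep-h-nu-vep}. The only difference is a trivial rearrangement of the algebra (the paper inserts $-\vep h$ and splits into four terms in one step, whereas you first pass to $\nu_{\vep h}$, invoke \eqref{psi-vep-h-nu-vep}, and then pass back), so the two proofs are essentially identical.
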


\begin{proof}
	 We decompose  the integral on the left hand side into
	$$\int_{X\setminus \overline D}\big( U^{\vep h} -  U_\nu \big) \,\dd\nu+\int_{bD} \big( U^{\vep h}-  U_\nu -\vep h\big)  \,\dd(\nu-\nu_{\vep h})  + \int_{bD} \big( U^{\vep h} -  U_\nu  -\vep h\big) \,\dd\nu_{\vep h}+\int_{bD}\vep h \,\dd \nu.$$
	The first and second terms are  $o(\vep)$ by Lemmas \ref{lemma-U-nu-U-veph} and \ref{lem-Uveh-Unu-veh-bD}. The third term is $0$ by \eqref{psi-vep-h-nu-vep}. 
	The proof of the corollary is complete.
\end{proof}

Now we can prove Proposition \ref{l:perturbation}, using Corollary \ref{cor-energy-o-vep}.

\begin{proof}[Proof of Proposition \ref{l:perturbation}]
	Recall	Lemma \ref{l:min} that for any probability measure  $\mu\in \cM(X\setminus D)$, one has
	$\int_X U_\mu \, \dd\nu \leq \int_X U_\nu \, \dd\nu$.  In particular, we can take $\mu= \nu_{\vep h}$, getting
	$$\int_X  U^{\vep h} \,\dd \nu- \int_X U_\nu  \,\dd \nu \leq 0.$$
	Combining with Corollary \ref{cor-energy-o-vep}, we obtain
	$$\vep \int_{bD} h \,\dd \nu +o(\vep) \leq 0   \quad\text{as}\quad \vep \to 0.$$
	This gives $\int_{bD} h\,\dd\nu \leq 0$.
\end{proof}

In view of Proposition \ref{l:perturbation}, given a probability measure $\mu$, if we want to show that $\mu$ is not the minimizer of $\oI_{\omega, D}$, it is enough to find a function $h$ on $\overline D$, which is harmonic on $D$, continuous on $bD$, and has positive $\mu$-integral on $bD$.
So the  following estimates on harmonic extension will be useful.

\begin{lemma}\label{lem-poisson}
	Fix a point $x_0\in b\D$ and let
	$0\leq f_r\leq 1$ be a family of continuous functions on $b \D$ such that $f_r(z)=1$ for $\dist(z,x_0)\leq r$ and $r>0$ small.   Fix a closed arc $A$ in $b\D$  and let $0\leq f_A\leq 1$ be a continuous function on $b \D$ such that $f_A(z)=1$ for $z\in A$. Let $F_r, F_A$ be the harmonic extensions of $f_r,f_A$ to $\D$ respectively. Fix also a set $\Lambda$ in $\D$ such that $ \dist\big(\Lambda, \supp(f_A)\big)\geq \alpha$
	for some constant $\alpha>0$.
	Then for all $z\in \Lambda$, we have 
	$$F_r(z)\geq C r \dist(z,b \D)     \quad\text{and}\quad  F_A(z)\leq C \dist(z,b \D),$$
	where  $C>0$ is a constant independent of $f_r, f_A$.
\end{lemma}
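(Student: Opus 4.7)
The plan is to prove both bounds directly from the Poisson integral representation of harmonic extensions on the disc. Recall that for $z \in \D$ and $\zeta \in b\D$, the Poisson kernel is
$$P(z,\zeta) = \frac{1-|z|^2}{|z-\zeta|^2},$$
and the harmonic extension of a continuous boundary datum $f$ is $F(z) = \frac{1}{2\pi}\int_{b\D} P(z,\zeta) f(\zeta)\, d\sigma(\zeta)$. Two elementary facts will drive everything: first, $1-|z| \leq 1-|z|^2 \leq 2(1-|z|)$, so $1-|z|^2 \simeq \dist(z, b\D)$ for $z \in \D$; second, $P(z,\zeta) \geq 0$, so monotonicity in the integrand is available.

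For the lower bound on $F_r$, I would restrict the integral to the arc $A_r := \{\zeta \in b\D : \dist(\zeta, x_0) \leq r\}$, on which $f_r \equiv 1$. For $\zeta \in A_r$ and any $z \in \Lambda \subset \overline{\D}$, we have $|z-\zeta| \leq 2 + r \leq 3$ once $r$ is small, so
$$P(z,\zeta) \geq \frac{1-|z|^2}{9} \geq \frac{\dist(z,b\D)}{9}.$$
Since $\sigma(A_r) \geq c \cdot r$ for small $r$, integrating over $A_r$ gives $F_r(z) \geq C r \dist(z, b\D)$ uniformly in $z \in \Lambda$.

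For the upper bound on $F_A$, I would use the hypothesis $\dist(\Lambda, \supp(f_A)) \geq \alpha$. For $\zeta \in \supp(f_A)$ and $z \in \Lambda$ we have $|z-\zeta| \geq \alpha$, hence
$$P(z,\zeta) \leq \frac{2(1-|z|)}{\alpha^2} = \frac{2 \dist(z,b\D)}{\alpha^2}.$$
Using $0 \leq f_A \leq 1$ and integrating only over $\supp(f_A) \subset b\D$ then yields
$$F_A(z) \leq \frac{1}{2\pi}\int_{\supp(f_A)} \frac{2 \dist(z,b\D)}{\alpha^2}\, d\sigma \leq \frac{2}{\alpha^2}\dist(z,b\D),$$
which gives the desired estimate with $C = 2/\alpha^2$ (absorbed into the final constant together with the $r$-dependence of the first estimate).

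I do not anticipate a real obstacle here: both bounds are routine applications of the explicit Poisson kernel on $\D$. The only mild point of care is to state the constants uniformly in the family $\{f_r\}$, which is immediate since the estimates only use $0 \leq f_r \leq 1$ and the fact that $f_r \equiv 1$ on the arc $A_r$; no other information about $f_r$ enters. The constants are independent of $r$ (for $r$ small) and of the particular choice of $f_A$, depending only on $\alpha$.
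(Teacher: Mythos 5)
Your proof is correct and follows essentially the same route as the paper's: both bounds are read directly off the Poisson kernel, with the lower bound coming from restricting the integral to the arc where $f_r\equiv 1$ and using the universal bound $|z-\zeta|\leq 2$, and the upper bound from $|z-\zeta|\geq\alpha$ on $\supp(f_A)$ together with $f_A\leq 1$. (Minor remark: your estimate $|z-\zeta|\leq 2+r$ is needlessly loose; $|z-\zeta|\leq 2$ already holds for any $z\in\D$, $\zeta\in b\D$, as the paper uses.)
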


\begin{proof}
	The Poisson kernel on $\D$ gives that
	$$F_r(z)={1\over 2\pi} \int_0^{2\pi} f_r(e^{i\theta}){1-|z|^2\over |e^{i\theta}-z|^2 }\,\dd \theta.$$ 
	Since $f_r$ is non-negative, we have for every $z\in \D$,
	$$F_r(z)\geq \int_{e^{i\theta}\in \D(x_0,r)} {1-|z|^2 \over 2^2} \,\dd \theta  \simeq r(1-|z|) = r \dist(z,b \D).$$
	
	Similarly, since $f_A \leq 1$,  for  $z\in \Lambda$, we have
	$$F_A(z) = \int_{e^{i\theta}\in\supp(f_A)} f_A (e^{i\theta}){1-|z|^2\over |e^{i\theta}-z|^2 }\,\dd \theta \leq  \int_{e^{i\theta}\in\supp(f_A)} {1-|z|^2 \over \alpha^2} \,\dd \theta \lesssim \dist(z,b \D).  $$
	The proof of the lemma is finished.
\end{proof}

We have the analog for simply connected open set with $\Cc^{1+\ep}$ boundary. One should notice that it is false if we only assume the boundary to be $\Cc^1$.

\begin{corollary} \label{cor-poisson}
	Let $E\subset X$ be a simply connected open set with $\Cc^{1+\ep}$ boundary.
	Fix a point $x_0\in b E$ and let
	$0\leq f_r\leq 1$ be a family of continuous functions on $b D_0$ such that $f_r(z)=1$ for $\dist(z,x_0)\leq r$ and $r>0$ small. Fix a  closed arc $A$ on $b E$ and let $0\leq f_A\leq 1$ be a continuous function on $b E$ such that $f_A(z)=1$ for $z\in A$. Let $F_r, F_A$ be the harmonic extensions of $f_r,f_A$ to $E$ respectively. Fix also a  set $\Lambda$ in $E$ such that  $ \dist\big(\Lambda, \supp(f_A)\big)\geq \alpha$
	for some constant $\alpha>0$.
	Then for all $z\in \Lambda$, we have 
	$$F_r(z)\geq C r \dist(z,b E)     \quad\text{and}\quad  F_A(z)\leq C \dist(z,b E),$$
	where  $C>0$ is a constant independent of $f_r, f_A$.
\end{corollary}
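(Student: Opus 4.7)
The plan is to reduce Corollary~\ref{cor-poisson} to Lemma~\ref{lem-poisson} via a conformal map to the disc. Since $E$ is simply connected and is a proper open subset of the compact Riemann surface $X$, the uniformization theorem furnishes a biholomorphism $\Phi\colon E\to \D$. Because $bE$ is $\Cc^{1+\ep}$, the Kellogg--Warschawski theorem (see e.g.\ Pommerenke, \emph{Boundary Behaviour of Conformal Maps}) guarantees that $\Phi$ extends to a $\Cc^{1+\ep}$ diffeomorphism $\overline E\to \overline\D$ with $|\Phi'|$ bounded both above and below by positive constants on $\overline E$. In particular $\Phi$ is bilipschitz, so
\[
\dist(\Phi(z),\Phi(w))\simeq \dist(z,w)\quad\text{and}\quad \dist(\Phi(z),b\D)\simeq \dist(z,bE)
\]
uniformly on $\overline E$ (where the reference metric on $X$ near $\overline E$ is equivalent to the Euclidean metric pulled back by any chart, since $\overline E$ is compact).

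Now push the data through $\Phi$: set $\tilde x_0:=\Phi(x_0)$, $\tilde f_r:=f_r\circ\Phi^{-1}$, $\tilde f_A:=f_A\circ\Phi^{-1}$, $\tilde A:=\Phi(A)$, and $\tilde\Lambda:=\Phi(\Lambda)$. The bilipschitz property yields a constant $c>0$, depending only on $\Phi$ (hence on $E$), such that $\tilde f_r\equiv 1$ on $\{w\in b\D:\dist(w,\tilde x_0)\leq cr\}$, $\tilde f_A\equiv 1$ on the closed arc $\tilde A$, and $\dist\big(\tilde\Lambda,\supp(\tilde f_A)\big)\geq c\alpha$. Conformal invariance of harmonic extension gives $F_r=\tilde F_r\circ\Phi$ and $F_A=\tilde F_A\circ\Phi$, where $\tilde F_r,\tilde F_A$ are the harmonic extensions to $\D$ of $\tilde f_r,\tilde f_A$ respectively. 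Applying Lemma~\ref{lem-poisson} on $\D$ and then converting distances back via the bilipschitz estimate yields, for every $z\in\Lambda$,
\[
F_r(z)=\tilde F_r(\Phi(z))\gtrsim (cr)\cdot\dist(\Phi(z),b\D)\gtrsim r\,\dist(z,bE),
\]
\[
F_A(z)=\tilde F_A(\Phi(z))\lesssim \dist(\Phi(z),b\D)\lesssim \dist(z,bE),
\]
which is the desired conclusion after absorbing all constants into a single $C$.

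The only substantive point is the boundary regularity of the conformal map: one needs $|\Phi'|$ bounded away from zero on all of $\overline E$, and this requires a H\"older gain on the derivative, which is exactly the content of the $\Cc^{1+\ep}$ (not merely $\Cc^1$) hypothesis on $bE$. This is what Kellogg--Warschawski provides, and it also explains the remark preceding the corollary: under only $\Cc^1$ regularity the derivative of the conformal map can degenerate at the boundary, destroying the linear dependence on $\dist(z,bE)$ that the estimates require.
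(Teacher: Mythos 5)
Your proposal is correct and follows essentially the same approach as the paper: transport the data to the unit disc via a boundary-regular conformal map furnished by Kellogg's theorem, apply Lemma~\ref{lem-poisson}, and transport the estimates back, using that the $\Cc^{1+\ep}$ regularity makes the conformal map and its inverse bilipschitz up to the boundary. The only cosmetic difference is that the paper works with $\psi\colon\overline\D\to\overline E$ whereas you use its inverse $\Phi\colon E\to\D$; the substance is identical.
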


\begin{proof}
	By Kellogg's theorem (c.f.\  \cite[II.Theorem 4.3]{harmonic-book}), we can find a bijective map $\psi : \overline \D \to \overline E$, such that $\psi$ is conformal on $\D$ and  $\Cc^{1+\ep}$ up to the boundary. Let us look at the functions $\psi^* F_r, \psi^* F_A$ and the sets $\psi^{-1}(x_0), \psi^{-1}(A), \psi^{-1}(\Lambda)$.  Since $\psi$ is $\Cc^{1+\ep}$, there is some constant $\alpha'>0$ such that $\dist\big( \psi^{-1}(\Lambda), \psi^{-1}(\supp(f_A)   \big) >\alpha'$.
	Applying Lemma \ref{lem-poisson} to them, we can find a constant $C'>0$ such that for every $w\in \psi^{-1}(\Lambda)$,
	$$\psi^*F_r(w)\geq C' r \dist(w,b \D)     \quad\text{and}\quad  \psi^*F_A(w)\leq C' \dist(w,b \D).$$
	Using again that $\psi$ is $\Cc^{1+\ep}$,  we have for every $z\in \Lambda$, 
	$$F_r(z)\geq C r \dist(z,b E)     \quad\text{and}\quad F_A(z)\leq C \dist(z,b E)$$
	for some constant $C>0$. This ends the proof.
\end{proof}

Let $D_0$ be a connected component of $D$. Together with Proposition \ref{l:perturbation},
we have the following three consequences. Recall from the beginning of this section that every continuous function on $bD_0$ has a harmonic extension to $D_0$  preserving the continuity on $bD_0$.

\begin{corollary}\label{cor-supp-bD}
	If $bD_0 \setminus \supp(\nu) \neq \varnothing$, then $\nu(b D_0)=0$. 
\end{corollary}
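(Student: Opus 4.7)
The plan is to argue by contradiction using Proposition \ref{l:perturbation}. Suppose $\nu(bD_0)>0$; I will construct $h$ continuous on $\overline D$, harmonic on $D$, satisfying $h+U_\nu\leq 0$ on $\overline D$ and $\int_{bD}h\,d\nu>0$, which contradicts that proposition.

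First I fix $r_0>0$ with $\supp(\nu)\cap\B(x_0,2r_0)=\varnothing$, so $\nu|_{bD_0}$ is carried by the compact set $bD_0\setminus\B(x_0,2r_0)$. By Lemma \ref{lema-S-no-mass}, this mass actually sits on $bD_0\setminus S$, where $-U_\nu$ is strictly positive. Using inner regularity, I obtain a compact $K\subset(bD_0\setminus\B(x_0,r_0))\setminus S$ with $\nu(K)>0$, and then pick a continuous non-negative bump $\phi_1$ on $bD_0$ with $\phi_1\geq\mathbf 1_K$ and $\supp\phi_1$ a small thickening of $K$ inside $bD_0\setminus S$. I also pick a continuous non-negative bump $\phi_2$ on $bD_0$, supported in $bD_0\cap\B(x_0,r_0)$, with $\phi_2(x_0)>0$. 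Setting $f:=c_1\phi_1-c_2\phi_2$ on $bD_0$ and $f\equiv 0$ on the other components of $bD$ (disjoint from $\overline{D_0}$ under the Jordan-curve hypothesis, since each Jordan curve in $bD$ bounds exactly one component of $D$), I let $h$ be the harmonic extension of $f$ to $D$.

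Then $\int_{bD}h\,d\nu=c_1\int\phi_1\,d\nu>0$ because $\phi_2$ sits where $\nu$ vanishes. The boundary inequality $f+U_\nu\leq 0$ on $bD$ holds as soon as $c_1\|\phi_1\|_\infty\leq\inf_{\supp\phi_1}(-U_\nu)$, which is a positive number by compactness of $\supp\phi_1$ and the inclusion $\supp\phi_1\cap S=\varnothing$; elsewhere $f\leq 0\leq -U_\nu$. The interior inequality $h+U_\nu\leq 0$ is trivial on the other components, where $h=0$. On $D_0$, write $h=c_1\Phi_1-c_2\Phi_2$ with $\Phi_j$ the harmonic extension of $\phi_j$; the claim becomes $c_1\Phi_1\leq c_2\Phi_2-U_\nu$ on $\overline{D_0}$. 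Corollary \ref{cor-poisson} yields $\Phi_1(z)\lesssim\dist(z,bD_0)$ for $z$ approaching any boundary point outside $\supp\phi_1$ (in particular near $x_0$), and $\Phi_2(z)\gtrsim\dist(z,bD_0)$ for $z$ approaching boundary points outside $\supp\phi_2$, while $\Phi_2$ is bounded below by a positive constant on any compact subset of $D_0$ near $x_0$. Choosing $c_2$ sufficiently large compared to $c_1$, $\|\Phi_1\|_\infty$ and $\|U_\nu\|_\infty$, a case analysis distinguishing neighbourhoods of $\supp\phi_2$, of $\supp\phi_1$, of $S\cap\overline{D_0}$ (if non-empty), and the remainder of $\overline{D_0}$ produces the required pointwise estimate.

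The main obstacle is this interior estimate, particularly in a neighbourhood of $S\cap\overline{D_0}$ where $-U_\nu$ degenerates and one must rely on $-c_2\Phi_2$ to dominate $c_1\Phi_1$. The quantitative Poisson-kernel bounds from Corollary \ref{cor-poisson} are exactly what allows one to match the boundary decay rates of $\Phi_1$, $\Phi_2$ and $-U_\nu$ and hence to balance the constants $c_1$, $c_2$ so that the pointwise inequality holds uniformly; granting this, the contradiction with Proposition \ref{l:perturbation} forces $\nu(bD_0)=0$.
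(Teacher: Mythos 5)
Your overall strategy—contradiction via Proposition \ref{l:perturbation}, with a test function $h$ on $\overline D_0$ built from a positive bump $\phi_1$ carried by $\nu$ and a negative bump $\phi_2$ near the $\nu$-null point $x_0$, weighted so that the Poisson-kernel estimates force $h+U_\nu\le 0$—is exactly the paper's. The fixed $\phi_2$ with a large coefficient $c_2$ plays the same role as the paper's shrinking family $r^{-3/2}f_r$, and the computation $\int_{bD}h\,\dd\nu>0$ is identical.

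The genuine gap is in the interior estimate, which you yourself flag as the main obstacle. You invoke Corollary \ref{cor-poisson} directly for $D_0$, but that corollary requires a \emph{simply connected} domain with $\Cc^{1+\vep}$ boundary, and $D_0$ may be multiply connected and, more importantly, may have cusps in $\sing(D)$; near a cusp the boundary Harnack constants degenerate and the ratio $\Phi_1/\Phi_2$ is not controlled. Your ``case analysis distinguishing neighbourhoods of $\supp\phi_2$, $\supp\phi_1$, $S\cap\overline{D_0}$, and the remainder'' is precisely where a careful argument has to be supplied, and the parenthetical ``granting this'' is doing all the work. The paper sidesteps these difficulties by never applying the Poisson estimate in $D_0$ itself: it chooses the arcs $A_1\subsetneq A_2\subsetneq A_3$ so that $A_3$ misses $\sing(D)$, $S$, and the ball around $x_0$; builds an auxiliary simply connected $D'\subset D_0$ with genuinely $\Cc^{1+\vep}$ boundary containing $A_3$ and the arc near $x_0$; fixes a smooth cross-cut $\Lambda\subset D'$ with endpoints in $A_3\setminus A_2$, bounding a region $\Omega$ with $\overline\Omega\cap S=\varnothing$; proves $H_r<0$ on $\Lambda$ via Corollary \ref{cor-poisson} applied in $D'$, so that the maximum principle gives $H_r\le 0$ on $\overline{D_0\setminus\Omega}$ (where the interior inequality is then free because $U_\nu\le 0$); and finally, on the compact set $\overline\Omega$ disjoint from $S$, uses lower semicontinuity of $-U_\nu$ to get a uniform $-U_\nu\ge\delta>0$ and shrinks the leading coefficient accordingly. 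Without this $D'$–$\Omega$ decomposition you have no rigorous route to the pointwise bound near cusps or near $S\cap\overline{D_0}$, so as written the proposal has a real hole even though the target inequality is correct.
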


\begin{proof}
	Suppose for contradiction, $\nu(b D_0)>0$. From assumption, there exist $x_0\in b D_0$ and $r_0>0$ such that $\nu( \B(x_0, r_0)\cap b D_0)=0$. We can also assume $\B(x_0, r_0) \cap \sing(D)=\varnothing$ because $\sing(D)$ is finite.
	We take three closed arcs $A_1\subsetneq A_2 \subsetneq A_3$ in $b D_0$ such that $\nu(A_1)>0$ and $A_3$ does not intersect $ \B(x_0, r_0)\cap bD_0, \sign(D)$ or $S$. This can be archived because $\nu$ has no mass on $\sing(D)$ or $S\cap D$ by Lemma \ref{lema-S-no-mass} and $U_\nu$ is upper semicontinuous. 
	Notice that $x_0$ and $A_1$ may not be in the same connected component of $bD_0$. But this is not a problem in our proof.
	
	For every $0<r<r_0/4$, let $f_r$ and $f_A$ be the cut-off functions on $b D_0$ given by  
	\begin{equation}\label{defn-fr}
	f_r(z)=1 \text{ for } z\in \B(x_0,r)\cap  b D_0,\quad   f_r(z)=0 \text{ for } z\in  b D_0\setminus  \B(x_0,2r);
	\end{equation}
	\begin{equation}\label{defn-fA}
	f_A(z)=1 \text{ for } z\in A_1 , \quad f_A(z)=0 \text{ for } z\in b D_0\setminus A_2. 
	\end{equation}  
	Denote by $F_r, F_A$ their harmonic extensions to $D_0$. Consider the function 
	$$H_r:=c( F_A -r^{-3/2} f_r),$$
	where $c>0$ is some small constant whose value will be determined later.
	It is harmonic on $D_0$ and continuous on $b D_0$. We have
	$$\int_{b D_0} H_r  \,\dd\nu \geq  c \, \nu (A_1)  -cr^{-3/2}\nu \big(\B(x_0, 2r)\cap b D_0 \big) =c \, \nu(A_1)>0.  $$
	
	In the following, we will show that $U_\nu +  H_r \leq 0$ on $\overline D_0$. 
	Take a simply connected open subset $D' \subset D_0$ with $\Cc^{1+\ep}$ boundary such that 
	$$\B(x_0, r_0/2) \cap b D_0 \subset b D',  \quad   A_3 \subset bD' \quad \text{and}\quad  b D'\cap \sing(D)=\varnothing.$$
	After that we fix a smooth simply curve $\Lambda \subset D'$ with two end points $\lambda_1, \lambda_2$ in different connected components of $A_3 \setminus A_2$. Denote by  $\Omega$ the open region enclosed by $\Lambda$ and $A_3$. After reducing $\Omega$, we may assume $\overline \Omega$ does not intersect $S$.
	
	Now let $f'_r, f'_A$ be the restriction of $F_r, F_A$ to $b D'$. Since the two end points  $\lambda_1, \lambda_2$ of $\Lambda$ are not contained in $\supp(f'_A)$, $\dist\big (  z, \supp(f'_A) \big) >0$ for all $z\in \Lambda$.  Applying Corollary \ref{cor-poisson} with $f'_r, f'_A, D', \Lambda$, we deduce that for every $z\in \Lambda$ 
	\begin{align*}
	H_r(z)= c F_A(z) -c r^{-3/2} f_r(z) &\leq c\, C \dist(z, b D') -cr^{-3/2} C r\dist(z, b D')\\
	&=c\, C(1-r^{-1/2}) \dist(z, bD'),
	\end{align*}
	which is negative for $r$ small enough.
	
	On the boundary of $D_0\setminus \Omega$, $H_r$ is non-positive. By maximal modulus principle, $H_r \leq 0$ on $\overline {D_0\setminus \Omega}$. On the other hand, since $\overline \Omega\cap S =\varnothing$, we have $U_\nu <0$ on $\overline \Omega$. We can find  $c$ small enough such that $U_\nu + H_r \leq 0$ on $\overline \Omega$. 
	
	\smallskip
	
	Lastly, we extend $H_r$  by $0$ to $\overline D\setminus \overline D_0$. On the set $\sing(D)$, $H_r=0$ from our construction above. So $H_r$ is continuous on $\overline D$. But $\int_{bD} H_r \,\dd \nu>0$ and $H_r +U_\nu \leq 0$ on $\overline D$.
	This  contradicts to Proposition \ref{l:perturbation}.
\end{proof}

\begin{corollary}\label{cor-sing-bD}
	If   there exists some point $ x_0\in bD_0\cap S$ but $x_0 \notin \sing(D)$,
	then $\nu(b D_0)=0$.
\end{corollary}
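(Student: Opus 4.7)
The plan is to mimic the proof of Corollary~\ref{cor-supp-bD} almost verbatim; the only new ingredient is that the bump concentrated at $x_0$ no longer sits on a $\nu$-null region, but on one of quantitatively small $\nu$-mass coming from Lemma~\ref{lem-regularity}.

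Suppose for contradiction that $\nu(bD_0)>0$. Since $\nu(S\cap bD)=0$ by Lemma~\ref{lema-S-no-mass} and $\sing(D)$ is finite, the relatively open set $bD_0\setminus(S\cup \sing(D))$ carries positive $\nu$-mass, so I can pick three nested closed arcs $A_1 \subsetneq A_2 \subsetneq A_3$ inside it with $\nu(A_1)>0$. Because $x_0 \in S$ and $A_3 \cap S = \varnothing$, one has $x_0 \notin A_3$; together with $x_0 \notin \sing(D)$ this allows shrinking a ball $\B(x_0, r_0)$ so that $\overline{\B(x_0, r_0)}$ is disjoint from $A_3\cup \sing(D)$. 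I then build the cut-off functions $f_r$, $f_A$, their harmonic extensions $F_r$, $F_A$, the auxiliary simply connected domain $D' \subset D_0$, the smooth arc $\Lambda$, and the region $\Omega$ exactly as in the proof of Corollary~\ref{cor-supp-bD}, arranging in addition that $\overline\Omega \cap S = \varnothing$, which is possible since $A_3$ is already away from the closed set $S$.

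The key new input is Lemma~\ref{lem-regularity} applied at $x_0$: since $U_\nu(x_0)=0$ is a global (hence local) maximum of $U_\nu$, one has
$$\nu\big(\B(x_0, 2r) \cap bD_0\big) \leq 4Cr^2$$
for a constant $C>0$ independent of $r$. With $H_r := c\,(F_A - r^{-3/2} F_r)$ exactly as before, this quadratic estimate gives
$$\int_{bD_0} H_r \,\dd\nu \;\geq\; c\,\nu(A_1)\;-\;c\, r^{-3/2}\cdot 4Cr^2\;\geq\;\tfrac{c}{2}\,\nu(A_1)\;>\;0$$
as soon as $r$ is small, since the exponent $3/2$ beats the quadratic bump mass.

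The verification that $U_\nu + H_r \leq 0$ on $\overline D$ is then identical to the one in Corollary~\ref{cor-supp-bD}: Corollary~\ref{cor-poisson} applied on $(D',\Lambda)$ yields $H_r \leq cC(1 - r^{-1/2})\dist(\cdot, bD')\leq 0$ on $\Lambda$ for small $r$; the maximum principle propagates $H_r\leq 0$ to $\overline{D_0\setminus\Omega}$; and on $\overline\Omega$, arranged to avoid $S$, one has $\sup_{\overline\Omega} U_\nu<0$, so a sufficiently small choice of $c>0$ absorbs $H_r$ there. Extending $H_r$ by $0$ to $\overline D \setminus \overline{D_0}$ (continuous since $H_r$ vanishes on $\sing(D)$ by our choice of cut-offs), Proposition~\ref{l:perturbation} gives $\int_{bD} H_r\,\dd\nu \leq 0$, contradicting the strict lower bound above. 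The only point requiring care in the adaptation is the applicability of Lemma~\ref{lem-regularity} at the boundary point $x_0$, granted by the remark following that lemma since $x_0$ is a local maximum of $U_\nu$.
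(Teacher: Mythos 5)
Your proof is correct and follows essentially the same approach as the paper's: both run the contradiction argument of Corollary~\ref{cor-supp-bD} with the same perturbation $H_r = c(F_A - r^{-3/2}F_r)$, and both invoke Lemma~\ref{lem-regularity} at $x_0\in S$ to conclude that $\nu(\B(x_0,2r)) = O(r^2)$, which after the $r^{-3/2}$ scaling leaves $\int_{bD_0} H_r\,\dd\nu \geq c\,\nu(A_1) - O(\sqrt{r}) > 0$ for small $r$. The remaining verification that $U_\nu + H_r \leq 0$ on $\overline D$ and the application of Proposition~\ref{l:perturbation} are, as you note, identical to the earlier corollary.
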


\begin{proof}
	Suppose for contradiction, $\nu(b D_0)>0$. By the same reason as in Corollary \ref{cor-supp-bD}, we can find $r_0>0$ such that $\B(x_0, r_0)\cap \sing(D)=\varnothing$ and three closed arcs $A_1\subsetneq A_2 \subsetneq A_3$ in $b D_0$ such that $\nu(A_1)>0$ and $A_3$ does not intersect $\B(x_0, r_0)\cap D_0,\sign(D)$ or $S$.
	For every $0<r<r_0/4$, let $f_r$ and $f_A$ be the cut-off functions on $b D_0$ defined as \eqref{defn-fr} and \eqref{defn-fA}.
	Denote by $F_r, F_A$ their harmonic extension to $D_0$. Consider the function 
	$$H_r:= c (F_A -r^{-3/2} f_r), \quad c>0,$$
	which is harmonic on $D_0$ and continuous on $b D_0$. Using Lemma \ref{lem-regularity}, we have for $r$ small enough,
	$$\int_{b D_0} H_r  \,\dd\nu \geq   c \, \nu (A_1)  -cr^{-3/2}\nu \big(\B(x_0, 2r) \big) =c\, \nu(A_1) -c\, O (\sqrt r) >0.  $$
	
	Repeating the same arguments as in  Corollary \ref{cor-supp-bD}, we can show that $U_\nu +H_r\leq 0$ on $\overline D_0$ for $c$ and $r$ small enough. Hence we get a contradiction from Proposition \ref{l:perturbation}.
\end{proof}

\begin{corollary}\label{cor-max-D}
	Let $D_0$ be a connected component of $D$.	If $\overline D_0\cap S=\varnothing$, then $\nu(b D_0)=0$.
\end{corollary}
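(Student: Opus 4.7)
The plan is to argue by contradiction: assume $\nu(bD_0)>0$ and construct, in the spirit of Corollaries \ref{cor-supp-bD} and \ref{cor-sing-bD}, a continuous function $h$ on $\overline D$, harmonic on $D$, satisfying $h+U_\nu\le 0$ on $\overline D$ but with $\int_{bD} h\,\dd\nu>0$; this will contradict Proposition \ref{l:perturbation}.

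The construction should be substantially simpler than in the previous two corollaries because the hypothesis $\overline D_0\cap S=\varnothing$ furnishes a genuine ``budget''. Indeed, $U_\nu<0$ on the compact set $\overline D_0$, and since $U_\nu$ is upper semicontinuous (being $\omega$-subharmonic), there exists $\delta>0$ with $U_\nu\le -\delta$ on $\overline D_0$. Consequently, any non-negative perturbation $h\le \delta/2$ will automatically satisfy $h+U_\nu\le -\delta/2<0$ on $\overline D_0$; while on the remaining components of $D$ where $h$ will be taken to vanish, $h+U_\nu=U_\nu\le 0$ for free. So the bulk of the work in Corollaries \ref{cor-supp-bD}--\ref{cor-sing-bD}---the Poisson-kernel estimates used to dominate a large negative term by a large positive one near a boundary point---is simply not needed here.

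To exhibit $h$, first use that $\oI_{\omega,D}(\nu)<\infty$ so $\nu$ does not charge the finite (hence polar) set $\sing(D)$; combined with $\nu(bD_0)>0$, pick a small open neighbourhood $N$ of $\sing(D)$ with $\nu(bD_0\setminus N)>0$. Then choose a continuous function $f_0$ on $bD$ supported in $bD_0\setminus\overline N$ with $0\le f_0\le\delta/2$ and $\int_{bD} f_0\,\dd\nu>0$ (a bump on a compact subset of $bD_0\setminus\overline N$ of positive $\nu$-measure). Harmonically extend $f_0$ componentwise to $D$: the extension is identically $0$ on components other than $D_0$, and on $D_0$ the maximum principle yields a function $0\le h\le\delta/2$. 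Together, these pieces form a continuous function on $\overline D$, harmonic on $D$, satisfying $h+U_\nu\le 0$ on $\overline D$ by the discussion above and $\int_{bD}h\,\dd\nu=\int_{bD}f_0\,\dd\nu>0$; Proposition \ref{l:perturbation} then yields the required contradiction.

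The only point to double-check---a bookkeeping matter rather than a real obstacle---is continuity of the harmonic extension at the singular boundary points of $D_0$. Since $f_0$ has been arranged to vanish identically in a neighbourhood of $\sing(D)\cap bD_0$, standard barrier arguments (or the Perron--Wiener--Brelot framework, applicable because $bD_0$ consists of finitely many Jordan arcs of class $\Cc^{1+\ep}$) give continuity of the extension up to those points, and continuity at the $\Cc^{1+\ep}$ boundary points is classical.
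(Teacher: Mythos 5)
Your proposal is correct and follows essentially the same route as the paper: pick $\delta>0$ with $U_\nu\leq -\delta$ on $\overline D_0$ (using $\overline D_0\cap S=\varnothing$ and upper semicontinuity), take a cut-off function on $bD_0$ avoiding $\sing(D)$ with positive $\nu$-integral, extend harmonically to $D_0$, extend by zero to the other components, scale by a small constant, and invoke Proposition~\ref{l:perturbation}. Your added remark that $\nu$ does not charge $\sing(D)$ (so such a cut-off can be found) is a useful clarification that the paper leaves implicit.
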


\begin{proof}
		Suppose for contradiction, $\nu(b D_0)>0$.
	Let $\delta>0$  be a  constant such that $U_\nu +\delta<0$ on $\overline D_0$. Take closed arcs
	$A_1\subsetneq A_2$ in $b D_0$ such that $\nu(A_1)>0$ and $A_2$ does not intersect  $\sing(D)$.
	 Let $f$ be a cut-off function on $b D_0$ such that $f(z) =1$ for $z\in A_1$ and $f(z)=0$ for $z\in b D_0 \setminus A_2$. Then $\int_{b D_0} f \,\dd\nu >0$. Extend $f$ to a function $F$ which is harmonic on $D_0$ and continuous on $b D_0$. After that, we define $H(z):=F(z)$ for $z\in \overline D_0$ and $H(z):=0$ for $\overline D \setminus \overline D_0$. Since $F(z)=0$ for $z\in \sing(D)$, $H$ is continuous. By considering the function $\delta H$, we get a contradiction  from Proposition \ref{l:perturbation}.
\end{proof}

\proof[Proof of Theorem \ref{t:main-1}]
(1) is given by Lemmas  \ref{prop-mass-nu}, \ref{l:min} and Corollary \ref{cor-U-mu-widehat}.  (2) is given by Lemmas \ref{prop-mass-nu} and \ref{prop-nu-S}. (3) is given by  Corollaries  \ref{cor-supp-bD}, \ref{cor-sing-bD}  and \ref{cor-max-D}.
\endproof

We end this section with an example for the necessity of the condition $\nu(b D_0)>0$ in (3) of Theorem \ref{t:main-1}.  Let $D$ be an simply connected open subset of $X$ with $\Cc^{1+\ep}$ boundary and $\overline D\neq X$. In this case the  equilibrium measure $\nu$ of $D$ is $\nu_{bD}+\nu_S$ and $b D\cap S=\varnothing$. Take a small open ball $B$ in $X\setminus (\overline D\cup S)$ such that $b B \cap \supp(\nu_S)\neq \varnothing$.

Consider the new open set $\widetilde D:=D\cup B$ and the new  equilibrium measure $\widetilde \nu$ of $\widetilde D$. We also define $\widetilde S=\{U_{\widetilde\nu}=0 \}$.   Since $X \setminus \widetilde D$ is proper contained in $X\setminus D$, we have $\min  \oI_{\omega , \widetilde D}\geq \min \oI_{\omega,D}$. From $\supp(\nu)\subset X \setminus \widetilde D$, we see that $\oI_{\omega,\widetilde D}$ actually attends its minimum at $\nu$. The uniqueness of $\widetilde \nu$ implies that $\widetilde \nu=\nu$.  In this example, $B$ is a connected component of $b \widetilde D$ such that $\widetilde \nu(B)=0$ and $b B\cap \widetilde S\neq \varnothing$.

\smallskip

This example also shows that condition $x_0\notin \sing(D)$  cannot  be replaced by $x_0\notin \sing(D_0)$ in Corollary \ref{cor-sing-bD}.

\medskip

\section{Spaces of holomorphic sections of line bundles} \label{s:bundle}

In this section, we will recall the notation related to holomorphic sections of line bundles on compact Riemann surfaces.

Let $X$ be a compact Riemann surface of genus $g\geq 1$.  In what follows, let $n$ be a large positive integer  and let $m:=n-g$. For simplicity, we will first assume $\deg (\oL)=1$. In this case, $\oL^n$ is a positive line bundle of degree $n$.

\subsection{Symmetric space}
Let $X^{(n)}$ denote the set of effective divisors of degree $n$ on $X$, i.e.\ the set of unordered $n$-sum $p_1+\cdots+p_n$, where  $p_1,\dots,p_n\in X$, not necessarily distinct.  It is equal to the symmetric product obtained by dividing $X^n$ by the permutations between its factors. Let 
$$\pi_n:X^n\to X^{(n)},\quad   (p_1,\dots,p_n)\mapsto p_1+\cdots+p_n$$
be the canonical projection. Then $X^{(n)}$ inherits a complex structure from $X^n$ in the following way. Let $p_1+\cdots+p_n\in X^n$ and let $z_j$ be a local coordinate in a neighborhood $U_j$ of $p_j$ in $X$, where we take $U_j\cap U_k=\varnothing$ if $p_j\neq p_k$ and $z_j=z_k,U_j=U_k$ if $p_j=p_k$. Let $\sigma_1,\dots,\sigma_n$ be the elementary symmetric functions with respect to the variables $z_j$'s. By the fundamental theorem of algebra, the map
$$ p_1+\dots+p_n\mapsto (\sigma_1,\dots,\sigma_n)$$
gives a local coordinate near $p_1+\cdots+p_n$ on $\pi_n (U_1\times \cdots\times U_n)\subset X^{(n)}$. So $X^{(n)}$ is a complex manifold of dimension $n$ and $\pi_n$ is a ramified covering of degree $n!$. The ramification set of $\pi_n$ is given by $\mathbf R_n:=\big\{(p_1,\dots,p_n):\, p_j=p_k \text{ for some } j\neq k \big\}$. This is also the large diagonal of $X^n$.

\subsection{Jacobian variety}\label{sec-jac}
Let $\alpha_1,\dots,\alpha_g,\beta_1,\dots,\beta_g$ be the $1$-cycles in $X$ forming a canonical basis for $H_1(X,\Z)$, i.e., $\alpha_j$ intersects $\beta_j$ once positively, and does not intersect any others. Let $\{\phi_1,\dots,\phi_g\}$ be a basis for the complex vector space of holomorphic differential $1$-forms on $X$, and we denote by $\phi$ the column vector of length $g$ with entires $\phi_j$'s. The period matrix $\Omega$ is the $g\times 2g$ matrix defined by 
$$\Omega:=\Big(\int_{\alpha_1} \phi, \,\dots\,,  \int_{\alpha_g} \phi  , \, \int_{\beta_1}\phi,\,\dots\, ,\, \int_{\beta_g} \phi\Big).$$
Moreover, one can choose the basis $\phi$ so that $\Omega=(I,\Omega')$, where $I$ is the $g\times g$ identity matrix and $\Omega'$ is a $g\times g$ symmetric matrix having positive definite imaginary part. 

The $2g$ columns of  $\Omega$ are linearly independent over $\R$. Thus, they generate a lattice $\Lambda$ in $\C^g$. We define the \textit{Jacobian variety} of $X$ by
$$ \mathrm{Jac}(X):= \C^g/ \Lambda,$$
which is a $g$-dimensional complex torus. We fix a base point $p_\star\in X$ throughout this article. The Abel-Jacobi map $\oA:X \to \mathrm{Jac}(X)$ associated to $p_\star$ is defined by
$$\oA(x):=\Big(\int_{p_\star}^x \phi_1,\,\dots\,,\,\int_{p_\star}^x \phi_g \Big) \quad  (\text{mod} \,\,\Lambda).$$
The last vector  is independent of the choices of path from $p_\star$ to $x$ after taking the modulo. Of course, $\oA$ depends on the base point $p_\star$.  The  Abel-Jacobi maps associated to  $p_\star$ and $p_\star'$ just differ by a translation on the complex torus. The reader may also refer to \cite{PAG} for the definition of Jacobian variety. 

\smallskip

Denote by $W_1$ the image of $\oA$ in $\mathrm{Jac}(X)$ and $W_t=W_1+\cdots +W_1$ ($t$ times). They are all irreducible complex analytic subsets of $\mathrm{Jac}(X)$ with 
$$W_1\subset W_2\subset\cdots,\quad \dim (W_t)=t \,\text{ for }  1\leq t\leq g, \quad\text{and} \quad W_g=\mathrm{Jac}(X).$$ See  \cite{gun-book} for the description of these $W_t$'s.
We can also define $\oA_t: X^{(t)}\to \mathrm{Jac}(X)$ by 
$$\oA_t(p_1+\cdots+p_t):=\sum_{j=1}^t\oA(p_j)= \sum_{j=1}^t\Big(\int_{p_\star}^{p_j} \phi_1,\,\dots \,,\,\int_{p_\star}^{p_j} \phi_g \Big) \quad  (\text{mod} \,\,\Lambda).$$ 
We have $\oA_1=\oA$ is injective and $\oA_g$ is surjective by Jacobi inversion theorem.

\subsection{Divisors on compact Riemann surfaces}\label{sec-divisor}

Recall that on  the  compact Riemann surface $X$, a \textit{divisor} is a finite formal linear combination $\oD:=\sum a_j x_j$ with $a_j\in \Z$ and $x_j\in X$. If $a_j\geq 0$ for all $j$, then we call $\oD$ an \textit{effective divisor}. The sum $\sum a_j$ is the \textit{degree} of $\oD$. For a meromorphic function $f$ on $X$ and a holomorphic  section $s$ of $\oL$, we write $(f)$ (resp.\ $(s)$) the divisor associated to the zeros and poles of $f$ (resp.\ $s$) respectively. The degree of $(f)$ is always $0$. If a divisor $\oD$ is associated to some meromorphic function, then we call $\oD$  a \textit{principal divisor}. Abel's theorem says that  $\oA_t(p_1+\cdots+p_t)=\oA_t(p_1'+\cdots+p_t')$ if and only if  $p_1+\cdots+p_t-p_1'-\cdots-p_t'$ is a principal divisor. In this case,  we also say that $p_1+\cdots+p_t$ and $p_1'+\cdots+p_t'$ are \textit{linearly equivalent}.

There is a nature group homomorphism from the group of divisors to the group of line bundles, see \cite[page 134]{PAG}.
For a divisor $\mathcal D$ on $X$, we denote by $\Oc(\mathcal D)$ the line bundle corresponding to $\mathcal D$.

\smallskip

The Abel-Jacobi  map $\oA_g$ is injective outside a $g-2$ dimensional analytic subset  $\W$ of $X^{(g)}$, where for  $q_1+\cdots+q_g\in \W$, one has $\dim  H^0(X,\Oc(q_1+\cdots+q_g))\geq 2$. If $g=1$, then $\W$ is empty. The image of $\W$ under $\oA_g$ is called the \textit{Wirtinger subvariety}, denoted by $W_g^1$. It is of codimension $2$ in $\mathrm{Jac}(X)$. Therefore, we obtain a biholomorphic map
$$\oA_g : X^{(g)} \setminus \W \longrightarrow  \mathrm{Jac}(X)  \setminus   W_g^1.$$

As a corollary of Jacobi inversion theorem, every divisor of degree greater than or equal to $g$ on $X$ is linearly equivalent to an effective divisor.
Since we assume that $n$ is large enough,  for any $p_1+\cdots +p_m\in X^{(m)}$, there exists $q_1+\cdots+q_g\in X^{(g)}$ such that 
\begin{equation}\label{equal-p+q=L}
\Oc(p_1+\cdots +p_m)\otimes \Oc(q_1+\cdots+q_g)\simeq \oL^n. 
\end{equation}
Equivalently, if $\oL^n\simeq \Oc(w_1+\cdots+w_n)$, then 
\begin{equation}\label{equal-p+q=L-oA}
\oA_m(p_1+\cdots+p_m)+\oA_g(q_1+\cdots+q_g)=\oA_n(w_1+\cdots+w_n):=\oA_n(\oL^n).
\end{equation}
Here, $\oA_n(\oL^n)$ is well-defined because of Abel's theorem. Moreover,
the choice of $q_1+\cdots+q_g$ is unique if $\dim H^0(X,\Oc(q_1+\cdots+q_g))=1$, or equivalently,  $\oA_n(\oL^n)-\oA_m(p_1+\cdots+p_m)  \notin W_g^1$. For more details on the line bundles over compact Riemann surfaces and Abel-Jacobi map, the readers may refer to \cite{demailly:agbook,PAG,gun-book}.

\smallskip

Inspiring from the above discussion, for every $m$, we define the exceptional set
$$\bH_m:=\big \{ p_1+\cdots+p_m\in X^{(m)}:\, \oA_n(\oL^n)-\oA_m(p_1+\cdots+p_m)    \in W_g^1 \big\}.$$
It is an  analytic subset of $X^{(m)}$ with codimension $2$. One should distinguish the difference between $\bH_g$ and $\W$.
We obtain a holomorphic map 
$$\oB_m:X^{(m)}\setminus \bH_m \longrightarrow  X^{(g)} \setminus \W$$
given by  $\oB_m(p_1+\cdots+p_m):=q_1+\cdots+q_g$ such that \eqref{equal-p+q=L} or \eqref{equal-p+q=L-oA} holds. 
Consequently, this induces the holomorphic map
$$\Ac _m:X^{(m)}\setminus \bH_m  \longrightarrow   \P H^0(X,\oL^n),$$
defined by 
$$\Ac _m (p_1+\cdots+p_m):=[s],$$
with $s\in H^0(X,\oL^n)\setminus\{0\}$ such that $(s)= p_1+\cdots+p_m+\oB_m( p_1+\cdots+p_m)$.

\smallskip

The map $\Ac _m$ can be also read in another way.
Consider the complex vector bundle over $X^{(g)}$ given by
$$\big\{ (s,q_1+\cdots+q_g):\, s\in  H^{0} (X,\oL^n \otimes \Oc(-q_1-\cdots-q_g) ),\, q_1+\cdots+q_g\in X^{(g)} \big\},$$
and its projectivization,  
\begin{align*}
\widehat X^{(m)}:&=\big\{ ([s],q_1+\cdots+q_g):\, s\in  H^{0} (X,\oL^n \otimes \Oc(-q_1-\cdots-q_g) )\setminus\{0\} \big\}        \\
&\simeq \big\{ (p_1+\cdots+p_m, q_1+\cdots+q_g) :\,  \Oc(p_1+\cdots +p_m)\otimes \Oc(q_1+\cdots+q_g)\simeq \oL^n \big\}.     
\end{align*}
Observe that $\widehat X^{(m)}$ is an $m$-dimensional compact complex manifold by Riemann-Roch.
We also have a holomorphic surjective map
$$\widehat\Ac_m:\widehat X^{(m)}  \longrightarrow   \P H^0(X,\oL^n)$$
given by 
$$\widehat\Ac_m \big((p_1+\cdots+p_m, q_1+\cdots+q_g)\big)=[s]$$
with $s\in H^0(X,\oL^n)\setminus\{0\}$ such that $(s)= p_1+\cdots+p_m+q_1+\cdots+q_g$. 

Zelditch \cite[Proposition 3]{zel-imrn} proved that  $\widehat\Ac_m$ is a ramified holomorphic covering of degree $\binom {n}{m}$. Comparing with $\Ac _m$, the advantage of  $\widehat\Ac_m$ is that it is defined on the whole $\widehat X^{(m)}$.  Moreover,  their difference is ``negligible" by the next lemma.

\begin{lemma}\label{lem-analytic-0}
	For any proper analytic subset $A$ of $\widehat X^{(m)}$,  the measure of $\widehat\Ac_m(A)$ in  $\P H^0(X,\oL^n)$ with respect to $V^{\FS}_n$ is $0$. 
\end{lemma}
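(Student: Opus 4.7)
The plan is to combine three standard facts about proper holomorphic maps of complex manifolds: the Remmert proper mapping theorem, the finiteness of fibers of a ramified covering, and the fact that proper analytic subsets of $\P^m$ have measure zero for any smooth volume form.

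First, I would note that both $\widehat X^{(m)}$ and $\P H^0(X,\oL^n)$ are compact complex manifolds of the same dimension $m$ (the former by the statement preceding the lemma, the latter because $\dim H^0(X,\oL^n)=m+1$ by Riemann--Roch), and the hypothesis recalled from \cite{zel-imrn} says that $\widehat\Ac_m$ is a ramified holomorphic covering of degree $\binom{n}{m}$. In particular $\widehat\Ac_m$ is a finite holomorphic map: every fiber is a finite set of cardinality at most $\binom{n}{m}$, and the map is proper because the source is compact.

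Next I would apply Remmert's proper mapping theorem to conclude that, for any analytic subset $A\subset\widehat X^{(m)}$, the image $\widehat\Ac_m(A)$ is an analytic subset of $\P H^0(X,\oL^n)$. Because the fibers of $\widehat\Ac_m$ are finite, the map does not collapse dimensions, so $\dim \widehat\Ac_m(A)=\dim A$. If $A$ is a proper analytic subset then $\dim A<m$, hence $\widehat\Ac_m(A)$ is a proper analytic subset of the $m$-dimensional projective space $\P H^0(X,\oL^n)$.

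Finally I would invoke the standard fact that a proper analytic subset of a complex manifold has Lebesgue measure zero with respect to any continuous volume form; since $V^{\FS}_n$ is the volume form of the Fubini--Study metric, which is smooth and everywhere positive, it follows that $V^{\FS}_n\bigl(\widehat\Ac_m(A)\bigr)=0$. There is really no obstacle here beyond correctly citing these three ingredients; the only point worth double-checking is that $\widehat\Ac_m$ is genuinely finite (as opposed to merely generically finite), which follows from the ramified covering statement in \cite{zel-imrn}, so that the dimension of $A$ does not increase under the image.
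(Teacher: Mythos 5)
Your proposal is correct and follows essentially the same route as the paper, which in one line observes that $\widehat\Ac_m$ is proper (compact source), so $\widehat\Ac_m(A)$ is a proper analytic subset and hence $V^{\FS}_n$-null. You merely spell out the ingredients the paper leaves implicit (Remmert's theorem plus the finite-fiber dimension count showing the image is a \emph{proper} subset), which is a reasonable expansion rather than a different argument.
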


\begin{proof}
	Obviously, $\widehat\Ac_m$ is a proper map. Hence $\widehat\Ac_m(A)$ is also a proper analytic subset of $\P H^0(X,\oL^n)$, which has measure $0$ with respect to  the  measure $V^{\FS}_n$.
\end{proof}

Define the nature projection $\widehat\pi_m: \widehat X^{(m)}\to X^{(m)}$ by 
$$ \widehat\pi_m (p_1+\cdots+p_m, q_1+\cdots+q_g)= p_1+\cdots+p_m.$$
Then the following diagram commutes:
\begin{equation}\label{diagram-ob}
\begin{tikzcd}
\widehat X^{(m)} \setminus \widehat\pi_m^{-1}(\bH_m) \arrow[r, "\widehat\Ac_m"] \arrow[d, "\widehat\pi_m"]
& \P H^0(X,\oL^n) \arrow[d, "\id" ] \\
X^{(m)} \setminus \bH_m \arrow[r, "\Ac _m"]
& || \P H^0(X,\oL^n).
\end{tikzcd}
\end{equation}

Our aim is to find the formula for the pull-back of the volume form $V_n^{\FS}$ from $\P H^0(X,\oL^n)$ by the map $\Ac _m$. Before that, we need to introduce some more terminologies.

\subsection{Admissible metric}
For any point $x\in X$, we have $H^0(X,\Oc(x))=1$ and each holomorphic section of $\Oc(x)$ vanishes at $x$ because $g\geq 1$. Recall that every holomorphic section of $\Oc(x)$ corresponds to a meromorphic function  $f$ such that $(f)+x$ is an effective divisor. Denote by $\mathbf 1_{\Oc(x)}$ the holomorphic section corresponding to the  function $\mathbf 1$. 

\smallskip

Given the K\"ahler form $\omega$ of integral $1$, for any line bundle $\oL'$  of degree $1$ on $X$, there exists a Hermitian metric $\fh'_\omega$ on $\oL'$ unique up to a multiplicative constant, such that its curvature $(1,1)$-form is exactly $\omega$.  Indeed, let $\fh'$ be any hermitian metric on $\oL'$ with curvature $(1,1)$-form $\omega'$. Then $\int_{X} (\omega-\omega')=0$, i.e., they are in the same cohomology class. So there exists a smooth function $\varphi_0$ such that $\ddc \varphi_0=\omega-\omega'$. It is not hard to check that the metric $\fh'_\omega=e^{-\varphi_0}\fh'$ satisfies the requirement. If we normalize $\varphi_0$ so that $\int_{X} \varphi_0 \,\omega'=0$, then the choice of $\fh'_\omega$ is unique. In this case, we say $\fh'_\omega$ is \textit{$\omega$-admissible}.  For every point $x\in X$, we denote by $\fh_{\omega,x}$ the $\omega$-admissible hermitian metric of $\Oc(x)$. 
Define the function
$$\rho(x):= \int_X  \log \norm{\mathbf 1_{\Oc(x)}(z)}_{\fh_{\omega,x}}  \,\omega(z).$$
Observe that $\norm{\mathbf 1_{\Oc(x)}(z)}_{\fh_{\omega,x}} \simeq \dist(x,z)$. Thus, $\rho$  is continuous on $X$.

\subsection{Riemann theta divisor}
Recall that in Subsection \ref{sec-jac}, we choose the basis $\phi$ so that $\Omega=(I,\Omega')$. Put
$$e_j:=\Big(\int_{\alpha_j} \phi_1, \, \dots \,, \,  \int_{\alpha_j} \phi_g  \Big) ,\quad \Omega_j':= \Big(\int_{\beta_j} \phi_1,\, \dots \, , \,  \int_{\beta_j} \phi_g  \Big). $$
They are the $2g$ columns of $\Omega$, which form an $\R$-basis for $\C^g$.
The complex torus $\mathrm{Jac}(X)$ admits a line bundle $\oL_J$ with the curvature form 
$$\omega_J=\sum_{j=1}^g  \dd x_j \wedge \dd x_{j+g},$$
where $x_1,x_2,\dots,x_{2g}$ are the real coordinates  with respect to the $\R$-basis $\{e_1,\dots,e_g,\Omega_1',\dots,\Omega_g'\}$. Moreover, we have  $\dim H^0(X,\oL_J)=1$. See e.g.\ \cite[Page 333]{PAG}. Let $\fh_J$ be the $\omega_J$-admissible hermitian metric of $\oL_J$.

The line bundle $\oL_J$ has a global holomorphic section $\widetilde \theta$ represented by the \textit{Riemann theta function} $\theta$, which is holomorphic on $\C^g$ and satisfying
$$\theta(z+e_j)=\theta(z) \quad \text{and}\quad \theta(z+\Omega'_j)=e^{-2\pi i(z_j+\Omega'_{j,j}/2)}\theta(z)$$
for every $z\in \C^g$ and $1\leq j\leq g$. Here, $\Omega_{j,j}'$ is  the $(j,j)$-entry of the symmetric matrix $\Omega'$.

Let $\Theta:=(\widetilde \theta)$ be the divisor of $\widetilde\theta$, called the \textit{Riemann theta divisor}.
Riemann's  theorem (c.f.\ \cite[Page 338]{PAG}) states that there exists a unique point $\mathbf z_\star\in \mathrm{Jac}(X)$ such that $W_{g-1}=\Theta +\mathbf z_\star$. This $\mathbf z_\star$ depends on the choice of $p_\star$.

\subsection{$\oE_m$ and $\oF_m$}

For every point $\mathbf p:=p_1+\cdots+p_n\in X^{(n)},n\in \N$, we denote by $\delta_{\mathbf p}$ the empirical probability measure $(\delta_{p_1}+\cdots+\delta_{p_n})/n$. Here we omit the dependence on $n$ to ease the notation.

Now for $\mathbf p=p_1+\cdots+p_m\in X^{(m)}\setminus \bH_m$ and $\mathbf q=\oB_m(\mathbf p)=q_1+\cdots+q_g\in X^{(g)}$, we define
$$\oE_m(\mathbf p):={1\over m^2} \sum_{j\neq k} G(p_j,p_k),$$
and 
$$ \oF_m(\mathbf p):=\log\big\|  e^{U'_{\delta_{\mathbf p}}+gU'_{\delta_{\mathbf q}}/m}       \big\|_{L^{2m}(\omega_0)}.$$
Here, $G$ is the Green function and $U'_{\delta_{\mathbf p}},U'_{\delta_{\mathbf q}}$ are  defined in  \eqref{defn-potentil-type-I}.  The next lemma allows us to remove the $\omega_0$ in $\oF_m$.

\begin{lemma}\label{lem-f-m-p}
	We have 
	$$\sup_{\mathbf p \in X^{(m)}}\big| \oF_m(\mathbf p)- \max U'_{\delta_{\mathbf p}}\big|=O\Big({\log m\over m} \Big)  \quad \text{as}\quad m\to \infty$$
	and $U'_{\delta_{\mathbf p}} \leq C$ for some constant $C$ independent of $m$ and $\mathbf p$.
\end{lemma}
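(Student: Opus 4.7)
The plan is to write $\oF_m(\mathbf p)=\frac{1}{2m}\log\int_X e^{2m U'_{\delta_{\mathbf p}}+2g U'_{\delta_{\mathbf q}}}\,\omega_0$ and sandwich this between $M_{\mathbf p}\pm O(\log m/m)$, where $M_{\mathbf p}:=\max_X U'_{\delta_{\mathbf p}}$. The workhorse is the decomposition $G(x,y)=\log\dist(x,y)+\varrho(x,y)$ with $\varrho$ Lipschitz, coming from Lemma \ref{l:Green}. This immediately gives $U'_{\delta_{\mathbf p}}(x)\le \log\diam(X)+\|\varrho\|_\infty$ uniformly in $\mathbf p$, which is the second assertion of the lemma; the same decomposition yields $e^{2g U'_{\delta_{\mathbf q}}(x)}\lesssim \prod_{k=1}^g\dist(q_k,x)^2$, so $\int_X e^{2g U'_{\delta_{\mathbf q}}}\,\omega_0$ is uniformly bounded by some constant $C''$ independent of $\mathbf q$.

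The upper bound $\oF_m(\mathbf p)\le M_{\mathbf p}+O(1/m)$ follows at once from $U'_{\delta_{\mathbf p}}\le M_{\mathbf p}$ pointwise and the uniform bound on $\int e^{2g U'_{\delta_{\mathbf q}}}\,\omega_0$. For the lower bound, upper semicontinuity of $U'_{\delta_{\mathbf p}}$ provides a point $x_0\in X$ with $U'_{\delta_{\mathbf p}}(x_0)=M_{\mathbf p}$; I then restrict the integral to $B=\B(x_0,r)$ with $r=m^{-1/2}$ and apply Jensen's inequality to the convex function $e^{(\cdot)}$. Two averaged estimates on $B$ feed into Jensen. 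First, since $\ddc U'_{\delta_{\mathbf p}}\ge -\omega$, adding a local smooth $\omega$-potential makes $U'_{\delta_{\mathbf p}}$ subharmonic in a neighborhood of $x_0$, and the classical sub-mean value inequality, combined with the smoothness of the correction and of $\omega_0$, gives $\frac{1}{\omega_0(B)}\int_B U'_{\delta_{\mathbf p}}\,\omega_0\ge M_{\mathbf p}-O(r^2)$ uniformly. Second, a direct computation of $\int_B\log\dist(q_k,\cdot)\,\omega_0$, splitting into the cases $q_k\in B$ and $q_k\notin B$, gives $\frac{1}{\omega_0(B)}\int_B U'_{\delta_{\mathbf q}}\,\omega_0\ge \log r-C$ uniformly in $\mathbf q$.

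Combining these two estimates through Jensen yields
\[
\int_X e^{2m U'_{\delta_{\mathbf p}}+2g U'_{\delta_{\mathbf q}}}\,\omega_0\ \gtrsim\ \omega_0(B)\,\exp\bigl(2m M_{\mathbf p}-O(m r^2)+2g\log r\bigr).
\]
With $r=m^{-1/2}$, one has $\omega_0(B)\asymp m^{-1}$ and $m r^2=1$, so the right hand side is $\gtrsim m^{-(1+g)}e^{2m M_{\mathbf p}}$; taking $(2m)^{-1}\log$ produces $\oF_m(\mathbf p)\ge M_{\mathbf p}-\frac{(1+g)\log m}{2m}+O(1/m)$, which matches the upper bound up to the claimed $O(\log m/m)$ error. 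The only delicate point is that $x_0$ may coincide with or lie very close to some $q_k$, making $U'_{\delta_{\mathbf q}}(x_0)=-\infty$; this logarithmic singularity is exactly what the $\log r$ lower bound for the average of $U'_{\delta_{\mathbf q}}$ on $B$ absorbs, and it is tolerable because the genus $g$ is fixed.
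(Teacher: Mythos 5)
Your proof is correct, but it takes a genuinely different route from the paper's. The paper reduces the lower bound on $\oF_m(\mathbf p)$ to a lower bound on the $\omega_0$-measure of the superlevel set $\{U_{\delta_{\mathbf p}}+gU'_{\delta_{\mathbf q}}/m\ge -O(\log m/m)\}$: it applies sub-mean to $U_{\delta_{\mathbf p}}$ on a ball of radius $1/m$ at a maximizer to show that the bad sublevel set $\{U_{\delta_{\mathbf p}}<-1/m\}$ has area $\lesssim m^{-3}$ inside that ball, and it controls $U'_{\delta_{\mathbf q}}$ by invoking the uniform exponential-decay estimate $\omega_0\{U'_{\delta_{\mathbf q}}\le -M\}\le e^{-\alpha M}$ for the compact family of quasi-subharmonic functions (citing \cite[Theorem A.22]{dinh-sibony:cime}); on the resulting set of measure $\gtrsim m^{-2}$ the integrand is bounded below pointwise, and taking $\frac{1}{2m}\log$ gives the claim. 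You instead localize to a ball of radius $m^{-1/2}$ at the maximizer and apply Jensen, feeding in two averaged estimates: sub-mean for the $\mathbf p$-potential, and an explicit average of $\log\dist(\cdot,q_k)$ for the $\mathbf q$-potential. Both proofs hinge on sub-mean at a maximizer of $U'_{\delta_{\mathbf p}}$, and both yield the same $O(\log m/m)$ rate, but your argument is more elementary and self-contained, avoiding the compactness/exponential-decay input entirely and replacing it with a direct logarithmic-potential computation. The paper's version is more modular and is the same mechanism Zelditch used, so the paper's choice keeps the exposition parallel to the $D=\varnothing$ case. One small technical point you should make explicit for completeness: when comparing the $\omega_0$-average to the Lebesgue-average in the sub-mean step, use that $U'_{\delta_{\mathbf p}}-M_{\mathbf p}\le 0$ together with the uniform two-sided bound on the density of $\omega_0$ in local coordinates to absorb the ratio of densities into the $O(r^2)$ error with constants uniform in $\mathbf p$ and $x_0$ by compactness of $X$.
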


\begin{proof}
By  Lemma \ref{l:Green}, we have
	$$	U'_{\delta_{\mathbf p}}(x)=\int_X G(x,y) \,\dd \delta_{\mathbf p}(y) = {1\over m}\sum_{j=1}^m \Big( \log \dist(x,p_j)+\varrho(x,p_j) \Big).
	$$
It is bounded by $\log \diam(X)+\max \varrho$. This gives the second assertion. In particular,   $U'_{\delta_{\mathbf q}}$ is also uniformly bounded from above. Thus, to prove the first estimate, it is enough to show
\begin{equation}\label{U-2m-cm}
\omega_0 \big\{ U_{\delta_{\mathbf p}} + gU'_{\delta_{\mathbf q}}/m \geq  -2/m  \big\}  \geq c' /m^2
\end{equation} 
for all $m$ large enough  and some $c'>0$ independent of $m$ and $\mathbf p$. Here $U_{\delta_{\mathbf p}}$ is the $\omega$-potential of $\delta_{\mathbf p}$ of type M.  

It is not hard to see that $\{U'_{\delta_{\mathbf q}}:\, \mathbf q\in X^{(g)}\}$ is contained in a compact subset of quasi-subharmonic functions on $X$. There exists an $\alpha>0$ independent of $\mathbf q$ such that for every $M>0$,
$\omega_0 \{U'_{\delta_{\mathbf q}}\leq -M  \} \leq e^{-\alpha M}$ (c.f.\ \cite[Theorem A.22]{dinh-sibony:cime}). 
Replacing $M$ by $4\alpha^{-1} \log m$, we get 
\begin{equation}\label{U-q-small}
\omega_0 \{U'_{\delta_{\mathbf q}}\leq -4\alpha^{-1}  \log m  \} \leq m^{-4}.    
\end{equation}

\smallskip
\noindent {\bf  Claim.} $\omega_0\{ U_{\delta_{\mathbf p}} \geq  -1/m  \} \geq 2c'/m^2$ for some $c'>0$ independent of $m$ and $\mathbf p$.
\smallskip

\noindent
Assuming the claim, together with \eqref{U-q-small}, we verify \eqref{U-2m-cm} and finish the proof of the lemma.  

\smallskip

Let us prove the claim. Let $y$ be a maximum of $U_{\delta_{\mathbf p}}$, i.e.\ $U_{\delta_{\mathbf p}}(y)=0$. It is enough to show the assertion holds on $\B(y,1/m)$.
The problem is local. We can work on $\D$. Fix a K\"ahler form $\omega_\D$ on $\D$ and  let $U$ be a $\omega_\D$-subharmonic function on $\D$ such that $ U\leq 0$ and $U(0)=0$. 
 Fix also a smooth potential $\varphi$ of $\omega_\D$ and we may assume $\varphi(z)=|z|^2+O(|z|^3)$ as $|z|\to 0$. Applying sub-mean inequality to the subharmonic function $U+\varphi$ on $\D(0, 1/m)$,  we get 
 $$0\leq \int_{\D(0, 1/m)} \big(U (z)+\varphi(z)\big)\, i\dd z \wedge \dd \overline z  \leq \int_{|z|<1/m,U(z)<-1/m} -{1\over m} \, i\dd z \wedge \dd \overline z+ {\pi\over m^4}+O(m^{-5}).         $$
 We see that $\Area\big( \{U(z)<-1/m\}\cap\D(0,1/m)\big)  \lesssim m^{-3}$. This proves the claim since $\Area(\D(0,1/m))=\pi m^{-2}$.
\end{proof}

\subsection{Formula of $\Ac _m^*(V_n^{\FS})$}\label{forumla-Ac}

By Riemann-Roch, $\dim\P H^0(X,\oL^n)=n-g=m$. 
Recall that $\mathbf R_m$ is the ramification set of the projection $\pi_m: X^m\to X^{(m)}$. For $p_1+\cdots+p_m\in  X^{(m)}$ outside $\pi_m(\mathbf R_m)$, we can take the local coordinate near $p_1+\cdots+p_m$ as $(z_1,\dots,z_m)$, where $z_j$ is the local coordinate of $p_j$ on $X$. As a  universal holomorphic covering of $X$, the upper half plane $\mathbb H$ carries a canonical Euclidean coordinate. Thus, in the following, we let $(z_1,\dots,z_m)$ to be the ``uniform" local coordinate  near $(p_1,\cdots,p_m) \in X^{m}\setminus \mathbf R_m$, where each $z_j$ is induced from a fixed fundamental domain in $\mathbb H$.

  Zelditch \cite[Theorem 2]{zel-imrn} found the following explicit formula for  $\Ac _m^*(V_n^{\FS})$ under the coordinate we introduced just now.

\begin{proposition}\label{prop-formula}
	On $X^{(m)}\setminus \big(\bH_m \cup\pi_m(\mathbf R_m)\big)$,	the positive measure $\Ac _m^*(V_n^{\FS})$  has the form
	$$C_n  \exp\Big[m^2\Big(\oE_m(\mathbf p)- {2(m+1)\over m} \oF_m(\mathbf p)  \Big) \Big] \kappa_n,$$
	where, $\kappa_n$ is the positive continuous $(m,m)$-form on $X^{(m)}\setminus \big(\bH_m \cup\pi_m(\mathbf R_m)\big)$: 
	\begin{align*}
	\kappa_n:= e&^{-4\sum_{k=1}^m \rho(p_k)-2(m+1)\sum_{j=1}^g \rho(q_j) }\\
	&{\prod_{k=1}^m\prod_{j=1}^g \norm{\mathbf 1_{\Oc(p_k)}(q_j)}^2_{\fh_{\omega,p_k}} \prod_{j=1}^g  \norm{\mathbf 1_{\Oc(p_\star)}(q_j)}^2_{\fh_{\omega,p_\star}} \over  \big\|\widetilde\theta\big(  \oA_g(\mathbf q)-\oA_1(p_\star) -\mathbf z_\star \big)\big\|_{\fh_J}^2} \, i\dd z_1\wedge \dd\overline z_1 \wedge\cdots \wedge i\dd z_m\wedge \dd\overline z_m,
	\end{align*} 
	and $C_n$ is a normalized positive constant such that $\lim_{n\to \infty} 1/n^2 \log C_n=0$.
\end{proposition}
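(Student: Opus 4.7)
The plan is to build an explicit holomorphic family of representative sections $\mathbf p\mapsto s_{\mathbf p}\in H^0(X,\oL^n)\setminus\{0\}$ with $(s_{\mathbf p})=\mathbf p+\oB_m(\mathbf p)$, compute its $L^2$-norm, and apply the standard formula for the pullback of $V_n^{\FS}$ via a holomorphic section-valued map. Writing $\mathbf q=\oB_m(\mathbf p)=q_1+\dots+q_g$, the tensor product $\bigotimes_k\mathbf{1}_{\Oc(p_k)}\otimes\bigotimes_j\mathbf{1}_{\Oc(q_j)}$ is a section of $\Oc(\mathbf p+\mathbf q)$ with the desired divisor; to descend it to $\oL^n$ we trivialize the degree-zero line bundle $\oL^n\otimes\Oc(-\mathbf p-\mathbf q)$. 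By Riemann's theorem this trivialization is canonically provided by $\widetilde\theta\bigl(\oA_g(\mathbf q)-\oA_1(p_\star)-\mathbf z_\star\bigr)$, which is nonzero precisely outside $\bH_m$.

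Next, compute the $L^2$-norm. Using that $\fh_{\omega,p}$ has curvature $\omega$ together with $\int_X G(\cdot,p)\,\omega=0$, one gets $\log\|\mathbf{1}_{\Oc(p)}(z)\|_{\fh_{\omega,p}}=G(z,p)+\rho(p)$, whence
\begin{equation*}
\|s_{\mathbf p}(z)\|_{\fh_n}^{2}=\frac{e^{2\sum_k\rho(p_k)+2\sum_j\rho(q_j)}}{\bigl\|\widetilde\theta\bigl(\oA_g(\mathbf q)-\oA_1(p_\star)-\mathbf z_\star\bigr)\bigr\|_{\fh_J}^{2}}\;e^{2mU'_{\delta_{\mathbf p}}(z)+2gU'_{\delta_{\mathbf q}}(z)},
\end{equation*}
using the identity $mU'_{\delta_{\mathbf p}}+gU'_{\delta_{\mathbf q}}=\sum_k G(\cdot,p_k)+\sum_j G(\cdot,q_j)$. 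Integration against $\omega_0$ recognises the $L^{2m}$-norm in the definition of $\oF_m$, giving $\lp s_{\mathbf p},s_{\mathbf p}\rp_n$ as the above prefactor times $e^{2m\oF_m(\mathbf p)}$.

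Now, for any holomorphic map $\sigma:U\to H^0(X,\oL^n)\setminus\{0\}$ from an open $U\subset\C^m$, the standard formula for the pullback of $V_n^{\FS}$ yields, in any orthonormal basis $B$ of $H^0(X,\oL^n)$,
\begin{equation*}
(\pi\circ\sigma)^{*}V_n^{\FS}=c_m\,\frac{\bigl|\det\nolimits_B\bigl(\sigma,\partial_{z_1}\sigma,\dots,\partial_{z_m}\sigma\bigr)\bigr|^{2}}{\lp\sigma,\sigma\rp_n^{m+1}}\,d\mathrm{Leb}(z).
\end{equation*}
Applied to $\sigma=s_{\mathbf p}$, the denominator supplies the factor $e^{-2m(m+1)\oF_m(\mathbf p)}$ prescribed by the coefficient $-2(m+1)/m$ in the proposition, together with $\rho$- and theta-prefactors. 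The Wronskian-type numerator is computed by differentiating the Step 1 representation and exploiting the vanishing of $\mathbf{1}_{\Oc(p_k)}$ at $p_k$: the Leibniz rule forces that in the $k$-th column of the evaluation matrix only the derivative of the $p_k$-factor survives at $p_k$, producing $\prod_{k\ne j}\|\mathbf{1}_{\Oc(p_j)}(p_k)\|_{\fh_{\omega,p_j}}$ contributions. Via $G(p_i,p_j)=\log\|\mathbf{1}_{\Oc(p_j)}(p_i)\|_{\fh_{\omega,p_j}}-\rho(p_j)$, these collapse to $\exp(m^2\oE_m(\mathbf p))$ together with residual $\rho$-factors and the cross-terms $\prod_{k,j}\|\mathbf{1}_{\Oc(p_k)}(q_j)\|^{2}$ accounting for the simultaneous variation of $\mathbf q$ with $\mathbf p$. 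Assembling everything matches the stated $\kappa_n$, the remaining scalar is absorbed into $C_n$, and the estimate $n^{-2}\log C_n\to 0$ holds because $c_m$ grows polynomially in $m\sim n$ while all surviving normalising factors are $O(n\log n)$.

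The main obstacle is the clean evaluation of this Wronskian-type determinant, since moving $\mathbf p$ simultaneously moves $\mathbf q=\oB_m(\mathbf p)$ through the inverse of the Abel-Jacobi map $\oA_g:X^{(g)}\setminus\W\to\mathrm{Jac}(X)\setminus W_g^1$ (this is precisely why the formula degenerates on $\bH_m$). The key structural point to verify is that the theta factor in the denominator of the Step 1 trivialization compensates exactly for the Jacobian of $\oB_m$, so that after differentiation the determinant factors cleanly through the Green function and the cross-norms $\|\mathbf{1}_{\Oc(p_k)}(q_j)\|_{\fh_{\omega,p_k}}$. This is essentially the content of classical theta-identities (Riemann's singularity theorem), and a careful sign and normalisation bookkeeping is required to extract the precise coefficients $-4$ and $-2(m+1)$ in the exponents of $\rho(p_k)$ and $\rho(q_j)$ appearing in $\kappa_n$.
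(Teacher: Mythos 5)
The paper does not prove this proposition from scratch: it cites Zelditch \cite[Theorem 2]{zel-imrn} directly and, in the remark that follows, records the only adjustments needed (a factor of $2$ from the different normalization of $\ddc$, and the limit $n^{-2}\log C_n\to 0$ obtained by taking $K=X$ in \cite{zel-imrn}). Your proposal attempts to reconstruct Zelditch's derivation. The skeleton is the right one: build a holomorphic family $\mathbf p\mapsto s_{\mathbf p}$ from tensor products of $\mathbf 1_{\Oc(p_k)}$, $\mathbf 1_{\Oc(q_j)}$ together with a theta trivialization of the degree-zero bundle $\oL^n\otimes\Oc(-\mathbf p-\mathbf q)$, compute $\lp s_{\mathbf p},s_{\mathbf p}\rp_n$, and pull back $V_n^{\FS}$ by the usual Wronskian-type formula. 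The identity $\log\|\mathbf 1_{\Oc(p)}(z)\|_{\fh_{\omega,p}}=G(z,p)+\rho(p)$ and the identification of the factor $e^{2m\oF_m(\mathbf p)}$, hence of the coefficient $-2(m+1)/m$ from the $(m+1)$-st power in the denominator, are correct and consistent with the stated $\kappa_n$.

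The genuine gap is the step you flag yourself: the evaluation of $\bigl|\det\nolimits_B\bigl(s_{\mathbf p},\partial_{z_1}s_{\mathbf p},\dots,\partial_{z_m}s_{\mathbf p}\bigr)\bigr|^2$. Essentially all the content of the proposition lives there: it must produce $\exp\big(m^2\oE_m(\mathbf p)\big)$, the cross-norms $\prod_{k,j}\|\mathbf 1_{\Oc(p_k)}(q_j)\|^2_{\fh_{\omega,p_k}}$ with exponent exactly $+2$, the $\rho$-exponents $-4$ on $\rho(p_k)$ and $-2(m+1)$ on $\rho(q_j)$ after combining with the $\lp s_{\mathbf p},s_{\mathbf p}\rp_n^{-(m+1)}$ contribution, and a theta factor whose interaction with the Jacobian of $\oB_m=\oA_g^{-1}\circ(\oA_n(\oL^n)-\oA_m)$ must be shown to cancel to leave exactly $\|\widetilde\theta(\cdots)\|_{\fh_J}^{-2}$. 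Calling this ``careful sign and normalisation bookkeeping'' understates it: without carrying out the determinant computation (including the differentiation through the implicit map $\oB_m$ and the relevant theta identity) one has no reason to prefer the exponents $-4$ and $-2(m+1)$ over, say, $-2$ and $-2(m+1)$, nor to be sure the $q$-dependence splits as stated between the cross-norms and the theta denominator. As written, your argument establishes the shape of the formula but not the formula itself; to close it you would need either to do the determinant evaluation in full (which amounts to reproducing the core of \cite[Theorem 2]{zel-imrn}), or to do what the paper does and simply cite that theorem together with the normalization remark.
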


\begin{remark}\rm
	There is an extra factor $2$ in several places of $\Ac _m^*(V_n^{\FS})$ comparing with the formula in \cite{zel-imrn}. This is due to the different definition of the operator $\ddc$ between ours and \cite{zel-imrn}. The limit of $1/n^2 \log C_n$ was not explicitly stated in \cite{zel-imrn}, which can be easily obtained by letting $K=X$ there.
\end{remark}

Now we focus on the continuous form $\kappa_n$ and put 
\begin{equation}\label{defn-xi}
\xi(\mathbf q):={\prod_{j=1}^g \norm{\mathbf 1_{\Oc(p_\star)}(q_j)}^2_{\fh_{\omega,p_\star}}  \over  \big\|\widetilde\theta\big(  \oA_g(\mathbf q)-\oA_1(p_\star) -\mathbf z_\star \big)\big\|_{\fh_J}^2}.
\end{equation}
It is a function on $X^{(g)}$ independent of $n$. 

\begin{lemma}\label{lem-xi-bound}
	There exist  positive constants $c$ and $C$ such that for all  $\mathbf q\in X^{(g)}\setminus \W$,
	$$c\leq \xi(\mathbf q)\leq C.$$
\end{lemma}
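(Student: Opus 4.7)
The plan is to show that $\xi$ extends to a continuous, strictly positive function on the compact space $X^{(g)}$, so that the bounds $c\leq\xi\leq C$ then follow immediately from compactness. The strategy proceeds in three stages: (i) identify the zero loci of numerator and denominator on $X^{(g)}\setminus\mathcal{W}$, (ii) match their vanishing orders along the common vanishing hypersurface, and (iii) extend the ratio across the exceptional set $\mathcal{W}$ via a global line-bundle comparison.

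For (i), the numerator $\prod_{j=1}^g\|\mathbf{1}_{\Oc(p_\star)}(q_j)\|^2_{\fh_{\omega,p_\star}}$ vanishes exactly on the hypersurface $D_\star:=p_\star+X^{(g-1)}\subset X^{(g)}$, and to order $2$, since each factor behaves like $\dist(p_\star,q_j)^2$. The denominator vanishes precisely when $\oA_g(\mathbf{q})-\oA_1(p_\star)\in W_{g-1}$; Riemann's theorem ($W_{g-1}=\Theta+\mathbf{z}_\star$) together with the biholomorphism $\oA_g:X^{(g)}\setminus\mathcal{W}\to\mathrm{Jac}(X)\setminus W_g^1$ then shows that this locus coincides with $D_\star$ on $X^{(g)}\setminus\mathcal{W}$.

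For (ii), near a point $\mathbf{q}^0=p_\star+\mathbf{q}^0_\star\in D_\star\setminus\mathcal{W}$, the short exact sequence
$$0\to \Oc(\mathbf{q}^0_\star)\to \Oc(\mathbf{q}^0)\to \Oc_{p_\star}\to 0$$
combined with $h^0(\Oc(\mathbf{q}^0))=1$ forces $h^0(\Oc(\mathbf{q}^0_\star))=1$. By Riemann's singularity theorem the corresponding point of $\Theta$ is then smooth, so $\widetilde\theta$ vanishes there to order $1$; since $\oA_g$ is a local biholomorphism at $\mathbf{q}^0$, the denominator vanishes to order $2$ along $D_\star$, matching the numerator exactly. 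Hence $\xi$ is continuous and strictly positive on $X^{(g)}\setminus\mathcal{W}$.

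The main obstacle is step (iii), extending $\xi$ continuously across $\mathcal{W}$, where $\oA_g$ degenerates. My approach is to interpret the numerator-section $\mathbf{q}\mapsto\prod_j\mathbf{1}_{\Oc(p_\star)}(q_j)$ and the pulled-back theta-section $\mathbf{q}\mapsto\widetilde\theta(\oA_g(\mathbf{q})-\oA_1(p_\star)-\mathbf{z}_\star)$ as global holomorphic sections of one and the same line bundle on $X^{(g)}$, namely $\Oc_{X^{(g)}}(D_\star)$. The required isomorphism $\oA_g^*\oL_J\simeq\Oc_{X^{(g)}}(D_\star)$ (after the given translation) comes from Poincaré's formula / the Riemann--Kempf identification. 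Granting this, the ratio of the two sections is a nowhere-vanishing holomorphic function on the compact connected manifold $X^{(g)}$, hence a nonzero constant. Consequently $\xi$ equals this constant times the smooth positive ratio of two hermitian metrics on $\Oc_{X^{(g)}}(D_\star)$, which is automatically bounded above and below by compactness of $X^{(g)}$.
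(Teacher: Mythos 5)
Your proposal is correct and takes a genuinely different (and somewhat more structural) route than the paper, particularly for handling the exceptional set $\W$. The paper proceeds by (i) matching the zero loci of numerator and denominator, then (ii) splitting $\xi$ into a ratio of locally defined holomorphic functions times a smooth positive metric factor, (iii) asserting that the holomorphic ratio is holomorphic on $X^{(g)}\setminus\W$, and (iv) invoking Hartogs' extension theorem (codimension of $\W$ is $2$) to extend this ratio to all of $X^{(g)}$, giving the upper bound for $\xi$; the lower bound is obtained by applying the same argument to $1/\xi$. Your version replaces the Hartogs step with the observation that, once the numerator section and the pulled-back theta section are recognized as sections of the same line bundle $\Oc_{X^{(g)}}(D_\star)$ with the same divisor $D_\star$ (reduced, multiplicity one), their ratio is a global nowhere-vanishing holomorphic function on the compact connected manifold $X^{(g)}$, hence a constant. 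This makes the codimension-$2$ extension automatic and dispenses with the need to argue separately for $\xi$ and $1/\xi$.

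Two remarks. First, you are more careful than the paper in step (ii): the paper asserts without detail that the holomorphic ratio is holomorphic across $D_\star\setminus\W$, which silently requires matching vanishing orders; your short-exact-sequence plus Riemann-singularity-theorem argument supplies exactly this missing step (though once you have the global line-bundle identification and the fact that both divisors equal $D_\star$ with multiplicity one, the local order-matching becomes a consequence rather than an input, so (ii) is somewhat redundant given (iii)). Second, the appeal to Poincar\'e's formula is overkill: to identify $\oA_g^*\big(T^*\oL_J\big)$ with $\Oc_{X^{(g)}}(D_\star)$ it suffices to compute the divisor of the pulled-back section, namely $D_\star$ with multiplicity one (multiplicity one at a generic point because $\oA_g$ is a local biholomorphism there and $\Theta$ is reduced; no extra components because $\W$ has codimension $2$), and to use that a line bundle with a section of divisor $D$ is isomorphic to $\Oc(D)$. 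With that small simplification your argument is complete.
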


\begin{proof}
	Observe that the zero set of numerator of $\xi$ is $p_\star + X^{(g-1)}$.
	Let us now analyze the zeros of the denominator of $\xi$. The zero set of $\widetilde\theta$ is $W_{g-1}-\mathbf z_\star$ by Riemann's  theorem. So the zero set of denominator of $\xi$ is given by
	$$\big\{\mathbf q:\, \oA_g(\mathbf q)\in W_{g-1}+\oA_1(p_\star)  \big\}.$$
	Equivalently,
	$\oA_g(\mathbf q)=  \oA_{g}( x_1+x_2+\cdots+x_{g-1}+p_\star)$ 
	for some $x_1,\dots,x_{g-1}$. 
	Using that $\oA_g$ is a biholomorphic map outside $\W$, we obtain
	$\mathbf q\in X^{(g-1)}+p_\star$. 
	Therefore, the zero set of  numerator of $\xi$ is exactly the zero set of the denominator for $\mathbf q\notin \W$.
	
	\smallskip
	
	To prove the desired bound, we separate the functions part and metrics part of $\xi$. Locally on $X$, we can  write $\norm{\mathbf 1_{\Oc(p_\star)}(z)}_{\fh_{\omega,p_\star}} =|\psi(z)| e^{-\varphi_1(z)}$ for some $\psi$ holomorphic  and $\varphi_1$ smooth. 
	So the numerator of $\xi$ equals to
	$ \prod_{j=1}^g |\psi(q_j)|^2 e^{-2\varphi_1(q_j)}$. Similarly,  the denominator of $\xi$ can be written as $\big|\theta\big(  \oA_g(\mathbf q)-\oA_1(p_\star) -\mathbf z_\star \big)\big|^2 e^{-2\varphi_2( \oA_g(\mathbf q)-\oA_1(p_\star) -\mathbf z_\star)}$ for some  $\varphi_2$ smooth  locally.
	
	From the above discussion, the  function $\prod_{j=1}^g \psi(q_j)^2/\theta\big(  \oA_g(\mathbf q)-\oA_1(p_\star) -\mathbf z_\star \big)^2$ is holomorphic outside $\W$. Since $\W$ is a codimension $2$ analytic subset, by Hartogs' extension theorem (cf. \cite[Section 11, Theorem 4]{Shabat-book}), this function can be extended to a holomorphic function through $\W$ locally.  Therefore, $ \xi$ can be extended to a smooth function on $X^{(g)}$. In particular, it is bounded from above on $X^{(g)}\setminus  \W$.
	
	For the lower bound of $\xi$, one can repeat the argument above with the function $1/ \xi$. 
\end{proof}

For $\mathbf p \in \pi_m(\mathbf R_m)$, $\oE_m(\mathbf p)=-\infty$. In this case, using Lemma  \ref{lem-f-m-p}, we get 
$$\oE_m(\mathbf p)- {2(m+1)\over m} \oF_m(\mathbf p)=-\infty.$$
Together with  Lemma \ref{lem-xi-bound}, we conclude that $\Ac_m^*(V_n^{\FS})=0$ on $\pi_m(\mathbf R_m)\setminus \bH_m$ from its formula in Proposition \ref{prop-formula}. In other words, $\Ac_m^*(V_n^{\FS})$ can be extended to a positive measure trivially through $\pi_m(\mathbf R_m)\setminus \bH_m$.  For simplicity, we still use $\Ac_m^*(V_n^\FS)$ to denote the extended positive continuous form on $X^{(m)}\setminus \bH_m$.

\medskip

\section{Wasserstein balls and its measures} \label{s:Wasserstein}

Recall the Wasserstein distance  on $\cM (X)$ introduced in Section \ref{s:qpot}. In this article, we mainly consider the probability  measures in $\cM(X\setminus D)$. So we denote by $\B_W^D(\sigma, r)$ the open ball of radius $r$ with center $\sigma$ in  $\cM(X\setminus D)$. 

For $\mathbf p=p_1+\cdots+p_m\in X^{(m)}\setminus \bH_m$,  recall that  $\oB_m(\mathbf p)=q_1+\cdots+q_g=:\mathbf q$ with \eqref{equal-p+q=L} or \eqref{equal-p+q=L-oA} holds, i.e., $\mathbf p+\mathbf q$ is the divisor of some holomorphic section of $\oL^n$. In order to take all the holomorphic sections non-vanishing on $D$, we define the following subset of $X^{(m)}\setminus \bH_m$:
\begin{equation}\label{defn-oR-m}
\oQ_m:=\big\{ \mathbf p\in (X\setminus D)^{(m)}\setminus \bH_m: \, \mathbf q\in (X\setminus D)^{(g)} \big\}.
\end{equation}
Observe that $\oQ_m= \Ac_m^{-1} \big(\P H^0(X,\oL^n)_D\big)$.
For $\zeta>0$, we define the following separated subset of $\oQ_m$:
\begin{align}\label{defn-oS-m}
\oS_m^\zeta:= \big\{\mathbf p\in \oQ_m: \,   \dist(p_j,p_k)\geq &\zeta/ \sqrt m    \,\text{ for all }\, 1\leq j\neq k\leq m,\\
 &\dist(p_j,q_l)\geq 1 /  m  \, \text{ for all } \, 1\leq j\leq m, 1\leq l\leq g \big\}. \nonumber
\end{align}

\begin{proposition}\label{bound-Sm}
	Given a smooth probability measure  $\sigma\in \mathcal M (X\setminus  D)$. We can find a constant $\zeta>0$ depending only on $\sigma$, such that for every $r<1$, 
	$$\Vol \big\{\mathbf p\in \oS_m^\zeta:\,  \delta_{\mathbf p}\in \B_W^D(\sigma,r)  \big\} \geq m^{-5m} \quad \text{for all } \, m>m_0,$$
	where $m_0$ is a positive integer  depending on  $\sigma$ and $r$, and $\Vol$ means the volume in $X^{(m)}$.
\end{proposition}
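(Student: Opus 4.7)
The plan is to produce a positive-volume family of configurations $\mathbf p$ in $\oS_m^\zeta\cap\{\delta_{\mathbf p}\in\B_W^D(\sigma,r)\}$ by perturbing a carefully chosen ``model'' configuration. First, using smoothness of $\sigma$, I would partition $\supp(\sigma)\subset X\setminus D$ into $m$ cells of equal $\sigma$-mass $1/m$ and pick one representative $p_j^0$ from each cell, obtaining $\mathbf p^0=p_1^0+\cdots+p_m^0$ with $\dist_W(\delta_{\mathbf p^0},\sigma)\leq r/3$ (for $m\geq m_1(\sigma,r)$) and with the $p_j^0$'s pairwise $(3\zeta/\sqrt m)$-separated, for some $\zeta=\zeta(\sigma)>0$. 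I then compute $\mathbf q^0:=\oB_m(\mathbf p^0)$; if necessary, I relocate a \emph{bounded} (independent of $m$) number of the $p_j^0$'s inside $\supp(\sigma)$ to enforce that $\mathbf q^0\in(X\setminus D)^{(g)}\setminus\W$ sits at distance at least $c_0>0$ from $bD\cup\W$ and from every $p_j^0$. This adjustment is possible because $\oA_g$ is surjective onto $\Jac(X)$ and only a few $p_j^0$'s need to be moved; the resulting Wasserstein perturbation is $O(1/m)$ and does not affect $\zeta$.

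Next, define the perturbation region
\[
\Omega:=\prod_{j=1}^m \B(p_j^0,\zeta/\sqrt m)\subset X^m.
\]
By the $(3\zeta/\sqrt m)$-separation of $\mathbf p^0$ the balls are pairwise disjoint, so $\pi_m$ is $m!$-to-one on $\Omega$ and $\pi_m(\Omega)$ has volume $\Vol(\Omega)/m!\gtrsim (\pi\zeta^2/m)^m/m!\gtrsim (c\zeta^2/m^2)^m$ by Stirling. The task is to show that for \emph{at least half} of $\mathbf p\in\Omega$, the associated $\mathbf q=\oB_m(\mathbf p)$ stays in $\B(\mathbf q^0,c_0/4)\subset(X\setminus D)^{(g)}\setminus\W$, which secures the conditions $\mathbf q\in(X\setminus D)^{(g)}$ and $\dist(p_j,q_l)\geq 1/m$ appearing in the definition of $\oS_m^\zeta$.

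The heart of the argument is therefore to control the fluctuation of $\oA_m(\mathbf p)-\oA_m(\mathbf p^0)$ as $\mathbf p$ ranges over $\Omega$. Since $\oA_g$ is a local biholomorphism at $\mathbf q^0\notin\W$, the defining relation $\oA_g(\mathbf q)=\oA_n(\oL^n)-\oA_m(\mathbf p)\pmod{\Lambda}$ converts a bound $|\oA_m(\mathbf p)-\oA_m(\mathbf p^0)|\leq \epsilon$ in $\Jac(X)$ into a bound $|\mathbf q-\mathbf q^0|\leq C\epsilon$ in $X^{(g)}$. Treating $\mathbf p$ as a uniformly random element of $\Omega$, the summands $\oA(p_j)-\oA(p_j^0)$ are \emph{independent}, each with mean $O(\zeta^2/m)$ (by symmetry of the ball modulo a quadratic error) and variance $O(\zeta^2/m)$. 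Summing yields $E[\oA_m(\mathbf p)-\oA_m(\mathbf p^0)]=O(\zeta^2)$ and $\mathrm{Var}=O(\zeta^2)$, so Chebyshev gives
\[
\Vol\big\{\mathbf p\in\Omega:\,|\oA_m(\mathbf p)-\oA_m(\mathbf p^0)|>K\zeta\big\}\leq \tfrac{C_0}{K^2}\,\Vol(\Omega).
\]
Choosing $\zeta$ small and $K$ large so that $CK\zeta<c_0/4$ and $C_0/K^2<1/2$ gives the desired half-volume bound. The remaining requirements of $\oS_m^\zeta$, namely $\dist(p_j,p_k)\geq\zeta/\sqrt m$, $\mathbf p\in(X\setminus D)^{(m)}$, and the Wasserstein bound $\dist_W(\delta_{\mathbf p},\sigma)<r$, are built into $\Omega$ from the start, while $\mathbf p\notin\bH_m$ holds off a proper analytic subset.

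Combining, the good set in $X^{(m)}$ has volume at least $\tfrac12 (c\zeta^2/m^2)^m$, which exceeds $m^{-5m}$ for $m\geq m_0(\sigma,r)$. The main obstacle is precisely the variance step: $\mathbf q$ depends \emph{non-locally} on $\mathbf p$ through Abel--Jacobi, and worst-case perturbations of size $\zeta/\sqrt m$ on each of the $m$ points could a priori accumulate to a fluctuation of order $\zeta\sqrt m$ in $\Jac(X)$, which would destroy any pointwise control on $\mathbf q$; only the cancellation captured by the $L^2$/Chebyshev estimate keeps the effective fluctuation at the harmless scale $\zeta$, and this in turn forces $\zeta$ to be chosen small in terms of the local inverse derivative of $\oA_g$ at $\mathbf q^0$, a quantity depending only on $\sigma$.
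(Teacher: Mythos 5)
Your overall architecture (build one base configuration $\mathbf p^0$ close to $\sigma$ in Wasserstein distance, then sweep a product of small balls around it) matches the paper's, and your Chebyshev/variance idea is a genuinely nice touch the paper does not use: by taking balls of radius $\zeta/\sqrt m$ instead of the paper's $m^{-2}$, you get exponentially more volume, and the independence of the per-point Abel--Jacobi increments means the total drift of $\oA_m(\mathbf p)$ concentrates at scale $\zeta$ rather than the worst-case $\zeta\sqrt m$. If the base configuration existed with the claimed properties, that step would indeed close the argument.

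The gap is precisely in the sentence ``I relocate a bounded number of the $p_j^0$'s inside $\supp(\sigma)$ to enforce that $\mathbf q^0$ sits at distance $\geq c_0$ from $bD\cup\W$''. To replace a handful of points $p_{j_1}^0,\dots,p_{j_g}^0$ by new ones $\tilde p_1,\dots,\tilde p_g$ realizing the required (generically large) shift of $\oA_m(\mathbf p^0)$, you must solve $\sum_j\oA_1(\tilde p_j)=\mathbf z$ for a prescribed $\mathbf z\in\mathrm{Jac}(X)$. Jacobi inversion gives existence of a solution in $X^{(g)}$, but there is no control whatsoever on \emph{where} those $g$ points land: they can sit in $\overline D$, on top of other $p_k^0$'s, or on the diagonal of $X^{(g)}$, and the image $\oA_g\big((X\setminus\overline D)^{(g)}\big)$ is in general a proper subset of $\mathrm{Jac}(X)$. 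This is exactly the obstacle the paper spends the bulk of Section~5 resolving: Lemma~\ref{lem-v-y} and Lemma~\ref{lemma-exist-one-p} move not $O(1)$ but $O(m)$ points, each by an amount $O(s^{-3/4})$ (small enough to keep them in $X\setminus\overline D$ and preserve separation), exploiting the ``distributed $\zeta$-sparsely'' hypothesis together with the Lojasiewicz-type lower bound of Lemma~\ref{ineq-gradiant} on $|\mathbf J_{\oA_g}|$ to keep the inversion tame. Without some substitute for that mechanism your proof does not produce the needed $\mathbf p^0$, and the probabilistic step then has nothing to perturb around. (Two smaller omissions, repairable once the base point exists: you also need $\mathbf q^0$ bounded away from $\pi_g(\mathbf R_g)$ so that Lemma~\ref{lem-dist-X-g} lets you pass from a bound on $\dist(\mathbf q,\mathbf q^0)$ in $X^{(g)}$ to bounds on the individual $\dist(q_l,q_l^0)$; and the quantity $c_0$ must be shown independent of $m$, which again is part of what Lemma~\ref{lemma-exist-one-p} delivers via its constant $\beta$.)
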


When $g=1$, $X$ is biholomorphic to $\mathrm{Jac}(X)$. The distance on $X$ is comparable with the distance in the complex torus, where the metric is flat. So it is easy to handle the volume.   But
when $g>1$, we need to control the points near the critical set $\W$.
Recall that $\oA_g$ is a biholomorphic map from $X^{(g)}\setminus \W$ to $\mathrm{Jac}(X)\setminus W_g^1$.  Thus, we have the following lemma.

\begin{lemma}\label{ineq-gradiant}
	 For $Q \in X^{(g)}\setminus \W$,  we have the Jacobian determinant
	$$ |\mathbf J_{\oA_g} (Q)|\geq c\dist(Q,\W)^{\vartheta},$$
	where $c>0,\vartheta>0$ are constants independent of $Q$. 
\end{lemma}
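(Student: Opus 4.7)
The plan is to realize $|\mathbf J_{\oA_g}|$ as a real-analytic function on the compact real-analytic manifold $X^{(g)}$ whose zero set is exactly $\W$, and then invoke the classical {\L}ojasiewicz inequality.

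First, I fix smooth volume forms $\Omega_X$ on $X^{(g)}$ and $\Omega_J$ on $\Jac(X)$, so that $|\mathbf J_{\oA_g}|^2$ is defined unambiguously by $\oA_g^*\Omega_J = |\mathbf J_{\oA_g}|^2\,\Omega_X$. Since $\oA_g$ is holomorphic, in any local holomorphic chart this function equals $|\det d\oA_g|^2$ times a strictly positive smooth factor, and hence it is real-analytic on the compact real-analytic manifold $X^{(g)}$.

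Second, I identify its zero locus with $\W$. On $X^{(g)}\setminus \W$ the map $\oA_g$ is a biholomorphism onto $\Jac(X)\setminus W_g^1$, so $\det d\oA_g$ is nowhere zero there. Conversely, for any $Q\in \W$ one has $\dim H^0(X,\Oc(Q))\geq 2$ by the very definition of $\W$, so by Abel's theorem the fiber $\oA_g^{-1}(\oA_g(Q))$ contains the projective space $\P H^0(X,\Oc(Q))$ of positive dimension. Every tangent vector to this positive-dimensional fiber at $Q$ lies in $\Ker(d\oA_g)_Q$, forcing the rank of $(d\oA_g)_Q$ to drop and $\det(d\oA_g)_Q=0$. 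Hence $\{|\mathbf J_{\oA_g}|^2=0\}=\W$.

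Third, I apply the {\L}ojasiewicz inequality for real-analytic functions on a compact real-analytic manifold: covering $X^{(g)}$ by finitely many real-analytic charts and combining the local estimates, there exist $c_1,\vartheta_1>0$ such that
$$|\mathbf J_{\oA_g}(Q)|^2 \geq c_1\,\dist(Q,\W)^{\vartheta_1} \qquad \text{for every } Q\in X^{(g)}.$$
Taking square roots yields the lemma with $c=\sqrt{c_1}$ and $\vartheta=\vartheta_1/2$. The main point requiring care is the fiber-dimension argument identifying the zero locus of $\mathbf J_{\oA_g}$ with $\W$; once this is in hand, the remainder is a direct application of a standard {\L}ojasiewicz-type estimate, and no new ingredient specific to the Abel--Jacobi map is required.
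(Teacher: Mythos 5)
Your proposal is correct and follows essentially the same route as the paper, which simply cites the {\L}ojasiewicz inequality for the real-analytic function $|\mathbf J_{\oA_g}|$ with zero set $\W$. What you add is the (worthwhile) verification that the zero locus of $|\mathbf J_{\oA_g}|$ is exactly $\W$, via the fiber-dimension argument using Abel's theorem and the jump $\dim H^0(X,\Oc(Q))\geq 2$; the paper takes this identification for granted by calling $\W$ the ``critical set.''
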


\begin{proof}
	In local coordinates, apply Lojasiewicz inequality (c.f.\ \cite[Theorem 6.4]{semianalytic} ) to the function $|\mathbf J_{\oA_g}|$ and the critical set $\W$.
\end{proof}

We also need  to compare the distances between $X^g$ and $X^{(g)}$ outside the diagonal. 

\begin{lemma}\label{lem-dist-X-g}
Let $\alpha$  be a positive constant.	For $Q_1,Q_2 \in X^{(g)}$ with $\dist \big(Q_1, \pi_g(\mathbf R_g)\big)>\alpha$, $\dist\big(Q_2, \pi_g(\mathbf R_g)\big)>\alpha$ and $\dist(Q_1,Q_2)\leq d$,  we can write 
	$$Q_1=\sum_{j=1}^g x_j \quad\text{and}\quad Q_2=\sum_{j=1}^g y_j  \quad \textit{such that} \quad \dist(x_j,y_j)\leq C_{\alpha} d  \quad \text{in }\, X$$
 for every $1\leq j\leq g$, where $C_{\alpha}>0$ is a constant independent of $Q_1,Q_2,d$.
\end{lemma}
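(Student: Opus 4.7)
The plan is to unravel the definition of the distance on $X^{(g)}$ as (essentially) the quotient metric coming from $X^g$ under the permutation action, and then perform a simple matching argument exploiting the separation from the big diagonal.

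First, I would observe that for any enumerations $Q_1 = \sum_{j=1}^g x_j$ and $Q_2 = \sum_{j=1}^g y_j$, the distance on the symmetric product satisfies
$$\dist(Q_1, Q_2) \asymp \min_{\sigma \in S_g}\max_{1 \leq j \leq g} \dist\big(x_{\sigma(j)}, y_j\big),$$
with constants controlled away from $\pi_g(\mathbf R_g)$. Likewise, a direct computation (moving two coordinates to their midpoint) shows that the hypothesis $\dist(Q_i, \pi_g(\mathbf R_g)) > \alpha$ is equivalent, up to a harmless factor of $2$, to the statement that within each enumeration the $g$ points of $Q_i$ are pairwise $\gtrsim \alpha$-separated in $X$.

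Next, I would split into two regimes according to the size of $d$ relative to $\alpha$. When $d \geq \alpha$ the estimate is trivial: enumerate $Q_1$ and $Q_2$ in any order at all and use $\dist(x_j, y_j) \le \diam(X) \le (\diam(X)/\alpha)\,d$, so $C_\alpha \asymp \diam(X)/\alpha$ suffices. When $d < \alpha$, the quotient-metric description of $\dist(Q_1, Q_2)$ directly yields a permutation $\sigma$ with $\max_j \dist(x_{\sigma(j)}, y_j) \lesssim d$; relabeling $x_{\sigma(j)}$ as $x_j$ gives the claimed matching. Combining the two cases, one obtains a constant $C_\alpha$ depending only on $\alpha$ (and on $\diam(X)$, which is fixed).

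The main potential obstacle is verifying that the metric used on $X^{(g)}$ is genuinely comparable, with constants depending only on $\alpha$, to the quotient metric on the region $\{Q \in X^{(g)} : \dist(Q, \pi_g(\mathbf R_g)) > \alpha\}$. Once one observes that $\pi_g$ is étale off $\mathbf R_g$ and that the uniform separation from $\mathbf R_g$ provides a lower bound on the Jacobian of $\pi_g$, this comparison follows from a compactness argument on the relatively compact set $\{\dist(\cdot, \mathbf R_g) \geq \alpha\} \subset X^g$. I expect this metric comparison to be the only non-definitional ingredient; after that, the lemma reduces to the two-case pigeonhole argument outlined above.
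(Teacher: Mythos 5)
Your proof is correct and reaches the same conclusion by essentially the same geometric mechanism as the paper: away from the branch locus $\pi_g(\mathbf{R}_g)$, the covering $\pi_g: X^g \to X^{(g)}$ has controlled metric distortion, so nearby points of $X^{(g)}$ lift to matched $g$-tuples. The paper's one-line proof invokes the Lojasiewicz inequality (as in Lemma~\ref{ineq-gradiant}) to control the Jacobian of $\pi_g$ near $\mathbf{R}_g$; you instead observe that since both $Q_1,Q_2$ are kept a \emph{fixed} distance $\alpha$ from the branch locus, a compactness argument on $\{\dist(\cdot,\pi_g(\mathbf{R}_g))\geq\alpha\}$ already yields the needed uniform lower bound on the Jacobian. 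This is a legitimate (and arguably cleaner) simplification: Lojasiewicz would give an explicit power-law dependence of $C_\alpha$ on $\alpha$, but the lemma only asserts existence of $C_\alpha$, so compactness suffices.

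Two small imprecisions are worth flagging, neither fatal. First, your claim that $\dist(Q_i,\pi_g(\mathbf{R}_g))>\alpha$ is equivalent ``up to a factor of $2$'' to pairwise $\gtrsim\alpha$-separation of the points is not right: in the elementary symmetric coordinates, the distance to the branch locus scales roughly \emph{quadratically} in the pairwise separation (via the Vandermonde), so $\dist(Q,\pi_g(\mathbf{R}_g))>\alpha$ only guarantees separation $\gtrsim\sqrt\alpha$. This does not affect your argument, since you only need \emph{some} positive lower bound depending on $\alpha$. Second, the case split should be $d<\alpha/2$ versus $d\geq\alpha/2$ (or with another fixed fraction of $\alpha$), not $d<\alpha$ versus $d\geq\alpha$: otherwise, for $d$ just below $\alpha$, the lifted path from $Q_1$ to $Q_2$ could come arbitrarily close to $\mathbf{R}_g$, and the distortion constant would not be uniform. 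With that adjustment the constant depends only on a fixed fraction of $\alpha$ (for the lifting case) and on $\diam(X)/\alpha$ (for the trivial case), as required.
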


\begin{proof}
	Use Lojasiewicz inequality also.
\end{proof}

The following combinatorial   lemma is independent of interest.

\begin{lemma}\label{lem-group}
Let $\{ v_j :\, 1\leq j\leq  2 g \ell \}$ be a collection of points in $X$ satisfying $\dist(v_{j_1},v_{j_2})\geq \zeta/\sqrt{2 g \ell}$ for some $\zeta>0$ if $j_1 \neq j_2$.   Then we can choose $\ell$ disjoint sets $V_t, 1\leq t \leq \ell$ from it,  containing $g$ elements each,  such that for every $t$, 
$$\dist(v, v')\geq \zeta ' \quad \text{if}\quad v\neq v' \quad \text{in }\, V_t.$$
Here, $\zeta'>0$ is a constant independent of $\ell$.
\end{lemma}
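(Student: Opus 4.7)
The plan is to construct the groups $V_1,\ldots,V_\ell$ one at a time by a greedy procedure, using a standard local packing estimate on $X$ to control how many of the $v_j$ can cluster near any chosen point.

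First I would set up a packing bound. Since $X$ is compact with the fixed smooth Riemannian metric induced by $\omega_0$, a straightforward volume-comparison argument carried out in finitely many coordinate charts gives a constant $C_{\mathrm{pack}}>0$, depending only on $X$, such that for all small enough $R,\delta>0$ and every $x\in X$, any $\delta$-separated subset of $\B(x,R)$ has cardinality at most $C_{\mathrm{pack}}(R/\delta)^2$. Applied with $R=\zeta'$ (a small constant to be fixed later) and $\delta=\zeta/\sqrt{2g\ell}$, this yields that any ball of radius $\zeta'$ in $X$ contains at most
\[
M \;:=\; \frac{2g\,C_{\mathrm{pack}}\,(\zeta')^2}{\zeta^2}\,\ell
\]
of the $v_j$.

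Next I would perform the greedy construction. Having produced $V_1,\ldots,V_{t-1}$ for some $t\le\ell$, at least $2g\ell-(t-1)g\ge g\ell+g$ of the $v_j$ remain unused. Initialize $V_t$ with any unused point, and while $|V_t|<g$, append any remaining point whose distance to every previously placed element of $V_t$ is at least $\zeta'$. The forbidden zone is a union of at most $g-1$ balls of radius $\zeta'$, hence by the packing estimate contains at most $(g-1)M$ of the $v_j$, so the admissible pool has size at least
\[
g\ell+g-(g-1)M \;\ge\; g\ell\left(1-\frac{2g(g-1)\,C_{\mathrm{pack}}\,(\zeta')^2}{\zeta^2}\right).
\]
Fixing $\zeta'$ once and for all so that the bracket above is at least $1/2$, for instance $\zeta':=\min\{1,\,\zeta/(2\sqrt{g(g-1)\,C_{\mathrm{pack}}})\}$ when $g\ge 2$ (the case $g=1$ is trivial since then $V_t$ is a singleton and any $\ell$ distinct points work), guarantees the admissible pool has size at least $g\ell/2\ge 1$. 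The greedy step therefore always succeeds and $|V_t|$ reaches $g$; iterating $t=1,\ldots,\ell$ produces the desired disjoint groups.

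The crucial point is that $\zeta'$ must be independent of $\ell$, and this is exactly what the calculation above delivers: the linear growth of $M$ in $\ell$ is precisely compensated by the linear growth of the pool of unused points, so the required constraint on $\zeta'$ becomes a purely quadratic condition in $\zeta$, $g$ and the geometry of $X$. The only mildly nontrivial ingredient is the local packing estimate on the Riemann surface, which I expect to be the main (but still elementary) obstacle; it reduces by compactness and a bi-Lipschitz comparison with flat charts to the classical Euclidean disk-packing bound.
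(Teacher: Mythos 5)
Your argument is correct, but it takes a genuinely different route from the paper's. The paper reduces to a flat torus via triangulation and then carries out an explicit geometric construction: it cuts $[0,1)\times[0,1)$ into $g$ vertical strips $A_k$ each containing about $2\ell$ of the $v_j$, shrinks each to a sub-strip $B_k$ still containing $\ell$ points while leaving a uniform gap to the next strip, and defines $V_t$ by selecting one point from each $B_k$; this produces a separation $\zeta' \simeq \zeta^2/g$. You instead run a greedy selection, using a uniform local packing bound on the compact surface: at each step the forbidden zone is at most $g-1$ balls of radius $\zeta'$, each holding at most $O\big((\zeta')^2\ell/\zeta^2\big)$ of the $v_j$, while $\Omega(\ell)$ unused points remain, so an admissible point always exists once $\zeta'$ is chosen small enough (independently of $\ell$). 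Your method is dimension-agnostic, avoids the somewhat delicate reduction to a flat model, and in fact yields the better scaling $\zeta'\simeq\zeta$; the paper's construction, while more hands-on, gives an explicit combinatorial picture of the groups. One small bookkeeping slip: during the greedy loop, once $|V_t|=k\le g-1$, the unused count is at least $g\ell+1$ rather than $g\ell+g$, but since your final slack is $g\ell/2$ this does not affect the conclusion. You should also take $\zeta'$ no larger than the injectivity-scale threshold below which the packing estimate is valid (your $\min\{1,\cdot\}$ serves this purpose only if that threshold is at least $1$; replacing $1$ by the correct geometric constant fixes it).
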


\begin{proof}
Using triangulation, we can work on $[0,1)\times [0,1)$ with the Euclidean distance, where we identify the parallel boundaries. Since a line segment of length $1$ contains at most $\sqrt{2g\ell}/\zeta$ points of $\{v_j\}$, we can find $0< a_1 < \cdots< a_{g+1} < 1$ and  rectangles $A_k:=(a_k,a_{k+1}) \times (0,1)$ for $ 1\leq k\leq g$, such that each $A_k$ contains $2\ell -o(\ell)$ points of $\{v_j\}$, as $\ell\to\infty$. These rectangles needs not cover  $\{v_j\}$.
We make two observations: 
\begin{enumerate}
\item $a_{k+1}-a_k\geq \sqrt 3\zeta^2/(2g)-o(1)$;
\item in each $A_k$, we can take a rectangle $B_k:=(a_k,b_k)\times (0,1)$, containing at least $\ell$ points of $\{v_j\}$ and $a_{k+1}-b_k \geq \sqrt 3 \zeta^2/(4g) -o(1)$.
\end{enumerate}
The first observation is easy, because the best way to put separating points into a rectangle  is using equilateral triangle. Hence a rectangle of length $1$ and width  $\delta$  can contain at most $4g \ell\delta/(\sqrt 3 \zeta^2)$ points in $\{v_j\}$.  For the second observations,  the width of $A_k\setminus B_k$ is $\delta_k:=a_{k+1}-b_k$.  It  can contain at most $4g \ell\delta_k/(\sqrt 3 \zeta^2)$ points of $\{v_j\}$. So the minimal $\delta_k$ we can take is $\sqrt 3 \zeta^2/(4g)-o(1)$.

Finally, for each $1\leq t\leq \ell$, we define  $V_t$ by taking exactly one point from every $B_k, 1\leq k\leq g$. We finish the proof with $\zeta'=\sqrt 3 \zeta^2/(8g)$.
\end{proof}

\begin{definition}\rm
	Let $\ell \in \N, \zeta>0$.
	We say a collection of points $\{ v_j :\, 1\leq j\leq g \ell \}$ is \textit{distributed $\zeta$-sparsely}  if 
	$$\dist(v_{j_1},\overline D)\geq \zeta/\sqrt{g\ell},\quad\dist(v_{j_1},v_{j_2})\geq \zeta/\sqrt{g \ell}$$ 
 for all $1\leq j_1\neq j_2 \leq g \ell$, and 
 $$\dist\big(V_k, \pi_g(\mathbf R_g) \big) \geq \zeta, \quad \dist(V_k, \W ) \geq \zeta/\sqrt{g \ell}$$ for  $1\leq k\leq  \ell$. Here,	$V_k:=v_{(k-1)g+1}+v_{(k-1)g+2}+\dots+v_{kg}$. 
\end{definition}

Since $bD $ is assumed to be piecewise $\Cc^{1+\ep}$, we can approximate $\sigma$ using atoms in $X\setminus \overline D$, such that half of these atoms are distributed $\zeta$-sparsely.

\begin{lemma}\label{lem-conver-sigma}
	Given a  probability measure $\sigma \in \cM (X\setminus D)$ with bounded density, i.e.\ $\sigma\leq c\omega_0$ for some $c>0$. We can find a constant $\zeta>0$ depending only on $\sigma$, and
	 a sequence of probability measures $\sigma_m$ converges to $\sigma$ weakly, with $\sigma_m:={1\over m}\sum_{j=1}^m\delta_{v_j^m}$, such that 
	$$\dist(v_{j_1}^m, \overline D) \geq \zeta/\sqrt m  \quad  \text{and}\quad \dist(v_{j_1}^m, v_{j_2}^m) \geq \zeta/\sqrt m \quad \text{for all}\quad  1\leq j_1\neq j_2 \leq m.$$
	Moreover, $\{v_j^m:\, 1\leq j \leq g\ell \}$ is distributed $\zeta$-sparsely for every $m=2g \ell$.
\end{lemma}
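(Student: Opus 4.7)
The plan is to build $\sigma_m$ in two layers: a baseline discretization with the basic $\zeta/\sqrt m$ separation, and then, when $m=2g\ell$, an ordering plus a small wiggle that yields the extra sparsity conditions in $X^{(g)}$.

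For the baseline, I would use the bounded density $\sigma\leq c\omega_0$ together with the piecewise $\Cc^{1+\ep}$ boundary of $D$ to partition $\supp(\sigma)$ into $m$ disjoint pieces $R_1,\dots,R_m\subset X\setminus D$ of $\sigma$-mass exactly $1/m$ each. The boundedness of density forces $\omega_0(R_j)\gtrsim 1/m$, while a standard dyadic refinement adapted to $\overline D$ can be arranged so that each $R_j$ has diameter $\lesssim 1/\sqrt m$ and is contained in $\{\dist(\cdot,\overline D)\geq \zeta_0/\sqrt m\}$ for some $\zeta_0>0$ depending only on $\sigma$; the extremal pieces near $\overline D$ may be absorbed into their neighbors at the cost of perturbing $\sigma$-masses by $O(1/\sqrt m)$ (which is still compatible with weak convergence, after normalization). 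Within each $R_j$ one has enough room (area $\gtrsim 1/m$) to choose a representative point $v_j^m$ so that the collection is $\zeta_0/\sqrt m$-separated pairwise. Weak convergence $\sigma_m\to\sigma$ is immediate from $\diam(R_j)\to 0$, and this settles the conditions $\dist(v_{j_1}^m,\overline D)\geq \zeta/\sqrt m$ and $\dist(v_{j_1}^m,v_{j_2}^m)\geq \zeta/\sqrt m$ for every $m$.

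Next, assume $m=2g\ell$. Because the $2g\ell$ baseline points are $\zeta_0/\sqrt{2g\ell}$-separated, Lemma \ref{lem-group} produces $\ell$ disjoint groups $V_t$ of $g$ points each with internal pairwise $X$-distance $\geq \zeta'$ for an absolute constant $\zeta'>0$. Reindexing the first $g\ell$ atoms to be the union of these groups gives the required labeling. The automatic bounds $\dist(v_j,\overline D)\geq \zeta_0/\sqrt{2g\ell}\geq \zeta/\sqrt{g\ell}$ and pairwise separation $\geq \zeta/\sqrt{g\ell}$ hold for $\zeta:=\zeta_0/\sqrt 2$. The condition $\dist(V_k,\pi_g(\mathbf R_g))\geq \zeta$ follows from Lemma \ref{lem-dist-X-g}: in $X^{(g)}$ the distance from a $g$-tuple to the big diagonal is comparable to its minimal pairwise $X$-distance, hence bounded below by a multiple of $\zeta'$.

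The main obstacle is the last condition $\dist(V_k,\W)\geq \zeta/\sqrt{g\ell}$, since $\W$ is a nontrivial codimension-two analytic set and the baseline construction does not see it. I would handle this by a wiggle argument: each representative $v_j^m$ was only constrained to lie in $R_j$ (of area $\gtrsim 1/m$), so we retain a product set of admissible configurations in $X^m$ of Lebesgue measure $\gtrsim m^{-m}$. For each $t$, the set of $g$-tuples $(x_1,\dots,x_g)$ whose image in $X^{(g)}$ falls within $\zeta/\sqrt{g\ell}$ of $\W$ has $X^g$-volume $O((g\ell)^{-2})$ because $\W$ has real codimension $4$ (and volumes of tubular neighborhoods scale like the fourth power of the radius). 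Summing over $t=1,\dots,\ell$ gives total obstruction volume $O(\ell^{-1})$, which is negligible compared to the per-slot wiggle room. By Fubini, we can slide each $v_j^m$ within its cell $R_j$ to an admissible position simultaneously avoiding all $\ell$ bad sets, without destroying either the pairwise separation or the $\zeta'$-separation inside each $V_t$; shrinking $\zeta$ once more to absorb the slack gives a single constant $\zeta>0$ depending only on $\sigma$ for which all assertions hold. The bookkeeping in keeping the earlier separation bounds stable under the wiggle is the most delicate step, but it is buyable by taking the baseline $\zeta_0$ a small absolute multiple larger than the final target.
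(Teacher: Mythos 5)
Your overall strategy mirrors the paper's: discretize $\sigma$ with the requisite $\zeta/\sqrt m$-separation, apply Lemma~\ref{lem-group} to form the $\ell$ groups $V_t$ that are automatically $\zeta'$-far from $\pi_g(\mathbf R_g)$, and then perturb to push the groups a distance $\gtrsim 1/\sqrt m$ away from $\W$. The gap is in the last step: your global volume-counting argument does not support the conclusion. You compute a total obstruction volume of $O(\ell^{-1})$ in $X^g$ (summed over $t$), and declare it ``negligible compared to the per-slot wiggle room,'' which you earlier identified as a product set of measure $\gtrsim m^{-m}$. But $\ell^{-1}\gg m^{-m}$ for large $m$; the obstruction volume is vastly \emph{larger}, not smaller, so no union-bound or global Fubini argument can conclude. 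And a per-$t$ comparison fares no better for $g\ge 3$: the tube of radius $\zeta/\sqrt{g\ell}$ around the codimension-four set $\W$ has $X^g$-volume of order $m^{-2}$, while the admissible cell product $\prod_{j\in V_t}R_j$ has volume only of order $m^{-g}$, so the bad set could, a priori, swallow the entire admissible region.

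What is true—and what the paper actually argues—is a \emph{local} version of the codimension estimate. The cell containing $V_t$ has diameter $\sim c_1/\sqrt m$, and the tube around $\W$ one wants to avoid has radius $c_2/\sqrt m$ for a constant $c_2\ll c_1$. Since $\W$ is locally a finite union of smooth submanifolds of real codimension $\ge 4$, the tube occupies at most a small constant fraction $\sim (c_2/c_1)^4$ of any ball of radius $c_1/\sqrt m$; so there is always an admissible point at distance $\le c_1/\sqrt m$ from $\widetilde V_k$ and at distance $\ge c_2/\sqrt m$ from $\W$. The paper packages this as: for every $\mathbf x\in\W$ there are nearby points $\mathbf y_n$ with $\tfrac12\dist(\mathbf y_n,\mathbf x)\le\dist(\mathbf y_n,\W)\le\dist(\mathbf y_n,\mathbf x)$, uniformly by compactness of $\W$, and then Lemma~\ref{lem-dist-X-g} converts the $X^{(g)}$-perturbation back into a small perturbation of each $v_j$. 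To repair your write-up, replace the global volume sum with this local tube-fraction estimate inside each cell product, and the rest of the bookkeeping (reducing $\zeta$ a second time to preserve separations) goes through as you describe. One small side remark: the condition $\dist(V_k,\pi_g(\mathbf R_g))\ge\zeta$ already follows directly from the $\zeta'$-separation produced by Lemma~\ref{lem-group}; you do not need Lemma~\ref{lem-dist-X-g} for that part.
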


\begin{proof}
In the proof, we omit the sup-index $m$ for simplicity. We only need to consider when $m=2g \ell$.
	Since $\sigma\leq c\omega_0$,  according to the density of $\sigma$,  it is easy to take $\widetilde \sigma_m:= {1\over m}\sum_{j=1}^m\delta_{\widetilde v_j}$ converging to $\sigma$,  such that $\dist(\widetilde v_{j_1},\overline D)\geq \zeta'/\sqrt m$ and $\dist (\widetilde v_{j_1},\widetilde v_{j_2})\geq \zeta'/\sqrt m$ for some $\zeta'$ independent of $m$.   By Lemma \ref{lem-group}, we can assume $\dist \big(\widetilde V_k, \pi_g(\mathbf R_g)\big)\geq \zeta'$ for $k\leq \ell$, where $\widetilde V_k:=\widetilde v_{(k-1)g+1}+\dots+\widetilde v_{kg}$. We set $v_j:=\widetilde v_j$ for $j>g \ell$. In the following, we will modify $\widetilde v_j, j\leq g \ell$ to make $V_k$ not close to $\W$.

	 It is enough to assume $m$ is large enough. Locally, $\W$ is a union of smooth manifolds with dimension less that $g-1$. For any $\mathbf x\in \W$, there exists a sequence of points $\mathbf y_n\to\mathbf  x$   such that 
	$$1/2 \dist (\mathbf y_n,\mathbf x)\leq \dist(\mathbf y_n,\W)\leq  \dist(\mathbf y_n,\mathbf x) \quad\text{for all} \quad n.$$
 Moreover, the estimate is uniform on $\dist(\mathbf y_n,\W)$ by compactness of $\W$.
	  Using this, for every $k \leq \ell$, we can find $V_k\in X^{(g)}$ such that 
	  $$\dist(V_k,\widetilde V_k)\leq c_1/\sqrt m \quad\text{and}\quad \dist( V_k,\W)\geq c_2/\sqrt m,$$
	  where $c_1>c_2>0$ are small constants independent of $m$. 
	  
	   We now check $V_k, k\leq \ell$ satisfies all the  conditions. Observe that $\dist\big(V_k, \pi_g(\mathbf R_g)\big)\geq \zeta/2$ since $m$ is large. Applying Lemma \ref{lem-dist-X-g}, we can find a permutation $\{v_j :\, 1\leq j \leq g\ell \}$ such that $V_k =v_{(k-1)g+1}+\dots+ v_{kg}$ and $\dist(v_j, \widetilde v_j )\lesssim c_1/\sqrt{g \ell}$ for all $1\leq j \leq g \ell$. Thus, by choosing $c_1$ very small, we have $\dist(v_{j_1},\overline D)\geq \zeta/ (2\sqrt{g \ell})$ and $\dist( v_{j_1}, v_{j_2})\geq \zeta/(2\sqrt{g \ell})$ for $j_1\neq j_2$.  It is also not hard to see that  $\dist_W\big(\widetilde \sigma_m, {1\over m}\sum_j \delta_{v_j} \big)\lesssim c_1/\sqrt{g \ell}$. Hence all the desired properties holds for $\{v_j\}$ with $\zeta=c_2$. 
\end{proof}

Given a set which is distributed $\zeta$-sparsely, we can cover $\mathrm{Jac}(X)$   only using the points near the set by Abel-Jacobi map. This is the content of next lemma.

\begin{lemma}\label{lem-v-y}
	Let $\ell$ be a large integer and $\mathbf z\in \mathrm{Jac}(X)$. 
	Assume $\{ v_j :\, 1\leq j\leq g \ell \}$ is distributed $\zeta$-sparsely. There exists an $s_0\in \N$ depending only on $\ell$, such that  for every    $s \geq  s_0$ , we can find a new collection of points $\{ y_t^j :\,   1\leq j \leq g \ell, 1\leq t \leq s \}$, such that  
	\begin{equation*}
	\sum_{ 1\leq j \leq g \ell}\sum_{1\leq t \leq s} \oA_1 (y_t^j)=\mathbf z.  
	\end{equation*}
	 Moreover,  for all $1\leq j\leq g \ell$, we have
		\begin{equation*}
	\dist(y_{t_1}^j, v_j)\leq c/\sqrt{g \ell} \quad\text{and}\quad	\dist(y_{t_1}^j,y_{t_2}^j)\geq c/\sqrt {g \ell s }   \quad\text{if}\quad 1\leq t_1\neq t_2\leq s,
			\end{equation*}
	where,   $0<c<\zeta/8$ is a small constant independent of $\mathbf z,\{v_j\},\ell,s$.
\end{lemma}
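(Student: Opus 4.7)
The plan is to produce the $y_t^j$ in two stages. First, I place a baseline configuration $y_t^{j,0}$ already realising the ball and separation conditions (Step 1). Then I apply a small perturbation inside the first cluster $V_1=v_1+\dots+v_g$, uniform in $t$, to correct the total Abel-Jacobi sum to the target $\mathbf z$ (Step 2).

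\textbf{Step 1: baseline placement.} Fix a small constant $c_0>0$ (eventually $c_0<\zeta/24$). For each $1\le j\le g\ell$, place $s$ distinct points $y_t^{j,0}$ inside $\B(v_j,2c_0/\sqrt{g\ell})$ with $\dist(y_{t_1}^{j,0},y_{t_2}^{j,0})\ge 3c_0/\sqrt{g\ell s}$ for $t_1\ne t_2$. This is feasible by a standard packing argument in a local chart, since the disc has area $\simeq c_0^2/(g\ell)$ while $s$ disjoint balls of radius $\simeq c_0/\sqrt{g\ell s}$ occupy total area $\simeq c_0^2/(g\ell)$ up to a packing constant. Set $\mathbf z_0:=\sum_{j,t}\oA_1(y_t^{j,0})\in\mathrm{Jac}(X)$ and lift $\mathbf z-\mathbf z_0$ to a vector $\tilde\delta\in\C^g$ of length at most the diameter $C_0$ of a fundamental domain of the period lattice $\Lambda$.

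\textbf{Step 2: uniform shift inside $V_1$.} Take $y_t^j:=y_t^{j,0}+\tau^j$ for $j\le g$ (in a fixed local chart) and $y_t^j:=y_t^{j,0}$ for $j>g$, where $\tau=(\tau^1,\dots,\tau^g)\in\C^g$ is common to every $t$. Working in the universal cover $\C^g$ of $\mathrm{Jac}(X)$, define
$$\Psi(\tau) := \sum_{j\le g}\sum_{t\le s}\oA_1(y_t^{j,0}+\tau^j).$$
A direct computation gives $d\Psi(0)=s\,d\oA_g(V_1)+O(c_0\sqrt{s/(g\ell)})$, and $\Psi$ is block-diagonal in the $\tau^j$'s so $\|d^2\Psi\|=O(gs)$. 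Since $\dist(V_1,\W)\ge\zeta/\sqrt{g\ell}$, Lemma \ref{ineq-gradiant} gives $|\mathbf J_{\oA_g}(V_1)|\gtrsim \ell^{-\vartheta/2}$; because $d\oA_g$ has bounded operator norm, the determinant lower bound transfers to $\|d\oA_g(V_1)^{-1}\|=O(\ell^{\vartheta/2})$. The quantitative inverse function theorem then yields, for $s\ge s_0(\ell):=Cg\ell^{\vartheta+1}$ with $C$ large enough, a unique solution $\tau$ of $\Psi(\tau)-\Psi(0)=\tilde\delta$, satisfying
$$|\tau|\ \le\ C'C_0\,\ell^{\vartheta/2}/s\ \le\ c_0/\sqrt{g\ell s}.$$

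\textbf{Verification and main obstacle.} Because $\tau^j$ is uniform in $t$, the separation is inherited verbatim from the baseline: $\dist(y_{t_1}^j,y_{t_2}^j)=\dist(y_{t_1}^{j,0},y_{t_2}^{j,0})\ge 3c_0/\sqrt{g\ell s}$. Also $\dist(y_t^j,v_j)\le 2c_0/\sqrt{g\ell}+|\tau|\le 3c_0/\sqrt{g\ell}$, and $\sum_{j,t}\oA_1(y_t^j)=\mathbf z_0+\tilde\delta\equiv\mathbf z$ in $\mathrm{Jac}(X)$ by construction. Choosing $c:=3c_0<\zeta/8$ completes the proof. The central difficulty is the quantitative degeneration of $d\oA_g$ near the Wirtinger locus $\W$: the $\zeta$-sparse hypothesis only buys $\dist(V_1,\W)\ge\zeta/\sqrt{g\ell}$, so the norm of $(d\oA_g(V_1))^{-1}$ blows up polynomially with $\ell$. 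Lemma \ref{ineq-gradiant} is precisely what converts this geometric fact into a quantitative inverse bound, and it is what forces the polynomial threshold $s_0\simeq\ell^{\vartheta+1}$ rather than an $\ell$-independent $s_0$.
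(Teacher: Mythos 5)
Your proposal takes a genuinely different route from the paper's proof, but as written it contains a gap in the invertibility step for the map $\Psi$.

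\textbf{Where the two proofs diverge.} Your Step 1 (baseline separated configuration near the $v_j$'s) coincides with the first part of the paper's argument. The paper then sets $\mathbf c_s:=(\mathbf z-\mathbf w)/(\ell s)\in\C^g$ (where $\mathbf w$ is the Abel--Jacobi sum of the baseline points), and corrects \emph{every} one of the $\ell s$ $g$-tuples $Q_t^k$ by setting $Y_t^k:=\oA_g^{-1}\big(\oA_g(Q_t^k)+\mathbf c_s\big)$, so that the sum automatically telescopes to $\mathbf z$. Crucially, this applies Lemma~\ref{ineq-gradiant} \emph{pointwise} at each $Q_t^k$, where $\dist(Q_t^k,\W)\gtrsim 1/\sqrt{g\ell}$ directly controls $\|(d\oA_g(Q_t^k))^{-1}\|\lesssim\ell^{\vartheta/2}$. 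Your Step 2 instead concentrates the correction on the first cluster by a single $\tau$ uniform in $t$, so that a \emph{summed} differential $d\Psi(0)=\sum_{t\le s} d\oA_g(Q_t^1)$ must be inverted.

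\textbf{The gap.} The linearization error in $d\Psi(0)=s\,d\oA_g(V_1)+E$ is claimed to be $\|E\|=O\big(c_0\sqrt{s/(g\ell)}\big)$, which would require square-root cancellation among the $s$ terms $d\oA_g(Q_t^1)-d\oA_g(V_1)$. No such cancellation is available: the $y_t^{j,0}$ from Step~1 are arbitrary separated points in a ball, and a worst-case placement gives
\[
\|E\|=\Big\|\sum_{t\le s}\big[d\oA_g(Q_t^1)-d\oA_g(V_1)\big]\Big\|\lesssim s\cdot\frac{c_0}{\sqrt{g\ell}},
\]
since each $Q_t^1$ differs from $V_1$ by $O(c_0/\sqrt{g\ell})$ and $d\oA_g$ is Lipschitz. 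For the perturbation $E$ not to destroy the operator bound $\|(d\Psi(0))^{-1}\|\lesssim\ell^{\vartheta/2}/s$ one must have $\|E\|<\sigma_{\min}\big(s\,d\oA_g(V_1)\big)\simeq s\,\ell^{-\vartheta/2}$, i.e.\ $c_0\lesssim\ell^{(1-\vartheta)/2}$. Since the Lojasiewicz exponent $\vartheta$ from Lemma~\ref{ineq-gradiant} is some fixed positive number that need not satisfy $\vartheta\le1$, this forces $c_0\to0$ as $\ell\to\infty$, contradicting the requirement that $c_0$ be a constant independent of $\ell$. The quantitative inverse function theorem is therefore not applicable as stated.

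\textbf{How to close it.} The fix is to trade your single implicit equation for $s$ (or $\ell s$) independent applications of the inverse of $\oA_g$, exactly as the paper does. Replacing the uniform shift by $Y_t^1:=\oA_g^{-1}\big(\oA_g(Q_t^1)+\tilde\delta/s\big)$ (or $\mathbf c_s=\tilde\delta/(\ell s)$ on all clusters) moves each $g$-tuple by $O(\ell^{\vartheta/2}/s)$ via Lemma~\ref{ineq-gradiant} and Lemma~\ref{lem-dist-X-g}, with no constraint on $c_0$, and gives $s_0\simeq\ell^{\vartheta+1}$ (comparable to the paper's $s_0\gtrsim\max(\ell^{2\vartheta-4},\ell^2)$). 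Your correct observation that the $\zeta$-sparse hypothesis only buys $\dist(V_1,\W)\gtrsim1/\sqrt{g\ell}$, and that Lemma~\ref{ineq-gradiant} is what converts this into a quantitative inverse bound, is exactly the right diagnosis of the difficulty; the remedy is to invoke that lemma per tuple rather than after summation.
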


\begin{proof}
  Fix a small constant $0<c_1<\zeta/8$.
	For every $1\leq j\leq g\ell$, we fix $s$ points  $x_t^j, 1\leq t\leq s$  in the ball $\B(v_j,c_1/\sqrt{g\ell})$ and they satisfy 
	\begin{equation}\label{condition-1}
	\dist(x_{t_1}^j,x_{t_2}^j)\geq c_2/\sqrt { g\ell s }   \quad\text{for}\quad t_1\neq t_2, 
	\end{equation}
	where $c_2>0$ is a small constant only depending on $c_1$. 
	This can be done   because the area of a ball of radius $c_1/\sqrt{g\ell}$ is $O(1/g\ell)$. 
	
	For every $1\leq k\leq \ell$ and $1\leq t\leq s$, we put 
	$$Q_t^k:= x_t^{(k-1)g+1}+x_t^{(k-1)g+2}+\dots+x_t^{kg}  \in (X\setminus \overline D)^{(g)},$$
	$$ \mathbf w:=\sum_{1\leq j\leq g\ell}  \sum_{1\leq t\leq s} \oA_1(x_t^j) =\sum_{1\leq k\leq \ell}  \sum_{1\leq t\leq s} \oA_g(Q_t^k )\in \mathrm{Jac}(X).$$
    
	Define the complex vector
	\begin{equation*}
	\mathbf c_s:= (\mathbf z-\mathbf w )/ (  \ell s) \in \C^g.
	\end{equation*}
	Of course, the definition of $\mathbf c_s$ depends on the choices of the representatives of $\mathbf z$ and $\mathbf w$ in $\C^g$. Here we  take  $\mathbf z$ and $\mathbf w$ with minimal distance on $\C^g$ , so that  $|\mathbf c_s|\leq  (\ell s)^{-1}\cdot\diam(\mathrm{Jac}(X))$. 
	Consider the points $\oA_g(Q_t^k)+\mathbf c_s$ for $1\leq k\leq \ell,1\leq t\leq s$ and set
	$$d_{k,t,s}:=\dist\big(\oA_g(Q_t^k),\oA_g(Q_t^k)+\mathbf c_s\big)\lesssim (\ell s)^{-1},$$ $$ Y_t^k:=\sum_{j=1}^g  y_t^{(k-1)g+j} :=\oA_g^{-1} \big(\oA_g(Q_t^k)+\mathbf c_s\big)  .$$
	Since $\{ v_j \}$ is distributed $\zeta$-sparsely and $\ell$ is large, using that $\dist(x_t^j, v_j)\leq c_1/\sqrt{g\ell}$, we see that 
	$  \dist( Q_t^k,    \W) \geq   c_3/\sqrt{g \ell}$ for some $c_3>0$ only depending on $c_1$ (we reduce the value of $c_1$ if necessary).
	 Thus, Lemma \ref{ineq-gradiant} gives
	\begin{equation}\label{oA-1+c}
	\dist(Q_t^k, Y_t^k) \lesssim  d_{k,t,s}/ {\mathbf J}_{\oA_g}(Q_t^k) \lesssim (\ell s)^{-1} \cdot \ell^{\vartheta/2} \leq s^{-3/4}
	\end{equation}
	provided that  $s\geq s_0\geq \ell^{2\vartheta-4}$. 
	From the definitions of $\mathbf w,\mathbf c_s$ and $Y_t^k$, we receive 
	$$ \sum_{ 1\leq j \leq g \ell}\sum_{1\leq t \leq s} \oA_1 (y_t^j)= \sum_{1\leq k\leq \ell} \sum_{1\leq t\leq s} \oA_g(Y_t^k)=\sum_{1\leq k\leq \ell}  \sum_{1\leq t\leq s} \big(\oA_g(Q_t^k )+\mathbf c_s\big)=\mathbf w+\ell s \mathbf c_s =\mathbf z.$$

On the other hand,   using again that  $\{ v_j \}$ is distributed $\zeta$-sparsely, we see that $\dist\big(Y_t^k,\pi_g(\mathbf R_g)\big)\geq \zeta/2$.  Applying Lemma  \ref{lem-dist-X-g} to \eqref{oA-1+c}, there is a permutation of $y_t^j$, such that 
$\dist(x_t^j,y_t^j)\lesssim s^{-3/4}$
for every $1\leq j\leq g\ell$ and $1\leq t\leq s$.
This implies $\dist(y_t^j, v_j)\leq 2c_1/\sqrt{g\ell}$ and $\dist(y_{t_1}^j,y_{t_2}^j)\geq c_2/(2\sqrt {g\ell s })$  from \eqref{condition-1} provided that $s$ is much large than $\ell$. 
\end{proof}

In order to prove Proposition \ref{bound-Sm}, we find a special $\mathbf p$ in $\oS_m^\zeta$ with $\delta_{\mathbf p}\in \B_W^D(\sigma,r/2)$ first.

\begin{lemma}\label{lemma-exist-one-p}
There exists a $\zeta>0$ depending only on $\sigma$, and an $m_0\in \N$ depending on $\sigma,r$, such that
for every $m\geq m_0$, we can find a $\mathbf p:=p_1+\cdots+p_m\in (X\setminus \overline D)^{(m)} \setminus \bH_m$, satisfying  $\dist_W(\delta_{\mathbf p}, \sigma)<r/2$, and 
$$   \dist(p_{j_1},\overline D)\geq \zeta/\sqrt m  ,   \quad   \dist(p_{j_1},p_{j_2})\geq \zeta/\sqrt m   \quad \text{for all}\quad 1\leq j_1\neq j_2\leq m.$$
Furthermore, write $\mathbf q:=\oB_m(\mathbf p)=q_1+\cdots+q_g$. We have for all $1\leq l\leq g, 1\leq j\leq m$,
$$\dist(\mathbf q, \W),\,\dist\big(\mathbf q,\pi_g(\mathbf R_g)\big),\, \dist(q_l,\overline D)  \geq \beta   \quad \text{and}\quad \dist(q_l, p_j)\geq \zeta/\sqrt m,$$
where $\beta>0$ is independent of $m$.
\end{lemma}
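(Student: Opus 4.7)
The strategy is to fix a good configuration $\mathbf{q}^{*}=q_1^{*}+\cdots+q_g^{*}$ once and for all (which yields the $m$-independent constant $\beta$) and then build $\mathbf{p}$ so that $\oB_m(\mathbf{p})=\mathbf{q}^{*}$, combining the approximation scheme of Lemma~\ref{lem-conver-sigma} with the Jacobian-adjustment provided by Lemma~\ref{lem-v-y}. Since $\W$ and $\pi_g(\mathbf{R}_g)$ are proper analytic subsets of $X^{(g)}$ and $X\setminus\overline{D}$ has nonempty interior, I would first pick $g$ distinct points $q_1^{*},\dots,q_g^{*}\in X\setminus\overline{D}$ at mutual distance bounded below and fix $\beta>0$ with $\dist(\mathbf{q}^{*},\W)\geq 2\beta$, $\dist(\mathbf{q}^{*},\pi_g(\mathbf{R}_g))\geq 2\beta$, and $\dist(q_l^{*},\overline{D})\geq 2\beta$ for every $l$. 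Setting $\mathbf{z}_m:=\oA_n(\oL^n)-\oA_g(\mathbf{q}^{*})\in \Jac(X)$, any $\mathbf{p}$ with $\oA_m(\mathbf{p})=\mathbf{z}_m$ automatically lies outside $\bH_m$ and satisfies $\oB_m(\mathbf{p})=\mathbf{q}^{*}$ by the uniqueness discussion following \eqref{equal-p+q=L-oA}, so both the $\beta$-conditions and the requirement $\mathbf{p}\notin\bH_m$ come for free.

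\textbf{Construction of $\mathbf{p}$.} For $m$ large, I would choose $\ell_m\to\infty$ slowly enough that $s_m:=\lfloor m/(g\ell_m)\rfloor$ satisfies $s_m\geq s_0(\ell_m)$ with $s_0$ coming from Lemma~\ref{lem-v-y}, absorbing the residue $m-g\ell_m s_m=O(\ell_m)=o(m)$ by adjoining auxiliary atoms placed generically in $X\setminus\overline{D}$ (and folding their $\oA_1$-contribution into the target $\mathbf{z}_m$). Rerunning the proof of Lemma~\ref{lem-conver-sigma} with the extra constraint that atoms avoid fixed $\beta/4$-balls around each $q_l^{*}$ (a cost of only $O(\beta^2)$ in the Wasserstein approximation, since $\sigma$ has bounded density), I would produce $g\ell_m$ atoms $v_1,\dots,v_{g\ell_m}\in X\setminus\overline{D}$ that are distributed $\zeta$-sparsely with $\zeta$ depending only on $\sigma$, and such that $\tfrac{1}{g\ell_m}\sum_j\delta_{v_j}\to\sigma$ weakly as $m\to\infty$. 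Lemma~\ref{lem-v-y} applied to $\{v_j\}$ with target $\mathbf{z}_m$ and parameter $s=s_m$ then yields points $y_t^j$ with $\sum_{j,t}\oA_1(y_t^j)=\mathbf{z}_m$, $\dist(y_t^j,v_j)\leq c/\sqrt{g\ell_m}$, and $\dist(y_{t_1}^j,y_{t_2}^j)\geq c/\sqrt{m}$ for $t_1\neq t_2$, with $c<\zeta/8$. Setting $\mathbf{p}$ equal to the sum of these $y_t^j$ plus the residual generic atoms completes the construction.

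\textbf{Verification and main obstacle.} All geometric inequalities reduce to the triangle inequality combined with $c<\zeta/8$: for $j_1\neq j_2$ one gets $\dist(y_{t_1}^{j_1},y_{t_2}^{j_2})\geq \dist(v_{j_1},v_{j_2})-2c/\sqrt{g\ell_m}\geq \zeta/(2\sqrt{g\ell_m})\geq \zeta/(2\sqrt{m})$, and similarly $\dist(y_t^j,\overline{D})\geq \zeta/(2\sqrt{m})$ and $\dist(y_t^j,q_l^{*})\geq \beta/2\geq \zeta/\sqrt{m}$ for $m$ large; the Wasserstein bound $\dist_W(\delta_{\mathbf{p}},\sigma)\leq \dist_W(\delta_{\mathbf{p}},\tfrac{1}{g\ell_m}\sum_j\delta_{v_j})+\dist_W(\tfrac{1}{g\ell_m}\sum_j\delta_{v_j},\sigma)\leq c/\sqrt{g\ell_m}+o(1)<r/2$ holds for $m\geq m_0$, with $\zeta$ replaced by a smaller constant if needed. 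The main obstacle is producing the sparse collection $\{v_j\}$ that simultaneously satisfies the full $\zeta$-sparse condition of Lemma~\ref{lem-v-y} (with its built-in $\W$- and $\pi_g(\mathbf{R}_g)$-avoidance) \emph{and} approximates $\sigma$, since Lemma~\ref{lem-conver-sigma} as stated only guarantees the \emph{full} $\sigma_m$ approximates $\sigma$, not the sparse half alone; this calls for a careful re-execution of that proof, coupled with the integer-divisibility bookkeeping for the atom count through the auxiliary generic atoms.
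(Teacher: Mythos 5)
Your construction differs structurally from the paper's: you fix $\mathbf{q}^{*}$ once, let $\ell_m\to\infty$ slowly, and build $\mathbf{p}$ entirely out of the $\zeta$-sparse atoms $\{v_j\}_{j\le g\ell_m}$ (plus an $O(\ell_m)$ residue); the paper instead fixes $\ell$, lets $s$ absorb the growth in $m$, writes $m=2g\ell s+m_2$, and builds $\mathbf{p}$ out of \emph{both} halves of a $2g\ell$-atom approximant $\sigma_{2g\ell}$ of $\sigma$ --- the sparse half $\{v_j\}_{j\le g\ell}$ (fed through Lemma~\ref{lem-v-y} to become the $y_t^j$) and the non-sparse half $\{v_j\}_{g\ell<j\le 2g\ell}$ (kept essentially fixed as small clusters $w_t^j$, with their Abel--Jacobi contribution subtracted from the target $\mathbf z_s$) --- choosing $\mathbf Q$ \emph{after} the $v_j$'s and $u_k$'s so that the $\beta$-neighborhood constraint never interferes with the Wasserstein approximation.

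This difference is where the gap lies, and you in fact point right at it in your last paragraph: you need the $g\ell_m$ sparse atoms \emph{alone} to approximate $\sigma$, and you propose to obtain this by ``re-executing'' Lemma~\ref{lem-conver-sigma}. That does not work. The sparseness of the first half in Lemma~\ref{lem-conver-sigma} is produced by Lemma~\ref{lem-group}, whose construction selects one point from each of $g$ thin, pairwise-separated strips $B_1,\dots,B_g$; the $g\ell$ points it outputs therefore all lie on $\bigcup_k B_k$, a set of small $\sigma$-measure for generic $\sigma$ (indeed of area $O(\zeta^2)$), so $\frac{1}{g\ell}\sum_{j\le g\ell}\delta_{v_j}$ cannot converge weakly to $\sigma$. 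Only the full $2g\ell$-atom measure converges. The obstruction is intrinsic to the grouping step: partitioning a $\sigma$-equidistributed $g\ell$-point set into $\ell$ groups of $g$ points that are pairwise $\zeta$-separated is a nontrivial hypergraph-matching problem (the Lemma~\ref{lem-group} argument inherently discards about half the points to create the necessary ``gaps'' between the strips), and you would need a genuinely new combinatorial argument, not a re-run of the existing one. The paper sidesteps this cleanly by letting the non-sparse half carry the approximation burden while only the sparse half passes through Lemma~\ref{lem-v-y}. There is also a smaller wrinkle: fixing $\mathbf{q}^{*}$ before the $v_j$ forces the $v_j$ to avoid fixed $\beta/4$-balls, costing a Wasserstein error on the order of $\beta^{3}$ that does not vanish as $m\to\infty$; this is harmless if one lets $\beta$ depend on $r$ (the lemma only requires $\beta$ independent of $m$), but the paper's ordering (pick $v_j, u_k$ first, then $\mathbf Q$ inside a \emph{fixed}-size ``good'' region away from $\W$, $\pi_g(\mathbf R_g)$ and $\overline D$, avoiding the finitely many $O(1/\sqrt{\ell})$-balls around the $v_j$ and $u_k$) yields a $\beta$ depending only on $(X,\omega,D)$, a stronger statement.
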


\begin{proof}

Using Lemma \ref{lem-conver-sigma}, we take an $\ell$ large and $\sigma_{2 g \ell}:={1\over 2g \ell} \sum_{j=1}^{2 g \ell } \delta_{v_j}$ such that $\dist_W(\sigma_{2 g \ell },\sigma)<r/4$,
\begin{equation}\label{defn-v-j}
\dist(v_{j_1}, \overline D)\geq \zeta_1/\sqrt {2 g \ell }, \quad \dist(v_{j_1}, v_{j_2}) \geq \zeta_1/\sqrt{2 g \ell } \quad \text{for} \quad 1\leq j_1\neq j_2\leq 2 g \ell  
\end{equation}
and $\{v_j:\, 1\leq j \leq  g \ell \}$ is distributed $\zeta_1$-sparsely for some $\zeta_1>0$ independent of $\ell $.  Take the $s_0$ associated to $\ell$ in  Lemma \ref{lem-v-y} and put $m_0:=  2 g \ell s_0 $.  We also let $s_0\geq \ell^2$ so that $m_0$ is much larger than $2 g \ell $.

Fix a small constant $0<c_1<\zeta_1 /8$. Similar as in \eqref{condition-1},
	for every $g\ell +1\leq j\leq 2g\ell$, we  define  $w_t^j, 1\leq t\leq s$  in the ball $\B(v_j,c_1/\sqrt{g\ell})$ and they satisfy 
	\begin{equation}\label{condition-2}
	\dist(w_{t_1}^j,w_{t_2}^j)\geq c_2/\sqrt {g\ell s }   \quad\text{for}\quad t_1\neq t_2, 
	\end{equation}
where $c_2>0$ is a small constant only depending on $c_1$.

\smallskip

For every $m\geq m_0$, we can uniquely write $m= 2g\ell s  +m_2$, with integers $s\geq s_0$ and $0\leq m_2<2 g\ell$. 
Fix $\{ u_k:\,  1\leq k\leq  m_2\}$ in $X\setminus \overline D$, such that 
\begin{equation} \label{defn-u-k}
\dist(u_k,\overline D)\geq    \zeta_1 /  \sqrt{8 g \ell } \quad \text{and} \quad    \dist( u_k, v_j )\geq \zeta_1 / \sqrt{8 g \ell } 
\end{equation}
for all $1\leq k \leq  m_2,  1\leq j \leq 2 g \ell $.
After that,  we choose a $\bQ:=q_1+\cdots+q_g$ in $(X\setminus \overline D)^{(g)}$ such that
\begin{equation}\label{q-k-qk-y-2}
\dist( q_l,\overline D),\,	\dist(\bQ, \W),\,  \dist\big(\bQ, \pi_g(\mathbf R_g)\big) \geq \beta ,    \quad    \dist( q_l, v_j) , \,    \dist( q_l, u_k)\geq \zeta_1/ \sqrt{8 g \ell }
	\end{equation} 
for all $1\leq l \leq g, 1\leq j\leq 2 g \ell , 1\leq k\leq m_2$. Here, $\bQ$ depends on $\ell $ but  $\beta >0$ not. Since $\overline D, \W ,\pi_g(\mathbf R_g)$ are fixed, the first three conditions are easy to handle. For the last two, note that the total area of $\B(v_j, \zeta_1/\sqrt{8 g \ell })$'s and $\B(u_k, \zeta_1/\sqrt{8 g \ell })$'s is $O(\zeta_1)$ as $\ell \to \infty$. We can reduce the value of $\zeta_1$ to ensure the existence of $\mathbf Q$.  

\smallskip

 For every $s\geq s_0$, we define the point in $\mathrm{Jac}(X)$:
$$\mathbf z_s:=  \oA_{m+g}  \big(\oL^{m+g} \big) -\sum_{j= g\ell_1+1}^{2 g \ell } \sum_{t=1}^s \oA_1 (w_t^j) - \sum_{k= 1}^{m_2} \oA_1 (u_k)  -\oA_g(\bQ).  $$
 Applying Lemma \ref{lem-v-y} for  $\mathbf z=\mathbf z_s$, we get a set $\{ y_t^j :\,   1\leq j \leq g \ell, 1\leq t \leq s \}$ in $(X\setminus \overline D)^{(g \ell s)}$  and a constant $0<c<\zeta_1 /8$ only depending on $\zeta_1$ such that 
\begin{equation}\label{new-y}
	\sum_{ j=1} ^{ g \ell}\sum_{t=1}^s \oA_1 (y_t^j)=\mathbf z_s,\quad  \dist(y_t^j, v_j)\leq c/\sqrt{g \ell} ,\quad \dist(y_{t_1}^j,y_{t_2}^j)\geq c/\sqrt{g \ell s}.
\end{equation}

Lastly, we define 
$$\mathbf p:= \sum_{j=1}^{ g \ell}\sum_{t=1 }^s  y_t^j  +  \sum_{j= g\ell+1}^{2 g \ell } \sum_{t=1 }^s  w_t^j +\sum_{k=1}^{m_2} u_k  \in (X\setminus \overline D)^{m} .$$
The first equality of \eqref{new-y}  gives $\bQ=\oB_m(\mathbf p)$.

\smallskip

Now we check $\mathbf p$ and $\bQ$ satisfy all the requirements. The point $\mathbf p$ is not in $\bH_m$ because $\mathbf Q \notin \W$. The fact that $m\geq 2 g \ell s$  and \eqref{defn-v-j},\eqref{condition-2}, \eqref{defn-u-k}, \eqref{q-k-qk-y-2}, \eqref{new-y} give all the distance inequalities in the lemma, by taking $\zeta >0$ small independent of $\ell,s$ and $m$.
It remains to show $\dist_W(\delta_{\mathbf p}, \sigma)<r/2$. We have 
$$\dist_W (\delta_{\mathbf p}, \sigma)  \leq \dist_W(\delta_{\mathbf p} ,\sigma_{2g \ell})+\dist_W(\sigma_{2g \ell},\sigma) < \dist_W(\delta_{\mathbf p} ,\sigma_{2g \ell})+r/4.   $$
Using that $\ell^2 \leq s$, $m_2<2g\ell$, $\dist(y_t^j, v_j)<c/\sqrt{g\ell}$ and $\dist(w_t^j,v_j)\leq c_1/\sqrt{g \ell}$, we obtain
$$  \dist_W(\delta_{\mathbf p} ,\sigma_{2g \ell})\lesssim m_2/(2g\ell s) + 1/\sqrt{g\ell} +1/\sqrt{ g\ell}   \lesssim 1/\sqrt \ell.   $$ 
We finish the proof of the lemma by taking $\ell$ large enough.
\end{proof}

We will use the above $\mathbf p$ to construct a subset of $\oS_m^\zeta$ with volume not too small.
We can ignore the exceptional set $\bH_m$ as its volume is $0$ in $X^{(m)}$. Hence we may assume $\oB_m(\mathbf p')$ is always well-defined.

\smallskip

\begin{proof}[Proof of Proposition \ref{bound-Sm}]
We take the $\zeta, m_0$ and $\mathbf p, \mathbf q$ in  Lemma \ref{lemma-exist-one-p}. For every $m\geq m_0$, consider the set 
	\begin{equation*}\label{defn-omega-m}
		\Omega_{m}:=\otimes_{1\leq j\leq  m} \B( p_j,m^{-2}).
		\end{equation*}
  Clearly, $\Vol(\Omega_m)\geq m^{-5m}$. In the following, we will show $\Omega_m\subset \oS_m^{\zeta/2}$ and $\dist(\delta_{\mathbf p'}, \sigma)<r$ for $\mathbf p'\in \Omega_m$, which ends the proof of  Proposition \ref{bound-Sm}.

\smallskip

 Take any $\mathbf p'\in \Omega_m$. Write $\mathbf p'=p'_1+\cdots+p'_m$ with $\dist(p'_j,p_j)<m^{-2}$.  The clearly implies $\dist(\delta_{\mathbf p'},\delta_{\mathbf p})\lesssim m^{-2}$ and hence  $\dist(\delta_{\mathbf p'}, \sigma)<r$ for $m$ large enough.

Now we verify that $\mathbf p'\in\oS_m^{\zeta/2}$. Let $\mathbf q':=\oB_m(\mathbf q')=q'_1+\cdots+q'_g$.  From Lemma \ref{lemma-exist-one-p}, it is not hard to see that $\mathbf p',\mathbf q'\in (X\setminus D)^{(g)}$ and $\dist(p'_j,p'_k)\geq \zeta/(2\sqrt m)$ for $j\neq k$.
 Since $\oA_1$ is injective and holomorphic,
\begin{align*} \dist\big(\oA_m(\mathbf p),\oA_m(\mathbf p')\big) &= \dist\Big( \sum_{j=1}^m \oA_1(p_j), \sum_{j=1}^m \oA_1(p'_j) \Big) \\
&\leq \sum_{j=1}^m \dist\big( \oA_1(p_j),\oA_1(p'_j) \big)   
 \lesssim  \sum_{j=1}^m \dist(p_j,p'_j)\leq m^{-1}.
\end{align*}
 Using the formula \eqref{equal-p+q=L-oA}, we get
 $$\dist\big(\oA_g(\mathbf q), \oA_g(\mathbf q' \big)=\dist\big(\oA_n(\oL^n)-  \oA_m(\mathbf p),\oA_n(\oL^n)- \oA_m(\mathbf p')   \big) \lesssim m^{-1}.$$
Recall from Lemma \ref{lemma-exist-one-p} that $\dist(\mathbf q, \W)\geq \beta$. So by Lemma \ref{ineq-gradiant}, we see that $\dist(\mathbf q, \mathbf q')\lesssim m^{-1}$.
Recall again from Lemma \ref{lemma-exist-one-p} that $\dist\big(\mathbf q,\pi_g(\mathbf R_g)\big)\geq \beta$. Applying   Lemma \ref{lem-dist-X-g}, we can fix a permutation of $q'_l$'s such that $\dist(q_l, q'_l)\lesssim m^{-1}$ for every $1\leq l\leq g$. Using Lemma \ref{lemma-exist-one-p} one more time, we obtain that for every $1\leq j\leq m, 1\leq l \leq g$, 
$$ \dist(p'_j, q'_l)\geq \dist(p_j, q_l) - \dist(q_l,q'_l)-\dist(p_j,p'_j)\geq \zeta/\sqrt m - O(m^{-1})-O(m^{-2})>1/m. $$
Thus, $\Omega_m\subset \oS_m^{\zeta/2}$ and the proof of Proposition \ref{bound-Sm} is now complete.
\end{proof}

\medskip

\section{Regularization of $\oI_{\omega, D}$} \label{sec:regularization}

We need the following lemma to approximate $\oI_{\omega,D}$ by using $\oE_m$ and $\oF_m$.

\begin{lemma}\label{sigma-delta-potential}
	Let $\eta$  be a smooth probability measure on $X$. For any $\mathbf p\in X^{(m)}$ satisfying  $\dist(p_j,p_k)\geq \zeta/ \sqrt m$ for $j\neq k$. We have 
	$$\Big|\oE_m(\mathbf p)-\int_X U'_\eta \,\dd \eta \Big|\leq C_{\eta,\zeta}\sqrt{\dist_W(\eta,\delta_{\mathbf p})}$$
	for some constant $C_{\eta,\zeta}>0$ independent of $m$ and $\mathbf p$.
\end{lemma}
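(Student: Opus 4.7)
Since $G$ has a logarithmic singularity on the diagonal, it is not Lipschitz and the estimate cannot follow directly from Wasserstein duality. The plan is to replace $\delta_{\mathbf p}$ by a smooth probability measure $\tilde\mu_r$ obtained by spreading each atom over a disk of radius $r$, then compute $\iint G\,d\tilde\mu_r\,d\tilde\mu_r-\oE_m(\mathbf p)$ explicitly, bound $\iint G\,d\eta\,d\eta - \iint G\,d\tilde\mu_r\,d\tilde\mu_r$ via a Wasserstein-Lipschitz inequality valid for the energy on measures of bounded density, and finally optimize $r$.

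\textbf{Smoothing and its energy.} Fix $r\in(0,\zeta/(3\sqrt m)]$ and set $\tilde\mu_r:=\frac{1}{m}\sum_{j=1}^m\nu_j$, where $\nu_j$ is the normalized area measure on $\B(p_j,r)$. The separation condition makes the balls pairwise disjoint, so $\tilde\mu_r$ has density at most $(m\pi r^2)^{-1}$, which stays bounded by a constant depending only on $\zeta$ as soon as $r\geq c/\sqrt m$; the obvious coupling gives $\dist_W(\delta_{\mathbf p},\tilde\mu_r)\leq r$. Using $G=\log\dist+\varrho$ from Lemma~\ref{l:Green}, harmonicity of $\log|x-y|$ in Euclidean charts together with the mean value property yields, for $j\neq k$, $\iint G\,d\nu_j\,d\nu_k=G(p_j,p_k)+O(r)$ (the error absorbing the Lipschitz difference $\log\dist_X-\log|\cdot|_{\text{chart}}$ as well as the Lipschitz term $\varrho$); the diagonal term gives the explicit self-energy $\log r+O(1)$. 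Summing over $j$ and $k$,
$$\iint G\,d\tilde\mu_r\,d\tilde\mu_r = \oE_m(\mathbf p)+O(r)+\frac{\log r+O(1)}{m}.$$

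\textbf{Lipschitz estimate on the energy.} The key technical input is that for any two probability measures $\sigma_1,\sigma_2$ on $X$ with densities bounded by a common constant $M$,
$$\Bigl|\iint G\,d\sigma_1\,d\sigma_1-\iint G\,d\sigma_2\,d\sigma_2\Bigr|\leq C_M\,\dist_W(\sigma_1,\sigma_2).$$
To prove this I truncate, setting $G_\epsilon(x,y):=\max(\log\dist(x,y),\log\epsilon)+\varrho(x,y)$. Since $1/\dist(x,\cdot)$ is in $L^1$ uniformly in $x$ and the $\sigma_i$ have bounded densities, the potentials $U^\epsilon_{\sigma_i}(x):=\int G_\epsilon(x,y)\,d\sigma_i(y)$ are Lipschitz with a constant depending only on $M$, \emph{uniformly in $\epsilon$}. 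The symmetric splitting
$$\iint G_\epsilon\,\bigl(d\sigma_1\otimes d\sigma_1-d\sigma_2\otimes d\sigma_2\bigr) = \int(U^\epsilon_{\sigma_1}+U^\epsilon_{\sigma_2})\,d(\sigma_1-\sigma_2)$$
together with the definition of $\dist_W$ yields the bound for $G_\epsilon$; the truncation error $|\iint(G-G_\epsilon)\,d\sigma_i\,d\sigma_i|=O(M^2\epsilon^2)$ vanishes as $\epsilon\to 0$.

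\textbf{Combining and optimizing.} Applied to $\eta$ and $\tilde\mu_r$, both of density bounded by a constant depending only on $\eta$ and $\zeta$, the previous step yields $|\iint G\,d\eta\,d\eta-\iint G\,d\tilde\mu_r\,d\tilde\mu_r|\leq C_{\eta,\zeta}(d+r)$, where $d:=\dist_W(\eta,\delta_{\mathbf p})$. Combined with the explicit energy of $\tilde\mu_r$,
$$\Bigl|\int U'_\eta\,d\eta-\oE_m(\mathbf p)\Bigr|\leq C_{\eta,\zeta}\bigl(d+r+|\log r|/m+1/m\bigr).$$
I take $r=\min(\sqrt d,\zeta/(3\sqrt m))$: the first case ($r=\sqrt d$) gives $O(\sqrt d+|\log d|/m)$, the second ($r=\zeta/(3\sqrt m)$) gives $O(d+1/\sqrt m+\log m/m)$. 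The main obstacle is absorbing the residual logarithmic terms into $O(\sqrt d)$ uniformly in $m$; this is done using the standard two-dimensional lower bound $\dist_W(\eta,\delta_{\mathbf p})\geq c_\eta/\sqrt m$ (valid by a simple packing argument since $\eta$ has bounded density), which forces $1/\sqrt m\leq C_\eta\sqrt d$ and similarly controls $\log m/m$ and $|\log d|/m$ in both regimes.
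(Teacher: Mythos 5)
Your proof is correct in its conclusions, but it takes a genuinely different route from the paper and relies on an extra ingredient the paper does not need. The paper's proof splits the double integral $\oE_m(\mathbf p)-\iint G\,d\eta\,d\eta$ directly via a cutoff $a$: on $\{\dist\geq a\}$ the kernel $G$ has gradient $O(a^{-1})$ and Wasserstein duality applies; on $\{0<\dist<a\}$ the $\delta_{\mathbf p}\otimes\delta_{\mathbf p}$ contribution is controlled by counting pairs at dyadic distances using the $\zeta/\sqrt m$ separation (with Stirling), and the $\eta\otimes\eta$ contribution by smoothness of $\eta$; the choice $a=\sqrt{\dist_W(\eta,\delta_{\mathbf p})}$ is unconstrained. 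You instead smooth $\delta_{\mathbf p}$ into $\tilde\mu_r$, relate $\iint G\,d\tilde\mu_r\,d\tilde\mu_r$ to $\oE_m(\mathbf p)$ by the mean-value property, and compare energies of bounded-density measures via a Wasserstein-Lipschitz inequality proved by truncation. Both arguments work, but yours has two structural costs worth noting. First, the density of $\tilde\mu_r$ is $\asymp(mr^2)^{-1}$, so the constant $C_M$ in your energy-Lipschitz inequality degenerates unless $r\gtrsim1/\sqrt m$, while disjointness forces $r\lesssim\zeta/\sqrt m$; thus $r$ is effectively pinned at $\asymp1/\sqrt m$ rather than freely optimizable, and you \emph{must} invoke the a~priori lower bound $\dist_W(\eta,\delta_{\mathbf p})\geq c_\eta/\sqrt m$ to absorb the residual $1/\sqrt m$ and $\log m/m$ terms. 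That lower bound is true (integrate the $1$-Lipschitz function $x\mapsto\dist(x,\{p_j\})$ and use that $\eta$ has bounded density), but it is an additional moving part; the paper's cutoff approach never needs it. Second, the mean-value step deserves more care than a parenthetical: the identity $\iint\log|x-y|\,d\nu_j\,d\nu_k=\log|p_j-p_k|$ is exact only when $\nu_j,\nu_k$ are uniform measures on \emph{Euclidean} disks in a common chart, whereas a geodesic ball on $X$ differs from a Euclidean disk by a boundary layer of relative width $O(r^2)$; you should either define $\nu_j$ on coordinate disks from a fixed finite atlas (and track the resulting Lipschitz correction, uniformly over the atlas) or quantify the geodesic-vs-Euclidean discrepancy explicitly. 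Neither issue is fatal, but both are places your sketch glosses over work the paper's argument structurally avoids.
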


\begin{proof}
	It is enough to assume $\dist_W(\eta,\delta_{\mathbf p})<1/2$, otherwise the inequality holds trivially. 	We fixed a constant $0<a<1/2$ whose value will be determined later. Observe that if $p_j\neq p_k$ for $j\neq k$, then
	$$\oE_m(\mathbf p)=\int_{(X\times X)\setminus \Delta}G(x,y)\,\dd \delta_{\mathbf p}(x)\dd \delta_{\mathbf p}(y).$$
	It follows that
	\begin{align}
	\Big|\oE_m(\mathbf p)-\int_X U'_\eta \,\dd \eta \Big|&=\Big|\int_{(X\times X)\setminus  \Delta}G(x,y)\,\dd \delta_{\mathbf p}(x)\dd \delta_{\mathbf p}(y)-\int_{X\times X}G(x,y)\,\dd \eta(x)\dd\eta(y)	\Big|\nonumber\\
	&\leq \Big| \int_{\dist(x,y)\geq a}G(x,y)\,\dd \delta_{\mathbf p}(x)\dd \delta_{\mathbf p}(y) -  \int_{\dist(x,y)\geq a }G(x,y)\,\dd \delta_{\mathbf p}(x)\dd\eta(y)  \Big| \label{term-1}\\
	&\quad+\Big|\int_{\dist(x,y)\geq a}G(x,y)\,\dd \delta_{\mathbf p}(x)\dd\eta(y)- \int_{\dist(x,y)\geq a}G(x,y)\,\dd \eta(x)\dd\eta(y) \Big|\label{term-2}\\
	&\quad+\Big| \int_{0<\dist(x,y)<a}G(x,y)\,\dd \delta_{\mathbf p}(x)\dd\delta_{\mathbf p}(y)  \Big|	 \label{term-3}\\
	&\quad+\Big| \int_{\dist(x,y)<a}G(x,y)\,\dd \eta(x)\dd\eta(y)  \Big|.	 \label{term-4}
	\end{align}

	Let $\|\cdot\|_{\Lip}$  denote the Lipschitz constant semi-norm.	By definition of Wasserstein distance,	\eqref{term-1} is equal to
	\begin{align*}
	\Big|\int_{\dist(x,y)\geq a}G(x,y)\,\dd \delta_{\mathbf p}(x)\dd (\delta_{\mathbf p}-\eta)(y)\Big|\leq \dist_W(\eta,\delta_{\mathbf p}) \Big\|\int_{\dist(x,y)\geq a}G(x,y)\,\dd \delta_{\mathbf p}(x)\Big \|_{\Lip(y)} .
	\end{align*}
	Using Lemma \ref{l:Green} that $G(x,y)=\log\dist(x,y)+\varrho(x,y)$, it is not hard to see  
	the second factor is	$\lesssim  a^{-1}$, and thus, \eqref{term-1} is $\lesssim a^{-1} \dist_W(\eta,\delta_{\mathbf p})$.
	Similarly, \eqref{term-2} is bounded by
	$$\dist_W(\eta,\delta_{\mathbf p})  \Big\|\int_{\dist(x,y)\geq a }G(x,y)\,\dd \eta(y)\Big \|_{\Lip(x)}\lesssim a^{-1} \dist_W(\eta,\delta_{\mathbf p}). $$
	
	Now we estimate \eqref{term-3}. Since  $p_j$ and $p_k$ are separated by $\zeta/\sqrt m$ if $j\neq k$,   for every fixed $p_j$ and $t\in\N$, then number of $p_k$'s with $$t/\sqrt m<\dist(p_j,p_k)\leq (t+1)/\sqrt m$$ is bounded by $C_\zeta t$ for some constant $C_\zeta >0$ independent of $m$. Therefore, \eqref{term-3} is equal to
	$$\Big|{1\over m^2}\sum_{0<\dist(p_j,p_k)<a} G(p_j,p_k)\Big|\leq  {1\over m^2}\sum_{j=1}^m  \sum_{t=1}^{\lfloor a\sqrt m \rfloor+1} C_\zeta t \Big( \big|\log(t/\sqrt m)\big| +\max | \varrho|\Big). $$
	Recall the Stirling's formula
	$\sum_{t=1}^n t\log t={n^2\over 2}\log n-{n^2\over 4}+{n\over 2}\log n +{1\over 12}\log n +O(1).$
	So by a direct computation, \eqref{term-3} is $\lesssim C_\zeta a^2 |\log a| \leq C_\zeta a$.
	
	For \eqref{term-4}, since $\eta$ is smooth, $G(x,y)=\log\dist(x,y)+\varrho(x,y)$ is integrable near the diagonal, and \eqref{term-4} is bounded by
	\begin{align*}
	&\int_{y\in X}\int_{\dist(x,y)<a} \Big( \big|\log\dist(x,y)\big| +O(1) \Big) \,\dd\eta(x)\dd\eta(y)\\
	&\leq  \int_{y\in X} \Big(C_\eta 	\int_{|z|<a} \big|\log |z| \big|\, i\dd z\wedge \dd\overline z+O(a^2)\Big)	\dd\eta(y)\\
	&\lesssim 	C_\eta (a^2|\log a|+a^2)\lesssim C_\eta a.
	\end{align*}

	Finally, we take $a:=\sqrt{\dist_W(\eta,\delta_{\mathbf p})}$ and finish the proof of the lemma.
\end{proof}

To achieve the smoothness condition of $\eta$ in last lemma, we need to regularize  $\oI_{\omega,D}$.

\begin{lemma}\label{lem-separate-measure}
Let $\sigma_j,\eta_j$ be two sequences  in $\cM(X\setminus D)$ converging weakly to $\sigma,\eta$, such that $\lim_{j\to \infty}\oI_{\omega, D}(\sigma_j)=\oI_{\omega, D}(\sigma),\lim_{j\to \infty}\oI_{\omega, D}(\eta_j)=\oI_{\omega, D}(\eta)$ and $\oI_{\omega, D}(\sigma),\oI_{\omega, D}(\eta)$ both are finite. Then for each $0\leq t \leq 1$, $$\lim_{j\to \infty} \oI_{\omega, D} \big( t\sigma_j +(1-t)\eta_j \big)= \oI_{\omega, D} \big( t\sigma +(1-t)\eta \big).$$
\end{lemma}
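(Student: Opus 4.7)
The plan is to reduce everything to the convergence of the cross-energy. Set $E(\mu,\nu):=\int_X U'_\mu\,\dd\nu$; this is a symmetric bilinear form by Lemma \ref{commut-potential}. Using \eqref{second-formula-I} and bilinearity,
\[
\oI_{\omega,D}\big(t\sigma_j+(1-t)\eta_j\big)=-t^2E(\sigma_j,\sigma_j)-2t(1-t)E(\sigma_j,\eta_j)-(1-t)^2E(\eta_j,\eta_j)+2\max U'_{t\sigma_j+(1-t)\eta_j},
\]
and analogously for $\sigma,\eta$. The continuity of $\mu\mapsto\max U'_\mu$ from Lemma \ref{l:convex} takes care of the $\max$ term, and combined with the hypothesis $\oI_{\omega,D}(\sigma_j)\to\oI_{\omega,D}(\sigma)$ it forces $E(\sigma_j,\sigma_j)\to E(\sigma,\sigma)$ (and analogously for $\eta$). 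So the whole statement reduces to proving $a_j:=E(\sigma_j,\eta_j)\to a:=E(\sigma,\eta)$.

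The upper bound $\limsup_j a_j\le a$ is immediate: by Lemma \ref{l:Green}, $G(x,y)=\log\dist(x,y)+\varrho(x,y)$ is upper semicontinuous and bounded above on $X\times X$, and $\sigma_j\otimes\eta_j\to\sigma\otimes\eta$ weakly, so the Portmanteau inequality for usc functions bounded above yields $\limsup\int G\,\dd(\sigma_j\otimes\eta_j)\le \int G\,\dd(\sigma\otimes\eta)$. Note that $a$ and each $a_j$ are finite, since $2E(\mu,\nu)\ge E(\mu,\mu)+E(\nu,\nu)$ (a consequence of $E(\mu-\nu,\mu-\nu)\le 0$ proved via Stokes) gives a finite lower bound.

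For the matching lower bound I would pass to the signed zero-mass measure $\mu_j:=\sigma_j-\eta_j$ with weak limit $\mu:=\sigma-\eta$. The identity $2a_j=E(\sigma_j,\sigma_j)+E(\eta_j,\eta_j)-E(\mu_j,\mu_j)$, combined with convergence of the self-energies, reduces the task to showing $\limsup_j E(\mu_j,\mu_j)\le E(\mu,\mu)$. Since $\mu_j$ has total mass zero the potential $U'_{\mu_j}$ satisfies $\ddc U'_{\mu_j}=\mu_j$, and Stokes' formula gives
\[
-E(\mu_j,\mu_j)=\int_X \dd U'_{\mu_j}\wedge \dc U'_{\mu_j}\ge 0,
\]
i.e.\ $-E(\mu_j,\mu_j)$ is (up to a positive constant) the squared Dirichlet seminorm of $U'_{\mu_j}$; this seminorm is uniformly bounded in $j$ because $a_j$ is bounded.

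Combined with the normalization $\int_X U'_{\mu_j}\,\omega=0$ and the Poincaré inequality on the compact Riemann surface $X$, this produces a uniform $W^{1,2}$-bound on $\{U'_{\mu_j}\}$. Along any subsequence I then extract a further subsequence converging weakly in $W^{1,2}$ and strongly in $L^2$ to some $f\in W^{1,2}$; passing $\ddc U'_{\mu_j}=\mu_j$ to the distributional limit gives $\ddc f=\mu$, and $\int f\,\omega=0$ by $L^2$ convergence, so $f=U'_\mu$. Lower semicontinuity of the Dirichlet seminorm under weak $W^{1,2}$-convergence then yields $-E(\mu,\mu)\le \liminf(-E(\mu_j,\mu_j))$ along that sub-subsequence, and a standard subsequence argument promotes this to $\limsup E(\mu_j,\mu_j)\le E(\mu,\mu)$ for the full sequence. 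Combined with the earlier $\limsup a_j\le a$, this forces $a_j\to a$ and closes the proof. The main technical hurdle is precisely this last step: upgrading weak measure convergence plus uniform finite-energy control into genuine $W^{1,2}$-weak convergence of the potentials, so that the Dirichlet form's lower semicontinuity becomes applicable.
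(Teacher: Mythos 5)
Your proof is correct but takes a genuinely different route from the paper in the key step. Both proofs reduce the statement to the convergence of the cross-energy $\int_X U'_{\sigma_j}\,\dd\eta_j\to\int_X U'_\sigma\,\dd\eta$ after expanding bilinearly and invoking the continuity of $\mu\mapsto\max U'_\mu$. The paper then splits the error into $\big|\int U'_{\sigma_j}\,\dd(\eta_j-\eta)\big|+\big|\int(U'_{\sigma_j}-U'_\sigma)\,\dd\eta\big|$, applies Cauchy--Schwarz for the Dirichlet form to bound each summand by a (uniformly bounded) factor times $\big|\int(U'_{\eta_j}-U'_\eta)\,\dd(\eta_j-\eta)\big|^{1/2}$ (resp.\ the analogous $\sigma$-term), and then asserts that this second factor tends to zero as a consequence of the hypotheses; unpacked, that assertion amounts to the same Portmanteau-plus-negativity-of-energy argument you prove in detail. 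You instead organize it as a separate upper bound $\limsup_j\int_X U'_{\sigma_j}\,\dd\eta_j\le \int_X U'_\sigma\,\dd\eta$ via Portmanteau for the usc kernel $G$ on the product measures, and a matching lower bound obtained by passing to $\mu_j=\sigma_j-\eta_j$, establishing a uniform $W^{1,2}$-bound on the potentials $U'_{\mu_j}$ (from the uniform energy bound plus Poincar\'e), extracting a $W^{1,2}$-weak, $L^2$-strong limit via Rellich, identifying it with $U'_\mu$ through the equation $\ddc U'_{\mu_j}=\mu_j$ and the normalization $\int U'_{\mu_j}\,\omega=0$, and finally invoking the lower semicontinuity of the Dirichlet seminorm. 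Your route is more functional-analytic and somewhat heavier in machinery, but it is fully self-contained and makes explicit the lower-semicontinuity content that the paper leaves implicit behind its ``tends to $0$ because $\oI_{\omega,D}(\eta_j)\to\oI_{\omega,D}(\eta)<\infty$'' remark; the paper's route is shorter but assumes the reader supplies the energy-convergence argument.
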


\begin{proof}
Put $\mu_j^t:= t\sigma_j +(1-t)\eta_j$ and $\mu^t := t\sigma+(1-t)\eta$.
 Since $\mu\mapsto \max U'_\mu$ is continuous by Lemma \ref{l:convex}, using the formula \eqref{second-formula-I}, we only need to show $\lim_{j\to \infty}\int_X U'_{\mu_j^t} \,\dd \mu_j^t=\int_X U'_{\mu^t} \,\dd \mu^t$.
Observe that  $U'_{\mu_j^t} = tU'_{\sigma_j} +(1-t) U'_{\eta_j}$ and $U'_{\mu^t} = tU'_{\sigma} +(1-t) U'_{\eta}$.  By Lemma \ref{commut-potential}, we have
	 \begin{align*}
	\int_X U'_{\mu_j^t} \,\dd\mu_j^t  &=-\int_X \big(tU'_{\sigma_j} +(1-t) U'_{\eta_j}\big) \,\dd\big(t\sigma_j +(1-t)\eta_j\big)\\
	&=  -t^2\int_X U'_{\sigma_j} \,\dd\sigma_j-2(1-t)t\int_X U'_{\sigma_j}\,\dd\eta_j -(1-t)^2\int_X U'_{\eta_j}\,\dd\eta_j .
	\end{align*}
By  the assumptions $\lim_{j\to \infty}\oI_{\omega, D}(\sigma_j)=\oI_{\omega, D}(\sigma),\lim_{j\to \infty}\oI_{\omega, D}(\eta_j)=\oI_{\omega, D}(\eta)$, we have $$ \lim_{j\to \infty} \int_X U'_{\sigma_j} \,\dd\sigma_j=\int_X U'_{\sigma} \,\dd\sigma,\quad \lim_{j\to \infty}U'_{\eta_j} \,\dd\eta_j=\int_X U'_{\eta} \,\dd\eta.$$

It remains to deal with the term $\int_X U'_{\sigma_j}\,\dd\eta_j$.  By Stoke's formula,
\begin{align}
\Big|\int_X U'_{\sigma_j}\,\dd\eta_j  -\int_X U'_{\sigma}\,\dd\eta \Big| &\leq \Big| \int_X U'_{\sigma_j}\,\dd\eta_j  -\int_X U'_{\sigma_j}\,\dd\eta   \Big|+\Big| \int_X U'_{\sigma_j}\,\dd\eta  -\int_X U'_{\sigma}\,\dd\eta   \Big|   \nonumber\\
&=\Big|   \int_X U'_{\sigma_j}\,\ddc ( U'_{\eta_j}-U'_\eta)\Big|+\Big|   \int_X (U'_{\sigma_j}-U'_\sigma)\,\ddc U'_\eta\Big|   \nonumber\\
&=\Big|   \int_X  \dd U'_{\sigma_j}\wedge\dc ( U'_{\eta_j}-U'_\eta)\Big|+\Big|   \int_X \dd (U'_{\sigma_j}-U'_\sigma)\wedge\dc U'_\eta\Big|. \label{sigma-eta}
\end{align}
 Using Cauchy-Schwarz inequality, the first term of \eqref{sigma-eta} is bounded by
 \begin{align*} \Big| \int_X  \dd U'_{\sigma_j} \wedge \dc U'_{\sigma_j}  \Big|   ^{1/2}   &\Big|  \int_X \dd ( U'_{\eta_j}-U'_\eta) \wedge \dc( U'_{\eta_j}-U'_\eta)\Big|^{1/2} \\
 &=\Big| \int_X   U'_{\sigma_j} \,\dd \sigma_j  \Big|    ^{1/2}   \Big|  \int_X ( U'_{\eta_j}-U'_\eta) \, \dd( \eta_j-\eta)\Big|^{1/2},
 \end{align*}
 which tends to $0$ because $\lim_{j\to \infty}\oI_{\omega, D}(\sigma_j)=\oI_{\omega, D}(\sigma)<\infty,\lim_{j\to \infty}\oI_{\omega, D}(\eta_j)=\oI_{\omega, D}(\eta)<\infty$. Similarly, the second term of \eqref{sigma-eta} also tends to $0$. This ends the proof.
\end{proof}

\begin{proposition} \label{l:reg-I}
	For any probability measure $\sigma\in \cM(X\setminus D)$ with $\oI_{\omega,D}(\sigma)$ finite. 
	There exists a sequence of smooth probability measures $\sigma_j\in \cM (X\setminus D)$ converging weakly to $\sigma$ such that 
	$$\lim_{j\to\infty}\oI_{\omega,D}(\sigma_j) = \oI_{\omega,D}(\sigma).$$
\end{proposition}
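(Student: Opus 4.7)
The plan is to construct $\sigma_j$ by a two-step regularization: first push $\sigma$ slightly outward from $D$ via a diffeomorphism $\Phi_j\colon X\to X$ close to the identity, and then mollify the pushforward by a smooth nonnegative kernel $K_{t_j}$ of small bandwidth $t_j$, assembled in local coordinate charts through a partition of unity. Exploiting the piecewise $\Cc^{1+\ep}$ structure of $bD$, one builds $\Phi_j$ that is bi-Lipschitz with constants $1\pm\vep_j$ with $\vep_j\to 0$, equals the identity outside a shrinking neighborhood of $bD$, and maps $X\setminus D$ into $X\setminus\overline{D^{(j)}}$, where $D^{(j)}$ is a tubular neighborhood of $D$ of width $c_j>0$. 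Taking $t_j\ll c_j$ then guarantees that $\sigma_j:=K_{t_j}*(\Phi_j)_*\sigma$ is a smooth probability measure supported in $X\setminus D$.

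Weak convergence $\sigma_j\to\sigma$ in $\cM(X\setminus D)$ is immediate from $\Phi_j\to\id$ uniformly and the shrinking of the kernel bandwidth. Combined with Lemma \ref{l:convex} this yields $\max U'_{\sigma_j}\to\max U'_\sigma$, so in view of formula \eqref{second-formula-I} the conclusion reduces to
\begin{equation*}
\iint G(x,y)\,\dd\sigma_j(x)\dd\sigma_j(y)\longrightarrow\iint G(x,y)\,\dd\sigma(x)\dd\sigma(y).
\end{equation*}
The direction $\limsup\le$ is nothing but the lower semicontinuity of $\oI_{\omega,D}$ already recorded in Lemma \ref{l:convex} (alternatively, apply Portmanteau to the usc integrand $G$, which is bounded above by Lemma \ref{l:Green}).

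The heart of the proof is the reverse inequality. Writing $\tilde\sigma_j:=(\Phi_j)_*\sigma$ and invoking the decomposition $G(x,y)=\log\dist(x,y)+\varrho(x,y)$ with $\varrho$ Lipschitz (Lemma \ref{l:Green}), the bi-Lipschitz property of $\Phi_j$ with constants $1\pm\vep_j$ yields the pointwise bound $|G(\Phi_j x,\Phi_j y)-G(x,y)|=O(\vep_j)$ for every $x\neq y$. Since $\oI_{\omega,D}(\sigma)<\infty$ forces $\sigma$ to have no atoms, the diagonal is $(\sigma\otimes\sigma)$-negligible, and integration gives $\iint G\,\dd\tilde\sigma_j\dd\tilde\sigma_j\to\iint G\,\dd\sigma\dd\sigma$. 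On the other hand, for each fixed $x$, the function $y\mapsto G(x,y)$ is subharmonic up to a smooth correction (by Lemma \ref{l:Green} one has $\ddc_y G(x,\cdot)=\delta_x-\omega$), so the sub-mean-value property applied to the nonnegative kernel $K_{t_j}$ in each variable yields
\begin{equation*}
\iint G\,\dd\sigma_j\dd\sigma_j=\iint(K_{t_j}\otimes K_{t_j}*G)(x,y)\,\dd\tilde\sigma_j(x)\dd\tilde\sigma_j(y)\ge\iint G\,\dd\tilde\sigma_j\dd\tilde\sigma_j-C t_j.
\end{equation*}
Letting $j\to\infty$ with $t_j\to 0$ supplies the $\liminf\ge$ direction and completes the convergence.

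The main technical obstacle is the construction of $\Phi_j$ near the finitely many singular points of $bD$, where the outward normal to $bD$ degenerates. There one has to exploit the piecewise $\Cc^{1+\ep}$ geometry to build a local radially-directed displacement that patches smoothly, via a partition of unity, to the outward-normal flow along the regular part of $bD$, all while keeping the bi-Lipschitz constants uniform and tending to $1$. Once this geometric construction is in hand, the Green-function estimates above are standard and the argument closes.
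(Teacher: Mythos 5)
Your overall strategy (push the measure off $\overline D$ by a near-identity map, then mollify, then compare Green energies) is in the same spirit as the paper's Cases 1 and 2, and your Green-function estimates are essentially correct: the paper also reduces the regular part to convolution plus a citation of \cite[Lemma 2.2]{Zei-esaim}, and your pointwise bound $|G(\Phi_j x,\Phi_j y)-G(x,y)|=O(\vep_j)$ together with the sub-mean-value argument is a reasonable way to supply the missing lower bound on the energies. You are also right that finiteness of $\oI_{\omega,D}(\sigma)$ rules out atoms, which is needed for both routes.

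Where you and the paper diverge is in the treatment of the singular points of $bD$, and this is exactly where your plan is incomplete. You propose to absorb the singular points into a single bi-Lipschitz displacement $\Phi_j$ with constants $1\pm\vep_j$ that pushes all of $X\setminus D$ uniformly away from $\overline D$; you acknowledge this as ``the main technical obstacle'' and leave it undone. It is, in fact, considerably more delicate than your sketch suggests: near an inward cusp of $D$ the spike of $X\setminus\overline D$ has width $\sim x^{1+\ep}$, so to clear a $c_j$-tube around $D$ the tip must be translated by a distance $\sim c_j^{1/(1+\ep)}\gg c_j$, and the taper of that translation has to be spread over a region of width $\gtrsim c_j^{1/(1+\ep)}/\vep_j$ to keep the Jacobian within $\vep_j$ of the identity; compatibility of this axial translation with the normal push along the nearby regular arcs, plus the quantitative statement that the image lies strictly in $X\setminus\overline{D^{(j)}}$, are all left to the reader. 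In particular, your stated requirement that $\Phi_j=\id$ outside a shrinking neighborhood of $bD$ is too strong at such cusps — one only gets $\sup|\Phi_j-\id|\to 0$ — and the paper's hypothesis is piecewise $\Cc^{1+\ep}$, not $\Cc^2$, so you cannot fall back on curvature bounds.

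The paper avoids this geometry entirely. Its Case 3 does not push the mass near a singular point $y$; it restricts $\sigma$ to $\B(y,\delta)^c$, renormalizes to $\sigma_\delta := \sigma|_{\B(y,\delta)^c}/\sigma(\B(y,\delta)^c)$, and shows via Lemma \ref{lem-separate-measure} and the integrability of $U'_\sigma$ (no atoms) that $\oI_{\omega,D}(\sigma_\delta)\to\oI_{\omega,D}(\sigma)$ as $\delta\to 0$; then it applies the easy Cases 1--2 to $\sigma_\delta$, which by construction charges no singular point. This truncation trick is more elementary and robust, and it is the piece of the argument you are missing. If you adopt it, the rest of your plan reduces to the regular-boundary situation, where your diffeomorphism-plus-mollification scheme and the paper's normal-translation scheme are both sound and essentially interchangeable.
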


\begin{proof}
	\noindent \textbf{Case 1:} $\supp(\sigma)\subset X\setminus \overline D$. Using partition of unity and convolution, we can find a sequence of smooth probability measures $\sigma_j$ converging weakly to $\sigma$ with $\supp(\sigma_j)\subset X\setminus \overline D$. Moreover, since $G(x,y)=\log\dist(x,y)+\varrho(x,y)$, the assertion $\lim_{j\to\infty}\oI_{\omega,D}(\sigma_j) = \oI_{\omega,D}(\sigma)$ is guaranteed by Lemma \ref{lem-separate-measure} above and \cite[Lemma 2.2]{Zei-esaim}.

	\smallskip
	
	\noindent \textbf{Case 2:} $\supp(\sigma)\cap  b D \neq \varnothing$. By Lemma \ref{lem-separate-measure} and using \textbf{Case 1}, we can assume that $\sigma$ is supported on a small neighborhood of $ b D $. Recall that $ b D $ is piecewise $\Cc^{1+\ep}$. If $\sing(D)\cap \supp(\sigma) =\varnothing$, for every $x\in \supp(\sigma)$, we take a small open neighborhood of $x$ and consider the $\delta$-translation  with the inner normal direction of $ b D $ near $x$. By partition of unity, we obtain a new probability measure $\sigma_\delta$ with compact support in $X\setminus \overline D$.  It is not hard to see that $\sigma_\delta\to  \sigma$ and $\oI_{\omega,D}(\sigma_\delta)\to \oI_{\omega,D}(\sigma)$ as $\delta\to 0$.
	
	\smallskip
	
	It only remains to consider the case that $\supp(\sigma)$ contains some  $y\in \sing(D)$. By Lemma \ref{lem-separate-measure} again, we may assume that $y$ is the only singular point of $D$  in $\supp(\sigma)$. Then we consider the decomposition
	$\sigma=\mu_\delta^1+\mu_\delta^2:=\sigma |_{\B(y,\delta)}+ \sigma | _{\B(y,\delta)^c}$. 
	We have
	\begin{align*}
	\int_X U'_{\sigma} \,\dd\sigma =\int_X \big( U'_{\mu_\delta^1} +U'_{\mu_\delta^2} \big)\,\dd(\mu_\delta^1+\mu_\delta^2)=\int_X U'_{\sigma} \,\dd\mu_\delta^1+\int_X U'_{\mu_\delta^1} \,\dd\mu_\delta^2+\int_X U'_{\mu_\delta^2} \,\dd\mu_\delta^2.
	\end{align*}
	The assumption that $\oI_{\omega, D}$ is finite gives $\int_X  U'_{\sigma} \,\dd\sigma>-\infty$ and $\sigma$ does not have point mass. Thus,  by definition of integral, the integral $\int_X U'_{\sigma} \,\dd\mu_\delta^1$ tends to $0$ as $\delta\to 0$. By the same reason, 
	$\lim_{\delta\to 0} \int_X U'_{\mu_\delta^1} \,\dd\mu_\delta^2=\lim_{\delta\to 0} \int_X U'_{\mu_\delta^2} \,\dd\mu_\delta^1 =0$.
	Therefore, we conclude that $$\lim_{\delta\to 0}\int_X U'_{\mu_\delta^2} \,\dd\mu_\delta^2=\int_X U'_{\sigma} \,\dd\sigma.$$
	
	Define $\sigma_\delta:=\mu_\delta^2 /  \sigma( \B(y,\delta)^c)$. It is a probability measure supported outside $\B(y,\delta)$. Moreover, using that $\lim_{\delta\to 0} \sigma( \B(y,\delta)^c)=1$, we have
	$$\lim_{\delta\to 0}\int_X U'_{\sigma_\delta} \,\dd\sigma_\delta =\lim_{\delta\to 0} {1\over  \sigma( \B(y,\delta)^c)^2 }  \int_X U'_{\mu_\delta^2} \,\dd\mu_\delta^2 = \int_X U'_{\sigma} \,\dd\sigma.$$
	On the other hand, recall from Lemma \ref{l:convex} that $\mu \mapsto \max U'_\mu$ is continuous.
	Therefore, $\lim_{\delta\to 0} \max U'_{\sigma_\delta} =\max U'_\sigma$ and thus,   $\lim_{\delta\to 0}\oI_{\omega,D}(\sigma_\delta)=\oI_{\omega,D}(\sigma)$ by using the formula \eqref{second-formula-I}.
	Since $\sigma_\delta$ is a probability measure with support that does not  intersect the singular point of $ b D $, we can use the earlier cases to finish the proof.
\end{proof}


\medskip

\section{Large deviations and proof of Theorem \ref{t:main-2}} \label{s:proof-main-2}

 In this section, we will prove Theorem \ref{t:main-2}, by using a large deviation principle relating to the random section non-vanishing on $D$.
Let $m=n-g$ and we first assume that $\deg (\oL)=1$ and $g\geq 1$ for simplicity at this moment.

\medskip

For $s$  a holomorphic section of $\oL^n$ with $(s)=\mathbf p +\mathbf q$, the empirical measure $\llbracket Z_s \rrbracket$ is the probability measure $\delta_{\mathbf p+\mathbf q}$. So the Fubini-Study volume form  $V^{\FS}_n$ on  $\P H^0(X,\oL^n)$ induces a probability measure, denoted by $\bP_n$, on the (Polish) space $\mathcal M (X)$. For a measurable set $A\subset \mathcal M (X)$, if we set
\begin{equation}\label{defn-Ap}
A_\P:=  \big\{[s]\in\P H^0(X,\oL^n):\, \llbracket Z_s \rrbracket \in A      \big\},
\end{equation}
then by definition,
$$\bP_n (A)=\int_{A_\P} \dd V^{\FS}_n.    $$
Similarly, the probability measure $V^{\FS}_{n,D}$ on $\P H^0(X,\oL^n)_D$ also induces a probability measure, denoted by $\bP_{n,D}$, on $\mathcal M (X\setminus D)$.
We have the following large deviation principle for $\bP_{n,D}$.

\begin{theorem}\label{thm-large-devia}
	For any $\sigma\in \mathcal M (X\setminus D)$, we have 
	$$\lim_{r\to 0} \lim_{n\to \infty} {1\over n^2} \log \bP_{n,D} \big(\B_W^D(\sigma,r)\big)=      \min \oI_{\omega,D} -\oI_{\omega,D}(\sigma).$$ 
\end{theorem}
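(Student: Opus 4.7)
My plan is to prove the theorem by first establishing, for every $\sigma \in \cM(X\setminus D)$ with $\oI_{\omega,D}(\sigma)<\infty$, the ``numerator'' asymptotic
$$\lim_{r\to 0}\lim_{n\to\infty}\frac{1}{n^2}\log V_n^{\FS}\bigl(\bigl\{[s]\in\P H^0(X,\oL^n)_D:\llbracket Z_s\rrbracket\in\B_W^D(\sigma,r)\bigr\}\bigr)=-\oI_{\omega,D}(\sigma),$$
and then dividing by the denominator. Applied at $\sigma=\nu$ and combined with a finite covering of the compact space $\cM(X\setminus D)$, this yields $V_n^{\FS}(\P H^0(X,\oL^n)_D)=e^{n^2(-\min\oI_{\omega,D}+o(1))}$, giving the stated LDP. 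By the commutative diagram \eqref{diagram-ob} and $\binom{n}{m}=e^{o(n^2)}$, the numerator equals $\binom{n}{m}^{-1}\int_{\oQ_m\cap\{\delta_{\mathbf p+\oB_m(\mathbf p)}\in\B_W^D(\sigma,r)\}}\Ac_m^*(V_n^{\FS})$. Proposition \ref{prop-formula}, together with Lemma \ref{lem-f-m-p}, Lemma \ref{lem-xi-bound} and the boundedness of $\rho$ and $\log\|\mathbf 1_{\Oc(p_k)}\|$, shows that the density of $\Ac_m^*(V_n^{\FS})$ on the separated locus has the form $\exp[n^2(\oE_m(\mathbf p)-2\max U'_{\delta_{\mathbf p}})+o(n^2)]$, which in view of \eqref{second-formula-I} is the discrete analogue of $\exp[-n^2\oI_{\omega,D}(\delta_{\mathbf p})]$. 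The discrepancy $\dist_W(\delta_{\mathbf p+\oB_m(\mathbf p)},\delta_{\mathbf p})=O(g/n)$ is absorbed into the $o(n^2)$ error.

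For the lower bound I first regularize via Proposition \ref{l:reg-I} to replace $\sigma$ by a smooth $\eta\in\cM(X\setminus D)$ with $\dist_W(\eta,\sigma)<r/4$ and $\oI_{\omega,D}(\eta)\leq\oI_{\omega,D}(\sigma)+\epsilon$. Proposition \ref{bound-Sm} then produces a region in $\oS_m^\zeta$ of $X^{(m)}$-volume at least $m^{-5m}=e^{o(n^2)}$ on which $\delta_{\mathbf p}\in\B_W^D(\eta,r/2)$; there the separations $\dist(p_j,p_k)\geq\zeta/\sqrt m$ and $\dist(\mathbf q,\W)\geq\beta$ force the density of $\kappa_n$ to be at least $e^{-O(m\log m)}=e^{-o(n^2)}$. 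Lemma \ref{sigma-delta-potential} and continuity of $\mu\mapsto\max U'_\mu$ (Lemma \ref{l:convex}) then yield $\oE_m(\mathbf p)\geq\int U'_\eta\,d\eta-o(1)$ and $\max U'_{\delta_{\mathbf p}}\leq\max U'_\eta+o(1)$, so the integrand is bounded below by $\exp[n^2(-\oI_{\omega,D}(\eta)+o(1))]$; sending $\epsilon\to 0$ closes the lower bound.

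For the upper bound the integrand on $\oQ_m$ is majorised by $\exp[n^2\sup_{\delta_{\mathbf p}\in\B_W^D(\sigma,r)}(\oE_m(\mathbf p)-2\oF_m(\mathbf p))]\cdot e^{o(n^2)}$ since the density of $\kappa_n$ is globally at most $e^{O(m)}$. The $\oF_m$ factor causes no difficulty by continuity of $\mu\mapsto\max U'_\mu$. For $\oE_m$, since $\oI_{\omega,D}(\delta_{\mathbf p})=+\infty$ forbids a direct lower semicontinuity argument, I truncate: set $G_M:=\max(G,-M)$, which is continuous on $X\times X$ and Lipschitz with constant $O(e^M)$, and observe
$$\oE_m(\mathbf p)\leq\frac{1}{m^2}\sum_{j\neq k}G_M(p_j,p_k)=\int_{X\times X}G_M\,d\delta_{\mathbf p}^{\otimes 2}+M/m.$$
The Wasserstein bound $\dist_W(\delta_{\mathbf p},\sigma)<r$ gives $|\int G_M\,d\delta_{\mathbf p}^{\otimes 2}-\int G_M\,d\sigma^{\otimes 2}|\leq Ce^M r$, while monotone convergence yields $\int G_M\,d\sigma^{\otimes 2}\downarrow\int U'_\sigma\,d\sigma$ as $M\to\infty$; passing to the limits in the order $n\to\infty$, $r\to 0$, $M\to\infty$ gives $\limsup(\oE_m-2\oF_m)\leq\int U'_\sigma\,d\sigma-2\max U'_\sigma=-\oI_{\omega,D}(\sigma)$ by \eqref{second-formula-I}.

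The main technical obstacle is the lower bound, and specifically the need to produce an open region of well-separated configurations $\mathbf p$ whose Abel-Jacobi complement $\oB_m(\mathbf p)$ simultaneously lies in $X\setminus D$ and stays bounded away from $\W$ and $\pi_g(\mathbf R_g)$. This is precisely the content of Proposition \ref{bound-Sm}, whose nontrivial proof (already carried out in Section \ref{s:Wasserstein}) is the heart of the matter; once it is granted, the LDP follows from fairly standard energy comparisons together with the regularization results of Section \ref{sec:regularization}. A secondary point worth noting is that $\oE_m=-\infty$ on the ramification locus $\pi_m(\mathbf R_m)$, but the integrand vanishes there and this set has measure zero, so it contributes nothing in either bound.
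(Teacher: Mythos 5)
Your proposal follows essentially the same route as the paper: pull back $V_n^{\FS}$ via $\Ac_m$ using Proposition~\ref{prop-formula}, control the integrand from above via the truncation $G^M$ and monotone convergence, and from below via regularization (Proposition~\ref{l:reg-I}), the separated-configuration volume bound (Proposition~\ref{bound-Sm}), and the discrete energy comparison (Lemma~\ref{sigma-delta-potential}), then identify the normalization through the hole-probability asymptotics. The only cosmetic difference is that you invoke a finite covering of $\cM(X\setminus D)$ to estimate the denominator $V_n^{\FS}\big(\P H^0(X,\oL^n)_D\big)$, whereas the paper's Lemma~\ref{lem-cn-dn} reaches the same conclusion directly by substituting $A=\cM(X\setminus D)$ in the upper chain~\eqref{pnk-ineq-upper} and smooth $\sigma_j\to\nu$ in the lower chain~\eqref{pnk-ineq-lower}, using only that $\bP_{n,D}$ has total mass $1$ and is bounded by $1$; both resolutions require the same Dini-type limit in $M$ on the compact space $\cM(X\setminus D)$, so this is a packaging difference rather than a different argument.
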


When $D$ is empty, the above result is a special case in  \cite{zel-imrn}. Instead of working directly on $\P H^0(X,\oL^n)$,  Zelditch considered the space $X^{(m)}$ there. As in Subsection \ref{forumla-Ac}, he obtained an explicit formula for $\Ac_m^*(V_n^{\FS})$ outside the analytic set $\bH_m$. Moreover, by Lemma \ref{lem-analytic-0} and the commutative digram \eqref{diagram-ob}, one  only needs to consider the integral of $\Ac_m^*(V_n^{\FS})$ outside the analytic set. More precisely, for a measurable subset $A$ of  $\mathcal M(X)$,
$$\bP_n(A)=\int_{A_\P}\dd V_n^{\FS} =\binom {n}{m}^{-1} \int_{\Ac_m^{-1}(A_\P)}  \Ac_m^*(V_n^{\FS})  $$
since  $\widehat\Ac_m$ is a ramified holomorphic covering of degree $\binom {n}{m}$.
So the Fubini-Study measure of any open ball in $\P H^0(X,\oL^n)$ can be computed in the complex manifold  $X^{(m)}$ using $\Ac_m^*(V_n^{\FS})$. But the problem is much more complicated for the probability measure  $\bP_{n,D}$.

\smallskip

Recall that $X$ is a compact Riemann surface of positive genus.
From our discussion in Subsection \ref{sec-divisor}, for every holomorphic section $s\in \P H^0(X,\oL^n)$, the zero set of $s$ consists of $n$ points $p_1,\dots,p_m,q_1,\dots,q_g$ counting with multiplicity. But for generic $s\in \P H^0(X,\oL^n)$,  $m$ points among the zeros of $s$ already determine the section $s$, which is exactly given by the map $\Ac_m$. Even though $p_1,\cdots, p_m$ are all belonging to $X\setminus D$, $q_k$ may be contained in $D$ for some $k$. Thus, usually we have
$$\int_{(X\setminus D)^{(m)}\setminus \bH_m} \Ac_m^*(V_n^{\FS}) >\binom {n}{m} \int_{\P H^0(X,\oL^n)_D} V^{\FS}_n. $$
So $\bP_{n,D} \big(\B_W^D(\sigma,r)\big)$ is hard to computed. But we will show that the difference of last two quantities is small  under $1/n^2 \log$ for $n$ large enough.

\smallskip

For the well-definess of the conditional probability measure $V^{\FS}_{n,D}$, we need to show that the measure $V^{\FS}_{n}$ on the conditional set is non-zero, i.e.,
$$V^{\FS}_{n} \big(\P H^0(X,\oL^n)_D\big)=\binom {n}{m}^{-1}\int_{\oQ_m}\Ac_m^*(V_n^{\FS})>0,$$
where $\oQ_m$ is the set defined in \eqref{defn-oR-m}. This is just a consequence of Proposition \ref{bound-Sm} for $m\geq m_0$. So for all $n\geq n_0:=m_0+g$, we put
$$K_n:=\int_{\oQ_m}\Ac_m^*(V_n^{\FS}) >0.$$
By Lemma \ref{lem-analytic-0} and the commutative digram \eqref{diagram-ob} again, for any measurable subset $A$ in $\mathcal M(X\setminus D)$, we have
\begin{equation}\label{conditional-formula}
\bP_{n,D}(A)=\int_{A_\P}\dd V_{n,D}^{\FS}  =\int_{\Ac_m^{-1}(A_\P)} {1\over K_n}\Ac_m^*(V_n^{\FS}),
\end{equation}

To prove Theorem \ref{thm-large-devia}, it is enough to show the following two inequalities.

\begin{equation}\label{large-devia-upper}
\limsup_{r\to 0} \limsup_{n\to \infty} {1\over n^2} \log \bP_{n,D} \big(\B_W^D(\sigma,r)\big)\leq      \min  \oI_{\omega,D} - \oI_{\omega,D}(\sigma);
\end{equation}
\begin{equation}\label{large-devia-lower}
\liminf_{r\to 0} \liminf_{n\to \infty} {1\over n^2} \log \bP_{n,D} \big(\B_W^D(\sigma,r)\big)\geq      \min  \oI_{\omega,D} - \oI_{\omega,D}(\sigma).
\end{equation}

\begin{proof}[Proof of  \eqref{large-devia-upper}]
		Let $\mathbf p=p_1+\cdots+p_m\in X^{(m)}\setminus \bH_m$ and  $\mathbf q=\oB_m(\mathbf p)$.
	The set $\Ac_m^{-1}\big(\B_W^D(\sigma,r)\big)$ consists of all the points $\mathbf p\in\oQ_m$ such that $\delta_{\mathbf p+\mathbf q}\in\B_W^D(\sigma,r)$. For every $r>0$. We have $\dist_W(\delta_{\mathbf p},\delta_{\mathbf p+\mathbf q})<r$ for $n$ large enough. In this case, $\dist_W(\delta_{\mathbf p},\sigma)< 2r$.
	Hence by \eqref{conditional-formula} and Proposition \ref{prop-formula}, we have 
	\begin{align*}
	\bP_{n,D} \big(\B_W^D(\sigma,r)\big)&\leq \int_{\mathbf p\in \oQ_m,\delta_{\mathbf p}\in\B_W^D(\sigma,2r) } {1\over K_n} \Ac_m^*(V_{n,D}^{\FS}) \\
	&=\int_{\mathbf p\in \oQ_m,\delta_{\mathbf p}\in\B_W^D(\sigma,2r) } {C_n \over K_n} \exp\Big[m^2\Big(\oE_m(\mathbf p)- {2(m+1)\over m} \oF_m(\mathbf p)  \Big) \Big] \kappa_n.
	\end{align*}
	
	For every $M>0$, we define the truncations
	$$G^M(x,y):=\max \big(G(x,y),-M\big)  \quad \text{and}\quad \oE^M(\mu):=\int_{X \times X}G^M(x,y) \,\dd \mu(x) \dd\mu(y).$$
	Observe that $G^M(x,y)$ is continuous from Lemma \ref{l:Green} and we have
	\begin{align*}
	\oE_m(\mathbf p)={1\over m^2} \sum_{j\neq k} G(p_j,p_k)&\leq {1\over m^2}\sum_{j\neq k} G^M(p_j,p_k)= {1\over m^2}\sum_{j, k} G^M(p_j,p_k) - {1\over m^2}\sum_{j=1}^m G^M(p_j,p_j)\\
	&\leq {1\over m^2}\sum_{j, k} G^M(p_j,p_k)+{M\over m}=\oE^M(\delta_{\mathbf p})+{M\over m}.
	\end{align*}
	Therefore, combining with Lemma \ref{lem-f-m-p}, yields
	$$\oE_m(\mathbf p)-{2(m+1)\over m} \oF_m(\mathbf p)   \leq  \oE^M(\delta_{\mathbf p})+{M\over m}- 2 \max U'_{\delta_{\mathbf p}}+{O (\log m) \over m} \quad\text{as}\quad m\to \infty.$$
	Thus, for every fixed $M$, 	${1\over n^2} \log \bP_{n,D} \big(\B_W^D(\sigma,r)\big)$ is 
	\begin{align}
	&\leq {1\over n^2} \log  \int_{\mathbf p\in \oQ_m,\delta_{\mathbf p}\in\B_W^D(\sigma,2r)}{C_n \over K_n} \exp\Big[m^2 \Big( \oE^M(\delta_{\mathbf p})+{M\over m}- 2 \max U'_{\delta_{\mathbf p}}+{O (\log m) \over m} \Big)\Big] \kappa_n \nonumber\\
	&\leq {1\over n^2}\log{C_n\over K_n} + \sup_{\delta_{\mathbf p}\in\B_W^D(\sigma,2r)}\Big( \oE^M(\delta_{\mathbf p})+{M\over m}- 2 \max U'_{\delta_{\mathbf p}}+{O (\log m) \over m} \Big)+ {1\over n^2}\log \int_{\oQ_m}\kappa_n  \nonumber\\
	&\leq{1\over n^2}\log{C_n\over K_n} + \sup_{\mu\in\B_W^D(\sigma,2r)}\Big( \oE^M(\mu)- 2 \max U'_{\mu}\Big)+{M\over m}+{O (\log m) \over m}+ {1\over n^2}\log \int_{\oQ_m}\kappa_n. \label{pnk-ineq-upper}
	\end{align}

	We need to compute the integral of $\kappa_n$ over $\oQ_m$. Firstly, we notice that 
	\begin{equation}\label{estiamte-exp-kappa}
	-4\sum_{k=1}^m \rho(p_k)-2(m+1)\sum_{j=1}^g \rho(q_j) =O(m)
	\end{equation}
	from the continuity of $\rho$, and
	$$ \prod_{k=1}^m\prod_{j=1}^g \norm{\mathbf 1_{\Oc(p_k)}(q_j)}^2_{\fh_{\omega,p_k}}=O(1)^{2mg}\prod_{k=1}^m\prod_{j=1}^g \dist(p_k,q_j)^2 \leq O(1)^{2mg} \diam(X)^{2mg}.$$	
	Together with Lemma \ref{lem-xi-bound}, yield
	\begin{align*}
	{1\over n^2}\log \int_{\oQ_m}\kappa_n \leq  {1\over n^2}\log \int_{X^{(m)}}\kappa_n  &\leq {O(1)\over n}+ {1\over n^2}\log \int_{X^{(m)}} i\dd z_1\wedge \dd\overline z_1 \wedge\cdots \wedge i\dd z_m\wedge \dd\overline z_m \\
	& = {O(1)\over n}+{1\over n^2}\log \big(O(1)^m\big)=  {O(1)\over n}.
	\end{align*}

	Thus, by taking $n\to \infty$, we deduce from \eqref{pnk-ineq-upper} that for every fixed $r$ and $M$,
	\begin{equation*} 
	\limsup_{n\to \infty} {1\over n^2} \log \bP_{n,D} \big(\B_W^D(\sigma,r)\big)  \leq \limsup_{n\to \infty}{1\over n^2}\log{C_n\over K_n}+   \sup_{\mu\in\B_W^D(\sigma,2r)}\Big( \oE^M(\mu)- 2 \max U'_{\mu}\Big).
	\end{equation*}
	
	From the continuity of $G^M(x,y)$, we see that $\oE^M(\mu)$ is continuous with respect to the weak topology of $\mathcal M (X\setminus D)$. Using the continuity of $\mu\mapsto\max U'_{\mu}$ from Lemma \ref{l:convex}, and taking $r\to \infty$, we conclude that
	$$\limsup_{r\to 0} \limsup_{n\to \infty} {1\over n^2} \log \bP_{n,D} \big(\B_W^D(\sigma,r)\big)  \leq \limsup_{n\to \infty}{1\over n^2}\log{C_n\over K_n}+   \oE^M(\sigma)- 2 \max U'_{\sigma}.$$
	The inequality holds for any $M$. By monotone convergence theorem, 
	$\oE^M(\sigma)\to \int_X  U'_\sigma \,\dd\sigma$ as $M\to\infty$. Using \eqref{second-formula-I}, we receive
	$$\limsup_{r\to 0} \limsup_{n\to \infty} {1\over n^2} \log \bP_{n,D} \big(\B_W^D(\sigma,r)\big)  \leq \limsup_{n\to \infty}{1\over n^2}\log{C_n\over K_n}-\oI_{\omega,D}(\sigma).$$
Thus, \eqref{large-devia-upper}  follows from Lemma  \ref{lem-cn-dn} below.
\end{proof}

\smallskip
 
\begin{proof}[Proof of \eqref{large-devia-lower}]
	According to Proposition \ref{l:reg-I}, we may assume $\sigma$ is smooth.
	For every fixed $r>0$, $\dist_W(\delta_{\mathbf p},\delta_{\mathbf p+\mathbf q})<r/2$ for $n$ large enough. Hence
	by \eqref{conditional-formula}  and Proposition \ref{prop-formula},
	\begin{align*}
	\bP_{n,D} \big(\B_W^D(\sigma,r)\big)
	&\geq \int_{\mathbf p\in \oQ_m,\delta_{\mathbf p}\in\B_W^D(\sigma,r/2) } {C_n \over K_n} \exp\Big[m^2\Big(\oE_m(\mathbf p)- {2(m+1)\over m} \oF_m(\mathbf p)  \Big) \Big] \kappa_n\\
	&\geq \int_{\mathbf p\in \oS_m^\zeta,\delta_{\mathbf p}\in\B_W^D(\sigma,r/2) } {C_n \over K_n} \exp\Big[m^2\Big(\oE_m(\mathbf p)- {2(m+1)\over m} \oF_m(\mathbf p)  \Big) \Big] \kappa_n,
	\end{align*}
	for $S_m^\zeta$ defined in \eqref{defn-oS-m} with $\zeta$ chosen according to Proposition \ref{bound-Sm} depending only on $\sigma$.
	Lemma \ref{lem-f-m-p}   gives that 
	$$ {2(m+1)\over m} \oF_m(\mathbf p)  \leq 2 \max U'_{\delta_{\mathbf p}}+{O (\log m) \over m}    \quad\text{as}\quad m\to \infty.$$
	Moreover, from the continuity of $\mu\mapsto\max  U'_\mu$ in Lemma \ref{l:convex}, we have for $\delta_{\mathbf p}\in \B_W^D (\sigma,r/2)$,  $$|\max U'_{\delta_{\mathbf p}} -\max U'_\mu|=o(1) \quad\text{as} \quad r\to 0.$$
Thus, using  \eqref{second-formula-I}  and Lemma \ref{sigma-delta-potential}, we get that for every $\mathbf p\in \oS_m^\zeta$ and $\delta_{\mathbf p}\in\B_W^D(\sigma,r/2)$,
	$$ \oE_m(\mathbf p)- {2(m+1)\over m} \oF_m(\mathbf p)  \geq -\oI_{\omega,D}(\sigma)-{O (\log m) \over m}-C_{\sigma,\zeta} \sqrt{r/2}-o_r(1). $$
	Therefore, ${1\over n^2} \log \bP_{n,D} \big(\B_W^D(\sigma,r)\big)$ is 
	\begin{align}
	&\geq {1\over n^2} \log \int_{\mathbf p\in \oS_m^\zeta,\delta_{\mathbf p}\in\B_W^D(\sigma,r/2) } {C_n \over K_n} \exp\Big[m^2\Big(-\oI_{\omega,D}(\sigma)-{O (\log m) \over m}-C_{\sigma,\zeta}\sqrt{r/2} -o_r(1)\Big) \Big] \kappa_n \nonumber	\\
	&\geq {1\over n^2}\log{C_n\over K_n}  -\oI_{\omega,D}(\sigma) -{O (\log m) \over m}-C_{\sigma,\zeta}\sqrt{r/2}-o_r(1)+{1\over n^2}\log \int_{\mathbf p\in \oS_m^\zeta,\delta_{\mathbf p}\in\B_W^D(\sigma,r/2) }\kappa_n. \label{pnk-ineq-lower}
	\end{align}
	
 By definition of $\oS_m^\zeta$,  we have for $\mathbf p \in \oS_m^\zeta$,
	$$ \prod_{k=1}^m\prod_{j=1}^g \norm{\mathbf 1_{\Oc(p_k)}(q_j)}^2_{\fh_{\omega,p_k}} =O(1)^{2mg}\prod_{k=1}^m\prod_{j=1}^g  \dist(p_k,q_j)^2  \geq O(1)^{2mg} m^{-2mg}.$$	 
	Hence, together with \eqref{estiamte-exp-kappa}, Lemma \ref{lem-xi-bound} and Proposition \ref{bound-Sm}, the last term of \eqref{pnk-ineq-lower} is 
	\begin{align*}
	&\geq {O(1)\over n}+  {1\over n^2}\log \int_{\mathbf p\in \oS_m^\zeta,\delta_{\mathbf p}\in\B_W^D(\sigma,r/2) }m^{-2mg} \, i\dd z_1\wedge \dd\overline z_1 \wedge\cdots \wedge i\dd z_m\wedge \dd\overline z_m \\
	& \geq {O(1)\over n} -  {O(\log m)\over n}+{1\over n^2}\log (m^{-5m})=-{O(\log n)\over n}.
	\end{align*}

	Thus, by taking $n\to \infty$, we deduce from \eqref{pnk-ineq-lower} that 
	\begin{equation*}\label{pnk-ineq-lower-conclude}
	\liminf_{n\to \infty} {1\over n^2} \log \bP_{n,D} \big(\B_W^D(\sigma,r)\big)  \geq \liminf_{n\to \infty}{1\over n^2}\log{C_n\over K_n}-   \oI_{\omega,D}(\sigma) -C_{\sigma,\zeta} \sqrt{r/2}-o_r(1).
	\end{equation*}
	Then letting $r\to 0$, we receive
	$$\liminf_{r\to 0} \liminf_{n\to \infty} {1\over n^2} \log \bP_{n,D} \big(\B_W^D(\sigma,r)\big)  \geq \liminf_{n\to \infty}{1\over n^2}\log{C_n\over K_n}- \oI_{\omega,D}(\sigma).$$
	Thus, \eqref{large-devia-lower} also follows from Lemma \ref{lem-cn-dn} below.
\end{proof}

In order to complete the proof of Theorem \ref{thm-large-devia}, we need to compute the limit of the term involving $C_n/K_n$.

\begin{lemma}\label{lem-cn-dn}
	We have 
	$$ \lim_{n\to \infty}{1\over n^2}\log{C_n\over K_n}= \min \oI_{\omega,D}.$$
\end{lemma}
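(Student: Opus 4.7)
Since Proposition \ref{prop-formula} gives $\frac{1}{n^2}\log C_n \to 0$, the statement is equivalent to $\frac{1}{n^2}\log K_n \to -\min \oI_{\omega,D}$. I would prove this by two matching bounds obtained by re-running the arguments from the proofs of \eqref{large-devia-upper} and \eqref{large-devia-lower} on the integral $K_n=\int_{\oQ_m}\Ac_m^*(V_n^{\FS})$ itself, without restricting the $\mathbf p$-integration to a Wasserstein ball; no new large deviation input is required.

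The upper bound repeats the truncation estimate of \eqref{pnk-ineq-upper} over all of $\oQ_m$: Lemma \ref{lem-f-m-p}, the inequality $\oE_m(\mathbf p)\le \oE^M(\delta_{\mathbf p})+M/m$, and the bound $\frac{1}{n^2}\log\int_{\oQ_m}\kappa_n=O(1/n)$ established there yield
\[
\limsup_{n\to\infty}\frac{1}{n^2}\log K_n \le \sup_{\sigma\in\cM(X\setminus D)}\bigl(\oE^M(\sigma)-2\max U'_\sigma\bigr) \quad \text{for every } M>0.
\]
Here I use that $\cM(X\setminus D)$ is weakly compact, since $\sigma(D)=0$ is a closed condition under weak convergence for the open set $D$ (Portmanteau), so the supremum is attained. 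The continuous functions $f_M(\sigma):=\oE^M(\sigma)-2\max U'_\sigma$ decrease monotonically to the upper semi-continuous limit $-\oI_{\omega,D}(\sigma)$ by monotone convergence, \eqref{second-formula-I}, and Lemma \ref{l:convex}. A Dini-type argument—letting $\sigma_M$ maximize $f_M$, extracting a weak limit $\sigma_\infty$, and comparing $f_M(\sigma_M)\le f_{M_0}(\sigma_M)\to f_{M_0}(\sigma_\infty)$ for each fixed $M_0$—then delivers $\lim_{M\to\infty}\sup f_M = -\min\oI_{\omega,D}$, completing the upper bound.

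For the lower bound, I would take $\sigma:=\nu$, the minimizer produced by Lemma \ref{l:convex}, and first replace it by a smooth approximation $\sigma_j$ with $\oI_{\omega,D}(\sigma_j)\to \min\oI_{\omega,D}$ via Proposition \ref{l:reg-I} so that Lemma \ref{sigma-delta-potential} becomes applicable. Restricting the integration to $\{\mathbf p\in\oS_m^\zeta:\delta_{\mathbf p}\in\B_W^D(\sigma_j,r/2)\}$ with $\zeta$ coming from Proposition \ref{bound-Sm}, the exponent lower bound from Lemmas \ref{lem-f-m-p} and \ref{sigma-delta-potential}, the separation-based pointwise estimate $\kappa_n\gtrsim e^{-O(m)}\,m^{-2mg}\bigwedge_{k=1}^m i\,\dd z_k\wedge\dd\overline z_k$, and the volume lower bound $\ge m^{-5m}$ from Proposition \ref{bound-Sm} combine to give
\[
\liminf_{n\to\infty}\frac{1}{n^2}\log K_n \ge -\oI_{\omega,D}(\sigma_j)-C_{\sigma_j,\zeta}\sqrt{r/2}-o_r(1),
\]
and letting $r\to 0$ then $j\to\infty$ yields $\liminf \frac{1}{n^2}\log K_n \ge -\min \oI_{\omega,D}$. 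The main obstacle is the Dini-type passage $M\to\infty$ in the upper bound: the pointwise limit $-\oI_{\omega,D}$ is only upper semi-continuous, so the classical Dini theorem does not apply directly, and one must argue via subsequential weak limits of the maximizers $\sigma_M$, a step that crucially relies on both the weak compactness of $\cM(X\setminus D)$ and the lower semi-continuity of $\oI_{\omega,D}$ from Lemma \ref{l:convex}.
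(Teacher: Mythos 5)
Your proposal is correct and follows essentially the same route as the paper: both bounds come from re-running the truncation estimate behind \eqref{pnk-ineq-upper} over all of $\cM(X\setminus D)$ and the separation/regularization estimate behind \eqref{pnk-ineq-lower} with $\sigma_j\to\nu$ supplied by Proposition \ref{l:reg-I}, the only cosmetic difference being that you phrase everything in terms of $\frac{1}{n^2}\log K_n$ rather than $\frac{1}{n^2}\log(C_n/K_n)$. Your Dini-type argument for the $M\to\infty$ minimax swap makes explicit a step the paper passes over with a brief appeal to monotone convergence, and it is exactly the justification needed there.
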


\begin{proof}
	Replacing $\B_W^D(\sigma,r)$ by $\mathcal M(X\setminus D)$ in \eqref{pnk-ineq-upper}, we get 
	$${1\over n^2}\log{C_n\over K_n} + \sup_{\mathcal M (X\setminus D)}\Big( \oE^M(\mu)- 2 \max U'_{\mu}\Big)+{M\over m^2}+{O(\log n) \over n} + {1\over n^2}\log \int_{\oQ_m}\kappa_n\geq 0$$
	It follows that
	$$\liminf_{n\to \infty}{1\over n^2}\log{C_n\over K_n} \geq \inf_{\mu\in\mathcal M (X\setminus D) }  \Big( -\oE^M(\mu)+2 \max U'_{\mu}\Big).$$ 
	Taking $M\to \infty$ and monotone convergence theorem give that
	$$ \liminf_{n\to \infty}{1\over n^2}\log{C_n\over K_n} \geq  \min \oI_{\omega,D}.$$
	
	On the other hand, choosing $\sigma_j$ to be a sequence of smooth probability measures in $\cM(X\setminus D)$ converging weakly to the equilibrium measure $\nu$  with $\lim_{j\to \infty} \oI_{\omega, D}(\sigma_j)=\oI_{\omega, D}(\nu)$. Such a sequence exists because of Proposition \ref{l:reg-I}. Then replacing $\sigma$ by $\sigma_j$ in \eqref{pnk-ineq-lower}, yields
	$${1\over n^2}\log{C_n\over K_n}-   \oI_{\omega,D}(\sigma_j)-C_{\sigma_j,\zeta_j} \sqrt{r/2} -o_r(1)-{O(\log n)\over n}\leq {1\over n^2} \log \bP_{n,D} \big(\B_W^D(\sigma_j,r)\big),$$
 where $\zeta_j$ depends on $\sigma_j$.
	Notice that the right hand side is always non-positive. We let $n\to \infty$ first. Then we take $r\to 0$ and $j\to \infty$, getting
	$$ \limsup_{n\to \infty}{1\over n^2}\log{C_n\over K_n} \leq \oI_{\omega,D}(\nu)=\min \oI_{\omega,D}.$$
	The proof of the lemma is finished.
\end{proof}

\begin{remark}\rm
		The quantity $K_n$ above is equal to the hole probability of random holomorphic sections non-vanishing on $D$, times $\binom {n}{m}$. 
Lemma \ref{lem-cn-dn} also gives the estimate of hole probability	in \cite[Theorem 1.4]{shi-zel-zre-ind} for Riemann surfaces case.
\end{remark}

So far, we have completed the proof of Theorem \ref{thm-large-devia} for the case $\deg (\oL)=1$. The higher degree case can be reduced to this case by the next lemma.

\begin{lemma}
	If $\oL'$ is a positive line bundle on $X$ of degree $d>1$. Then there exists a positive line bundle $\oL$ of degree $1$ such that $\oL^d \simeq \oL '$.
\end{lemma}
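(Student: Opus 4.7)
The plan is to use the standard structure of the Picard group $\mathrm{Pic}(X)$ of a compact Riemann surface. Recall that $\mathrm{Pic}(X)$ decomposes as a disjoint union of components $\mathrm{Pic}^k(X)$ indexed by the degree $k \in \Z$, that $\mathrm{Pic}^0(X) \cong \mathrm{Jac}(X)$ as complex Lie groups, and that the Jacobian is a complex torus $\C^g/\Lambda$. Since any complex torus is a divisible group, the multiplication-by-$d$ map $[d]:\mathrm{Jac}(X) \to \mathrm{Jac}(X)$ is surjective (in fact an isogeny with kernel $(\Z/d\Z)^{2g}$).

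First I would fix any degree $1$ line bundle $\oL_0$ on $X$, for instance $\oL_0 = \Oc(p_\star)$ with $p_\star$ the base point from Section \ref{sec-jac}. Then $\oL_0^d$ has the same degree $d$ as $\oL'$, so $\oL' \otimes \oL_0^{-d} \in \mathrm{Pic}^0(X)$. By surjectivity of $[d]$ on $\mathrm{Jac}(X)$, there exists a degree $0$ line bundle $\oM$ with $\oM^d \simeq \oL' \otimes \oL_0^{-d}$. Setting $\oL := \oL_0 \otimes \oM$, one immediately gets $\deg(\oL) = 1$ and
$$\oL^d = \oL_0^d \otimes \oM^d \simeq \oL_0^d \otimes \oL' \otimes \oL_0^{-d} \simeq \oL'.$$

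Finally I need to verify that this $\oL$ is positive, i.e.\ admits a smooth Hermitian metric with strictly positive curvature. The cleanest way is to observe that on a compact Riemann surface, any line bundle of strictly positive degree is ample and hence positive (this is a standard consequence of Riemann--Roch or Kodaira embedding in dimension one). Alternatively, one can give the metric explicitly: pick any smooth Hermitian metric $\fh$ on $\oL$, so $\fh^d$ is a metric on $\oL^d \simeq \oL'$; comparing with the given positively curved metric $\fh'$ on $\oL'$ one writes $\fh' = e^{-\varphi}\fh^d$ for some smooth function $\varphi$, and then the metric $\widetilde\fh := e^{-\varphi/d}\fh$ on $\oL$ has curvature form $\tfrac{1}{d}\,c_1(\oL',\fh') > 0$.

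There is no real obstacle here; the only subtlety worth flagging is the use of the divisibility of $\mathrm{Jac}(X)$, which is what allows the extraction of a $d$-th root in $\mathrm{Pic}(X)$. This $d$-th root is not unique (it is ambiguous up to the finite group $\mathrm{Jac}(X)[d] \cong (\Z/d\Z)^{2g}$ of $d$-torsion points), but any choice works for the lemma since all we need is existence.
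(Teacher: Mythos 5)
Your proof is correct. Both you and the paper rest on the same underlying fact --- $d$-divisibility of the complex torus $\mathrm{Jac}(X)=\C^g/\Lambda$ --- but you invoke it abstractly at the level of $\mathrm{Pic}^0(X)$, while the paper realizes it concretely: it represents $\oL'^g$ by an effective divisor $x_1+\cdots+x_{gd}$ via Jacobi inversion, lifts $\oA_{gd}(x_1+\cdots+x_{gd})$ to a vector $\mathbf z\in\C^g$, solves $\oA_g(y_1+\cdots+y_g)=\mathbf z/(gd)$ again by Jacobi inversion, and sets $\oL=\Oc(y_1+\cdots+y_g-(g-1)p_\star)$. Your abstract route is cleaner and more robust: in the paper's concrete version, an arbitrary lift $\mathbf z$ of the class of $\oL'^g$ only yields $\oL^{gd}\simeq\oL'^g$, which is weaker than $\oL^d\simeq\oL'$ by a possible $g$-torsion discrepancy in $\mathrm{Jac}(X)$, so one must take care to choose $\mathbf z$ to be $g$ times a lift of the class of $\oL'\otimes\Oc(-dp_\star)$ itself --- a subtlety your formulation avoids entirely. (The detour through $\oL'^g$ appears because $\oA_d$ need not be surjective when $d<g$; working directly in $\mathrm{Pic}^0(X)$ makes this irrelevant.) Your positivity check, either by ampleness of positive-degree line bundles on a curve or by the explicit rescaled metric $e^{-\varphi/d}\fh$ whose curvature is $\tfrac{1}{d}c_1(\oL',\fh')$, and your remark on the $(\Z/d\Z)^{2g}$-ambiguity of the $d$-th root, are both fine.
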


\begin{proof}
	We consider the line bundle $\oL'^g$. By Jacobi inversion theorem, we can find $p_1,\dots, p_{gd}\in X$ such that $\Oc (x_1+\dots+ x_{gd})\simeq \oL'^g$. Let $[\mathbf z]:=\oA_{gd}(x_1+\cdots + x_{gd})$. Fix a representative $\mathbf z\in \C^g$ of $[\mathbf z]$.  Using Jacobi inversion theorem again, there exist $y_1, \dots, y_g\in X$ such that $\oA_g (y_1+ \cdots+ y_g)=\mathbf z/(gd)$. Abel's theorem implies that $$\Oc (gdy_1 +\cdots +gd y_g)  \otimes \Oc( -g^2 d  p_\star  )\simeq \Oc (x_1+\cdots+x_{gd}) \otimes \Oc (  - gd p_\star   ).$$
	Hence we take $\oL:= \Oc (y_1 +\cdots + y_g-gp_\star +p_\star)$ and finish the proof.
\end{proof}

\begin{remark}\rm
We also need to give the remark for the case $g=0$, equivalently, $X=\P^1$. In this case, $m$ is equal to $n$. The formula in Proposition \ref{prop-formula} remains valid  after we setting $g=0$ (cf.\ \cite[Proposition 3]{zei-zel-imrn}). Moreover, all the objects related to $\mathbf q$, such as $\oB_m,\Ac_m$, are not meaningful any more. The proof will be much simpler because the set of holomorphic sections with zeros only in $X\setminus D$ can be exactly parameterized by  the set $(X\setminus D)^{(n)}$ in $X^{(n)}$. In particular, we do not need the technical Proposition \ref{bound-Sm} any more. 
\end{remark}

\medskip

In the remaining part of this section, we will give the proof of Theorem \ref{t:main-2}, using the large deviation principle we derived a while before. 

\smallskip

Theorem \ref{thm-large-devia} gives the large deviation principle of $\bP_{n,D}$ for every small open ball in $\mathcal M(X\setminus D)$. In  view of Lemma \ref{l:convex}, $\oI_{\omega,D}-\min \oI_{\omega,D}$ is a \textit{good rate function}, i.e., it is lower semicontinuous and non-negative.
Together with \cite[Theorem 4.1.11]{dem-zei-book}, we can deduce the weak large deviation principle for compact sets. More precisely,  for every compact subset $A$ in $\mathcal M(X\setminus D)$, we have
\begin{equation}\label{compact-LDP}
\limsup_{n\to \infty}{1\over n^2} \log \bP_{n,D} (A) \leq    \min  \oI_{\omega,D}- \min_{\mu\in A}\oI_{\omega,D}(\mu).
\end{equation}

Fix a smooth test function $\phi$ on $X$.
For every $t\geq 0$, define the auxiliary set
$$\oT_t:=\Big\{\mu\in \mathcal M(X\setminus D):\, \Big| \int_{X}\phi \,\dd\mu-  \int_{X}\phi \,\dd\nu  \Big|\geq t      \Big\}.$$
Observe that $\oT_t$ is empty for $t>2\norm{\phi}_\infty$.

\begin{lemma}\label{lema-t-I-K}
	There exists a constant $c_\phi>0$ independent of $t$ such that for all $\mu\in \oT_t$,
	$$\oI_{\omega,D}(\mu)-\oI_{\omega,D}(\nu)\geq c_\phi t^2.$$
\end{lemma}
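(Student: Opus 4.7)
\medskip

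\textbf{Plan.} The strategy is to establish a quadratic lower bound $\oI_{\omega,D}(\mu) - \oI_{\omega,D}(\nu) \geq \int_X \dd U'_\eta \wedge \dc U'_\eta$, where $\eta := \mu - \nu$, and then to use Cauchy--Schwarz paired with the smooth test function $\phi$ to convert $|\int_X \phi\,\dd\eta| \geq t$ into a lower bound for this Dirichlet energy.

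For the first step, I assume $\oI_{\omega,D}(\mu) < +\infty$ (else the inequality is trivial), so that $\eta$ has total mass zero and $\ddc U'_\eta = \eta$. Expanding $U'_\mu = U'_\nu + U'_\eta$ in \eqref{second-formula-I} and using Lemma \ref{commut-potential} (extended to signed measures by bilinearity) to symmetrize $\int_X U'_\nu\,\dd\eta = \int_X U'_\eta\,\dd\nu$, one obtains
\[\oI_{\omega,D}(\mu) - \oI_{\omega,D}(\nu) = -\int_X U'_\eta \,\dd\eta + 2\Big[\big(\max U'_\mu - \int_X U'_\nu \,\dd\mu\big) - \big(\max U'_\nu - \int_X U'_\nu \,\dd\nu\big)\Big].\]
The bracketed term is non-negative: Lemma \ref{l:min} applied to $\mu$ reads $\int_X U_\mu\,\dd\nu \leq \int_X U_\nu\,\dd\nu$, and unfolding $U_\mu = U'_\mu - \max U'_\mu$ and applying Lemma \ref{commut-potential} once more rewrites this precisely as the bracket being $\geq 0$. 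Stokes' formula then converts the remaining term into the Dirichlet energy: $-\int_X U'_\eta\,\dd\eta = \int_X \dd U'_\eta \wedge \dc U'_\eta \geq 0$.

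For the second step, integration by parts against the smooth function $\phi$ gives $\int_X \phi\,\dd\eta = \int_X \phi \ddc U'_\eta = -\int_X \dd\phi \wedge \dc U'_\eta$, and the symmetric pairing $(f,g) \mapsto \int_X \dd f \wedge \dc g$ is, in local coordinates, a positive multiple of the Dirichlet inner product, hence satisfies Cauchy--Schwarz:
\[\Big|\int_X \phi\,\dd\eta\Big|^2 \leq \int_X \dd\phi \wedge \dc\phi \cdot \int_X \dd U'_\eta \wedge \dc U'_\eta.\]
For $\mu \in \oT_t$ the left-hand side is at least $t^2$, so combining with the first step yields the claim with $c_\phi := 1/\int_X \dd\phi \wedge \dc\phi$, which is positive when $\phi$ is non-constant (if $\phi$ is constant, $\oT_t$ is empty for $t > 0$ and the statement is vacuous). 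The main technical hurdle I expect is justifying the integration-by-parts and Cauchy--Schwarz steps for $U'_\eta$, which is only a difference of quasi-subharmonic functions; however, the finiteness of $\oI_{\omega,D}(\mu)$ places $U'_\eta$ in the appropriate finite-energy Sobolev space, and these identities then follow by a standard smoothing-and-passage-to-the-limit argument in potential theory.
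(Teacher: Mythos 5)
Your proof is correct and takes essentially the same route as the paper: both split $\oI_{\omega,D}(\mu)-\oI_{\omega,D}(\nu)$ into the Dirichlet-energy piece $-\int_X(U'_\mu-U'_\nu)\,\dd(\mu-\nu)$ plus a remainder which is non-negative by Lemma \ref{l:min} (the paper invokes its equivalent form \eqref{Unu-Umu-max}), and both then apply Cauchy--Schwarz with $\int_X\dd\phi\wedge\dc\phi$ to bound $\big|\int_X\phi\,\dd(\mu-\nu)\big|$ by the square root of that Dirichlet energy. The only differences are cosmetic (you reorder the two steps and write out the decomposition more explicitly), and your final constant $c_\phi=1/C_\phi$ is the arithmetically correct one, whereas the paper writes $C_\phi^{-2}$, an apparent typo.
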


\begin{proof}
	 Notice that $\dd \phi \wedge \dc \phi$ is a positive measure on $X$ with mass $C_\phi$. By Stoke's formula and Cauchy-Schwarz inequality,
	\begin{align*}
	&\Big|\int_{X}\phi \,\dd\mu-  \int_{X}\phi \,\dd\nu\Big|=\Big|\int_{X}\phi \, \ddc(U'_\mu-U'_\nu )\Big|=
	\Big|\int_X \dd \phi \wedge \dc(U'_\mu-U'_\nu )  \Big|\\
	& \leq \Big(\int_X \dd\phi\wedge \dc\phi \Big)^{1/2} \Big(\int_X \dd(U'_\mu-U'_\nu )\wedge \dc (U'_\mu-U'_\nu )\Big)^{1/2}  \\
	&=C^{1/2}_\phi \Big(-\int_X  (U'_\mu-U'_\nu )\, \ddc (U'_\mu-U'_\nu )\Big)^{1/2}=C_\phi^{1/2} \Big(-\int_X  (U'_\mu-U'_\nu )\,\dd(\mu-\nu)\Big)^{1/2}. 
	\end{align*}
	
	On the other hand, by \eqref{Unu-Umu-max},
	$$-\int_X U'_\mu\,\dd\mu +2\int_X U'_\mu\,\dd \nu -\int_X U'_\nu\,\dd\nu \leq  -\int_X U'_\mu\,\dd\mu +2\max U'_\mu+\int_X U'_\nu\,\dd \nu -2\max U'_\nu.$$
	From Lemma \ref{commut-potential} and \eqref{second-formula-I}, we see that last inequality is equivalent to 
	$$- \int_X  (U'_\mu-U'_\nu )\,\dd(\mu-\nu)\leq \oI_{\omega,D}(\mu)-\oI_{\omega,D}(\nu). $$
	Therefore, we conclude that for $\mu \in \oT_t$,
	$$ t\leq \Big|\int_{X}\phi \,\dd\mu-  \int_{X}\phi \,\dd\nu\Big| \leq C^{1/2}_\phi \big( \oI_{\omega,D}(\mu)-\oI_{\omega,D}(\nu) \big)^{1/2}.$$
	We finish the proof by taking $c_\phi := C_\phi^{-2}$.
\end{proof}

Let $\E_{n,D}$ denote the expectation of $\bP_{n,D}$. 

\begin{proof}[Proof of Theorem \ref{t:main-2}]
		It is not hard to see that  $\oT_t$ is closed, and hence compact in $\mathcal M(X\setminus D)$ for every $t\geq 0$.
	So \eqref{compact-LDP} can be applied to $\oT_t$, yielding
	$$\limsup_{n\to \infty}{1\over n^2} \log \bP_{n,D} (\oT_t) \leq   \oI_{\omega,D}(\nu) - \min_{\mu\in \oT_t}\oI_{\omega,D}(\mu) .  $$
	Lemma \ref{lema-t-I-K} gives that
	$$\limsup_{n\to \infty}{1\over n^2} \log \bP_{n,D} (\oT_t) \leq -c_\phi t^2.  $$
	So for every $t\geq 0$, there exists a constant $N_t\in \N$, such that for all $n\geq N_t$, 
	\begin{equation}\label{P-n-K-leq-exp}
	\bP_{n,D}(\oT_t)\leq e^{-c_\phi t^2n^2/2}.
	\end{equation}

   For the first statement of Theorem \ref{t:main-2},	we need to show
	\begin{equation*}
	\lim_{n\to \infty}\int_{\P H^0(X,\oL^n)} \int_X \phi \,\dd (\llbracket Z_s \rrbracket -\nu )\,  \dd V_{n,D}^{\FS}(s) =0.
	\end{equation*}
	By Fubini's theorem,
	\begin{align*}
	\int_{\P H^0(X,\oL^n)} \Big|\int_X \phi \,\dd (\llbracket Z_s \rrbracket -\nu )\Big|\,  \dd V_{n,D}^{\FS}(s) =\E_{n,D}\Big(    \Big|\int_X \phi \,\dd (\mu -\nu )\Big|   \Big) \\
	=\int_0^\infty  \bP_{n,D}\Big(    \Big|\int_X \phi \,\dd (\mu -\nu )\Big|\geq t   \Big) \, \dd t      =\int_0^{2\norm{\phi}_\infty} \bP_{n,D}(  \oT_t  )\, \dd t. 
	\end{align*}
	Then \eqref{P-n-K-leq-exp} and Lebesgue dominated convergence theorem give the result.

\smallskip

For the second statement, 	we define the sequence of random variables $Y_n$ on $\prod_{n\geq n_0} \P H^0(X,\oL^n)$ as follows:
for every sequence $\mathbf s=\{s_n\}_{n\geq n_0}$ in  $\prod_{n\geq n_0} \P H^0(X,\oL^n)$,  let 
$$Y_n (\mathbf s):=  \Big|\int_X \phi \,\dd(\llbracket Z_s \rrbracket -\nu )\Big|,   $$
where $\phi$ is the test function we fixed above.
Our goal is showing that $Y_n$ converges to $0$ almost surely.
According to Borel–Cantelli lemma,
it is enough to prove that for every $t>0$, 
$$ \sum_{n\geq n_0} \bP(Y_n\geq t)= \sum_{n\geq n_0} \bP_{n,D}\Big(    \Big|\int_X \phi \,\dd (\llbracket Z_s \rrbracket -\nu )\Big|\geq t   \Big)    <\infty. $$
Equivalently, 
$ \sum_{n\geq n_0} \bP_{n,D}(\oT_t)   <\infty$.
This follows easily from  \eqref{P-n-K-leq-exp}.
\end{proof}


\medskip

\section{Examples on the Riemann sphere and on a torus} \label{s:example}

In this section, we will give some examples on the Riemann sphere $\P^1$ and the complex torus $\C/(\Z+\Z i)$.

\subsection{Riemann sphere}

Let $X=\P^1$ and $(\oL,\fh)=(\Oc(1),\omega_{\FS})$. We will consider $D$ a connected circular domain.

\smallskip

Since $\P=\C\cup \{\infty\}$ and clearly,  the equilibrium measure $\nu$ of $D$ has no mass at $\infty$. So we can just work on $\C$ and assume that $D$ is a connected circular domain centered at $0$. On the coordinate
$\C$, the Fubini-study form 
$\omega_\FS(z)=(2\pi)^{-1} (1+|z|^2)^{-2} \,i \dd z\wedge \dd \overline z$.
Hence by definition, the quasi-potential of type M 
$$U_\nu(z)=\int_{\C} \log|z-w|\, \dd\nu(w) -\log (1+|z|^2)^{1/2}+ C_\nu   \quad\text{for}\quad z\in\C$$
with some normalized constant $C_\nu$. We have  $ U_\nu\leq 0$, $\max  U_\nu =0$ and $\int_{\C} \log|z-w|\, \dd\nu(w)$ is a subharmonic function on $\C$. 

From Theorem \ref{t:main-1}, we know that the equilibrium measure $\nu$ is the sum of $\nu_{b D}$ and $\nu_S=\omega_\FS |_S$. Moreover,  $bD \cap \supp(\nu_S)=\varnothing$ since $D$ has smooth boundary.  Now we analyze the behavior of $U_\nu$ outside $b D \cup \supp(\nu_S)$.

We first notice that $\nu$ is also circular because $D$ is invariant under rotations with center $0$ and the  equilibrium measure is unique. Fix a point $x\notin bD \cup \supp(\nu_S)$. 
We want to study the term $\int_{\C} \log|z-w|\, \dd\nu(w)$ near $x$. Since $\nu$ is circular, after changing to polar coordinates and using the fact that
\begin{equation}\label{integral-circle}
{1\over 2\pi}\int_{0}^{2\pi} \log|z-re^{i\theta}|\,\dd\theta =\max(\log|z|,\log r ),   
\end{equation}
we deduce that for $z$ in a small neighborhood of $x$ who does not intersect $b D \cup \supp(\nu_S)$, 
\begin{equation}\label{linear-potential-nu}
\int_{\C} \log|z-w|\, \dd\nu(w) = \alpha \log |z| +\beta,  
\end{equation}
where  $0\leq \alpha\leq 1$ and $\beta\in \R$ are constants. In other words, locally, $\int_{\C}\log|z-w|\, \dd\nu(w)$ is an increasing linear function of $\log|z|$ with slope bounded by $1$.
Here, $\alpha,\beta$ depend on the connected component of $\C\setminus(bD \cup \supp(\nu_S))$ which $x$ belongs to.  

\smallskip

\noindent \textbf{Case 1:} $D=\D$. 
Since $\nu$ has no mass on $\D$, applying \eqref{integral-circle} again, yields that $\int_{\C} \log|z-w|\, \dd\nu(w)+ C_\nu$ is constant for $|z|\leq 1$.   This constant should be $0$ because $U_\nu\leq 0$ and $\max _{\overline \D} U_\nu =0$ by Theorem \ref{t:main-1}.

On the other hand, the function $\int_{\C} \log|z-w|\, \dd\nu(w)+ C_\nu$ can only  be non-differentiable on $bD\cup b S$, as a function of $\log|z|$. This is due to the facts that $\nu$ has no mass outside $bD \cup S$ and $\nu$ is smooth on $S^o$.  But we observe that $b S$ is the union of circles since $\nu$ is circular. Hence $\nu$ does not have mass on $b S$. This means that under the variable $\log|z|$, the graph of $\int_{\C} \log|z-w|\, \dd\nu(w)+ C_\nu$ is tangent to the graph of $\log (1+|z|^2)^{1/2}$ at the points in $b S$. In particular, the function $\int_{\C} \log|z-w|\, \dd\nu(w)+ C_\nu$ can only  be non-differentiable on $bD$, as a function of $\log|z|$.

Therefore, according to \eqref{linear-potential-nu}, we can draw the graphs of $\log (1+|z|^2)^{1/2}$ and $\int_{\C} \log|z-w|\, \dd\nu(w)+ C_\nu$, see the figure below.  The blue curve represents $\log (1+|z|^2)^{1/2}$ under the variable $\log|z|$. The red curve represents the non-zero part of $\int_{\C} \log|z-w|\, \dd\nu(w)+ C_\nu$. The implies that $S=\{0,\infty\}$, which gives that $\nu_S=0$. Hence we conclude that $\nu$ is the Lebesgue probability measure on the unit circle.

\begin{tikzpicture}
\begin{axis}[xmin=-2,xmax=2,
width=15cm, 
height=6cm,
domain=-2:2,
xlabel=$\log|z|$,
smooth,thick,
axis lines=center,
every tick/.style={thick},
legend style={cells={anchor=west}},
legend pos=north west]

\addplot[color=blue,domain=-2:2]{1/2 * ln(1+e^x* e^x)};

\addplot[color=red,domain=0:2]{x};
\addplot[color=green,domain=0.2:2]{x-0.2};
\draw [color=pink] (-0.2 ,0 ) -- (1.02, 1.08112);
\legend{${1\over 2}\log(1+|z|^2)$}

\end{axis}
\end{tikzpicture}

\smallskip

\noindent \textbf{Case 2:} $D=\D(0,r)$ for $r>1$. Repeating the same argument as  \textbf{Case 1}, we can draw the green curve in the figure above  representing $\int_{\C} \log|z-w|\, \dd\nu(w)+ C_\nu$ in this case.  This gives that $S=\{0\}$. So we conclude that $\nu$ is the Lebesgue probability measure on the circle $b \D(0,r)$.

\smallskip

\noindent \textbf{Case 3:} $D=\D(0,r)$ for $r<1$. Through the point $\log|z|=\log r$, we can find a unique pink line tangent to the blue curve at some point $\big(\log t, {1\over 2 } \log (1+t^2) \big) $ with $t>0$, see the figure above.
The pink curve represents $\int_{\C} \log|z-w|\, \dd\nu(w)+ C_\nu$ in this case. We see that $S$ is the union of $\{0\}$ and an open ball $B$ centered at $\infty$. So we conclude that $\nu_S=\omega_\FS |_B$ and $\nu_{b D}$ is a Lebesgue measure on the circle $b \D(0,r)$ with mass  $1-\omega_\FS(B)$.


\smallskip

\noindent \textbf{Case 4:} $D$ is a symmetric annulus on $\P^1$. In this case, $D=\{r <|z|<1/r\}$ on $\C$ for some $0<r <1$.  By the same reason,  $\int_{\C} \log|z-w|\, \dd\nu(w)$ is linear on $\log |z|$ for $z\in \C \setminus bD$, and it can only be non-differentiable on $bD$. Moreover, $U_\nu (z) =0$ at $|z|=1$ since $D$ is symmetric. So we obtain the graphs below for $r<1/2$ (pink), $r=1/2$ (red),  $r>1/2$ (green).


\begin{tikzpicture}
\begin{axis}[xmin=-2,xmax=2,ymin=-0.25,
width=15cm, 
height=6cm,
domain=-2:2,
xlabel=$\log|z|$,
smooth,thick,
axis lines=center,
every tick/.style={thick},
legend style={cells={anchor=west}},
legend pos=north west]

\addplot[color=blue,domain=-2:2]{1/2 * ln(1+e^x* e^x)};

\draw [color=pink](-1.1, 1/2 * ln 1.1108 ) -- (-0.5 , -1/4 +1/2 * ln 2 );
\draw [color=pink] (0.5 , 1/4 +1/2 * ln 2 ) -- (1.1, 1/2 * ln 10.025 );

\addplot[color=red,domain=ln 2  :  2]{x};
\addplot[color=red,domain=-ln 2  :  ln 2]{1/2 * x +1/2 * ln(2)};
\draw [color=red] (-2 , 0 ) -- (-ln 2, 0 );

\draw [color=green] (-2 , 1/2 * ln 2 - 1/2 ) -- (-1, 1/2 * ln 2- 1/2 );
\draw [color=green] (-1, 1/2 * ln 2- 1/2 ) -- (-ln 2 , 0);
\draw [color=green] (ln 2 ,  ln 2 ) -- ( 1 , 1/2 * ln 2 + 1/2  );
\draw [color=green] ( 1 , 1/2 * ln 2 + 1/2  )--( 2, 3/2+ 1/2 * ln 2   );

\legend{${1\over 2}\log(1+|z|^2)$}

\end{axis}
\end{tikzpicture}

\smallskip

\noindent \textbf{Case 5:}  $D=\{r_1\leq |z|\leq r_2\}$ with $0<r_1<r_2<\infty$, is a non-symmetric annulus  on $\P^1$. In this case, the computation is quite long, because the set $D \cap S$ is not obvious. We just make the conclusion here. For the smooth part $\nu_S$, $\supp(\nu_S)$ may vanish, or $\supp(\nu_S)=\overline \D(0,r_3)$, or  $\supp(\nu_S)=\overline \D(\infty,r_4)$, or $\supp(\nu_S)=\overline \D(0,r_5) \cup \overline  \D(\infty,r_6)$.

\smallskip

Summing up, when $D$ is an open ball larger than the unit disc, the equilibrium measure $\nu$  only has the singular part $\nu_{bD}$. When $D$ is an annulus large enough, $\nu_S$ vanishes also.

\subsection{Elliptic curve}
Let $X=\C/(\Z+\Z i)$  and $(\oL,\fh)=(\Oc(z_0),\omega)$, where $z_0=0$ and $\omega$ is induced by the standard K\"ahler form $1/2 \, i \dd z \wedge \dd \overline z$ from $\C$. We use $x,y$ to denote the real coordinates and  we will consider $D$ a strip with hight $r<1$ and width $1$ on some fundamental domain.   After a translation, we can assume $D=\{0\leq x\leq r/2\}\cup \{1-r/2\leq x\leq 1\}$. 

\smallskip

First we notice that $U_\nu$ is a function independent of $y$ and $2\pi x^2$ is a local potential of $\omega$. Hence locally, we can write $U_\nu = V_\nu -2\pi x^2$, where $V_\nu$ is a local potential of $\nu$.  For $z$ outside $bD\cup S$, $V_\nu$ is harmonic. Thus, $V_\nu$ is piecewise linear with respect to the variable $x$ on $X\setminus (bD\cup S)$ because  $V_\nu$ is a function with only one variable $x$. 

On the other hand, by Theorem \ref{t:main-1} and using that the non-differentiable points of $U_\nu$ can only appear on $bD \cup bS$, we get that $D\cap S=\{x=0\}$. 
Clearly, $\nu$ has no mass on $bS$. By the same reason as in the $\P^1$ case,  we see that as a function of $x$, the graph of $V_\nu$ should be tangent to the graph of $2\pi x^2$ at every $x'$, which is the $x$-coordinate of $S$.

Then we draw the figures below. The advantage of choosing $D=\{0\leq x\leq r/2\}\cup \{1-r/2\leq x\leq 1\}$ is that we do not need to care about lattice points $\{x=0\}$ and $\{x=1\}$ to make sure that we actually define a function on the torus. The blue curve represents the function $2\pi x^2$.

\medskip

\begin{minipage}[b]{0.45\textwidth}  
	\begin{adjustbox}{width=\linewidth}
		\begin{tikzpicture}
		\begin{axis}[
		domain=0:1,
		xlabel=$x$,
		smooth,thick,
		axis lines=center,
		every tick/.style={thick},
		legend style={cells={anchor=west}},
		legend pos=north west]

		\addplot[color=blue]{2*pi*x^2};
		\addplot[color=red,domain=0:1/4]{0};
		\addplot[color=red,domain=1/4:3/4]{2*pi*(x-1/4)};
		\addplot[color=red,domain=3/4:1]{4*pi*(x-1/2)};

		\legend{$2\pi x^2$}
		\end{axis}
		\end{tikzpicture}
	\end{adjustbox}
\end{minipage}
\begin{minipage}[b]{0.45\textwidth}  
	\begin{adjustbox}{width=\linewidth}
		\begin{tikzpicture}
		\begin{axis}[
		domain=0:1,
		xlabel=$x$,
		smooth,thick,
		axis lines=center,
		every tick/.style={thick},
		legend style={cells={anchor=west}},
		legend pos=north west]

		\addplot[color=blue]{2*pi*x^2};
		\addplot[color=pink,domain=0:3/8]{0};
		\addplot[color=pink,domain=3/8:5/8]{2*pi*(x-3/8)};
		\addplot[color=pink,domain=5/8:1]{4*pi*(x-1/2)};
		\addplot[color=green,domain=0:0.2]{0};
		\addplot[color=green,domain=0.2:0.4]{1.6 * pi * x -1.6 * pi *0.2};
		\addplot[color=green,domain=0.6:0.8]{2.4 * pi * x -2.4 * pi *0.6+ 2 * pi * 0.36};
		\addplot[color=green,domain=0.8:1]{4*pi*(x-1/2)};
		\legend{$2\pi x^2$}
		\end{axis}
		\end{tikzpicture}
	\end{adjustbox} 
\end{minipage}

When $r=1/2$, $S=\{x=0\}\cup \{x=1/2\}$, see the left figure above. Hence in this case, $\nu_S=0$ and $\nu$ is the Lebesgue probability measure on $\{x=1/4\}\cup \{x=3/4\}$.

When $r<1/2$, $S$ is the union of $\{x=0\}$ and the strip $\{r<x<1-r\}$, see the green curve in the right figure above. So in this case, $\nu_S=\mathbf 1_{\{r<x<1-r\}} \omega$ and $\nu_{bD}$ is the Lebesgue probability measure on $\{x=r/2\}\cup \{x=1-r/2\}$ with mass $2r$.

When $r>1/2$, $S=\{x=0\}$, see the pink curve in the right figure above. So in this case, $\nu_S=0$ and $\nu$ is the Lebesgue probability measure on $\{x=r/2\}\cup \{x=1-r/2\}$.

Summing up, when $r\geq 1/2$, the  equilibrium measure $\nu$   is the Lebesgue probability measure on the boundary of $D$. When $r <1/2$, $\nu$ is the sum of a Lebesgue measure on $bD$ and $\omega |_B$ for some strip $B$.

\medskip

As shown in the above examples, the measure $\nu_S$, which is the smooth part of the  equilibrium measure, vanishes for some special choices of  $D$. In fact, such a $D$ always exists for any compact Riemann surface. So we end our paper with the following proposition.

\begin{proposition}\label{prop-nu-S-vanish}
	For any compact Riemann surface $X$ and any positive line bundle $(\oL,\fh)$ on $X$ associated to a K\"ahler metric $\omega$, there exists an open subset $D$ with smooth boundary such that the  equilibrium measure  of $D$ is supported on the boundary of $D$. 
\end{proposition}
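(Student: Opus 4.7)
My plan is to take $D := X \setminus \overline{\B(p_0, r)}$ for a fixed point $p_0 \in X$ and for a sufficiently small $r > 0$, so that $bD = b\B(p_0,r)$ is smooth and $\overline D \neq X$. I claim that then the equilibrium measure $\nu$ of $D$ is supported on $bD$, i.e., $\nu_S = 0$ where $S = \{U_\nu = 0\}$. Since Theorem~\ref{t:main-1}(2) gives $\nu_S = \omega|_S$, it suffices to prove that $\omega(S) = 0$.

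\smallskip

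The main step is a uniform upper bound showing $U_\nu < 0$ on $\overline{\B(p_0, r)}$ for $r$ small. The measure $\nu \in \cM(X\setminus D)$ is a probability measure supported in $\overline{\B(p_0, r)}$, so for any $z \in \overline{\B(p_0, r)}$ we have $\dist(z, w) \leq 2r$ for all $w \in \supp \nu$. Using the decomposition $G(z, w) = \log \dist(z,w) + \varrho(z, w)$ of Lemma~\ref{l:Green}, with $\varrho$ Lipschitz on the compact space $X \times X$ and hence bounded by some $C_\varrho$, this gives
\[
U'_\nu(z) = \int_X G(z,w)\,\dd\nu(w) \leq \log(2r) + C_\varrho \qquad \text{for all } z \in \overline{\B(p_0, r)}.
\]
On the other hand, picking any auxiliary $z_0 \in X$ with $\dist(z_0, p_0) \geq \delta$ for some $\delta > 0$ independent of $r$, and requiring $r < \delta/2$, one has $\dist(z_0, w) \geq \delta/2$ for all $w \in \overline{\B(p_0, r)}$, so $\max_X U'_\nu \geq U'_\nu(z_0) \geq \log(\delta/2) - C_\varrho$. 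Subtracting yields $U_\nu \leq \log(4r/\delta) + 2 C_\varrho$ on $\overline{\B(p_0, r)}$, which is strictly negative for $r$ small enough. Fix such an $r$; then $S \cap \overline{\B(p_0, r)} = \varnothing$, so $S \subset D \cup bD$.

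\smallskip

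For the remaining piece, note that $\supp \nu \subset \overline{\B(p_0, r)}$ forces $\nu|_D = 0$, so that $\ddc U_\nu = -\omega$ on $D$; by elliptic regularity $U_\nu$ is smooth there. At any $p \in S \cap D$, $U_\nu$ attains its global maximum $0$, so $\nabla U_\nu(p) = 0$ and the real Hessian $H(p)$ is negative semi-definite with trace $\Delta U_\nu(p) = -2\pi f(p) < 0$, where $\omega = f\,\dd x \wedge \dd y$ in local coordinates. Hence $H(p)$ has at least one strictly negative eigenvalue. The Taylor expansion $U_\nu(p+h) = \tfrac{1}{2} h^\top H(p) h + O(|h|^3)$ combined with Lebesgue's density theorem then forces $S \cap D$ to have $2$-dimensional Lebesgue measure zero: at a hypothetical density point $p$, every $h$ with $p+h \in S$ would satisfy $h^\top H(p) h = O(|h|^3)$, pushing $h/|h|$ into the null direction of $H(p)$ and so confining $S$ locally to a $1$-dimensional set, contradicting density one. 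Combined with $\omega(bD) = 0$ (as $bD$ is a smooth curve), we conclude $\omega(S) = 0$, whence $\nu_S = 0$ and $\nu$ is supported on $bD$.

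\smallskip

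\emph{Main obstacle.} The crux of the argument is the uniform upper bound on $U'_\nu$ over $\overline{\B(p_0, r)}$ in the main step: it uses only the small diameter of $\supp \nu$ and the singular behaviour of the Green function from Lemma~\ref{l:Green}, without any detailed structural information on $\nu$ itself. The density-point argument for $S \cap D$ is routine but deserves attention, since with $\omega$ only assumed smooth (not real analytic) the set $S \cap D$ need not be discrete; nevertheless the strict negativity of $\ddc U_\nu$ on $D$ constrains $S \cap D$ to be at most $1$-dimensional at every point, which is enough.
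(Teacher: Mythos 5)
Your proof is correct and hinges on the same key idea as the paper's: take $D = X \setminus \overline{\B(p_0,r)}$ and use the logarithmic singularity of the Green function from Lemma~\ref{l:Green} to show that $U'_\nu$ cannot attain its maximum on the small ball once $r$ is small. Your direct pointwise estimate $U'_\nu(z) \leq \log(2r) + C_\varrho$ on the ball, compared against $U'_\nu(z_0) \geq \log(\delta/2) - C_\varrho$ at a fixed faraway point, is just a reformulation of the paper's version, which computes $U'_\nu(z) - U'_\nu(y) = \int_X (U'_{\delta_z} - U'_{\delta_y})\,\dd\nu$ via Stokes' formula and then uses the same Green function asymptotics; both are essentially the same estimate.

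Where you diverge is in the concluding step. You set out to prove $\omega(S) = 0$ outright, which forces you to handle $S\cap D$ via elliptic regularity, a Hessian argument, and a density-point estimate. This is valid (and the reasoning is sound — the strictly negative trace of the Hessian with negative semi-definiteness does confine $\{U_\nu = 0\}\cap D$ to a Lebesgue-null set), but it is more work than is needed. Once you know $S\cap\overline{\B(p_0,r)} = \varnothing$, i.e.\ $S \subset \overline D$, the conclusion $\nu_S = 0$ follows immediately from the structural facts already in Theorem~\ref{t:main-1} and its supporting lemmas: $\nu$ has no mass on $D$ (Theorem~\ref{t:main-1}(3), or directly $\ddc U_\nu = -\omega$ on $D$ from Lemma~\ref{prop-ddc-eql-omega} and Corollary~\ref{cor-U-mu-widehat}), and $\nu(S\cap bD) = 0$ by Lemma~\ref{lema-S-no-mass}. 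Hence $\nu|_S = 0$ with no need to analyse $\omega(S\cap D)$. The paper exploits this shortcut; you could replace your entire second half with a one-line appeal to those facts.
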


\begin{proof}
	Recall the decomposition $\nu=\nu_{bD}+\nu_S$ from Theorem \ref{t:main-1} and $S=\big\{ U'_\nu=\max U'_\nu \big\}$.  It is enough to find a $D$ such that $S$ is contained in $\overline D$, i.e.,  $U'_\nu$ does not attend its maximum on $X\setminus \overline D$. This clearly implies that $\nu_S$ vanishes. We fix a point $x\in X$ and consider $D=X\setminus \overline{ \B(x,r)}$. We will show that for any probability measure $\mu$ supported on $\overline{\B(x,r)}$, $U'_\mu$ does not attend its maximum on   $\overline{\B(x,r)}$ when $r$ is small enough.
	
	Fix another point $y$ away from $x$. It suffices to prove that $U'_\mu(z)<U'_\mu(y)$ for any $z\in \overline{\B(x,r)}$. By Stoke's formula, we have
	\begin{align*}
	U'_\mu(z)-U'_\mu(y)&=\int_X U'_\mu \,\dd (\delta_z-\delta_y)=\int_X (U'_{\delta_z}-U'_{\delta_y} )\,\ddc U_\mu \\
	&=\int_X (U'_{\delta_z}-U'_{\delta_y} )\, \dd(\mu-\omega)=\int_X (U'_{\delta_z}-U'_{\delta_y} )\, \dd\mu.
	\end{align*}
	Observe that as $r\to 0$, $U'_{\delta_z}$ is very negative on $\overline{\B(x,r)}$ since $z\in \overline{\B(x,r)}$. While $U'_{\delta_y}$ is bounded on  $\overline{\B(x,r)}$. Therefore, using that $\supp(\mu)\subset \overline{\B(x,r)}$,  we conclude that $\int_X (U'_{\delta_z}-U'_{\delta_y} )\, \dd\mu<0$. This ends the proof of the proposition. 
\end{proof}

\medskip

\end{document}